\documentclass[11pt]{amsart}

\usepackage[lmargin=1in,rmargin=1in, bmargin=1in, tmargin=1in]{geometry}

\usepackage{graphicx}              
\usepackage{amsmath}               
\usepackage{amsfonts}              
\usepackage{amsthm}                
\usepackage{amssymb}
\usepackage{amscd}
 \usepackage{listings}
\usepackage{times}
\usepackage{comment}
\usepackage{mathtools}
\usepackage{tikz}
\usepackage[all,cmtip]{xy}
\usepackage{bm}

\usepackage{stmaryrd}
\usepackage{xcolor}

\usetikzlibrary{arrows, matrix}
\usepackage{tikz-cd}
\usepackage{mathrsfs}
\usepackage{stmaryrd}
\usepackage{xcolor}
\usepackage[backref,hidelinks]{hyperref}
\hypersetup{
    colorlinks,
    citecolor=blue,
    filecolor=blue,
    linkcolor=blue,
    urlcolor=blue
}
\usepackage[strings]{underscore}
\usepackage[colorinlistoftodos,textsize=tiny]{todonotes}

\theoremstyle{definition}
\newtheorem{thm}{Theorem}[section]
\newtheorem{lem}[thm]{Lemma}
\newtheorem{prop}[thm]{Proposition}
\newtheorem{cor}[thm]{Corollary}
\newtheorem{conj}[thm]{Conjecture}
\newtheorem{example}[thm]{Example}

\newtheorem{defn}[thm]{Definition}

\newtheorem{q}{Question}

\newtheorem{rem}[thm]{Remark}

\newtheorem{mainthm}{Theorem}

\newcommand{\RR}{\mathbf{R}}      
\newcommand{\ZZ}{\mathbf{Z}}      

\newcommand{\Gm}{\mathbf{G}_m}
\newcommand{\kk}{\mathsf{k}}

\newcommand{\GSp}{\mathrm{GSp}}

\newcommand{\A}{\mathbb{A}}

\newcommand{\Z}{\mathbf{Z}}
\newcommand{\Q}{\mathbf{Q}}
\newcommand{\R}{\mathbf{R}}
\newcommand{\SL}{\mathrm{SL}}
\newcommand{\C}{\mathbf{C}}
\newcommand{\F}{\mathbf{F}}

\newcommand{\GL}{\mathrm{GL}}

\newcommand{\G}{\mathbf{G}}

\newcommand{\g}{\mathfrak{g}}

\newcommand{\spec}{\mathrm{Spec}\, }

\newcommand{\acts}{\curvearrowright}

\newcommand{\Ggr}{\mathbf{G}_{\mathrm{gr}}}

 \DeclareFontFamily{U}{wncy}{}
    \DeclareFontShape{U}{wncy}{m}{n}{<->wncyr10}{}
    \DeclareSymbolFont{mcy}{U}{wncy}{m}{n}
    \DeclareMathSymbol{\Sh}{\mathord}{mcy}{"58}

\usepackage{hyperref}
\hypersetup{
    colorlinks,
    citecolor=blue,
    filecolor=blue,
    linkcolor=blue,
    urlcolor=blue
}

\begin{document}

\author{Antonio Cauchi}
\address{Antonio Cauchi\newline University College Dublin\\ School of Mathematics and Statistics\\ Science Centre - South, Belfield, Dublin 4, Dublin, Ireland}
\email{antonio.cauchi@ucd.ie}

\author{Eric Yen-Yo Chen}
\address{Eric Yen-Yo Chen \newline École Polytechnique Fédérale de Lausanne, Lausanne, Switzerland}
\email{eric.chen@epfl.ch}

\author{Armando Gutierrez Terradillos}
\address{Armando Gutierrez Terradillos\newline 
Department of Mathematics, Aarhus University, Ny Munkegade 118, DK-8000, Aarhus, Denmark}
\email{armangute@math.au.dk}

\title{Relative Langlands duality of the Bump--Friedberg--Ginzburg $\mathrm{GSO}_6$-integral}
\begin{abstract}
 We provide a new instance of singular relative Langlands duality, underlying a Rankin--Selberg integral on $\mathrm{GSO}_6$ due to Bump--Friedberg--Ginzburg. We conclude that this integral represents an essentially self-dual object in the relative Langlands program, and we demonstrate that the Langlands dual automorphic integral computes a finite sum of $L$-functions, reflecting stacky structure on the spectral side. 
\end{abstract}

\maketitle

\setcounter{tocdepth}{2}
\tableofcontents

\newpage

\section{Introduction}
Let $F$ be a global field, and let $G$ be a reductive group over $F$ which we assume to be split for simplicity. The arithmetic form of the Langlands correspondence roughly stipulates that automorphic forms on $G$, i.e., $L^2$-functions on the adelic quotient
$$[G] = G(F) \backslash G(\mathbb{A}_F),$$
which appear in the spectral decomposition of the right $G(\mathbb{A}_F)$-action, can be labeled by $L$-parameters with image in the Langlands dual group $\check{G}$ of $G$. The Hecke eigenvalues of an eigenform $f$ on $G$ can be directly calculated from its $L$-parameter, and this compatibility constraint essentially characterizes the labeling. 

Typically, for applications one seeks a more quantitative description of the Langlands correspondence afforded by a \textit{relative Langlands correspondence}, in the recently introduced terminology of Ben-Zvi--Sakellaridis--Venkatesh \cite{BZSV}. Concretely, one collects numerical data from a Hecke eigenform via \textit{automorphic period integrals}, and wonders how the same numerical invariant may be extracted from its $L$-parameter. Traditionally, such numerical invariants took the form of \textit{special values of $L$-functions}, and successful cases of a relative Langlands correspondence can be understood as establishing diagrams of the following form:
\begin{equation}\label{equation period L function diagram}
    \begin{tikzcd} 
	{\text{Automorphic forms on } G} && {\check{G}\text{-valued } L\text{-parameters}} \\
	& {\mathbf{C}}
	\arrow[from=1-1, to=1-3]
	\arrow["{\text{automorphic periods}}"', from=1-1, to=2-2]
	\arrow["{\text{Langlands corresp.}}"', from=1-3, to=1-1]
	\arrow["{L\text{-values}}", from=1-3, to=2-2]
\end{tikzcd}
\end{equation}
A fundamental philosophical consequence of the relative Langlands program is that the diagram \eqref{equation period L function diagram} itself can be Langlands dualized, after which the automorphic and spectral roles of $G$ and $\check{G}$ are interchanged:
\begin{equation}\label{equation period L function diagram 2}
    \begin{tikzcd} 
	{\text{Automorphic forms on } \check{G}} && {G\text{-valued } L\text{-parameters}} \\
	& {\mathbf{C}}
	\arrow[from=1-1, to=1-3]
	\arrow["{\text{automorphic periods}}"', from=1-1, to=2-2]
	\arrow["{\text{Langlands corresp.}}"', from=1-3, to=1-1]
	\arrow["{L\text{-values}}", from=1-3, to=2-2]
\end{tikzcd}
\end{equation}
In other words, automorphic period integrals occur in Langlands dual pairs. With this in mind, a program was initiated by the second named author's thesis \cite{thesis} and following joint work with A. Venkatesh \cite{CV} to answer the following question:
\begin{q} \label{question}
    Given one of the many automorphic integrals in the literature, how do we compute its Langlands dual automorphic integral?
\end{q}
    
In the rest of the introduction, we discuss in more detail the precise formulation of Question \ref{question}, our current state of knowledge, and the key improvements, building upon previous work, made by the present project.

\subsection{Integral representations of \texorpdfstring{$L$}{L}-functions}

The theory of integral representations of $L$-functions produces concrete examples of diagram \eqref{equation period L function diagram}, in many cases predating the underlying Langlands correspondence itself. Among the illustrious instances of this theory are the following instructive examples:
\begin{itemize}
    \item (Riemann, Iwasawa--Tate). Let $\chi$ be an (adelic) Hecke character on $[\Gm]$ whose $L$-parameter we continue to denote by $\chi$. Then 
    $$\int_{[\Gm]} \, \chi(x)\sum_{\gamma \in F}\Phi(\gamma x) \, dx = L(1/2, \chi, \mathrm{std}),$$
    where $\Phi$ denotes a well-chosen Schwartz function on the adelic points of the 1-dimensional representation of $\Gm$, and $\mathrm{std}$ denotes the 1-dimensional standard representation of the Langlands dual copy of $\Gm$.
    \item (Hecke). Let $f$ be a cuspidal automorphic form on $G = \mathrm{PGL}_2$ with $L$-parameter $\varphi_f$, and let $T \subset G$ be the diagonal torus. Then 
    $$\int_{[T]} \, f(t) \, dt = L( 1/2,\varphi_f, \mathrm{std}),$$
    where $\mathrm{std}$ here denotes the 2-dimensional standard representation of the Langlands dual group $\check{G} = \mathrm{SL}_2$.
    \item (Jacquet--Shalika). Let $f$ be a cuspidal automorphic form on $G = \mathrm{PGL}_{2n}$ with $L$-parameter $\varphi_f$. Then the period
    $$\int_{[\mathrm{PGL}_n][\mathrm{Mat}_n]} \, f\left(\begin{bmatrix} 1 & X\\ 0&1\end{bmatrix}\begin{bmatrix} m & 0\\ 0&m \end{bmatrix}\right) \psi(\mathrm{tr}(X))  \, dX \, dm$$
    vanishes unless $\varphi_f$ is conjugate to a symplectic subgroup of $\check{G} = \mathrm{SL}_{2n}$, and in which case the period computes the $L$-value $L(1,\varphi_f,  \wedge^2_0)$ where $\wedge^2_0$ is the second fundamental representation of $\mathrm{Sp}_{2n}$.
\end{itemize}

A rather distinct class of integrals was first discovered independently by Rankin and Selberg, and reformulated by Jacquet--Piatetskii-Shapiro--Shalika (\cite{Jacquet-Shalika-RSConvolutions}, \cite{Jacquet-PS-Shalika-RSConvolutions}) in the adelic setting, which took as input two modular forms and produced the tensor product $L$-function as its principal output. 
\begin{itemize}
    \item (Rankin--Selberg, Jacquet--Piatetskii-Shapiro--Shalika). Let $f = f_1 \times f_2$ be cuspidal automorphic forms on $G = \mathrm{P}(\mathrm{GL}_2 \times \mathrm{GL}_2)$, with $L$-parameter $\varphi_f = \varphi_1 \times \varphi_2$ valued in $\check{G}$. Then
    $$\int_{[\mathrm{PGL}_2]}f_1(g)f_2(g)E(g,s) \, dg = L(s,\varphi_1 \times \varphi_2, \mathrm{std}_1 \otimes \mathrm{std}_2)/\zeta(2s),$$
    where $E(g,s)$ denotes a \textit{naïve} Eisenstein series induced from the Borel subgroup of $\mathrm{GL}_2$, and $\mathrm{std}_1 \otimes \mathrm{std}_2$ denotes the tensor product of the standard representations of $\check{G} = \mathrm{S}(\mathrm{GL}_2 \times \mathrm{GL}_2)$.
\end{itemize}
Note that, contrary to the three integrals in the first list, the right hand side of the formula in this case is not an $L$-function, but rather a \textit{ratio} of $L$-functions.

The four types of automorphic integrals displayed above are emblematic of the vast majority\footnote{Notably we are missing the rather large class of automorphic integrals involving $\theta$-series, such as the one considered by Shimura \cite{Shimura} and Bump--Ginzburg \cite{Bump-Ginzburg} in their representation of the symmetric square $L$-function. } of successful integral representations of $L$-functions that one encounters in the literature: they are obtained by a combination of (i) $L^2$-pairing against a Schwartz function on a linear representation, (ii) integrating over a reductive subgroup, (iii) taking certain Fourier--Whittaker coefficients, and (iv) $L^2$-pairing against an Eisenstein series. The first three building blocks (i)-(iii) of automorphic integrals are conveniently encoded by the formalism of \cite{BZSV}, through which one labels an automorphic integral on $[G]$ by certain \textit{Hamiltonian $G$-spaces} (with some additional extra structure).

We recall briefly the labeling proposed by \textit{op. cit.} in a form directly useful for us. In particular, we will specialize to the case of (twisted) polarized Hamiltonian spaces for the purposes of this introduction, where one encodes the automorphic integral via a $G$-variety $X$ (and an affine line bundle $\Psi$ in the twisted case). Given such data, the shape of the associated automorphic period is as follows
\begin{equation} \label{equation general BZSV integral}
    f \longmapsto \int_{[G]} \, f(g)\theta_{X}(g) \, dg =: \langle f, \theta_{X}\rangle
\end{equation}
where $\theta_{X}$ is a certain \textit{theta series} associated to the $G$-variety $X$ (and in the twisted case, combined with a partial Fourier coefficient encoded by $\Psi$). Indeed, the automorphic integral \eqref{equation general BZSV integral} specializes to the left hand sides of the Riemann/Iwasawa--Tate case (by taking the standard representation $X = \mathbf{A}^1$), the Hecke case (by taking $X = T \backslash \mathrm{PGL}_2$), and the Jacquet--Shalika case (by taking $(X,\Psi) = (\mathrm{Mat}_n\mathrm{PGL}_n \backslash \mathrm{PGL}_{2n}, \Psi_{\mathrm{tr}})$, where $\Psi_{\mathrm{tr}}$ is the affine line bundle encoded by the trace character on $\mathrm{Mat}_n$).

A fundamental observation of the \textit{relative Langlands program} \cite{BZSV} posits that, not only can automorphic periods be labeled by (Hamiltonian) actions, but so can $L$-values (for $L$-parameters into $\check{G}$). One defines the $L$-value labeled by some smooth $\check{G}$-variety $\check{X}$\footnote{We ignore the complications presented by a nontrivial $\Psi$ on the Galois side for now. For an all-inclusive definition, see the right hand side of Conjecture 14.3.5 in \cite{BZSV}.} for a $\check{G}$-valued $L$-parameter as follows:
\begin{equation}\label{equation general L function}
    \varphi \longmapsto \sum_{x \in \mathrm{Fix}(\varphi, \check{X})} \, L\big(\varphi, T_x\check{X}\big) =: L_{\check{X}}(\varphi)
\end{equation}
where $\mathrm{Fix}(\varphi, \check{X})$ denotes the set of fixed points of the $L$-parameter $\varphi$ on the $\check{G}$-space $\check{X}$, and $L\big(\varphi, T_x\check{X}\big)$ denotes the usual Langlands $L$-function\footnote{More precisely, the $L$-function evaluated at some specific point depending on finer structures on the action that we omit for the sake of exposition.} attached to the representation on the tangent space $T_x\check{X}$. Indeed, the $L$-value \eqref{equation general L function} specializes to the right hand sides of the Riemann/Iwasawa--Tate case (by taking the standard representation $\check{X} = \mathbf{A}^1$), the Hecke case (by taking the standard representation $\check{X} = \mathbf{A}^2$) and the Jacquet--Shalika case (by taking $\check{X} = \mathrm{Sp}_{2n} \backslash \mathrm{SL}_{2n}$). For an explicit explanation of the last case in particular, see Lemma \ref{lemma spectral induction}.

With the preceding reformulations in mind, one makes the following heuristic definition: a pair of actions $(G,X)$ and $(\check{G}, \check{X})$ are Langlands dual if 
\begin{equation}
    \langle f, \theta_X\rangle = L_{\check{X}}(\varphi_f)
\end{equation}
for cusp forms $f$ on $G$ with $L$-parameter $\varphi_f$ valued in $\check{G}$. However, once convinced that the two arrows in diagram \eqref{equation period L function diagram} can be labeled by equivalent data, one is led to the following natural prediction of relative Langlands duality, that integral representation formulae come in Langlands dual pairs: there should exist a \textit{Langlands dual automorphic period formula}
\begin{equation}
    \langle f', \theta_{\check{X}}\rangle \overset{?}{=} L_{X}(\varphi_{f'})
\end{equation}
for cusp forms $f'$ on $\check{G}$ with $L$-parameter $\varphi_{f'}$ valued in $G$. While a successful list of well-behaved examples can be found in Appendix B of \cite{CV} (and many more are expected if not implicit in the literature), the statement that the two formulae are logically equivalent seems to be out of reach at the moment. 

\subsection{Langlands dualizing Rankin--Selberg integrals}\label{dualizing Rankin}
Integrals of Rankin--Selberg type appear, at first glance, to be more difficult to capture using the same type of labels. This difficulty is already apparent in the Rankin--Selberg integral presented above: on neither the automorphic nor the Galois side are the numerical outputs of the form \eqref{equation general BZSV integral} or \eqref{equation general L function}, respectively. In fact, examples abound in the literature (such as \cite{Ginzburg-adjoint}, \cite{GinzburgRallisExteriorCube},  \cite{Ginzburg-Hundley},  \cite{Ginzburg-Hundley-Orthogonal},  \cite{Bump-Ginzburg-Spin9},   \cite{Bump-Ginzburg-Adjoint},  \cite{tower},  \cite{newtower} to cite a few) where integral representation formulae cannot be encoded by (Hamiltonian) $G$-actions considered by \cite{BZSV} and thus cannot be ``Langlands dualized" in a straightforward manner. 

A partial solution to this problem was proposed in the second named author's thesis \cite{thesis} and the subsequent work \cite{CV}: by extending the allowed labels of automorphic periods and $L$-values to include certain \textit{singular varieties}, one may recover the underlying Langlands duality for the integral representation of Garrett's triple product $L$-function \cite{Garrett} and Ginzburg's $\mathrm{SL}_3$ adjoint $L$-function \cite{Ginzburg-adjoint}. The key idea is to carefully redistribute the normalizing factors of the Eisenstein series and the $\zeta$-factors that appear on the right hand side of Rankin--Selberg-type integrals, in order to achieve an automorphic period (and $L$-value) which is associated to a spherical, albeit singular, variety. Since this is not an innovation of our present project but nonetheless informs our starting point, we simply state its consequences and refer the reader to \cite[\S 3.2.4]{CV} for a detailed discussion.

Experimentally, Rankin--Selberg type formulae often take the following shape: for a cuspidal automorphic form $f$ on $G$ and a well-chosen \textit{normalized} Eisenstein series $E^*(g,s)$ on $G$, we have
\begin{equation} \label{equation typical RS integral}
    \int_{[G]} \, f(g)E^*(g,s) \, dg = \frac{L(s,\varphi_f,V)}{\prod_i \, \zeta(d_is)}
\end{equation}
where $\varphi_f$ denotes the $\check{G}$-valued $L$-parameter of $f$, $V$ is a representation of $\check{G}$ whose Langlands $L$-function $L(s,\varphi_f,V)$ one seeks to represent, and the $d_i$'s are certain positive integers. In many cases\footnote{To the best of our knowledge, this numerical phenomenon was first pointed out by Ginzburg and Rallis in \cite{tower}.}, the $d_i$'s that appear are exactly the degrees of $\check{G}$-invariant polynomials on $V$. Write $g_1, \ldots, g_r$ for the homogeneous generators of $\C[V]^{\check{G}}$ of homogeneous degrees $d_1, \ldots, d_r$, then we define the \textit{nilcone} of $V$ to be the joint vanishing locus of $g_1, \ldots, g_r$. The first fundamental observation one makes is that (Proposition \ref{proposition quotient of L functions})
\begin{equation} \label{equation spectral period of nilcone}
    \text{RHS of \eqref{equation typical RS integral}} =\text{ spectral period of the nilcone of }V.
\end{equation}
On the other hand, on the left hand side the Eisenstein series $E^*(g,s)$ is written as an induction from some parabolic subgroup $P \subset G$ normalized by various $\zeta$-factors. We observe that in some cases, for some special value $s = s_0$, one can replace $E^*(g,s_0)$ by a $\theta$-series attached to the affine cone $\overline{U_PM_P'\backslash G}^{\mathrm{aff}}$ over the Grassmannian $P \backslash G$, where $P = U_P M_P$ is the Levi decomposition of $P$ and $M_P^{\mathrm{der}} \subseteq M_P' \subset M_P$ is a suitably chosen subgroup. In other words, 
\begin{center}
    LHS of \eqref{equation typical RS integral} = automorphic period of the affine cone over $P \backslash G$.
\end{center}
These observations allow us to rewrite \eqref{equation typical RS integral} in a form that is more compatible with the relative Langlands program. 

It is important to note that a large class of Rankin--Selberg integrals is obtained as the integration over a subgroup $H$ of a cuspidal automorphic form $f$ on $G$ and an Eisenstein series $E^*(g,s)$ on $H$. These can be encoded via the procedure of induction from $H$ to $G$, as we now explain. Replace $E^*(g,s_0)$ by the theta series $\theta_Y$ attached to the affine cone $Y=\overline{U_P M_P'\backslash H}^{\mathrm{aff}}$ over the Grassmannian $P\backslash H$. Then, if $X = \mathrm{Ind}_H^G(Y):=Y \times^H G$ denotes the induction of $Y$ from $H$ to $G$, we have  \begin{equation} \label{equation typical RS integral with restriction to H}
    \int_{[H]} \, f(h)E^*(h,s_0) \, dh = \text{ automorphic period of }X,
\end{equation}
under a certain cohomological assumption such as $\mathrm{H}^1(F, H) = 0$, see Lemma \ref{lemma automorphic induction}.  We invite the reader to consult  \S \ref{subsection period induction}, where we record various results on the effect of induction on automorphic and spectral periods. Most of these are known to experts, but we discuss them for the sake of completeness and for precise future reference.

\subsection{Main results}

Our present purpose is to construct another example of singular relative Langlands duality underlying an integral representation formula discovered by Bump--Friedberg--Ginzburg \cite{BFGsplitorthogonal} and later rivisited by \cite{CauchiGutiCS,CauchiGutiMVI} on $\mathrm{PGL}_4$. This is an emblematic example of a Rankin--Selberg-type integral, whose shape we describe below.  Let $f$ be a cuspidal automorphic form on $\mathrm{PGL}_4$ with $L$-parameter $\varphi_f$, and consider the period
\begin{equation} \label{equation CG integral}
    f \longmapsto \int_{[\mathrm{PGSp}_4]} \, f(g)E_{\mathrm{Siegel}}(g,s) \, dg,
\end{equation}
where $E_{\mathrm{Siegel}}(g,s)$ denotes the \textit{fully normalized} Siegel Eisenstein series on $\mathrm{PGSp}_4$. Then the period vanishes unless $\varphi_f$ is conjugate to the symplectic subgroup of $\mathrm{SL}_4$, and in which case the period computes the $L$-function $L(s,\varphi_f, \wedge_0^2)$, where $\wedge_0^2$ is the 5-dimensional second fundamental representation of $\mathrm{Sp}_4$. If we substitute $E_{\mathrm{Siegel}}(g,s)$ by the \emph{partially normalized} Eisenstein series $E^*_{\mathrm{Siegel}}(g,s) := \zeta(2s)^{-1} E_{\mathrm{Siegel}}(g,s)$, the result of \emph{loc. cit.} conveniently reads as $$\text{\eqref{equation CG integral}} = \frac{L( s,\varphi_f, \wedge_0^2)}{\zeta(2s)}.$$
It is now crucial to observe that the right hand side is precisely the spectral period of the affine cone $Y$ over the Lagrangian Grassmannian $\mathrm{LGr}(2,4)$ of 2-planes in the standard representation of $\mathrm{GSp}_4$. In view of the discussion in \S \ref{dualizing Rankin}, observe that the discrepancy between the spectral period of $Y$ and that of $\mathrm{Ind}_{\mathrm{GSp}_4}^{\GL_4}(Y)$ is, by Lemma \ref{lemma spectral induction},  the factor $$L( 1, \varphi_f, \wedge_0^2).$$ By a result of Jacquet--Shalika \cite[Proposition 1]{Jacquet-Shalika}, this special $L$-value is computed by the Shalika period of $f$, reflecting the fact that \eqref{equation CG integral} unfolds naturally to the automorphic period of \textit{loc. cit}.

With this in mind, our main result concerns giving an answer to Question \ref{question} in the case of \eqref{equation CG integral}. We discover that the Bump--Friedberg--Ginzburg integral is more or less Langlands self dual, up to an isogeny that reflects stacky structure on the dual side. 

Generalizing slightly, we work with the centrally extended group $\check{G} := \mathrm{GSO}_6$ of $\mathrm{PGL}_4$, allowing for nontrivial central characters in \eqref{equation CG integral}, whose Langlands dual group is $G = \mathrm{GSpin}_6$. There is a copy of the symplectic group $\mathrm{GSp}_4$ (resp. $\overline{\mathrm{GSp}}_4' := \mathrm{GSp}_4 \times \Gm/(z\mathrm{Id}, z^{-2})$) embedded in $G$ (resp. $\check{G}$).  We then consider an isogenous cover $\mathrm{GSp}_4'$ of  $\overline{\mathrm{GSp}}_4'$ with kernel $\mu_2$ and we let the affine cone  $Y$  be equipped with the actions of $\mathrm{GSp}_4$ and $\mathrm{GSp}_4'$ described in \S \ref{Section:The:Space:G:X} and \S \ref{section Gcheck Xcheck}. We state the resulting singular Langlands duality here in a rough form, and refer the reader to Theorems \ref{theorem main duality}, \ref{thm:Final:PX}, and \ref{thm PXcheck final} in the main text for a precise formulation:

\begin{mainthm} \label{theorem A}
   Consider the pair of Langlands dual groups $G = \mathrm{GSpin}_6$ and $\check{G} = \mathrm{GSO}_6$, with the actions
    $$G \acts X = \mathrm{GSp}_4 \backslash (Y \times G) \, \text{ and } \check{G} \acts \check{X} = \overline{\mathrm{GSp}}_4' \backslash ([Y/\mu_2] \times \check{G}),$$
    where the action of $\mathrm{GSp}_4$ and $\overline{\mathrm{GSp}}_4'$ on $Y \times G$ and $[Y/\mu_2] \times \check{G}$ are specified in \S \ref{Section:The:Space:G:X} and \S\ref{section Gcheck Xcheck}, respectively. Then we have a Langlands dual pair of period integrals
    \begin{equation}
        \langle f_1, \theta_X\rangle = \Delta^{-5/4} \cdot L_{\check{X}}(\varphi_1) \text{ and } \langle f_2, \theta_{\check{X}}\rangle = \Delta^{-5/4} \cdot L_X(\varphi_2)
    \end{equation}
    where $f_1$ (resp. $f_2$) is a suitably normalized, everywhere unramified tempered cusp form on $G$ (resp. $\check{G}$) with $L$-parameter $\varphi_1$ (resp. $\varphi_2$).
\end{mainthm}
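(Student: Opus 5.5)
The plan is to prove the two identities separately. Each follows the template of \S\ref{dualizing Rankin}: first reduce the automorphic period of an induced space to a period on the subgroup (Lemma~\ref{lemma automorphic induction}). Then compute the spectral period of the dual induced space, using Lemma~\ref{lemma spectral induction} together with the nilcone computation of Proposition~\ref{proposition quotient of L functions}. Finally, match the two outputs through a known Rankin--Selberg unfolding.

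\textbf{First identity.} Since $\mathrm{H}^1(F,\mathrm{GSp}_4)=0$, Lemma~\ref{lemma automorphic induction} turns $\langle f_1,\theta_X\rangle$ into $\int_{[\mathrm{GSp}_4]} f_1\,\theta_Y$. Here $\theta_Y$ is the theta series of the affine cone over $\mathrm{LGr}(2,4)$, with the similitude twist dictated by the $\mathrm{GSp}_4$-action of \S\ref{Section:The:Space:G:X}.
\begin{itemize}
\item We identify $\theta_Y$ with the partially normalized Siegel Eisenstein series $E^*_{\mathrm{Siegel}}(g,s_0)$ at the special point $s_0$ singled out by the homogeneity of the cone. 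The argument is Poisson summation on the cone: $Y(F)\smallsetminus 0$ fibres over $P\backslash \mathrm{GSp}_4(F)$ with $\Gm$-fibres, and the Mellin integral over the fibre produces the factor $\zeta(2s)$ that is removed in $E^*$.
\item We then apply the Bump--Friedberg--Ginzburg unramified computation (in the form of \cite{CauchiGutiCS,CauchiGutiMVI}, extended to $\mathrm{GSpin}_6$ with nontrivial central character). This gives $L(s_0,\varphi_1,\wedge^2_0)\,L(1,\varphi_1,\wedge^2_0)/\zeta(2s_0)$ times the Shalika-period normalization, provided $\varphi_1$ factors through the symplectic similitude subgroup; otherwise both sides vanish.
\item On the spectral side, $\check X$ is induced from $\overline{\mathrm{GSp}}_4'$ acting on $[Y/\mu_2]$. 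By Lemma~\ref{lemma spectral induction}, $L_{\check X}(\varphi_1)$ is a sum over fixed points on $\mathrm{GSO}_6/\overline{\mathrm{GSp}}_4'$, which contributes the factor $L(1,\varphi_1,\wedge^2_0)$, times the spectral period of the cone. Proposition~\ref{proposition quotient of L functions} evaluates that spectral period as $L(\cdot,\wedge^2_0)/\zeta(2\cdot)$.
\item The stacky quotient by $\mu_2$ has to be handled so that its fixed points and automorphisms give the correct count: a weight $1/2$ from the automorphisms against two fixed-point lifts.
\end{itemize}

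\textbf{Second identity.} Run the same steps with the roles of the groups swapped. The period $\langle f_2,\theta_{\check X}\rangle$ reduces to an integral over $[\overline{\mathrm{GSp}}_4']$ of $f_2$ against a theta series on $[Y/\mu_2]$. Because $\mathrm{H}^1(F,\mu_2)\neq 0$, the induction lemma must be supplemented by a sum over the twisted forms indexed by $F^\times/F^{\times 2}$. In the unramified, cuspidal setting only a finite set of these classes survive. Unfolding each surviving piece with the $\mathrm{GSO}_6\simeq$ (similitude form of $\mathrm{PGL}_4$) version of the Bump--Friedberg--Ginzburg integral should yield a finite sum of $L$-functions. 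We then compare this with $L_X(\varphi_2)$, computed by enumerating the fixed points of $\varphi_2$ on $X$ via Lemma~\ref{lemma spectral induction}. The expected outcome is that the finitely many fixed components correspond exactly to the surviving quadratic twists.

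\textbf{Constants and main obstacle.} The factor $\Delta^{-5/4}$ comes from volume normalizations and the discriminant: it is $\Delta^{-\dim/2}$-type bookkeeping from Tamagawa measures and the Poisson summation on the $5$-dimensional cone, which we track throughout. The main obstacle is the second identity. One must correctly define the theta series on the stack $[Y/\mu_2]$ and match the resulting finite sum of twisted periods with the fixed-point sum defining $L_X(\varphi_2)$. The first approach to try is to write $\theta_{[Y/\mu_2]}$ as an average over $\mathrm{H}^1(F,\mu_2)$ of twisted $\theta_Y$'s, and then show that only the components compatible with the parameter contribute, with the correct weights.
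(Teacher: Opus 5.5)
There is a genuine gap. You have put the stacky phenomenon on the wrong identity, and the step you rely on for the first identity fails.

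\textbf{First identity.} For $f_1$ on $G=\mathrm{GSpin}_6$, neither side is a single Euler product.

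On the spectral side there is no ``weight $1/2$ against two lifts'' cancellation. By Definition~\ref{defn_spectral_period_DM_stack}, $L_{\check X}(\varphi_1)$ sums over \emph{all} unramified lifts of $\varphi_1$ along $\iota\colon \mathrm{GSp}_4'\to\check G$, i.e.\ over the unramified square roots $\varepsilon$ of $\det(\varphi)\chi^2$, which form a torsor under $\mathrm{Hom}(\Gamma_F^{\mathrm{ur}},\mu_2)$. Since $\mathrm{GSp}_4'$ acts on $Y'$ through the coordinate $y$, the cone contributions $\varepsilon(\partial^{-1/2})^3L(1/2,\varphi,\wedge_0^{\varsigma_0}\otimes\varepsilon)/L(1,\varepsilon^2)$ genuinely change with $\varepsilon$. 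Only the factor $L(1,\varphi,\wedge^2_0)$ coming from $\check{\mathfrak g}/\overline{\mathfrak h}'$ is independent of the lift. Dividing by $L_{\check W}$ therefore gives an average of distinct quadratic twists, not one term. Indeed, your ``$L(s_0,\varphi_1,\wedge^2_0)$'' for the cone is not even defined for a $\mathrm{GSO}_6$-valued parameter until a lift is chosen.

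On the automorphic side, unfold over $P_0(F)\backslash H(F)$ and Fourier expand along $U$. The stabilizer in $M_0$ of the generic character is only $\mathrm{SL}_2$, because $m_0(m,\lambda)$ sends $\psi_\alpha$ to $\psi_{\lambda\alpha}$ with $\det m=\lambda^2$. You therefore land on the nonstandard Shalika period over $[U][\mathrm{SL}_2]$, which has no multiplicity-one Euler factorization. No ``extension of Bump--Friedberg--Ginzburg to $\mathrm{GSpin}_6$'' can produce a single product here.

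The missing idea is Proposition~\ref{Spectral:Expansion:Shalika}:
\begin{itemize}
\item realize the $\mathrm{GSpin}_6$ form inside a cuspidal $\Pi\boxtimes\chi$ on $\mathrm{GL}_4\times\mathrm{GL}_1$ (Labesse--Schwermer);
\item spectrally expand along $[\mathrm{SL}_2]\subset[\mathrm{GL}_2]$;
\item cuspidal terms have vanishing $[\mathrm{SL}_2]$-integral, and Eisenstein terms vanish by Proposition~\ref{Auxiliary:Lemma:Spectral:Expansion};
\item the residual characters $\varepsilon\circ\det$ survive, which expresses $P_W$ as an average over $\varepsilon^2=\omega_\Pi^{-1}\chi^{-2}$ of twisted standard Shalika periods.
\end{itemize}
You must then show that ramified $\varepsilon_v$ contribute nothing: they kill the Shalika functional of the unramified $\widetilde\Pi_v$, once the possible twist $\eta_v$ of the lift is accounted for. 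Only after that does Sakellaridis' formula give Theorem~\ref{thm:Final:PX}.

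\textbf{Second identity.} Here you have the picture reversed. This direction \emph{is} the Bump--Friedberg--Ginzburg integral, and no twists survive. Since $\mathrm{GSp}_4'\simeq\overline{\mathrm{GSp}}_4'\times\mu_2$ splits, and Definition~\ref{definition DM automorphic period} divides the groupoid measure by the number of components, the unramified $\mu_2$-twists are averaged. Lemma~\ref{example DM stack periods split isogeny} then collapses $P_{\check X}$ to the single untwisted integral over $[\overline{\mathrm{GSp}}_4']$. Unfolding that to the standard Shalika model gives the single product of Theorem~\ref{thm PXcheck final}.

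Dually, $X$ is induced along the honest inclusion $\mathrm{GSp}_4\subset\mathrm{GSpin}_6$, so $L_X(\varphi_2)$ is one Euler product. There are no fixed components to match against quadratic twists, so the obstacle you single out is not where the difficulty lies.

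\textbf{The constant $\Delta^{-5/4}$.} This is not Tamagawa bookkeeping, and the cone is $4$-dimensional, not $5$. The identities hold as $P_X=\Delta^{-5/4}L_{\check X}/L_{\check W}$ and $P_{\check X}=\Delta^{-5/4}L_X/L_{\check M}$ for Shalika-normalized forms. The exponent is a genuine discrepancy: in the $\mathrm{GSO}_6$ direction, the relative automorphic discrepancy is $-6$ while the relative spectral discrepancy is $\sigma(Y)=\varepsilon_Y-\dim Y=-1$.
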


We would like to emphasize that the appearance of the (singular) Deligne--Mumford stack $\check{X}$ arises naturally as the answer to Question \ref{question}. In particular, the generalized $L$-value $L_{\check{X}}$ is not a single Eulerian product, but rather a sum of quadratic twists of quotients of Langlands $L$-functions. The notions of automorphic periods and $L$-values attached to Deligne--Mumford stacks will be treated more carefully in \S \ref{subsection DM stacks} of the main text, and we refer the reader to the introductory section \S \ref{subsubsec DM staks intro} for a conceptual explanation of the appearance of such structures.

On the one hand, the automorphic period $\langle f_2, \theta_{\check{X}}\rangle$, which unfolds to the Shalika model on $\mathrm{GSO}_6$, is essentially equivalent to the Bump--Friedberg--Ginzburg integral \eqref{equation CG integral} and extends it to the case of cusp forms with arbitrary central character. On the other hand, our method of computation of the automorphic period $\langle f_1, \theta_X\rangle$ on $\mathrm{GSpin}_6$ hinges upon a natural unfolding to a nonstandard Shalika-type model of $f_1$. By leveraging the natural inclusion $\mathrm{GSpin}_6 \subset \mathrm{GL}_4 \times \mathrm{GL}_1$, this model can be spectrally expanded in order to directly compare with the standard Shalika model. These results - which may be of independent interest - are detailed in \S\ref{subsec:stackyJS} and \S\ref{subsection Shalika for GSpin6}, with the main statements given in Theorem \ref{conjecture stacky Shalika dual pair} and Proposition \ref{Shalika:GSpin:In:GL}.

\subsection{Difficulties and new tools}

In order to uncover the Langlands duality structure underlying the integral representation formula \eqref{equation CG integral}, there are notable difficulties and new phenomena which are typical, and they must be overcome in order to improve our understanding of Question \ref{question}. The new tools we develop, with a view towards compatibility with the ideas of relative Langlands duality à la Ben-Zvi--Sakellaridis--Venkatesh, form the technical heart of our current discussion, and we aim to tackle similar but more challenging examples in future work utilizing the techniques established here.

Besides the strategy proposed by \cite{thesis} and \cite{CV}, i.e., the replacement of the automorphic sides of Rankin--Selberg type integrals by certain \textit{affine cones} and the spectral sides by certain \textit{nilpotent cones}, we highlight the following key pieces of innovation realized in the course of the proof of Theorem \ref{theorem A}.

\subsubsection{Sums of Eulerian products} \label{subsubsec DM staks intro}

While the prototypically successful automorphic integral evaluates to a single Eulerian product which one seeks to compare to (products and quotients of) Langlands $L$-functions, examples abound where an automorphic integral evaluates to a sum, even an infinite sum, of Eulerian products. The fact that such phenomena should reflect \textit{spectral expansions} on the automorphic side, and the presence of \textit{stabilizers}, or \textit{stackyness}, on the spectral side was first informally discussed in A. Venkatesh's presentation at the Bernstein 75 Conference \cite{Bernstein75}.

To summarize the argument therein, suppose $G$ is a reductive group and $H \subset G$ is an algebraic subgroup which is not necessarily spherical, and we would like to evaluate the $H$-period of automorphic forms on $G$. Formally speaking, one can try to find some intermediate reductive subgroup $H \subset S \subset G$ such that $H$ \textit{is spherical} in $S$, and expand in the automorphic spectrum of $S$ to compute this period in terms of an infinite sum of $L^2$-inner products:
\begin{equation} \label{equation spectral expansion}
    \int_{[H]}f(h) \, dh = \sum_{\xi \in \mathcal{A}(S)} \, \langle \xi, f\rangle_{L^2([S])}\int_{[H]} \, \xi(h) \, dh,
\end{equation}
where $\mathcal{A}(S)$ denotes an ``orthonormal basis" of automorphic forms on $S$. If one is lucky, then each of the terms appearing in the preceding formula are now spherical, i.e., one may try to relate $\langle \xi, f\rangle$ and $\int_{[H]}\xi(h) dh$ separately to $L$-functions, the former in terms of the $L$-parameters of both $\xi$ and $f$, while the latter depends only on the $L$-parameter of $\xi$. Schematically, we may thus write
$$\eqref{equation spectral expansion} = \sum_{\varphi_\xi} \, L(\varphi_\xi \boxtimes \varphi_f)L(\varphi_\xi),$$
where $\varphi_\xi, \varphi_f$ denote the $L$-parameters of $\xi$ and $f$ respectively, and $L(\varphi_\xi \boxtimes \varphi_f)$ and $L(\varphi_\xi)$ denote some hypothetical $L$-functions. The fact that \textit{spectral expansion} as performed heuristically above can be useful in evaluating nonspherical periods is not a new idea, but the essential observation of \textit{loc. cit.} was that the right hand side of the preceding formula may be interpreted as the spectral period (Definition \ref{definition spectral period}) of an \textit{Artin stack} with stabilizer $\check{S}$, the Langlands dual group of $S$.

Following this line of thought, one is led to consider the simpler case of \textit{Deligne--Mumford stacks}, where the analytic difficulties in making sense of infinite expressions in spectral expansions disappear. The first such examples were proposed by the second named author \cite{toric}, which indeed arise from discrete spectral expansions in which the groups involved were tori. The example analyzed in this article presents the first \textit{nonabelian} example of automorphic period duality involving Deligne--Mumford stacks and lends credence to the fact that the Langlands dual of many existing automorphic integrals must be phrased in terms of \textit{singular Deligne--Mumford stacks}. 

\subsubsection{Normalization and discrepancies in period formulae}

The vast majority of automorphic integrals attempt to unfold to the Whittaker period in order to deduce Eulerianity of the integral on representation-theoretic grounds (via multiplicity one results on the local Whittaker model). This is not \textit{a priori} the only available strategy to compute automorphic integrals, as we now have access to various local models with multiplicity one properties. This strategy has not seen popular use, even after the appearance of Sakellaridis' general Casselman--Shalika formula \cite{SakellaridiSsphericalFunctions}, due to the relatively complicated translation between local spherical function evaluations and Weyl characters of the Langlands dual group. 

The integral \eqref{equation CG integral}, however, presents an interesting example of an emerging paradigm: there exists automorphic integrals which readily unfold to non-Whittaker, multiplicity one models, and such integrals of a cuspidal automorphic form can be evaluated in terms of its $L$-parameter via Sakellaridis' Casselman--Shalika formula as a \textit{nonabelian $L$-function} in the terminology of \cite{CV}. Besides our present example of interest which involves the \textit{Shalika model} (see \S \ref{section Shalika model} for a definition), other Eulerian integrals which appear naturally as Langlands dual partners of existing integral representations have also been discovered in \cite{thesis} (involving Ginzburg--Rallis' integrals for the exterior square $L$-function of $\mathrm{GSpin}_7$ \cite{tower} and for the exterior cube $L$-function of $\mathrm{GL}_6$ \cite{GinzburgRallisExteriorCube}).

With a view towards future work, we set up a convenient normalization scheme for automorphic periods unfolding readily to the Shalika model, and explain its utility towards establishing relative Langlands duality statements in \S \ref{subsection Shalika normalization}. Essentially, the numerical conjectures of \cite[Chapter 14]{BZSV} can be formulated without privileging a choice of normalization if one always works with \textit{ratios of periods}; the numerical conjectures of \textit{loc. cit.} and the weak numerical duality formulated in \cite{CV} thus privilege the Whittaker period (resp. the point $L$-function) by using it as a denominator against which all other automorphic periods (resp. $L$-values) are compared. 

Furthermore, we keep track of the contribution to discrepancies (see Definition \ref{definition weak numerical duality}) on the automorphic and spectral sides of a hypothetical period formula. This leads to an explicit formula, the \textit{discrepancy equation} (see Definition \ref{definition discrepancy equation}) measuring the  numerical deviation from the smooth case, where discrepancy-free duality formulae are expected.

\subsection{Notation and conventions}
\subsubsection{Groups}\label{Subsection:Groups:Notation}
$G$ and $\check{G}$ (or sometimes $G_1$ and $G_2$ for notational symmetry) will denote a pair of split Chevalley forms of Langlands dual reductive groups over $\Z$. We regard these reductive groups as equipped with standard pinning, with the letters $T$ and $\check{T}$ denoting Langlands dual maximal split tori in $G$ and $\check{G}$. 

We write $e^{2\rho} \in \mathrm{X}^*(T) = \mathrm{X}_*(\check{T})$ for the sum of positive roots of $G$, and $e^{2\check{\rho}} \in \mathrm{X}_*(T) = \mathrm{X}^*(\check{T})$ for the sum of positive coroots of $G$. 

We always write $\Gm := \spec \Z[t^{\pm}]$ as the group scheme of units defined over $\Z$. Identifying $\mathrm{X}^*(\mathbf{G}_m)$ with $\Z$, for each $d\in\Z$ we denote by $\varsigma_d$ the character of $\mathbf{G}_m$ defined by $\varsigma_d(x) := x^d$.

\subsubsection{Coefficient fields}
The letter $\F$ will be used for an \textit{automorphic side} coefficient field, i.e., a finite field of prime power size $q$ which we assume is odd. On the other hand, the letter $\kk$ will be used for a \textit{Galois side} coefficient field, i.e., the algebraic closure of $\ell$-adic rational numbers, or a copy of $\C$. Sometimes, we will use the letter $k$ to denote either $\F$ or $\kk$, to make uniform arguments that are useful for both the automorphic and the Galois side. 

\subsubsection{Group actions}
\label{subsubsection graded group actions}
$G$ (and $\check{G}$) will act by \textit{right actions} on varieties, inducing \textit{left actions} on functions by pullback. As a conseuence, by \textit{left translation} we will always mean the right action resulting from left translation by an inverse.

We denote by $\Ggr$ a distinguished copy of the multiplicative group. If $\Ggr$ acts on a variety $X$, then the ring of functions on $X$ is naturally graded according to $\Ggr$-weights. If $x \in X$ is a $\Ggr$-fixed point, then the completed ring of functions $\widehat{\mathcal{O}}_{X,x}$ at $x$ is equipped with a grading as well.

\subsubsection{Curves}
Let $\Sigma$ be a curve of genus $g$ over $\F$ with function field $F$. We write $\Delta$ for the discriminant of $\Sigma$, i.e., 
\begin{equation}
    \Delta := q^{2g-2}.
\end{equation}

\subsubsection{Langlands parameters}

We write $\Gamma$ for the unramified global Weil group of $\Sigma$, i.e., the preimage of integral powers of Frobenius in the \'etale fundamental group of $\Sigma$. All Langlands parameters will be understood as (equivalence classes of) homomorphisms from $\Gamma$ into some $\kk$-valued points of a reductive group. 

\subsubsection{Automorphic forms}
We denote by $\mathbb{A}$ the adele ring of $F$ and by $\mathfrak{o} \subset \mathbb{A}$ the maximal compact subring. Write
\begin{equation} \label{unramified adelic quotient}
    [G] := G_F \backslash G_\mathbb{A}/G_\mathfrak{o}
\end{equation}
for the unramified adelic quotient of $G$, i.e., the groupoid of isomorphism class of $\F$-rational $G$-bundles over $\Sigma$ when $G$ is a connected reductive group. Similar notation is used for nonreductive algebraic groups defined over $F$. Unless otherwise specified, by an automorphic form on $G$ we mean an unramified Hecke eigenform $f$ on $[G]$, whose central character we denote by $\omega_f$ and whose $L$-paramter we denote by $\varphi_f: \Gamma \to \check{G}(\kk)$. Similarly, if $\pi$ is an unramified automorphic representation, we denote by $\omega_\pi$ its central character and $\varphi_\pi$ its $L$-parameter.

\subsubsection{Local notation} We use $v$ to denote a place of $F$, and we write $F_v, \mathfrak{o}_v, \varpi_v$ for the completion of $F$, the ring of integers, and the uniformizer, at $v$, respectively. We write $q_v$ for the size of the residue field at $v$. The normalized valuation will be denoted $x \mapsto |x|_v$, and the product over all $v$ of the $| \, \cdot \, |_v$ will be denoted $| \, \cdot \, |: \mathbb{A}^\times \to \RR^\times$. If $\chi$ is a cocharacter of some torus $T$, we write $\varpi_v^\chi$ for the element $\chi(\varpi_v) \in T(F_v)$.

\subsubsection{Additive characters}\label{subsec_add_chars}
As in \cite{CV}, we choose a distinguished additive character as follows. By a theorem of Hecke, there exists a choice of $\F$-rational spin structure $K^{1/2}$ on $\Sigma$ and we fix a rational section $\nu$ of $K^{1/2}$. Its square $\omega = \nu^{\otimes 2}$ is a regular global 1-form, and we take $\psi$ to be
$$F \backslash \mathbb{A} \ni f \longmapsto \psi(f) := \mathrm{Res}(f\omega),$$
the latter regarded as a scalar in $\kk$ by a choice of primitive $\mathrm{char}(\mathbf{F})$th roots of unity. Note that for each $v$, there is an even integer $2m_v$ with the property that $\psi$ has conductor $\varpi_v^{-2m_v}\mathfrak{o}_v$, i.e., 
$$\psi(x) = \otimes_v \, \psi_v^{\mathrm{ur}}(\varpi_v^{2m_v}x)$$
where $\psi_v^{\mathrm{ur}}$ denotes the unramified local additive character. We write
$$\partial^{1/2} = (\varpi_v^{m_v}) \in \mathbb{A}^\times \text{ so that } |\partial| = \Delta^{-1}.$$

\subsubsection{Measures} For $G$ a reductive group over $\Z$, we shall normalize the Haar measure on $G(\mathbb{A})$ so that $G(\mathfrak{o}) = \prod_v G(\mathfrak{o}_v)$ has volume 1, and for a coordinate $g$ on $G(\mathbb{A})$ (resp. on $G(F_v)$) we write $dg$ for this probability normalized Haar measure.

For the additive group $\mathbf{G}_a$ there are two measures of interest. First, the analogue of the measure described above, which we denote by $dx$ for a coordinate $x$ on $\mathbb{A}$ (resp. on $F_v$), assigns volume 1 to $\mathbf{G}_a(\mathfrak{o})$ (resp. to $\mathfrak{o}_v$). The other is the ``self-dual" measure, denoted $d^\psi x$, with respect to which the adelic quotient $[\mathbf{G}_a]$ has a self-dual Fourier transform with respect to $\psi$. Locally, a self-dual lattice is given by $\varpi_v^{-m_v}\mathfrak{o}_v$, which has $d^\psi x$-volume 1 but $dx$-volume $q_v^{m_v}$. Thus, we see that
\begin{equation} \label{equation switching to self dual measure}
    dx = \Delta^{1/2} d^\psi x,
\end{equation}
a useful identity which we will use in our computations.

\subsubsection{Normalization of class field theory} \label{subsubsection CFT normalization}

We normalize local class field theory so that the modulus character $x \mapsto |x|_v$ on $F_v^\times$ is associated with the cyclotomic character of the Galois group of $F_v$. In particular, the uniformizer $\varpi_v$ is sent to geometric Frobenius $\mathrm{Fr}_v$, evaluating to $q^{-1}_v$ under the cyclotomic character.

\subsection{Acknowledgments}
This project was initiated and partially completed while the authors were visiting the Institute for Mathematical Sciences, National University of Singapore in 2025-2026. We wish to express our sincere gratitude to the organizers of the program ``Relative Langlands Program'' at NUS for the invitation and for fostering a stimulating environment that made this collaboration possible. A.C.'s research in this publication was conducted with the financial support  of Taighde \'{E}ireann -- Research Ireland under Grant number IRCLA/2023/849 (HighCritical). E.Y.C. was partially supported by the Swiss National Science Foundation No. 196960 and the JSPS Postdoctoral Fellowship during the completion of this project. A.G.T. was supported by VILLUM FONDEN research grant VIL54509 during the preparation of this paper.

\section{Periods and duality}\label{Section:Periods:and:Duality}

The first step towards regarding automorphic and spectral periods on equal footing is the repackaging of both concepts in terms of group actions. More precisely, we make the following fundamental definition.

\begin{defn}
    Let $G$ be a reductive algebraic group defined over $k$. A \textit{graded $G$-variety} is a $k$-variety $X$ with $G$-action, equipped with a commuting $\Ggr$-action on $X$, and an eigen-volume form $\omega_X$ on $X$.
\end{defn}
Given a graded $G$-variety $(X, \omega_X)$, we denote by $\eta_X: G \times \Ggr \to \Gm$ the eigenvalue character with which $G \times \Ggr$ acts on $\omega_X$:
$$(g, \lambda)^*\omega_X = \eta_X(g, \lambda) \omega_X$$
and we write $\varepsilon_X$ for the integer weight for which $\eta_X(1, \lambda) = \lambda^{\varepsilon_X}$. If the context is clear, we shall drop the subscripts $X$ for notational simplicity.
\begin{rem}
    In fact, the eigen-volume form $\omega_X$ is not strictly speaking necessary. We need, for all intents and purposes, only a \textit{rational} character $\eta_X$, which may be interpreted as the eigenvalue of a \textit{rational} volume form on $X$. 
\end{rem}

All graded $G$-varieties that we consider will be equipped with an integral form over some ring of $S$-integers (compatible with the Chevalley form of $G \times \Ggr$). In principle only the automorphic side depends on such structure, yet the dependence is subtle and crucial.

\begin{rem}[Actions v.s. Hamiltonian actions] Conceptually, the key objects of the relative Langlands program, i.e., the automorphic and spectral periods to be described below, are symplectic invariants - they are associated to \textit{Hamiltonian actions} and not arbitrary actions. However, for the purposes of computing numerical results, we have chosen to stick with the minimal setup possible: we will continue to work with graded $G$-actions whenever possible, and the underlying Hamiltonian action will usually be obtained by taking cotangent bundles. 

There are a few cases in which considering just actions is not quite enough. More precisely, we may consider graded $G$-varieties $X$ with a choice of affine line bundle $\Psi$, in which case the associated Hamiltonian action will be the $\Psi$-twisted cotangent bundle of $X$. This is notably the case in \S \ref{section Shalika model}, and we will discuss it in detail there.
\end{rem}

\subsection{Automorphic and spectral periods}

Given the definition of graded $G$-varieties, we recall the key notions of automorphic and spectral periods attached to them.

\subsubsection{Automorphic Periods}
The normalized theta series of $X$ on $G(\mathbb{A})$ is defined by 
\begin{equation} \label{equation theta series}
    \theta_X(g) := \Delta^{\frac{\mathrm{dim}(X) - \mathrm{dim}(G)}{4}}|\eta_X(\partial^{1/2})|^{1/2} \sum_{x \in \mathring{X}(F)} \, |\eta_X(g)|^{1/2}\Phi(x \partial^{1/2} g),
\end{equation}
where $\Phi = \otimes_v \, \mathbf{1}_{X(\mathfrak{o}_v)}$ is the pure tensor product of  indicator functions of the integral points of $X$ at all places. Using $\theta_X$, we can define the automorphic $X$-period for everywhere unramified automorphic forms on $G$. Since $|\eta_X(\partial^{1/2})|^{1/2} = \Delta^{-\varepsilon_X/4}$, we can set
\begin{equation}\label{equation betaX}
    C_X := \Delta^{\frac{\mathrm{dim}(X) - \mathrm{dim}(G) - \varepsilon_X}{4}}
\end{equation}
to be the global constant\footnote{In the notation of \cite[\S 10.4]{BZSV}, this is closely related to the constant $q^{-\beta_X}$, except we multiply by $|\eta_X(\partial^{1/2})|^{1/2}$ to simplify future calculations.} appearing in \eqref{equation theta series}, required to approximately $L^2$-normalize the theta series $\theta_X$. 

\begin{defn}[Automorphic period] \label{definition automorphic period}
    Let $f$ be an unramified automorphic form on $G$. The \textit{automorphic $X$-period of $f$} is defined as
    \begin{equation}
    P_X(f) := \int_{[G]} \, \theta_X(g) \, f(g) \, dg.
    \end{equation}
\end{defn}

The above definition of automorphic $X$-period can be regarded as a formal one, for which we make no promises of convergence. On the other hand, we observe that (modifying routinely the argument of Lemma 2.3 of \cite{CV}), we have the following.

\begin{lem} \label{lemma convergence of automorphic period}
Suppose that the center of $G$ acts \textit{conically}\footnote{By definition, an action by a torus $\Gm^n$ on some affine variety $X$ is \textit{conical} if the resulting action on $\mathcal{O}_X$ has nonnegative weights, and the 0th graded piece is isomorphic to the ground field.} on $X$. For $f$ a cusp form whose central character has real part large enough, the period $P_X(f)$ converges absolutely.
\end{lem}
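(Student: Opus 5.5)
We bound $|P_X(f)|$ by the integral of $|f|\cdot|\theta_X|$ over $[G]$ and control the theta series using the conical central action. We work over the function field $F$, where $[G]$ is the groupoid of $G$-bundles on $\Sigma$. The plan is to bound the theta series pointwise, use rapid decay of the cusp form, and let the central character absorb the growth in the central direction.

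First, fibre $[G]$ over the image of the central torus. Let $Z$ be the connected centre and $G' = G/Z$. Then $[G]$ maps to $[G/G^{\mathrm{der}}]$, and $Z$ surjects onto $G/G^{\mathrm{der}}$ up to isogeny. Hence, up to finitely many components, every point of $[G]$ can be written as $g = z g_1$ with $z \in Z(\mathbb{A})$ and $g_1$ in a set of bounded "degree" (bounded image in $[G/G^{\mathrm{der}}]$). Since $f$ is cuspidal, $f(zg_1) = \omega_f(z) f(g_1)$. Moreover, $f$ restricted to the bounded-degree part is compactly supported. Over function fields, cusp forms are supported on finitely many $G^{\mathrm{der}}$-bundle classes modulo the centre, by Harder's reduction theory.

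Second, bound $\theta_X(zg_1)$ as $z$ varies. Decompose $\mathcal{O}_X = \bigoplus_\chi \mathcal{O}_X[\chi]$ under $Z$. By conicality, all weights $\chi$ are nonnegative in the sense that they pair nonnegatively with some dominant cone, and the weight-$0$ part is the constants. Choose homogeneous generators $h_1,\dots,h_m$ of $\mathcal{O}_X$ with nonzero weights $\chi_i$, giving a closed embedding $X \hookrightarrow \mathbf{A}^m$ on which $Z$ acts diagonally by the characters $\chi_i$. For $g_1$ in a fixed compact set $C$, we have $\Phi(x\partial^{1/2}zg_1) \le \mathbf{1}_{L}(x \cdot z)$ for some fixed adelic lattice box $L \subset \mathbf{A}^m(\mathbb{A})$.

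The number of $x \in X(F) \subset F^m$ with $h_i(x)\chi_i(z) \in L_i$ is bounded by $\prod_i \#\{a \in F : a\chi_i(z) \in L_i\}$. This is $\ll \prod_i \max(1, |\chi_i(z)|^{-1})$ by Riemann–Roch. Together with the factor $|\eta_X(z)|^{1/2}$, we get $|\theta_X(zg_1)| \ll |\mu(z)|^{-1}$ for a single character $\mu$ of $Z$, up to a polynomial in $\log|z|$. Here $\mu$ is built from the $\chi_i$ and $\eta_X$, and it lies in the cone spanned by the weights. Conicality is exactly what makes this cone pointed, so the bound is uniform along directions where $|\chi_i(z)| \to \infty$. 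Where $|\chi_i(z)| \to 0$, the counting bound grows at most like a power $|\chi_i(z)|^{-1}$.

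Third, integrate:
\[
|P_X(f)| \le \sum_{\text{comp.}} \int_{C}|f(g_1)|\int_{Z(F)\backslash Z(\mathbb{A})/Z(\mathfrak{o})} |\omega_f(z)|\,|\theta_X(zg_1)|\,dz\,dg_1 .
\]
The $z$-integral is a lattice sum over $\mathrm{X}_*(Z)$, modulo finite class group data, of $|\omega_f(z)|$ times a quantity of at most exponential growth $|\mu(z)|^{-1}$ in the directions where the weights shrink. In the opposite directions the theta series decays to its $x=0$ term only if $0 \in \mathring{X}$; since the sum is over $\mathring{X}(F)$, that term should be absent.

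The main obstacle is making this uniform on the pointed cone: one must show the lattice sum converges when $\mathrm{Re}\,\omega_f$ is large in the dual direction. One could try an abelian-variable Tate-integral argument as in \cite[Lemma 2.3]{CV}, which I would adapt directly.
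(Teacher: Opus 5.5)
Your outline is the argument the paper intends; the paper gives no proof beyond ``modify \cite[Lemma~2.3]{CV}''. The ingredients match: compact support of cusp forms modulo $Z(\mathbb{A})$, a uniform lattice box for $g_1$ in a compact set, Riemann--Roch counting in homogeneous coordinates, and absorption by $\omega_f$ of the growth of $\theta_X$ along the center.

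The step you leave open as the ``main obstacle'' is not an obstacle when $Z^\circ$ is one-dimensional, which is the case for $\mathrm{GSpin}_6$ and $\mathrm{GSO}_6$. There all generators have weights $d_i>0$. For $|z|\ge c$ only the cone point satisfies the integrality condition. That point is $G$-fixed, being the unique $Z$-fixed point while $G$ commutes with $Z$, so it is not in $\mathring{X}$ and $\theta_X(zg_1)=0$. For $|z|<c$ your count gives $|\theta_X(zg_1)|\ll |z|^{-c'}$ uniformly in $g_1$. Since $[Z^\circ]$ has finitely many classes in each degree, the $z$-integral is a one-sided geometric series. It converges once $|\omega_f|=|\cdot|^{\sigma}$ with $\sigma>c'$, and that finishes the proof. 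A minor slip: $f(zg_1)=\omega_f(z)f(g_1)$ comes from the central character, not from cuspidality; cuspidality is what gives compact support modulo the center.

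In higher rank, do not try to ``adapt \cite{CV} directly.'' There are directions in $Z$ along which some $|\chi_i(z)|$ grow and others shrink. Your count over $X(F)$ removes only the cone point, so it does not vanish in those directions. Worse, the statement as written needs an extra hypothesis there. For example, take $G=\mathrm{GL}_2\times\Gm$ acting on $X=\mathbf{A}^2$ through the first factor. This is conical for $Z=\Gm\times\Gm$ in the sense of the footnote, since the weights are $(n,0)$ with $n\ge 0$ and the weight-zero part is the constants. Yet $\theta_X$ does not depend on the $\Gm$-coordinate. Hence $P_X(f\otimes\chi)$ contains the factor $\int_{[\Gm]}\chi(t)\,dt$, which diverges absolutely for every $\chi$. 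So the lemma should be read with $\dim Z^\circ=1$, and in that reading your argument is complete.
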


The first hypothesis entails no essential loss for the automorphic integrals considered in the present discussion. For more general situations, see \S \ref{subsub central twists} below.

\subsection{Spectral periods}
Analogously to the automorphic side, a spectral period is defined as a certain numerical invariant of an $L$-parameter. To define it, suppose we have a $\check{G}$-valued $L$-parameter $\varphi$, and a graded $\check{G}$-variety $\check{X}$. Then $\Gamma_F$ acts on $\check{X}$ via $\varphi$, and for every point $x \in \check{X}$ fixed by this action, there is an induced action of $\Gamma_F$ on the completed ring of functions $\widehat{\mathcal{O}}_{\check{X},x}$ at the fixed point $x$. 

\begin{defn}[Spectral period] \label{definition spectral period}
    Let $\varphi$ be a $L$-parameter valued in $\check{G}$. Let $\check{X}$ be a graded $\check{G}$-variety with eigencharacter $\eta_{\check{X}}:\check{G} \to \Gm$. The \textit{spectral $\check{X}$-period of $\varphi$} is defined as
    \begin{equation}
        L_{\check{X}}(\varphi) := \mathfrak{z}_{\check{X}}(\varphi) \cdot  \Delta^{\frac{\varepsilon_{\check{X}} - \mathrm{dim}(\check{X})}{4}} \cdot \sum_{x \in \mathrm{Fix}(\varphi, \check{X})}  L_x(\check{X}, \varphi)
    \end{equation}
    where 
    \begin{itemize}
        \item $\varepsilon_{\check{X}} \in \Z$ is the $\Ggr$-weight on the eigenmeasure on $\check{X}$;
        \item $\mathfrak{z}_{\check{X}}(\varphi)$ is the value of $\eta_{\check{X}} \circ \varphi: \Gamma_F \to \kk^\times$ at $\partial^{-1/2}$, viewed as an element of $\Gamma_F^{\mathrm{ab}}$ via global class field theory; 
        \item for a fixed point $x$, we define
        $$L_x(\check{X}, \varphi) := \prod_v \, \mathrm{gtr}\left(\mathrm{Fr}_v \, \middle| \, \widehat{\mathcal{O}}_{\check{X}, x}\right)\footnote{The graded trace of $\mathrm{Fr}_v$ is defined as $\sum_{k \geq 0} \, q_v^{-k/2} \mathrm{tr}(\mathrm{Fr}_v \, |\widehat{\mathcal{O}}_{\check{X},x}[k])$ where $\widehat{\mathcal{O}}_{\check{X},x}[k]$ denotes the $k$th weighted piece.},$$
        which we term the \textit{spectral period at $x \in \check{X}$}.
    \end{itemize}
\end{defn}
Note that the transfer of $L$-parameters $\varphi \mapsto \eta_{\check{X}} \circ \varphi$ from $\check{G}$-valued parameters to $\Gm$-valued parameters is Langlands dual to restricting the central character of an automorphic form along the central cocharacter $\check{\eta}_{\check{X}}: [\Gm] \to [G]$ dual to $\eta_{\check{X}}$ (not to be confused with $\eta_X$). By our normalization of class field theory (see \S \ref{subsubsection CFT normalization}) we may calculate the scalar $\mathfrak{z}_{\check{X}}(\varphi_f)$ in terms of $f$ as
\begin{equation}
    \mathfrak{z}_{\check{X}}(\varphi_f) = \omega_f(\check{\eta}_{\check{X}}(\partial^{-1/2}))
\end{equation}
where $\varphi_f$ is the $L$-parameter of an automorphic form $f$ on $G$.

\begin{example}
    When $\check{X} = \mathbf{A}^n$ and the action of $\check{G}$ is linear arising from a representation $\rho : \check{G} \to \GL_n$ and $\Ggr$ acts by scaling, then \cite[Lemma 3.1]{CV} gives 
    $$L_{\check{X}}(\varphi) = \mathrm{det}(\rho)(\partial)^{-1/2} \cdot L(1/2,\varphi,\rho),$$
    where the latter is the value at $1/2$ of the Langlands $L$-function attached to $\rho$ and $\varphi$, provided that $L_{\check{X}}(\varphi)$ converges.
\end{example}

\begin{rem}[Convergence of spectral periods] \label{rem convergence of spectral period}
    In the case when $\check{X}$ is conical - which will cover all cases of interest for us - the convergence of the associated spectral periods is guaranteed by the discussion following \cite[Lemma 3.1]{CV}. 
\end{rem}

\subsection{Induction of periods} \label{subsection period induction}

Let $H \subset G$ be a reductive subgroup, and let $(Y, \omega_Y)$ be a graded $H$-variety. We consider an induction procedure which turns $Y$ into a graded $G$-variety. Let 
$$X = \mathrm{Ind}_H^G(Y) := Y \times^H G$$
be equipped with the $G \times \Ggr$-action
$$(y,g') \cdot (g, \lambda) := (y\lambda, g'g)$$
for $g, g' \in G, \lambda \in \Ggr$, and $y \in Y$. Note that $X \to H \backslash G$ is a $Y$-fiber bundle over the homogeneous space $H \backslash G$. 

While it is always possible to write down a volume form $\omega_X$ on $X$ which restricts on each $Y$-fiber to the volume form $\omega_Y$, it is not always possible to ensure that the volume form $\omega_X$ is a $G$\textit{-eigenform}. Indeed, this is only possible if $\eta_Y: H \to \Gm$ extends to an algebraic character $\eta_X: G \to \Gm$, in which case any such extension corresponds to a choice of volume form on $X$ restricting to $\omega_Y$ on each $Y$-fiber. On the other hand, for the purposes of calculating automorphic and spectral periods, it will become evident that only the existence of $\eta_X$ as a \textit{rational} character $\eta_X \in \mathrm{Hom}(G, \Gm) \otimes_\Z \Q$ such that $\eta_X|_H = \eta_Y$ is required. 

Indeed, note that on the automorphic side, the eigenform $\eta_X$ contributes only in the recipe defining $\theta_X$, where what we needed was the \textit{real-valued half-density} $|\eta_X|^{1/2}: G(\mathbb{A}) \to \R_{>0}$. This real valued character makes sense even if $\eta_X$ is only a rational character. 

Correspondingly on the spectral side, note that in defining the spectral period $L_X$ via Definition \ref{definition spectral period}, we only need $\eta_X$ to be a rational character. Indeed, $\eta_X$ participates only in the construction of the scalar $\mathfrak{z}_X(\varphi)$, and we can make sense of the scalar $\mathfrak{z}_X(\varphi) = (\eta_X \circ \varphi)(\partial^{-1/2})$ as a complex number with a choice of branch of the complex logarithm. 

When the choice of $\eta_X$ is unique as a rational character (as will always be the case for us), we will abuse terminology slightly to call $X = \mathrm{Ind}_H^G(Y)$, equipped with its commuting $\Ggr$-action and rational character $\eta_X \in \mathrm{Hom}(G,\Gm) \otimes_\Z \Q$ the \textit{graded $G$-variety induced from $Y$.} By definition, we have
$$\eta_X|_{H \times \Ggr} = \eta_Y.$$

The procedure of induction incurs predictable consequences on automorphic and spectral periods, most of which are known to experts. For the sake of completeness and for precise future reference, we record them as lemmas below.

\subsubsection{Induction of automorphic periods} The only aspect of induction which requires some care on the automorphic side is the induction of \textit{integral structures} which we always assume that $Y$ carries per our convention. To spell this out carefully, we write $\mathbf{G}, \mathbf{H}$ for our fixed Chevalley forms of the reductive groups $G,H$ respectively, and let $\mathbf{Y}$ denote a chosen integral structure for the $H$-action, i.e., we have an action $\mathbf{H} \times \mathbf{Y} \to \mathbf{Y}$ of $\mathfrak{o}_v$-schemes so that passing to the generic fiber gives the action of $H$ on $Y$. To construct an integral form of $X$, it suffices to choose an $\mathfrak{o}_v$-subalgebra of $\mathcal{O}_X$ stable under the $\mathfrak{o}_v$-linear $\mathcal{O}_{\mathbf{G}}$-coaction. We define simply $\mathcal{O}_{\mathbf{X}} := (\mathcal{O}_{\mathbf{Y}} \boxtimes \mathcal{O}_{\mathbf{G}})^{\mathbf{H}} $
and set
$$\mathbf{X} := \spec \mathcal{O}_{\mathbf{X}},$$
which we regard as the \textit{integral structure induced from $Y$.} For a dominant weight $\lambda$ of the maximal torus $T$ of $G$, we write $V_\lambda$ for the highest weight $G$-module of highest weight $\lambda$, and let $\mathbf{V}_\lambda \subset V_\lambda$ be the $\mathfrak{o}_v$-lattice defined by the integral form $\mathbf{G}$. Then the ring of functions on $\mathbf{X}$ can be presented explicitly as
\begin{equation} \label{equation integral functions on X}
    \mathcal{O}_{\mathbf{X}} \simeq \oplus_\lambda \, (\mathcal{O}_{\mathbf{Y}} \otimes \mathbf{V}_\lambda^*)^\mathbf{H} \boxtimes \mathbf{V}_\lambda
\end{equation}
using the Peter--Weyl decomposition of functions on $\mathbf{G}$, where $\lambda$ ranges through dominant weights of $T$.

\begin{lem} \label{lemma automorphic induction}
    Let $Y$ be a graded $H$-variety, where $H \subset G$ is a reductive subgroup satisfying the cohomological assumption 
    $$\mathrm{K}^1_{H,G} := \mathrm{Ker}\big(\mathrm{H}^1(F, H) \to \mathrm{H}^1(F, G)\big) = 0.$$
    Let $X = \mathrm{Ind}_H^G(Y)$ be the induction of $Y$ from $H$ to $G$. Then the automorphic $X$-period of an automorphic form $f$ on $G$ can be computed as
    \begin{equation} \label{equation automorphic induction}
        P_X(f) = \int_{[H]} \theta_Y(h)f(h) \, dh.
    \end{equation}
\end{lem}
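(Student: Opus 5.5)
The plan is to unfold the theta series $\theta_X$ along the fibration $X \to H\backslash G$. The first step is to describe the rational points $\mathring{X}(F)$. A point of $X = Y \times^H G$ over $F$ maps to a point of $(H\backslash G)(F)$. The fiber of $G(F) \to (H\backslash G)(F)$ is governed by $\mathrm{H}^1(F,H)$: the image of $G(F)$ in $(H\backslash G)(F)$ is exactly the set of classes whose associated $H$-torsor is trivial. The assumption $\mathrm{K}^1_{H,G}=0$ is precisely what forces $(H\backslash G)(F) = H(F)\backslash G(F)$. Consequently
\[
\mathring{X}(F) = \bigsqcup_{\gamma \in H(F)\backslash G(F)} \mathring{Y}(F)\cdot \gamma ,
\]
with each point written as $(y,\gamma)$ and $y \in \mathring{Y}(F)$ determined once a coset representative $\gamma$ is fixed. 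The same fibration argument, applied locally to the induced integral structure $\mathbf{X}$ of \eqref{equation integral functions on X}, should give
\[
\mathbf{1}_{X(\mathfrak{o}_v)}\big((y,g)\big) = \mathbf{1}_{Y(\mathfrak{o}_v)}(y\,h)
\]
whenever $g = h k$ with $h \in H(F_v)$ and $k \in G(\mathfrak{o}_v)$.

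With this description, the theta series becomes a sum over cosets:
\[
\theta_X(g) = \sum_{\gamma \in H(F)\backslash G(F)} \Theta(\gamma g),
\]
where $\Theta$ is the partial sum over $\mathring{Y}(F)$. The normalizing constants must then be matched. Since $\eta_X|_H = \eta_Y$, the half-density factors $|\eta_X(g)|^{1/2}$ and $|\eta_X(\partial^{1/2})|^{1/2}$ restrict to the $Y$-factors. The discriminant exponents also agree: $\dim X - \dim G = \dim Y - \dim H$, and $\varepsilon_X = \varepsilon_Y$. The $\partial^{1/2}$-twist commutes with the fibration because $\Ggr$ acts only on the $Y$-factor.

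The final step is the standard unfolding
\[
\int_{H(F)\backslash G(\mathbb{A})} \Theta(g) f(g)\, dg .
\]
Decompose $G(\mathbb{A})$ fiberwise over $H(\mathbb{A})\backslash G(\mathbb{A})$, and use the integrality computation above to see that the function on $H(\mathbb{A})\backslash G(\mathbb{A})$ is supported on the image of $G(\mathfrak{o})$. Under the probability Haar measures, this fiberwise decomposition yields $\int_{[H]} \theta_Y(h) f(h)\,dh$. The justification for interchanging sum and integral would come from absolute convergence (Lemma \ref{lemma convergence of automorphic period}), or the identity is understood formally, matching the paper's convention for Definition \ref{definition automorphic period}.

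I expect the main obstacle to be the integral-structure step: showing that the $\mathfrak{o}_v$-points of $\mathbf{X} = \spec(\mathcal{O}_{\mathbf{Y}}\boxtimes\mathcal{O}_{\mathbf{G}})^{\mathbf{H}}$ are exactly $\mathbf{Y}(\mathfrak{o}_v)\times^{\mathbf{H}(\mathfrak{o}_v)}\mathbf{G}(\mathfrak{o}_v)$. I would prove this by combining two facts. First, $\mathbf{G}\to \mathbf{H}\backslash\mathbf{G}$ is an $\mathbf{H}$-torsor, since $\mathbf{H}$ is reductive and the quotient is affine. Second, Lang's theorem together with Hensel's lemma gives $\mathrm{H}^1(\mathfrak{o}_v,\mathbf{H})=0$, so every $\mathfrak{o}_v$-point of the base lifts to $\mathbf{G}(\mathfrak{o}_v)$. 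The remaining input is an Iwasawa-type fact: the $H(F_v)$-orbits on $(H\backslash G)(F_v)$ meeting integral points are controlled by $G(\mathfrak{o}_v)$. This is where the measure normalizations and the volume of $H(\mathfrak{o})$ enter.
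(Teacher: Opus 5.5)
Your proposal is correct. Its skeleton matches the paper's proof: identify $X(F)$ with $(Y(F)\times G(F))/H(F)$ via $\mathrm{K}^1_{H,G}=0$, unfold, cut the outer integral down to $H(\mathbb{A})G(\mathfrak{o})$ by integrality, reduce integrality in $\mathbf{X}$ to integrality in $\mathbf{Y}$, and match constants via $\dim X-\dim G=\dim Y-\dim H$ and $\eta_X|_{H\times\Ggr}=\eta_Y$.

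Two steps are organized differently. First, you unfold once along $H(F)\backslash G(F)$, keeping the left $H(F)$-invariant sum over $Y(F)$ intact. The paper instead passes through $G(F)$-orbits indexed by $Y(F)/H(F)$ with their stabilizers and then refolds; both land on the same double integral over $H(\mathbb{A})\backslash G(\mathbb{A})$ and $[H]$.

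The more substantive difference is the integrality step. The paper expands $\mathcal{O}_{\mathbf{X}}$ by an integral Peter--Weyl decomposition and tests points against matrix coefficients. It asserts without argument that the $\mathbf{H}$-invariant coefficients $\langle\xi,\mathbf{v}g\rangle$ are integral exactly when $g\in H(F_v)G(\mathfrak{o}_v)$, and treats the fiber by a contraction argument. You instead use that $\mathbf{G}\to\mathbf{H}\backslash\mathbf{G}$ is an $\mathbf{H}$-torsor and that $\mathrm{H}^1(\mathfrak{o}_v,\mathbf{H})=0$.

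To close your argument, say explicitly that by descent $\mathbf{Y}\times\mathbf{G}\to\mathbf{X}$ is then also an $\mathbf{H}$-torsor; this is what identifies $\mathrm{Spec}\,(\mathcal{O}_{\mathbf{Y}}\boxtimes\mathcal{O}_{\mathbf{G}})^{\mathbf{H}}$ with the associated bundle. Together with lifting of integral base points, this gives $\mathbf{X}(\mathfrak{o}_v)=\mathbf{Y}(\mathfrak{o}_v)\times^{\mathbf{H}(\mathfrak{o}_v)}\mathbf{G}(\mathfrak{o}_v)$ at once. The formula $\mathbf{1}_{X(\mathfrak{o}_v)}((y,hk))=\mathbf{1}_{Y(\mathfrak{o}_v)}(yh)$ is well defined because $H(F_v)\cap G(\mathfrak{o}_v)=H(\mathfrak{o}_v)$ preserves $\mathbf{Y}(\mathfrak{o}_v)$. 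So the extra ``Iwasawa-type fact'' you list is not needed, and the measure normalization enters only afterwards, through $\mathrm{vol}(H(\mathfrak{o})\backslash G(\mathfrak{o}))=1$.

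What your route buys:
\begin{itemize}
\item It proves the step the paper leaves unjustified, which is precisely the vanishing of unramified $\mathbf{H}$-torsors.
\item It avoids integral Peter--Weyl decompositions, which are delicate in small residue characteristic.
\item It makes visible where connectedness of $\mathbf{H}$ is used (Lang's theorem), consistent with the paper's separate treatment of disconnected $H$ in \S\ref{subsection DM stacks}.
\end{itemize}
The price is reliance on standard but nontrivial facts about quotients over $\mathfrak{o}_v$: existence and affineness of $\mathbf{H}\backslash\mathbf{G}$ with coordinate ring $\mathcal{O}_{\mathbf{G}}^{\mathbf{H}}$.
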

\begin{proof}
    Note that the power of $\Delta$ involved in normalizing the theta series of $X$ and $Y$ are related by
    $$\frac{\mathrm{dim}(X) - \mathrm{dim}(G)}{4} = \frac{\mathrm{dim}(Y) - \mathrm{dim}(H)}{4}.$$
    Similarly, the prefactor involving $\partial^{1/2}$ in the definitions of $\theta_X$ and $\theta_Y$ can also be related simply as 
    $$|\eta_X(\partial^{1/2})|^{1/2} = |\eta_Y(\partial^{1/2})|^{1/2}$$
    because the induced $\Ggr$-action acts trivially on the Haar measure on $H \backslash G$. Thus, we will ignore these prefactors in the following. 
    
    By definition (ignoring the aforementioned prefactors), we compute directly that
    \begin{align*}
        P_X(f) &= \int_{[G]} \, f(g)\, |\eta_X(g)|^{1/2} \, \sum_{x \in X(F)} \,  \Phi_X(x \partial^{1/2}g) \, dg\\
        &= \int_{[G]} \, f(g) \,|\eta_X(g)|^{1/2} \sum_{(y,x) \in (Y(F) \times G(F))/H(F)} \, \Phi_X(y\, \partial^{1/2},xg) \, dg\\
        &= \sum_{\bm{y} \in Y(F) / H(F)} \int_{{\rm Stab}_{H(F)}(\bm{y}) \backslash G(\mathbb{A})} \, f(g)|\eta_X(g)|^{1/2} \Phi_X(\bm{y}\partial^{1/2},g) \, dg\\
        &= \int_{H(\mathbb{A}) \backslash G(\mathbb{A})} \sum_{\bm{y} \in Y(F) / H(F)} \int_{{\rm Stab}_{H(F)}(\bm{y}) \backslash H(\mathbb{A})}\!\! \!\!\!\!\!\!\!\!\!\!\!\!\!\!\!\!\!f(hg)|\eta_X(hg)|^{1/2} \Phi_X(\bm{y}\partial^{1/2},hg) \, dh \, dg\\
        &= \int_{H(\mathbb{A}) \backslash G(\mathbb{A})} \int_{[H]} \, f(hg)|\eta_X(hg)|^{1/2} \sum_{y \in Y(F)} \, \Phi_X(y\partial^{1/2}h,g) \, dh \, dg.
    \end{align*}
    Note that from the first to the second line, we relied on the Galois cohomological assumption on $H$ to interpret the $F$-rational points of $X$ as those of $Y \times G$ up to $H(F)$-action. Moreover, observe that the set of $G(F)$-orbits on $(Y(F) \times G(F))/H(F)$ is isomorphic to $Y(F)/ H(F)$. Thus, unfolding with respect to the $G(F)$-action explains the third line. 

    We would now like to evaluate the Schwartz function $\Phi_X$ in terms of $\Phi_Y$. The integrality of the point $(y\partial^{1/2}h,g)$ for $y \in Y(F), h \in H(\mathbb{A})$ and $g \in H(\mathbb{A}) \backslash G(\mathbb{A})$ is determined by the $\mathfrak{o}_v$-integrality (for all places $v$) of the evaluation of every function in $\mathcal{O}_{\mathbf{X}}$. To this end, we fix a place $v$ and appeal to equation \eqref{equation integral functions on X}. Select a dominant weight $\lambda$ for which there are nonzero $\mathbf{H}$-invariant vectors in $\mathcal{O}_\mathbf{Y} \otimes \mathbf{V}_\lambda^*$ among which we pick an arbitrary $\xi$, and take an arbitrary vector $\mathbf{v} \in \mathbf{V}_\lambda$. The matrix coefficient formed by $\xi$ (regarded as a function on $\mathbf{Y}$ valued in $\mathbf{V}_\lambda^*$) and $\mathbf{v}$ thus represent a generic function in $\mathcal{O}_{\mathbf{X}}$, so we examine the integrality of the expression
    \begin{equation} \label{equation integrality condition}
        \langle \xi(y\partial^{1/2}h),\mathbf{v}g\rangle = \langle \xi(y \partial^{1/2}) \cdot h, \mathbf{v}g\rangle \in \mathfrak{o}_v,
    \end{equation}
    for $y\partial^{1/2}h \in Y(F_v)$ and $ g \in H(F_v) \backslash G(H_v)$. First, we consider those $\xi$ that lie in $(\mathbf{V}_\lambda^*)^{\mathbf{H}}$; they may be regarded as constant functions on $\mathbf{Y}$ valued in $\mathbf{V}_\lambda^*$. Varying over $\lambda$ such that $(\mathbf{V}_\lambda^*)^{\mathbf{H}} \neq 0$, $\xi \in (\mathbf{V}_\lambda^*)^{\mathbf{H}}$, and $\mathbf{v} \in \mathbf{V}_\lambda$, we see that the integrality condition \eqref{equation integrality condition} reads
    $$g \longmapsto \langle \xi, \mathbf{v}g\rangle \in \mathfrak{o}_v$$
    which is exactly the condition that $g \in H(F_v)G(\mathfrak{o}_v)$. Returning to the integral, we thus obtain (again up to normalizing constants that we suppressed since the beginning)
    \begin{equation} \label{equation integrality step 1}
        P_X(f) = \int_{[H]} \, f(h)|\eta_X(h)|^{1/2} \sum_{y \in Y(F)}\Phi_X(y\partial^{1/2}h,1) \, dh
    \end{equation}
    by integrating out the region $g \in H(\mathbb{A})G(\mathfrak{o})$ in the outer integral. Thus we are reduced to considering the integrality of points of the form $(y\partial^{1/2}h,1) \in X(F_v)$. The integrality constraint \eqref{equation integrality condition} now reads as follows: for all $\lambda$ dominant, $\xi \in (\mathcal{O}_{\mathbf{Y}} \otimes \mathbf{V}_\lambda^*)^{\mathbf{H}}$, and $\mathbf{v} \in \mathbf{V}_\lambda$, we require
    $$\langle \xi(y\partial^{1/2}h), \mathbf{v}\rangle \in \mathfrak{o}_v.$$
    Note that the pairing $\mathbf{V}_\lambda^* \otimes \mathbf{V}_\lambda \to \mathfrak{o}_v$ induces a contraction $(\mathcal{O}_{\mathbf{Y}} \otimes \mathbf{V}_\lambda^*)^{\mathbf{H}} \otimes \mathbf{V}_\lambda \to \mathcal{O}_{\mathbf{Y}}$ which we denote by $\xi \otimes \mathbf{v} \mapsto f_{\xi \otimes \mathbf{v}}$. Then the previous condition is simply that $f_{\xi \otimes \mathbf{v}}(y\partial^{1/2}h) \in \mathfrak{o}_v$. First we observe that contraction is surjective, i.e., any function in $\mathcal{O}_{\mathbf{Y}}$ can be written as $f_{\xi \otimes \mathbf{v}}$ for some $\xi$ and $\mathbf{v}$. To see this, decompose $\mathcal{O}_{\mathbf{Y}}$ as an $\mathbf{H}$-representation and take a function $f \in \mathcal{O}_{\mathbf{Y}}$ which lies in some irreducible $\mathbf{H}$-module $\mathbf{W} \subset \mathcal{O}_{\mathbf{Y}}$. We then take any $\mathbf{G}$-module $\mathbf{V}_\lambda$ such that $\mathbf{V}_\lambda^*|_{\mathbf{H}}$ contains $\mathbf{W}$, and take two dual vectors $\mathbf{v}^* \in \mathbf{V}_\lambda^*$ and $\mathbf{v} \in \mathbf{V}_\lambda$ satisfying the equation $\langle \mathbf{v}^*, \mathbf{v}\rangle = 1$. Then $f \otimes \mathbf{v}^*$ is an $\mathbf{H}$-invariant vector of $\mathcal{O}_{\mathbf{Y}} \otimes \mathbf{V}_\lambda^*$, and $f$ is the contraction of $(f \otimes \mathbf{v}^*) \otimes \mathbf{v}$. Thus we see that the integrality of $f_{\xi \otimes \mathbf{v}}(y\partial^{1/2}h)$ for all $\xi$ and $\mathbf{v}$ is equivalent to the integrality of $f(y\partial^{1/2}h)$ for all $f \in \mathcal{O}_{\mathbf{Y}}$, which is in turn equivalent to the integrality of the point $y\partial^{1/2}h \in Y(F_v)$. 
    
    Finally we observe that by definition of the induced volume form on $X$, the eigencharacter in the above integrand can be evaluated as $|\eta_X(hg)|^{1/2} = |\eta_Y(h)|^{1/2}$ since the Haar measure on $H \backslash G$ is right $G$-invariant. Replacing $\eta_X$ and $\Phi_X(y\partial^{1/2}h,1)$ by $\eta_Y$ and $\Phi_Y(y\partial^{1/2}h)$ in equation \eqref{equation integrality step 1}, respectively, we obtain the statement of the lemma.
\end{proof}

\begin{rem} \label{remark unramified orbits}
    Suppose we relax the cohomological assumption $\mathrm{K}^1_{H,G}  = 0$. Then the first step in the above unfolding, in which we used the assumption that the rational points $X(F)$ are identified with the quotient $(Y(F) \times G(F))/H(F)$ is no longer valid. However, we do have an exact sequence (of pointed sets)
    $$(Y(F) \times G(F))/H(F) \longrightarrow X(F) \longrightarrow \mathrm{K}^1_{H,G}.$$
    In other words, in decomposing the set $X(F)$ into $H(F)$-orbits, there is a \textit{neutral orbit} corresponding to $(Y(F) \times G(F))/H(F)$, and the other orbits can be assigned a cohomological invariant in $\mathrm{K}^1_{H,G} \subseteq \mathrm{H}^1(F, H)$. Each orbit may contribute an extra term to Lemma \ref{lemma automorphic induction}, expressed as the automorphic period over an inner form of $H$.
\end{rem}

\begin{rem} \label{remark geometric interpretation of automorphic induction}
    The preceding lemma admits a geometric description, explained in \cite[\S 10.2]{BZSV}, through which its validity is evident. Recall that given an $H$-variety $Y$, one may consider the relative moduli space
    $$\mathrm{Bun}_H^Y(\Sigma) := \mathrm{Map}(\Sigma, [Y/H])$$
    over $\mathrm{Bun}_H(\Sigma)$ parametrizing $H$-bundles with an associated $Y$-section. The induced $G$-variety $X = Y \times^H G$ allows one to form
    $$\mathrm{Bun}_G^X(\Sigma) \simeq \mathrm{Bun}_H^Y(\Sigma)$$
    induced by the identification of stacks $[X/G] \simeq [Y/H]$. As a moduli space over $\mathrm{Bun}_G(\Sigma)$, the right hand side $\mathrm{Bun}_H^Y(\Sigma)$ parametrizes a principal $G$-bundle, a reduction of structure group to $H$, and an associated $Y$-section of the $H$-reduction. The period $P_X$ in Lemma \ref{lemma automorphic induction} may be geometrized as (i.e., is the function-sheaf partner of) the $!$-pushforward of the constant $\ell$-adic sheaf on $\mathrm{Bun}_G^X(\Sigma) \to \mathrm{Bun}_G(\Sigma)$, while the right hand side of equation \eqref{equation automorphic induction} is precisely geometrized by $\mathrm{Bun}_H^Y(\Sigma) \to \mathrm{Bun}_G(\Sigma)$.

    In the case when we drop the cohomological assumption $\mathrm{K}^1_{H,G} = 0$ as in the preceding remark, a similar geometric interpretation can be made. It suffices, without loss of generality, to consider the special case when $Y = \mathrm{pt}$ and $X = H \backslash G$: the period distribution $P_X$ counts reductions of a given $G$-bundle to \textit{pure inner forms} of $H$ that appear in $\mathrm{K}^1_{H,G}$.
\end{rem}

\subsubsection{Induction of spectral periods} We now turn our attention to the effect of induction on spectral periods. In classical language, a spectral period induced from $H$ to $G$ distinguishes $L$-parameters landing in $H$ (with possibly even smaller image). The following computation can be found in Lemma 2.7.1 of \cite{thesis}, but we reproduce it here for completeness and clarity. 
\begin{lem} \label{lemma spectral induction}
    Let $Y$ be a graded $H$-variety, and $H \subset G$ be a reductive subgroup. Let $X = \mathrm{Ind}_H^G(Y)$ be the induction of $Y$ from $H$ to $G$. Then the spectral $X$-period of an unramified $G$-valued $L$-parameter $\varphi: \Gamma_F \to G$ can be computed as
    \begin{equation} \label{equation spectral induction}
        L_X(\varphi) = \begin{cases}
        \, 0 \, \text{ if } \mathrm{Im}(\varphi) \text{ is not in a conjugate of }H, \\
        \, \Delta^{\frac{\mathrm{dim}(H)-\mathrm{dim}(G)}{4}} \cdot L(1, \varphi, \mathfrak{g}/\mathfrak{h}) \cdot\sum_{c \in C_\varphi}\,  L_Y(\varphi^c) \text{ if } \mathrm{Im}(\varphi) \text{ is in a conjugate of }H,
    \end{cases}
    \end{equation}
    where we have used the following notation:
    \begin{itemize}
        \item $C_\varphi$ parametrizes equivalence classes of lifts of the parameter $\varphi$ to $H$, i.e., 
        $$C_\varphi := \big\{g \in G: \mathrm{Ad}_g(\mathrm{Im}(\varphi)) \subset H\big\}/H,$$
        \item for an element $c \in C_\varphi$ we wrote $\varphi^c$ for the $c$-conjugate of $\varphi$,
        \item and $L(1,\varphi, \mathfrak{g}/\mathfrak{h})$ is the Langlands $L$-function
    $$L(1,\varphi, \mathfrak{g}/\mathfrak{h}) = \prod_v \, \frac{1}{\mathrm{det}(1- \varphi(\mathrm{Fr}_v)q_v^{-1}|\mathfrak{g}/\mathfrak{h})}.$$
    \end{itemize}
\end{lem}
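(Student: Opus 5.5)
The plan is to unwind Definition \ref{definition spectral period} for $X = Y \times^H G$ in three steps: identify the fixed points, compute the completed local ring at each fixed point, and then match the normalizing constants.

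\textbf{Fixed points.} A point of $X$ is a class $[y,g]$, and its image $Hg \in H\backslash G$ depends only on this class. If $[y,g]$ is fixed by $\varphi(\Gamma_F)$ (acting on the right), then $Hg$ is fixed as well, that is, $Hg\varphi(\gamma) = Hg$ for all $\gamma$. This says exactly that $\mathrm{Ad}_g(\mathrm{Im}\varphi) \subset H$, in the convention where $g\varphi g^{-1}$ lands in $H$. Consequently the fixed-point set is empty unless $\mathrm{Im}(\varphi)$ lies in a conjugate of $H$, and in that case the period is $0$; this gives the first case.

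In the second case, the fixed points of $H\backslash G$ are the cosets $Hg$ whose classes lie in $C_\varphi$; two representatives of the same coset differ by left multiplication by $H$. Fix a representative $g_c$ for each $c \in C_\varphi$. The fiber of $X$ over $Hg_c$ is identified with $Y$ via $y \mapsto [y,g_c]$. Under this identification, $\gamma$ acts by
$$[y,g_c]\varphi(\gamma) = [y,\, g_c\varphi(\gamma)g_c^{-1}g_c] = [y\varphi^c(\gamma),\, g_c].$$
So the fixed points in this fiber are $\mathrm{Fix}(\varphi^c, Y)$, and we get a disjoint union $\mathrm{Fix}(\varphi,X) = \bigsqcup_{c} \mathrm{Fix}(\varphi^c, Y)$.

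\textbf{Local rings.} Take $x = [y,g_c]$. Near $x$, the map $X \to H\backslash G$ is étale-locally a product: a slice through $g_c$ transverse to $H$ gives $\widehat{\mathcal{O}}_{X,x} \simeq \widehat{\mathcal{O}}_{Y,y}\,\widehat{\otimes}\,\widehat{\mathcal{O}}_{H\backslash G, Hg_c}$. This identification should be $\Gamma_F$-equivariant, using the smoothness of $H\backslash G$ and the reductivity of $H$ (exponential map or Luna slice). The $\Ggr$-action is trivial on the base factor, but the graded trace in the definition still carries the factor $q_v^{-k/2}$ in the total degree. Here I expect a normalization subtlety: to produce $L(1,\cdot)$ rather than $L(1/2,\cdot)$, the base directions must sit in weight $2$. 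I would handle this by declaring the tangent directions of $H\backslash G$ to have weight $2$, in line with BZSV's cotangent/Hamiltonian convention. This reconciliation of gradings is the step I expect to be the main obstacle.

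With that convention, the completed local ring of the base at the fixed point is $\mathrm{Sym}\big((\mathfrak{g}/\mathfrak{h})^*\big)$ in degree $2$. The $\Gamma_F$-action on $\mathfrak{g}/\mathfrak{h}$ at $Hg_c$ is $\mathrm{Ad}\circ\varphi^c$, which is conjugate to $\mathrm{Ad}\circ\varphi$ on $\mathfrak{g}/\mathfrak{h}$. Hence the graded trace of $\mathrm{Fr}_v$ factors as the graded trace on $Y$ times $\det\big(1-q_v^{-1}\varphi(\mathrm{Fr}_v)\,|\,\mathfrak{g}/\mathfrak{h}\big)^{-1}$; I expect a contragredient/dual to appear and cancel by self-duality of $\mathfrak{g}/\mathfrak{h}$ under the Killing form. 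Taking the product over $v$ gives $L_x(X,\varphi) = L(1,\varphi,\mathfrak{g}/\mathfrak{h})\,L_y(Y,\varphi^c)$, and summing over $x$ gives
$$\sum_{x\in\mathrm{Fix}(\varphi,X)} L_x(X,\varphi) = L(1,\varphi,\mathfrak{g}/\mathfrak{h})\sum_{c\in C_\varphi}\ \sum_{y\in\mathrm{Fix}(\varphi^c,Y)} L_y(Y,\varphi^c).$$

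\textbf{Constants.} For the prefactors, $\eta_X|_H = \eta_Y$ gives $\mathfrak{z}_X(\varphi) = \mathfrak{z}_Y(\varphi^c)$, since $\varphi^c$ is conjugate to $\varphi$ and characters are conjugation-invariant. The eigenmeasure weight satisfies $\varepsilon_X = \varepsilon_Y$, because the volume form on the base is $\Ggr$-invariant, and $\dim X = \dim Y + \dim G - \dim H$. Therefore the power of $\Delta$ differs from the one in $L_Y$ by exactly $\Delta^{(\dim H - \dim G)/4}$, which yields the stated formula.
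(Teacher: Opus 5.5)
Your proposal is correct and follows essentially the same route as the paper. Like the paper, you decompose $\mathrm{Fix}(\varphi,X)$ fiberwise over the fixed cosets in $H\backslash G$ indexed by $C_\varphi$, split the completed local ring as $\widehat{\mathcal{O}}_{Y,y}\,\widehat{\otimes}\,\widehat{\mathrm{Sym}}((\mathfrak{g}/\mathfrak{h})^*)$, and match the prefactors using $\eta_X|_{H\times\Ggr}=\eta_Y$ and $\dim X=\dim Y+\dim G-\dim H$.

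The grading issue you single out is genuine, and the paper glosses over it by simply saying the base factor is ``graded by polynomial degree.'' Your convention of putting the base directions in weight $2$ is exactly what is implicitly needed to get $L(1,\varphi,\mathfrak{g}/\mathfrak{h})$, rather than $L(1/2,\cdot)$ or a divergent weight-$0$ trace.
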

As usual, we regard the right hand side of \eqref{equation spectral induction} as invalid if it fails to converge for any reason: if $|C_\varphi|$ is infinite, if $L(s,\varphi, \mathfrak{g}/\mathfrak{h})$ has a pole at $s = 1$, or if $\varphi$ has infinitely many fixed points on $Y$.

\begin{proof}
    By definition, the spectral $X$-period of $\varphi$ is 
    $$L_X(\varphi) = (\eta_X \circ \varphi)(\partial^{-1/2}) \Delta^{\frac{\varepsilon_X-\mathrm{dim}(X)}{4}}\sum_{x \in \mathrm{Fix}(\varphi,X)} \, L_x(X, \varphi).$$
    To evaluate the preceding expression from first principles, let us write down explicitly the definition of $\mathrm{Fix}(\varphi, X)$: a fixed point for the $\Gamma$ action on $X$ via $\varphi$ is the data of a pair $(y, g) \in Y \times G$ and for every $\gamma \in \Gamma$ an element $h_\gamma \in H$ satisfying the equation
    \begin{equation} \label{equation fixed point equation}
        (yh_\gamma, h_\gamma^{-1} g) = (y, g \varphi(\gamma)).
    \end{equation}
    Note that if one is able to find a solution to \eqref{equation fixed point equation}, then (i) the association $\gamma \mapsto h_\gamma$ is a group homomorphism $\Gamma \to H$, (ii) we have $h_\gamma = g \varphi(\gamma)^{-1} g^{-1}$ so that the homomorphism of (i) is obtained from $\varphi$ by taking a $g$-conjugate, and (iii) the point $y \in Y$ is a fixed point under the action of $\varphi^g$. Assuming that $\varphi$ is indeed conjugate to a homomorphism with image inside $H$ so that $\mathrm{Fix}(\varphi, X)$ is nonempty, we may further assume without loss of generality that $\varphi$ itself has image contained in $H$ by replacing $\varphi$ by a conjugate. Then we may compute from \eqref{equation fixed point equation} the following description of the fixed point set:
    $$\mathrm{Fix}(\varphi,X) = \bigsqcup_{c \in C_\varphi} \, \mathrm{Fix}(\varphi^c, Y_c)$$
    where $c$ ranges through $C_\varphi$ (which we assume to be finite), and $Y_c$ is the fiber above $Hc \in H \backslash G$, on which $\varphi^c$ acts (note that $Y_c \simeq Y$ as varieties for all $c \in C_\varphi$).

    Since $\eta_X|_{H \times \Ggr} = \eta_Y$, with the preceding observations in mind we may rewrite the spectral $X$-period for an $L$-parameter $\varphi$ with image valued in $H$ as
    \begin{equation} \label{equation spectral induction step 1}
        (\eta_Y \circ \varphi)(\partial^{-1/2})\Delta^{\frac{\varepsilon_Y - \mathrm{dim}(X)}{4}} \sum_{c \in C_\varphi} \, \sum_{y \in \mathrm{Fix}(\varphi^c, Y_c)} \, L_{(y,c)}(X, \varphi^c).
    \end{equation}
   We concentrate on the contribution at the $c = \mathrm{id}$ coset and evaluate the expression $L_{(y,\mathrm{id})}(X, \varphi)$; all other $c \in C_\varphi$ lead to a similar conclusion. As a graded $H$-representation, the completed local ring at $(y, \mathrm{id}) \in X$ can be computed as
    $$\widehat{\mathcal{O}}_{X,(y,\mathrm{id})} \simeq \widehat{\mathcal{O}}_{Y,y} \,  \widehat{\otimes} \, \widehat{\mathcal{O}}_{H \backslash G, \mathrm{id}} \simeq \widehat{\mathcal{O}}_{Y,y} \, \widehat{\otimes} \, \widehat{\mathrm{Sym}}((\mathfrak{g}/\mathfrak{h})^*)$$
    where $\widehat{\mathcal{O}}_{Y,y}$ is graded via the grading on $Y$, and $\widehat{\mathrm{Sym}}((\mathfrak{g}/\mathfrak{h})^*)$ is graded by polynomial degree. The graded trace of a tensor product being a product of graded traces, we see that
    $$L_{(y, \mathrm{id})}(X, \varphi) = L_y(Y, \varphi) \cdot L_0(\mathfrak{g}/\mathfrak{h}, \varphi)$$
    where $0 \in \mathfrak{g}/\mathfrak{h}$ is the origin. Returning to \eqref{equation spectral induction step 1}, and using the dimensional relation $\mathrm{dim}(X) = \mathrm{dim}(Y) + \mathrm{dim}(G) - \mathrm{dim}(H)$, we have exactly the desired expression in the statement of the lemma.
\end{proof}

\subsection{Central twists}\label{subsub central twists}

 For the sake of conceptual completeness, we discuss in some detail the normalization scheme presented in \cite[\S 4.1]{CV} and point out some subtleties. While not strictly necessary, it is nevertheless motivating in order to follow the constructions of \S \ref{subsec JS period} and \S \ref{subsec:stackyJS}.
 
 Let $G$ be a reductive group, with a 1-dimensional central torus $z: \Gm \subset G$. Let $X$ be a spherical $G$-variety from which we have constructed a theta function $\theta_X$ and the corresponding period  $P_X(f)$
for a cuspidal automorphic form $f$ on $G$. We let $\delta: G \to \Gm$ be a primitve character. In applications one often considers central twists of $P_X$ of the form
\begin{equation}\label{equation PX with twist}
    f \times \chi \longmapsto \int_{[G]} f(g)\theta_X(g)\chi(\delta(g)) \, dg,
\end{equation}
where $\chi: [\Gm] \to \kk$ is a varying adelic character. Formally we may factor out the central $[\Gm]$-integral to obtain the expression
\begin{equation} \label{equation factoring out the center with twist}
     \bigg[\int_{[\Gm]} \omega_f(z)\chi^{\langle z,\delta\rangle}(z) \, dz\bigg]\bigg[\int_{Z_G(\mathbb{A})\backslash [G]} \, f(g)\theta_X(g)\chi(\delta(g)) \, dg\bigg].
\end{equation}
where the $[\Gm]$-integral distinguishes\footnote{One may interpret this divergent expression representation-theoretically, as is common in the renormalization of automorphic periods. In this case, the expression $\int_{[\Gm]}\omega_f(z)\chi^{\langle z,\delta\rangle}(z) \, dz$ is regarded as a vector in $\mathrm{Hom}_{\Gm(\mathbb{A})}(\omega_f \chi^{\langle z,\delta\rangle}, \mathbf{1})$, which vanishes unless $\omega_f\chi^{\langle z,\delta\rangle} = \mathbf{1}$.} central characters of the form $\omega_f\chi^{\langle z,\delta\rangle} = \mathbf{1}$, in which case the integrand $f(g)\theta_X(g)\chi(\delta(g))$ descends to the quotient $Z_G(\mathbb{A})\backslash [G]$ and the second bracketed integral makes sense; otherwise, $P_{X}(f \times \chi) = 0$. In the case of distinction, the $[\Gm]$-integral contributes a divergent factor of $\zeta(1)$. To address this convergence issue, we thus pass to the action $G^{\rm ad} \acts X$, which has the effect of formally canceling the divergent factor $\zeta(1)$.

In terms of (hyper)spherical actions, we may regard the period \eqref{equation PX with twist} as encoded by the \textit{Hamiltonian induction} of $M_0 := T^*X$ along the inclusion 
\begin{equation}
    G \hookrightarrow G \times \Gm \text{ by } g \mapsto (g, \delta(g)).
\end{equation}
from which we obtain the Hamiltonian $G \times \Gm$-space
$$M_1 := \mathrm{hInd}_{G}^{G \times \Gm}(T^*X) := T^*X \times_{\mathfrak{g}^*}^G T^*(G \times \Gm) \simeq T^*(X \times^G (G \times \Gm)).$$
Indeed, setting $X_1 = X \times^G (G \times \Gm)$, one verifies readily that the formula \eqref{equation PX with twist} is given by the period $P_{X_1}$ on $G \times \Gm$. To obtain a numerically convergent expression, one considers the \textit{Hamiltonian reduction} by the central $\Gm$:
$$M_{\mathrm{tw}} := \{0\} \times_{\mathfrak{g}_{m,z}^*}^{\mathbf{G}_{m,z}}M_1$$
where we denoted by $\mathbf{G}_{m,z}$ the central torus (to distinguish it from the recipient of the character $\delta$). The resulting symplectic space $M_{\mathrm{tw}}$ is a Deligne--Mumford stack equipped with a Hamiltonian action of the twisted group 
$$G_{\mathrm{tw}} := (G \times \Gm)/\mathbf{G}_{m,z}.$$
Note that a cusp form $f \times \chi$ on $G \times \Gm$ as above with $\omega_f \chi^{\langle z, \delta\rangle} = \mathbf{1}$ naturally descends to a cusp form on $G_{\mathrm{tw}}$, and the automorphic $M_{\mathrm{tw}}$-period of such an $f \times \chi$ is essentially the second bracketed period in \eqref{equation PX with twist}.

Now suppose we have knowledge of the hyperspherical dual $\check{G} \acts \check{M}$, and we seek to construct the Hamiltonian space whose spectral period ought to encode \eqref{equation PX with twist}, i.e., we would like to construct the hyperspherical dual $\check{M}_{\mathrm{tw}}$ of $M_{\mathrm{tw}}$. Then we first consider the Langlands dual Hamiltonian induction of $\check{M}$ along the inclusion 
$$\check{G} \hookrightarrow \check{G} \times \check{\mathbf{G}}_m \text{ by } g \mapsto (g,z(g))$$
where we now regard the central cocharacter $z$ of $G$ as a character of $\check{G}$. From this we obtain 
$$\check{M}_1 := \mathrm{hInd}_{\check{G}}^{\check{G} \times \check{\mathbf{G}}_m}(M)$$
as the hyperspherical dual to $G \times \Gm \acts M_1$. To obtain $\check{M}_{\mathrm{tw}}$, the hyperspherical dual of $M_{\mathrm{tw}}$, first note that the Langlands dual group $\check{G}_{\mathrm{tw}}$ of $G_{\mathrm{tw}}$ is naturally presented as the kernel in the following short exact sequence
$$1 \longrightarrow \check{G}_{\mathrm{tw}} \longrightarrow \check{G} \times \Gm \overset{z}{\longrightarrow} \Gm \longrightarrow 1.$$
To extract from $\check{M}_1$ the Hamiltonian $\check{G}_{\mathrm{tw}}$-action hyperspherical dual to $M_{\mathrm{tw}}$, one needs to recognize that $\check{M}_1$ is a Hamiltonian induction of some $\check{G}_{\mathrm{tw}} \acts \check{M}_{\mathrm{tw}}$ (see \cite[\S 3.1.1]{BZSV}). 

Note crucially that neither $G_{\mathrm{tw}} \acts M_{\mathrm{tw}}$ nor $\check{G}_{\mathrm{tw}} \acts \check{M}_{\mathrm{tw}}$ are necessarily hyperspherical: while they do satisfy the dimension estimate (i.e., that there exists a Borel subgroup with a dense orbit in the Lagrangian base of the hyperspherical action), they may be Deligne--Mumford stacks and/or have disconnected generic stabilizers.  

\subsection{Numerical duality}

In \S 4 of \cite{CV}, a notion of automorphic and spectral period duality was proposed, generalizing the numerical conjectures of \cite{BZSV}, to encompass certain singular reductive group actions. On the one hand, the definitions of \textit{loc. cit} apply \textit{a priori} to arbitrary varieties (modulo convergence issues); on the other hand, they were envisioned to be most useful in treating automorphic integrals in the existing literature which do not admit a simple description in terms of hyperspherical varieties, but which do present themselves as attached to certain singular conical varieties. 

Besides placing emphasis on numerical consequences (as opposed to verifying a geometric, or sheaf theoretic correspondences relying on versions of the geometric Langlands correspondence), the duality proposed in \cite{CV} requires certain period identities to hold only on the \textit{cuspidal part} of the automorphic spectrum. This is why we deem the proposed duality a \textit{weak numerical duality}, and we review this notion here for the reader's convenience. 

\begin{defn} \label{definition weak numerical duality}
    We say that a pair $(G_1,X_1), (G_2,X_2)$ (defined over $\Q$) are \textit{numerically weakly dual} if, for a suitable $N$, there are integral models $\mathbf{X}_1, \mathbf{X}_2$ over $\Z[1/N]$ such that for any finite field $\F$ of size $q$ with characteristic not dividing $N$ and for any projective smooth curve $\Sigma$ over $\F$ with discriminant $\Delta$, we have equalities
    \begin{equation} \label{equation weak numerical duality}
        P_{X_1}(f_1) = \Delta^{a_{12}/4}L_{X_2}(\varphi_1) \, \text{ and } \, P_{X_2}(f_2) = \Delta^{a_{21}/4} L_{X_1}(\varphi_2),
    \end{equation}
    where:
    \begin{itemize}
        \item $f_1$ (resp. $f_2$) is any Whittaker normalized, everywhere unramified cuspidal tempered automorphic form on $G_1$ (resp. $G_2$) with Langlands parameter $\varphi_1$ valued in $G_2$ (resp. $G_1$).
        \item $a_{12}, a_{21} \in \Z$ are integers known as the \textit{discrepancy} of the pair $(G_1,X_1)$ and $(G_2,X_2)$.
    \end{itemize}
\end{defn}
When $X_1$ and $X_2$ are smooth, we generally expect that numerical duality should hold \textit{without discrepancies}; in other words, it is a correction term expected for cases of singular duality. While the precise significance of these discrepancies is not completely clear at the moment, see \S \ref{subsection discrepancy} for a discussion of our current understanding.  

\begin{rem}[Convergence in numerical duality statements] \label{remark convergence in numerical duality}
    We explain briefly how to handle the convergence issues in the preceding definition, which encompasses the main example considered here (and those of \cite{CV}).

    By Lemma \ref{lemma convergence of automorphic period}, if $X_1, X_2$ are \textit{conical} actions, then $P_{X_1}(f_1)$ and $P_{X_2}(f_2)$ converge absolutely whenever the real part of the central characters of $f_1, f_2$ are large enough, respectively. Implicitly, we may let $z$ be this real parameter. It is established separately (Remark \ref{rem convergence of spectral period}) that the spectral sides of \eqref{equation weak numerical duality} converge whenever $z \gg 1$, and one must interpret this equation in the sense of analytic continuation at $z = 1/2$. In our current examples, we will in fact have analytic continuation for $z \in \C$, but in general we expect a natural boundary at $z > 0$ for nonabelian $L$-functions. 
\end{rem}
\begin{rem}[Hyperspherical duality and numerical duality]
    The protagonists of \cite{BZSV} are \textit{hyperspherical varieties}: certain smooth affine Hamiltonian actions which are small relative to the acting group. While it is not at all necessary to understand them to parse the results here, we provide a brief discussion of their relation to the preceding Definition \ref{definition weak numerical duality} lest the reader be confused with the two notions. 

    In \textit{op. cit.}, a pair of \textit{hyperspherical dual} actions of a pair of Langlands dual groups $G_1 \acts M_1$ and $G_2 \acts M_2$, conjecturally gives rise to a myriad of matching structures predicating on varying contexts of Langlands duality. In particular, at the numerical level, the authors of \textit{op. cit.} proposed numerical conjectures \cite[\S 14]{BZSV}, which lead to the two equations in \eqref{equation weak numerical duality} without discrepancy, assuming that $M_1 = T^*X_1$ and $M_2 = T^*X_2$ are both polarized. In particular, every hyperspherical dual pair leads (conjecturally) to a numerically dual pair, while the objects discussed by Definition \ref{definition weak numerical duality} are much more general than hyperspherical actions. 
\end{rem}

\subsection{Extensions to Deligne--Mumford stacks} \label{subsection DM stacks}

An extension of weak numerical duality to certain Deligne--Mumford stacks was proposed in \cite{toric}, when the reductive group in question is a torus. Such an extension was introduced with the intention of encompassing period formulae that exhibit the following behavior: 
\begin{itemize}
    \item the automorphic integral unfolds to a \textit{finite sum} of Eulerian integrals, and
    \item the spectral period computes a \textit{finite sum} of (nonabelian) $L$-values.
\end{itemize}
These two phenomena are evidently one and the same, regarded from the automorphic or the Galois side. In \textit{op. cit.}, it was proposed that we can encode this situation as an extended relative Langlands duality:
\begin{itemize}
    \item the (Hamiltonian) action on the automorphic side has generically disconnected stabilizers, being Langlands dual to
    \item Deligne--Mumford stabilizers of the (Hamiltonian) action on the Galois side. 
\end{itemize}
In both cases -- having generically disconnected stabilizers or Deligne--Mumford stabilizers -- disqualifies a (Hamiltonian) action from being (hyper)spherical. Nonetheless, we observe, following \textit{op. cit.}, that at the numerical level (and the geometrical level), it is straightforward to generalize the definition of automorphic and spectral periods. 

We will avoid exceedingly general considerations, and simply focus on the following situation which seems to us an appropriate generality for applications to our program. Let $(Y, \omega_Y)$ be a graded $H$-variety for a reductive group $H$, and suppose we are given a group homomorphism $\iota: H \to G$ which is a \textit{central isogeny onto a subgroup}. We consider the induction from $H$ to $G$ of $Y$ via the same formula as in \S \ref{subsection period induction}:
$$X := \mathrm{Ind}_H^G(Y) := [(Y \times G)/H].$$
which is again a $Y$-fiber bundle over $[H \backslash G]$. Alternatively, if we write $\mu := \mathrm{ker}(\iota)$ and $\overline{H} := \mathrm{im}(\iota) \subset G$, we may present $X$ as
\begin{equation}\label{equation two presentations}
    X  \simeq [Y/\mu] \times^{\overline{H}} G.
\end{equation}

We pay some explicit attention to the computation of the eigencharacter associated to an eigenvolume form\footnote{Here we will only use volume forms on Deligne--Mumford stacks. Recall that the tangent complex of the smooth locus $X^{\mathrm{sm}}$ of $X$ is \textit{underived} and may be identified with the tangent space of its coarse quotient via the (\'etale) quotient map. Consequently, the sheaf of volume forms on $X^{\mathrm{sm}}$ is identified with the pullback of that on the coarse quotient.}. Taking the formula of $\omega_X$ \textit{verbatim} as in \S \ref{subsection period induction}, the action of an element $h \in \overline{H} \subset G$ on the fiber over the identity coset in $\overline{H}\backslash G$ is given by $y \cdot h = y \widetilde{h}$, where $\widetilde{h}$ is an arbitrary lift of $h$ to $H$, so we have
\begin{equation} \label{equation eigencharacter on cover}
    |\eta_X(h)| = |\eta_Y(\widetilde{h})| \text{ for } h \in \overline{H}.
\end{equation}

We note that Definitions \ref{definition automorphic period} and \ref{definition spectral period} extend almost \textit{verbatim} to $X$, and so do the statements of Lemmas \ref{lemma automorphic induction} and \ref{lemma spectral induction}. For the sake of clarity, let us spell out explicitly these consequences.

\subsubsection{Automorphic periods}
We briefly recall the geometric setup leading to BZSV's period sheaves as in \cite[\S 10]{BZSV} in order to motivate our definition. Given an arbitrary $G$-space $X$, one forms the following relative mapping stacks\footnote{We will ignore subtleties arising from twists by spin structures and normalizations for this informal discussion, focusing only on the important qualitative aspects.}
\begin{equation}\label{equation BunGX}
p: \mathrm{Bun}_G^X(\Sigma) := \mathrm{Map}(\Sigma, [X/G]) \longrightarrow \mathrm{Bun}_G(\Sigma).
\end{equation}
The $\theta$-function $\theta_X$ is geometrized by the BZSV period sheaf $\Theta_X := p_!\overline{\Q}_{\ell}$ (regarded as an $\ell$-adic sheaf on $\mathrm{Bun}_G(\Sigma)$), and the automorphic period $P_X$ is geometrized by the Hom-functor $\mathrm{Hom}(\Theta_X, -)$. But note that $p$ factors as 
$$\mathrm{Bun}_G^X(\Sigma) \simeq \mathrm{Bun}_H^Y(\Sigma) \overset{p'}{\to} \mathrm{Bun}_H(\Sigma) \overset{\iota}{\to}\mathrm{Bun}_G(\Sigma)$$
in the case when $X = \mathrm{Ind}_H^G(Y)$, so the aforementioned Hom-functor can be computed by adjunction as
$$\mathrm{Hom}(\Theta_X, -) \simeq \mathrm{Hom}(\Theta_Y, \iota^*(-)).$$
Taking Frobenius traces, we recover function-theoretic analogues of $\Theta_X, \Theta_Y$ as distributions on the groupoid of $\F$-rational points of $\mathrm{Bun}_G, \mathrm{Bun}_H$, respectively. If the groups in consideration are \textit{connected}, then by Weil's uniformization we may express these groupoids as the unramified adelic quotients $[G],[H]$, hence the formulation of Definition \ref{definition automorphic period}. 

However, if $H$ is disconnected (as it will be in our main cases of interest), it is no longer the case that $[H]$ is identified with $\mathrm{Bun}_H(\F)$ since a finite \'etale cover may not be trivializable on a Zariski cover, and it is the description of the following definition that remains faithful to the geometric interpretation of automorphic periods according to the setup of \eqref{equation BunGX}. 

\begin{defn} \label{definition DM automorphic period}
    Let $H, Y, \iota: H \to G$, and $X = \mathrm{Ind}_H^G(Y)$ be as above.  We assume that the image $\overline{H} \subset G$ of the isogeny $\iota$ satisfies the Galois cohomological condition $\mathrm{K}^1_{\overline{H},G} = 0$. For an automorphic form $f$ on $G$, we define the automorphic $X$-period of $f$ as
    \begin{equation}
        P_X(f) := \int_{\mathrm{Bun}_H(\F)} \, f(\iota(h)) \, \theta_Y(h) \, dh,
    \end{equation}
    where for an $H$-bundle $h \in \mathrm{Bun}_H(\F)$, the theta function $\theta_Y$ is evaluated as
    $$\theta_Y(h) := C_Y \cdot |\mathrm{Sect}(\Sigma, Y \times^{H \times \Ggr} hK^{1/2})(\F)| \, |\eta_Y|^{1/2}(h),$$
    with global constant $C_Y = \Delta^{\frac{\mathrm{dim}(Y) - \mathrm{dim}(H) - \varepsilon_Y}{4}}$ as in Definition \ref{definition automorphic period}, and $dh$ is the groupoid measure on $\mathrm{Bun}_H(\F)$ divided by the number of connected components of $\mathrm{Bun}_H$.
\end{defn}
The definition is compatible with Lemma \ref{lemma automorphic induction} when $\iota$ is an inclusion: indeed, when $H$ is connected, under Weil's uniformization $\mathrm{Bun}_H(\F) \simeq [H]$, the two definitions of $\theta_Y$ are compatible. Indeed, the theta series $\theta_Y$ of \eqref{equation theta series} can be regarded as the counting function of everywhere regular rational sections of the associated bundle $Y \times^{H \times \Ggr}hK^{1/2}$, hence the counting function of sections.

\begin{rem}
    The Galois cohomological assumption $\mathrm{K}^1_{\overline{H},G} = 0$ is made in order to avoid conflicting definitions in the case when $\iota$ is an inclusion but $\mathrm{K}^1_{\overline{H},G} \neq 0$. Indeed, in this case Definitions \ref{definition automorphic period} and \ref{definition DM automorphic period} would both apply, but they do not agree.
\end{rem}

In fact, the most important case for us will be when $\iota$ is a \textit{split isogeny}, in the sense that $H = \overline{H} \times \mu$ for some finite abelian group $\mu$, and $\iota: H \to \overline{H} \subset G$ is the projection composed with inclusion, with $\overline{H}$ connected.
\begin{lem}\label{example DM stack periods split isogeny}
    Suppose the isogeny $\iota : H \to G$ splits as $\iota: H \simeq \overline{H} \times \mu \subset G$, for some finite abelian group $\mu$. Then
    $$P_X(f) = \int_{[H]} \, f(\iota(h))\theta_Y(h) \, dh.$$
\end{lem}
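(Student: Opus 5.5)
The plan is to reduce everything to the two factors of the split isogeny and then apply Weil's uniformization to the connected factor. Since $H=\overline{H}\times\mu$ is a product, the moduli stack splits as
$$\mathrm{Bun}_H(\Sigma)\simeq \mathrm{Bun}_{\overline{H}}(\Sigma)\times \mathrm{Bun}_\mu(\Sigma),$$
and so does its groupoid of $\F$-points. The number of connected components of $\mathrm{Bun}_H$ is the product of those of the two factors. Hence the measure $dh$ of Definition \ref{definition DM automorphic period} is the product of the normalized groupoid measures $d\bar h$ and $dm$ on the two factors.

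The integrand also factors. By construction $\iota$ is the projection onto $\overline{H}$, so $f(\iota(h))=f(\bar h)$ depends only on the $\overline{H}$-component. For $\theta_Y$, since $\mu$ is finite, $|\eta_Y|$ is trivial on $\mu$-torsors and $|\eta_Y|^{1/2}(h)=|\eta_Y|^{1/2}(\bar h)$. Only the section count $|\mathrm{Sect}(\Sigma, Y\times^{H\times\Ggr}hK^{1/2})(\F)|$ sees the $\mu$-torsor $m$, through the $m$-twisted form of the $\overline{H}$-twisted $Y$. Therefore
$$P_X(f)=\int_{\mathrm{Bun}_{\overline{H}}(\F)} f(\bar h)\,|\eta_Y|^{1/2}(\bar h)\, C_Y\!\int_{\mathrm{Bun}_\mu(\F)} \big|\mathrm{Sect}(\Sigma, Y\times^{H\times\Ggr}(\bar h,m)K^{1/2})(\F)\big|\, dm\, d\bar h .$$

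For the outer integral, $\overline{H}$ is connected, so Weil's uniformization identifies $\mathrm{Bun}_{\overline{H}}(\F)\simeq[\overline{H}]$ with compatible measures. As explained after Definition \ref{definition DM automorphic period}, the inner section count for the trivial torsor is exactly the adelic sum $\sum_{y\in Y(F)}\Phi(y\partial^{1/2}\bar h)$ appearing in \eqref{equation theta series}.

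For the $\mu$-part, I would read $[H]=[\overline{H}]\times[\mu]$ adelically, with $[\mu]=\mu(F)\backslash\mu(\mathbb{A})/\mu(\mathfrak{o})$. For each $m\in\mathrm{Bun}_\mu(\F)$, choose a cocycle representative in $\mu(\mathbb{A})$ and rewrite the $m$-twisted section count as the count of $F$-points of $Y$ that are integral after translation by that adelic element. This is the same unfolding of global sections into rational points with local integrality conditions used in the proof of Lemma \ref{lemma automorphic induction}. Summing over $m$ against $dm$ then becomes the integral over $[\mu]$ of $\theta_Y(\bar h,m)$, because the normalization of $dm$ by the number of connected components is designed to match the Haar probability normalization on $\mu(\mathfrak{o})$ together with the counting measure on $\mu(F)$. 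Recombining with the outer integral gives $\int_{[H]} f(\iota(h))\theta_Y(h)\,dh$.

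I expect this last matching to be the main obstacle. Everywhere-unramified $\mu$-torsors need not be trivial at the places $v$; for instance, those coming from constant field extensions are nontrivial at every place. So the identification of $\mathrm{Bun}_\mu(\F)$ with an adelic double coset is not literally Weil's uniformization. I would handle it by interpreting $[\mu]$, for the finite group $\mu$, as the groupoid $\mathrm{Bun}_\mu(\F)$ itself. The content of the step is then that the normalized groupoid measure and the twisted section counts agree with the adelic description on the neutral component, which should follow from the argument above. This requires carefully checking that the factor from dividing by $|\pi_0(\mathrm{Bun}_\mu)|$ exactly compensates the mass $|\mathrm{H}^1(\Sigma,\mu)|/|\mu|$ of $\mathrm{Bun}_\mu(\F)$.
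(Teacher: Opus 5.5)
Your factorization $\mathrm{Bun}_H(\F)\simeq\mathrm{Bun}_{\overline{H}}(\F)\times\mathrm{Bun}_\mu(\F)$ matches the paper. So do the observation that $f\circ\iota$ and $|\eta_Y|^{1/2}$ only see $\bar h$, and the use of Weil uniformization on the connected factor. The last step, however, has a genuine gap.

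In the lemma, $[H]$ is the unramified adelic quotient. For the finite constant group $\mu$ one has $\mu(F_v)=\mu(\mathfrak{o}_v)$ at every place, so $\mu(\mathbb{A})=\mu(\mathfrak{o})$ and $[\mu]=\mu(F)\backslash\mu(\mathbb{A})/\mu(\mathfrak{o})$ is a point. Hence the right-hand side, as the paper uses it, is $\int_{[\overline{H}]}f(\bar h)\theta_Y(\bar h)\,d\bar h$ with the ordinary theta series, i.e.\ it only involves the trivial torsor.

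A nontrivial unramified $\alpha$ has no representative in $\mu(\mathbb{A})$ at all. The obstruction is not local: a constant-field torsor actually splits at places of even degree. It is that $\alpha$ has no $F$-rational point at the generic point. Its section count enumerates integral points of $Y(K_\alpha)^{\mu}$ (twisted action) over the \'etale algebra $K_\alpha$, not $F$-points of $Y$ made integral by an adelic translate. So the unfolding of Lemma \ref{lemma automorphic induction} does not apply.

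Declaring $[\mu]:=\mathrm{Bun}_\mu(\F)$ turns the lemma into a restatement of Definition \ref{definition DM automorphic period}, and that is not what is used later. In \S\ref{subsec_automorphic_side_Gcheck} the right-hand side is unfolded via $\mathring{Y}'(F)\simeq P_0'(F)\backslash H'(F)$, which sees only rational points. Comparing $|\pi_0(\mathrm{Bun}_\mu)|$ with the mass of $\mathrm{Bun}_\mu(\F)$ does not help either, because the issue is how the integrand depends on $\alpha$, not the total mass.

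The missing step is the collapse of the $\alpha$-average onto the trivial torsor, which is how the paper argues. The measure divides by $|\mathrm{Cov}_\mu(\F)|$, and $f\circ\iota$ is constant in the $\mathrm{Cov}_\mu$-direction. The paper then asserts $\theta_Y(h\times\alpha)=\theta_Y(h\times 1)$ for every unramified $\alpha$ (``all the $\alpha$'s are necessarily integral''), so the average is the $\alpha=1$ term, and $[H]\simeq[\overline{H}]$ finishes.

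Your suspicion is well founded, though: that pointwise equality is clear when $\mu$ acts trivially on $Y$, but not in general. In the paper's application, $\epsilon\in\mu_2$ acts on $Y'$ by the scalar $\epsilon$, exactly as $j(\mathrm{Id},\epsilon)$ with $(\mathrm{Id},\epsilon)$ central in $\overline{H}'$ (cf.\ \eqref{equation map splitting isogeny}). So $\theta_Y(\bar h\times\alpha)$ is the untwisted theta series at the twist of $\bar h$, through $x\mapsto(\mathrm{Id},x)$, by the $2$-torsion line bundle $L_\alpha$ (the Kummer image of $\alpha$). That twist generally has a different number of sections.

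What one can prove is an averaged version. Since $f\otimes\chi$ transforms by $\chi$ under $(\mathrm{Id},x)$, translating by $L_\alpha$ on $[\overline{H}']$ turns the $\alpha$-term into $\chi(L_\alpha)^{\pm1}$ times the trivial one. Because $\alpha\mapsto L_\alpha$ surjects onto $\mathrm{Pic}(\Sigma)[2]$, the average equals the trivial-torsor term when $\chi$ is trivial on $\mathrm{Pic}(\Sigma)[2]$ and is $0$ otherwise. To complete the argument along either route, you need to make this computation explicit, together with a hypothesis of this kind (or trivial $\mu$-action on $Y$).
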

\begin{proof}
    Definition \ref{definition DM automorphic period} indicates that the associated period is an integral over $$(\mathrm{Bun}_{\overline{H}}\times \mathrm{Cov}_\mu)(\F)  \simeq [\overline{H}] \times \mathrm{Cov}_\mu(\F),$$ where $\mathrm{Cov}_\mu$ denotes the (discrete) moduli stack of unramified Galois $\mu$-covers of $\Sigma$. Since the automorphic form $f$ in question is pulled back along $\iota$, it is constant in the $\mathrm{Cov}_\mu(\F)$-direction.  Hence, the automorphic period reduces to 
    \begin{equation} \label{equation sum over Covmu}
        P_X(f) = \frac{1}{|\mathrm{Cov}_\mu(\F)|}\sum_{\alpha \in \mathrm{Cov}_\mu(\F)} \int_{[\overline{H}]} \, f(h) \, \theta_Y(h \times \alpha) \, dh =  \int_{[\overline{H}]} \, f(h) \, \theta_Y(h \times 1) \, dh,
    \end{equation}
    since all the $\alpha$'s are necessarily integral, where now both the $dh$'s that appear in the preceding equation are groupoid measures. Now, noticing that $\iota$ induces an isomorphism $[H] \simeq [\overline{H}]$ of groupoids (since the unramified adelic quotient \eqref{unramified adelic quotient} of finite groups are trivial), we can rewrite the integral over $[H]$ as in the statement of the lemma.
\end{proof}

\begin{rem}
One may wish to obtain a description of the period $P_X$ independent of the induction presentations of \eqref{equation two presentations}. If one regards $X$ as obtained from induction along the subgroup $\overline{H} \subset G$, one is led to write down
$$P'_X(f) := \int_{[\overline{H}]} \, f(h) \, \theta_{[Y/\mu]}(h) \, dh,$$
where the definition of the theta series $\theta_{[Y/\mu]}$ requires some explanation. We can copy \eqref{equation theta series} to write
\begin{equation} \label{equation stacky theta series}
    \theta_{[Y/\mu]}(h) :=  C_{[Y/\mu]} \cdot \sum_{x \in [Y/\mu](F)} \, \Phi(x \cdot h \partial^{1/2})|\eta_{[Y/\mu]}|^{1/2}(h),
\end{equation}
and note that the $F$-points $x$ of $[Y/\mu]$ can be partitioned into isomorphism classes of \'etale $\mu$-algebras $K/F$, each contributing $Y(K)^\mu$ where $\mu$ acts by the Galois action on $K$ and its defining action on $Y$. A meaningful way to renormalize the infinite sum over \eqref{equation stacky theta series} as written is to consider only those \'etale $\mu$-algebras that are unramified over $F$, as done in \cite{toric}. With this regularization, since unramified \'etale $\mu$-algebras are in one-to-one correspondence with $\mu$-covers of $\Sigma$, we would form the \textit{regularized theta series}
$$\theta^{\mathrm{reg}}_{[Y/\mu]}(h) := C_{[Y/\mu]} \cdot \sum_{\alpha \in \mathrm{Cov}_\mu(\F)} \, \Phi(\alpha \cdot h \partial^{1/2})|\eta_{[Y/\mu]}|^{1/2}(h).$$
Using $\theta_{[Y/\mu]}^{\mathrm{reg}}$ instead of $\theta_{[Y/\mu]}$ in the definition of $P'_X$, we obtain a regularized period that coincides with $P_X$ thanks to \eqref{equation sum over Covmu}.
\end{rem}

\subsubsection{Spectral periods}
Analogously on the Galois side, given an arbitrary $G$-space $X$ one forms the relative mapping stack
\begin{equation}\label{equation LocGX}
q:\mathrm{Loc}_G^X(\Sigma) := \mathrm{Map}(\Sigma_{\mathrm{dR}}, [X/G]) \longrightarrow \mathrm{Loc}_G(\Sigma).
\end{equation} 
The spectral period $L_X$ is geometrized by the BZSV $L$-sheaf $\mathcal{L}_X := q_*\omega_{\mathrm{Loc}_G^X}$ (regarded as an ind-coherent sheaf on $\mathrm{Loc}_G(\Sigma)$), and the $L$-value of a given $L$-parameter is geometrized by the Hom-functor $\mathrm{Hom}(-, \mathcal{L}_X)$. Since $q$ factors as
$$\mathrm{Loc}_G^X(\Sigma) \simeq \mathrm{Loc}_H^Y(\Sigma) \overset{q'}{\to}\mathrm{Loc}_H(\Sigma) \overset{\iota}{\to} \mathrm{Loc}_G(\Sigma),$$
the aforementioned Hom-functor can be computed by adjunction as
$$\mathrm{Hom}(-, \mathcal{L}_X) \simeq \mathrm{Hom}(\iota^*(-), \mathcal{L}_Y).$$
Let $\varphi \in \mathrm{Loc}_G$ be an $L$-parameter, and let $\delta_{\varphi}$ be the skyscraper sheaf supported at $\varphi$\footnote{We will assume for simplicity that the centralizer of the image of $\varphi$ is trivial in this motivational discussion.}. Evaluating the preceding formula on $\delta_{\varphi}$, we see that the spectral $X$-period of $\varphi$ can be computed first by applying $\iota^*$ to obtain $\iota^*\delta_\varphi = \oplus_{\widetilde{\varphi}: \iota \circ \widetilde{\varphi} = \varphi} \, \delta_{\widetilde{\varphi}}$, and then by calculating the usual $Y$-spectral period for each lift $\widetilde{\varphi}$. These geometric considerations motivate the following definition.

\begin{defn}\label{defn_spectral_period_DM_stack}
 Let $H, Y, \iota: H \to G$, and $X=\mathrm{Ind}_H^G(Y)$ be as above. The spectral $X$-period of a $G$-valued $L$-parameter $\varphi: \Gamma_F \to G$ distinguishes parameters with image in a conjugate of $\overline{H}$, and in the distinction case is defined as
        \begin{equation}
            L_X(\varphi) = \Delta^{\frac{\mathrm{dim}(H)-\mathrm{dim}(G)}{4}} \cdot L(1,\varphi, \mathfrak{g}/\overline{\mathfrak{h}}) \cdot \sum_{\substack{\widetilde{\varphi}: \Gamma_F \to H \\ \text{ such that } \iota  \circ \widetilde{\varphi} = \varphi}} \mathfrak{z}_Y(\widetilde{\varphi}) \cdot L_Y(\widetilde{\varphi}),
        \end{equation}
    where $\widetilde{\varphi}$ ranges through equivalence classes of \textit{unramified} lifts of $\varphi$ to $H$.    
\end{defn}
This definition is compatible with Lemma \ref{lemma spectral induction} in the case when $\iota$ is an inclusion, and in fact agrees \textit{verbatim} with Definition \ref{definition spectral period} if one interprets the fixed points $\mathrm{Fix}(\varphi, X)$ in a stack-theoretic sense. In more classical terms, in the distinction case $L_X$ evaluates to a sum over \textit{unramified} $\mu = \mathrm{Ker}(\iota)$-twists of an $L$-function for $\overline{H}$.

\section{The Shalika model} \label{section Shalika model}

\subsection{The Jacquet--Shalika period}\label{subsec JS period}

We start by recalling a hyperspherical dual pair proposed by \cite[Example 4.5.1]{BZSV}, based on the pair of Langlands dual groups $G=\mathrm{GL}_{2n}$ and $\check{G}=\mathrm{GL}_{2n}$. Consider the unipotent subgroup
$$U = \bigg\{\begin{bmatrix} 1 & x\\ 0 & 1\end{bmatrix} \text{ for } x \in \mathrm{Mat}_n\bigg\} \subset G$$
with the trace character $\Psi: [U] \to \C$ given by $$\begin{bmatrix} 1 & x\\ 0 & 1\end{bmatrix} \mapsto \psi({\rm Tr}(x)).$$ We write $2\rho_\Sh$ for the sum of positive roots appearing in the unipotent subgroup $U$. Also, let $\mathrm{GL}_n^\Delta$ denote the subgroup of $G$ given by $$ \bigg\{\begin{bmatrix} g & 0 \\ 0 & g\end{bmatrix} \text{ for } g \in \mathrm{GL}_n\bigg\}.$$ 
Then we consider the pair of (Hamiltonian actions associated to the) group actions
\begin{equation} \label{equation Shalika dual pair}
    \mathrm{GL}_{2n} \acts \Sh := (U \mathrm{GL}_n^\Delta \backslash G, \Psi) \, \text{ and } \, \mathrm{GL}_{2n} \acts \check{\Sh} := \mathrm{Sp}_{2n} \backslash \check{G}
\end{equation}
with the following data (see \cite[\S 3.2.1]{BZSV}):
\begin{itemize}
    \item On the $G$-side, we consider the $\Ggr$-action on $\Sh$ given by left multiplication by the cocharacter
    $$e^{-\check{\lambda}_\Sh}: \Ggr \to G$$
    $$t \longmapsto \begin{bmatrix} t \cdot \mathrm{Id}_n & 0\\ 0 & t^{-1} \cdot \mathrm{Id}_n\end{bmatrix}.$$
    The $\Ggr$-eigenform has weight $\varepsilon_\Sh := \langle \check{\lambda}_\Sh, 2\check{\rho}_\Sh\rangle = 2n^2$.
    \item On the $\check{G}$-side, we consider the trivial $\Ggr$-action. 
\end{itemize} 
Then it is stipulated in \textit{loc. cit} that $(G,\Sh)$ and $(\check{G}, \check{\Sh})$ define hyperspherical dual (Hamiltonian) actions. 

Let us spell out carefully the period duality predicted by the hyperspherical dual pair \eqref{equation Shalika dual pair}, and we remark before continuing that it must be interpreted in terms of the renormalization scheme explained in \cite[\S 4.1]{CV}. Let $f$ be a Whittaker normalized cusp form on $\mathrm{GL}_{2n}$ with $L$-parameter $\varphi_f$ valued in the Langlands dual $\mathrm{GL}_{2n}$. Writing out the definition of $P_{\Sh}(f)$ and $L_{\check{\Sh}}(\varphi_f)$, we obtain the period identity
\begin{align} \label{equation vanilla Jacquet Shalika period}
        &\Delta^{-\frac{2n^2 + \varepsilon_{\Sh}}{4}} \cdot \int_{[U][\mathrm{GL}_n]} \, f(um \cdot e^{\check{\lambda}_\Sh}(\partial^{1/2}))\Psi(u) \, du \, dm\\
        &= \begin{cases}
            \, 0 \, \text{ if } \, \varphi_f \text{ is not symplectic, and }\\
            \, \Delta^{\frac{n-2n^2}{4}}\cdot L(1,\varphi_f, \check{\g}/\mathfrak{sp}_{2n}) \, \text{ otherwise}. 
        \end{cases}
\end{align}
Formally, the above period distinguishes trivial central characters $\omega_f = \mathbf{1}$, which corresponds to distinguishing $\varphi_f$ with image inside $\mathrm{SL}_{2n} \subset \mathrm{GL}_{2n}$. In the case of distinction, the automorphic side contributes a divergent factor of $\zeta(1)$ as the central torus part of the integral, matching an identical divergent factor of the spectral side:
$$L(1,\varphi_f, \check{\g}/\mathfrak{sp}_{2n}) = \zeta(1) \cdot L(1, \varphi_f, \wedge_0^2) \text{ for } \mathrm{Im}(\varphi_f) \subset \mathrm{Sp}_{2n}.$$
Formally ``cancelling $\zeta(1)$'s" (and multiplying by a factor of $\Delta^{1/4}$ on the right hand side for dimension reasons, switching from $\check{G}$ to $\mathrm{SL}_{2n}$), we obtain the following hyperspherical dual pair: 
\begin{equation}
    \mathrm{PGL}_{2n} \acts (U\mathrm{PGL}_n^\Delta\backslash \mathrm{PGL}_{2n}, \Psi) \, \text{ and } \, \mathrm{SL}_{2n} \acts \mathrm{Sp}_{2n} \backslash \mathrm{SL}_{2n}
\end{equation}

\subsection{Central twists of Jacquet--Shalika periods on \texorpdfstring{$\GL_4$}{GL4}}  For applications to our automorphic integrals of interest, we will restrict to $n = 2$ (although the general case is similar), and we must allow central twists of \eqref{equation vanilla Jacquet Shalika period}. While this procedure itself is a slight generalization of the induction and restriction of hyperspherical dual actions along central tori \cite[\S 4.1]{CV}, we refrain from discussing things in an unnecessary generality. Instead, we summarize briefly in Remark \ref{remark central twist} below and leave a careful discussion to future work.

\begin{rem}[Central twists and induction] \label{remark central twist}
As discussed in \S \ref{subsub central twists}, expanding on the more terse \cite[\S 4.1]{CV}, it is often convenient to be able to twist automorphic periods by a character and compute the hyperspherical dual formally. More generally, if $X$ is induced from $Y$ along an inclusion $H \subset G$ (or more generally, Whittaker induced along $H \subset G$ and a unipotent subgroup $(U,\Psi)$), then we may twist the automorphic $X$-period over $[G]$ (after unfolding one step as in Lemma \ref{lemma automorphic induction}) by 
\begin{equation} \label{equation centrally twisted period}
    f \longmapsto \int_{[H][U]} \, f(uh)\chi(\delta(h))\theta_Y(h)\Psi(u) \, du \, dh.
\end{equation}
for an automorphic form $f$ on $G$ and a character $\delta: H \to \Gm$ of $H$. Then the period \eqref{equation centrally twisted period} is associated to the Hamiltonian induction along $H(U,\Psi) \subset G \times \Gm$ where $H$ is embedded via $h \mapsto (h, \delta(h))$. The (numerically convergent part of the) centrally twisted period is now labeled by the (Whittaker) induction of $Y_{\mathrm{tw}}$ along $H_{\mathrm{tw}}(U,\Psi) \subset G_{\mathrm{tw}}$, which we denote by $G_{\mathrm{tw}} \acts X_{\mathrm{tw}}$. Note that the subscript $(-)_{\mathrm{tw}}$ now refers to reduction by the central $\Gm \subset H$ embedded in $G \times \Gm$ by $z \mapsto (z,\delta(z))$. 

Now suppose we have the knowledge of the hyperspherical dual action $\check{H} \acts \check{Y}$, from which we have formed $\check{H}_{\mathrm{tw}} \acts \check{Y}_{\mathrm{tw}}$ as in \S \ref{subsub central twists}. By the general principle that ``relative duality commutes with composition of boundary conditions" (see, for instance, \cite{Nakajima}), we may compute the relative dual of $(G, X_{\mathrm{tw}})$ by applying to $\check{Y}_{\mathrm{tw}}$ the Langlands dual operation of Whittaker induction along $H_{\mathrm{tw}}(U,\Psi) \subset G_{\mathrm{tw}}$. This latter procedure is not currently understood in satisfying generality, and may produce non-hyperspherical actions, as will be the case for us.
\end{rem}

Let us now denote the Langlands dual pair of groups
\begin{equation}\label{first appearance GSO and GSpin}
    \check{G} := \frac{\mathrm{GL}_{4} \times \Gm}{(z, z^{-2})} \, \, \text{ and } \, G := \bigg\{(g,\lambda) \in \mathrm{GL}_{4} \times \Gm: \mathrm{det}(g) = \lambda^{2}\bigg\}.
\end{equation}
In view of Remark \ref{remark central twist}, we perform a central twist of the Jacquet--Shalika dual pair of hyperspherical actions along the determinant character $\mathrm{det}: \mathrm{GL}_2^\Delta \to \Gm$ in the following sense. 
Consider the actions\footnote{We apologize for the preposterous convention for having a check versus not; it comes from a conflict of wanting to have ``traditionally" automorphic things without a check, while wanting to let $G$ be the group on which new automorphic phenomena is computed.}
\begin{equation}\label{def_M}
    \check{G} \acts M := (U \mathrm{PGL}_2^\Delta \backslash \check{G} ,\Psi) \, \text{ and } \, G \acts \check{M} := \mathrm{GSp}_{4} \backslash G,
\end{equation}
where $\mathrm{PGL}_2^\Delta$ denotes the subgroup of  $\check{G}$ given by $$ \bigg\{\left(\begin{bmatrix} g & 0 \\ 0 & g\end{bmatrix}, {\rm det}(g)^{-1}\right) \text{ for } g \in \mathrm{PGL}_2\bigg\},$$ and where $\mathrm{GSp}_{4}$ is embedded in $G$ via $g \longmapsto (g, \nu(g)).$ The grading on $M$ and $\check{M}$ are inherited from those of $\Sh$ and $\check{\Sh}$, respectively:
\begin{itemize}
    \item We consider the $\Ggr$-action on $M$ by left multiplication by the character $e^{-\check{\lambda}_\Sh}$, while
    \item on $\check{M}$ we consider the trivial $\Ggr$-action. 
\end{itemize}

We may thus interpret the main statements \cite[Proposition 1 of \S 5.1, Theorem 1 of \S 8]{Jacquet-Shalika} as (one direction of) the weak numerical duality of central twists of the hyperspherical example \eqref{equation Shalika dual pair}.
\begin{thm}[Jacquet--Shalika]\label{theorem Jacquet-Shalika}
    Let $f \times \chi$ be a Whittaker normalized cusp form on $\check{G}$, i.e., $f$ is a Whittaker normalized cusp form on $\check{G}$ with central character $\omega_\Pi$, and $\chi$ an adelic character satsifying $\omega_\Pi = \chi^2$. Then the automorphic period 
    $$P_M(f \times \chi) = \Delta^{-\frac{7+\varepsilon_\Sh}{4}} \cdot \int_{[U][\mathrm{PGL}_2]}f( u\left[\begin{smallmatrix} g& \\ &g\end{smallmatrix}\right] \cdot e^{\check{\lambda}_\Sh}(\partial^{1/2}))\chi^{-1}(\mathrm{det}(g))\Psi(u) \, d(u,g)$$
    computes the spectral period
    $$L_{M}(\varphi_f \times \chi) = \begin{cases} \, 0 \, \text{ unless } \mathrm{Im}(\varphi_f \times \chi) \subset \mathrm{GSp}_{4}, \\ \, 
    \Delta^{-\frac{5}{4}}  \cdot L(1, \varphi_f, \wedge^2_0) \, \text{ in the distinction case.}
    \end{cases}$$
\end{thm}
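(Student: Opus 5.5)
The plan is to reduce the statement to the Jacquet--Shalika evaluation of the twisted Shalika period in terms of the exterior square $L$-function, and then to track normalizing constants.

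\textbf{Step 1 (unfolding the automorphic side).} Since $M$ is Whittaker-induced from the homogeneous space $\mathrm{PGL}_2^\Delta\backslash \mathrm{PGL}_2^\Delta$, I would first apply (the Whittaker version of) Lemma \ref{lemma automorphic induction} along $U\mathrm{PGL}_2^\Delta \subset \check{G}$. This writes $P_M(f\times\chi)$ as the displayed integral over $[U][\mathrm{PGL}_2]$. The prefactor $\Delta^{-(7+\varepsilon_\Sh)/4}$ comes from $C_M$ in \eqref{equation betaX}, with $\dim M - \dim \check{G} = -(\dim U + 3) = -7$. The translation by $e^{\check{\lambda}_\Sh}(\partial^{1/2})$ is forced by the $\partial^{1/2}$-shift in $\theta_M$ together with the $\Ggr$-action. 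I would then rewrite $\chi^{-1}(\det g)$ as a twist on $\mathrm{GL}_2$: since $\omega_\Pi=\chi^2$, the integrand $f(u\,\mathrm{diag}(g,g))\chi^{-1}(\det g)$ is invariant under the center of $\mathrm{GL}_2$. Hence the integral over $[\mathrm{PGL}_2]$ is exactly the Jacquet--Shalika twisted Shalika integral $\int_{Z_\mathbb{A}\mathrm{GL}_2(F)\backslash\mathrm{GL}_2(\mathbb{A})}\int_{[U]}$ for the pair $(\Pi,\chi)$, with the center quotiented out so that no $\zeta(1)$ appears.

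\textbf{Step 2 (Euler product).} I would invoke \cite[Proposition 1 of \S 5.1]{Jacquet-Shalika}. Unfolding the residue of the twisted exterior square integral against the mirabolic Eisenstein series shows that the twisted Shalika functional is nonzero iff $L^S(s,\Pi,\wedge^2\otimes\chi^{-1})$ has a pole at $s=1$. By \cite[Theorem 1 of \S 8]{Jacquet-Shalika}, the Shalika period then equals the residue of $L(s,\Pi,\wedge^2\otimes\chi^{-1})$ at $s=1$, expressed via the Whittaker function. For unramified everywhere and Whittaker normalization, this gives $\mathrm{Res}_{s=1}L(s,\wedge^2\otimes\chi^{-1})/\mathrm{Res}\,\zeta$. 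Spectrally, a pole exists exactly when $\varphi_f\times\chi$ lands in $\mathrm{GSp}_4$ with similitude $\chi$. In that case $\wedge^2\otimes\chi^{-1}=\mathbf{1}\oplus\wedge_0^2$, so the quotient is $L(1,\varphi_f,\wedge^2_0)$.

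\textbf{Step 3 (spectral side).} I would compute $L_M$ via Lemma \ref{lemma spectral induction} with $H=\mathrm{GSp}_4\subset G$ and $Y=\mathrm{pt}$. For a symplectic parameter, $C_\varphi$ should be a single point: the normalizer of $\mathrm{GSp}_4$ in $G$ is $\mathrm{GSp}_4$ times the center, and a generic (cuspidal-type) parameter has only one symplectic reduction up to conjugacy. Then $\mathfrak g/\mathfrak{gsp}_4\simeq\wedge^2_0$. The power of $\Delta$ is $(\dim\mathrm{GSp}_4-\dim G)/4=(11-16)/4=-5/4$, and $\mathfrak z$ is trivial because $\eta=1$.

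\textbf{Step 4 (the main obstacle).} The hard part is matching constants: Jacquet--Shalika's local measures and additive-character conductors versus our $\partial^{1/2}$ normalization. I would handle this by computing the unramified local Shalika functional at the shifted point $e^{\check{\lambda}_\Sh}(\varpi_v^{m_v})$, using the local Casselman--Shalika-type formula (or the Whittaker expansion of \cite{Jacquet-Shalika}). I expect each place to contribute $q_v^{-m_v\cdot c}$ with the total power of $\Delta$ cancelling the prefactor. The switch between the measures $dx$ and $d^\psi x$ on $U$ via \eqref{equation switching to self dual measure}, which contributes $\Delta^{\dim U/2}$, should be what reconciles $\varepsilon_\Sh=8$ with the final exponent $-5/4$.
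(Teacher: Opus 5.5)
Your Steps 1--2 match what the paper does: it gives no argument beyond reading the statement off \cite[Proposition 1 of \S 5.1, Theorem 1 of \S 8]{Jacquet-Shalika}. Step 3, however, rests on a false claim.

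In $G=\{(g,\lambda)\in\mathrm{GL}_4\times\Gm:\det g=\lambda^2\}$ the center is $\{(z\,\mathrm{Id},\lambda):z^4=\lambda^2\}\simeq\Gm\times\mu_2$. The element $c=(\mathrm{Id},-1)$ is central but does not lie in $H=\{(g,\nu(g))\}$. So ``$\mathrm{GSp}_4$ times the center'' is $H\sqcup Hc$. Because $c$ is central, $Hc\in C_\varphi$ for every $\varphi$ valued in $H$, and hence $|C_\varphi|\geq 2$.

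Concretely, let $G$ act by $\omega\cdot(g,\lambda)=\lambda^{-1}\,{}^tg\,\omega\,g$. The stabilizer of $J$ is exactly $H$, so this identifies $H\backslash G$ with the affine quadric $\{\omega\in\wedge^2k^4:\mathrm{Pf}(\omega)=\mathrm{Pf}(J)\}$. Suppose the $\mathrm{GL}_4$-component of $\varphi$ is irreducible and symplectic with similitude $\chi$. Then Schur's lemma shows the fixed alternating forms are the multiples $tJ$, and $\mathrm{Pf}(tJ)=t^2\mathrm{Pf}(J)$ leaves exactly the two fixed points $\pm J$. Both have tangent representation $\wedge^2_0$.

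Lemma \ref{lemma spectral induction} therefore gives $L_{\check M}(\varphi)=2\,\Delta^{-5/4}L(1,\varphi,\wedge^2_0)$, not the displayed value. Uniqueness of the symplectic reduction up to conjugacy is beside the point: $C_\varphi$ counts fixed points on $H\backslash G$, and here both fixed points give the same reduction $\varphi$.

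This matters because nothing else in your argument pins down the global constant finely enough to confirm or absorb this factor. Your Step 2 assertion that Jacquet--Shalika yields exactly $\mathrm{Res}_{s=1}L(s,\wedge^2\otimes\chi^{-1})/\mathrm{Res}_{s=1}\zeta$ for Whittaker-normalized forms, and your Step 4 bookkeeping, are precisely where the constant must be computed. That computation involves the residue of the Eisenstein series, the volume of $[\mathrm{PGL}_2]$ (a factor $2$ can easily enter there, since $\mathrm{Bun}_{\mathrm{PGL}_2}$ has two components), and all measure and $\partial^{1/2}$ conventions. You only anticipate its outcome.

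To complete the proof, you must carry out that comparison and determine whether it produces $\Delta^{-5/4}L(1,\varphi_f,\wedge^2_0)$ or twice it. Only then can the displayed formula be identified with $L_{\check M}$ as defined.

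For context, the paper also writes $L_{\check M}(\varphi)=\Delta^{-5/4}L(1,\varphi,\wedge^2_0)$ and drops the sum over $C_\varphi$ in Proposition \ref{proposition spectral X period}. This is harmless for its main results, which only use the ratio $L_X/L_{\check M}$. The same two-element $C_\varphi$ appears for $X=\mathrm{Ind}_H^G(Y)$ with identical contributions (since $c$ is central), so the factor cancels there.
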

Note that by our embedding of $\mathrm{GSp}_{4}$ in $G$, we are distinguishing parameters where $\varphi_f$ is valued in symplectic similitudes and whose similitude character agrees with $\chi$.

\subsection{A stacky Jacquet--Shalika period}\label{subsec:stackyJS}

We retain the notation introduced in \eqref{first appearance GSO and GSpin}. We now switch the automorphic and spectral roles of the Langlands dual pair of groups $G$ and $\check{G}$, i.e., now we consider automorphic forms on $G$ whose $L$-parameters are valued in $\check{G}$. We are interested in the (Hamiltonian action associated to the) $G$-space
\begin{equation}\label{eq action P W}
   G \acts W := (U \mathrm{SL}_2\backslash G,\Psi),
\end{equation}
where $\mathrm{SL}_2$ is embedded in $G$ via
$$m \longmapsto \bigg(\begin{bmatrix} m& \\ &m\end{bmatrix}, \,  1\bigg),$$ and $U \simeq \mathrm{Mat}_2$, with the $\Ggr$-action via the cocharacter $e^{-\check{\lambda}_\Sh}$ as above\footnote{We extend $e^{-\check{\lambda}_\Sh}$ to a cocharacter of $G$ by sending
    $t \mapsto \left (\left [\begin{smallmatrix} t \cdot \mathrm{Id}_2 & 0\\ 0 & t^{-1} \cdot \mathrm{Id}_2\end{smallmatrix}\right], 1 \right).$}. The (Hamiltonian) space $W$ encodes the Jacquet--Shalika type period  
$$P_W(f) =  \Delta^{-\frac{7+\varepsilon_\Sh}{4}} \cdot \int_{[U][\mathrm{SL}_2]} \, f\left( u\left[\begin{smallmatrix} m& \\ &m\end{smallmatrix}\right] \cdot e^{\check{\lambda}_\Sh}(\partial^{1/2})\right)\Psi(u) \, du \, dm$$
for automorphic forms $f$ on $G$. We propose that the relative Langlands dual of $(G, W)$ would be a Deligne--Mumford stack, of the type discussed in \S \ref{subsection DM stacks}. To describe it, we start with the prequotient $\mathrm{GL}_{4} \times \Gm$ defining $\check{G}$, and consider the subgroup
$$\mathrm{GSp}_{4} \times \Gm \longrightarrow \mathrm{GL}_{4} \times \Gm.$$
Passing to the central quotient by $(z\mathrm{Id}, z^{-2})$, we obtain an inclusion
\begin{equation} \label{equation GSp2nbarprime}\overline{\mathrm{GSp}}_{4}' :=  \frac{\mathrm{GSp}_{4} \times \Gm}{(z\mathrm{Id}, z^{-2})} \longrightarrow \frac{\mathrm{GL}_{4} \times \Gm}{(z\mathrm{Id}, z^{-2})} = \check{G}.
\end{equation}
Note that $\check{G}$ has a character given by
$$\lambda(g,x) = \mathrm{det}(g)x^2$$
and we can consider a covering group $\widetilde{G}$ obtained by taking a square root of $\lambda$:
\begin{equation*} 
    \pi: \widetilde{G} := \big\{(g,x;y) : (g,x) \in \check{G} \text{ and } \lambda(g,x) = y^2\big\} \longrightarrow \check{G}.
\end{equation*}
Pulling back $\pi$ along the inclusion \eqref{equation GSp2nbarprime}, we obtain an isogeny
\begin{equation} \label{equation GSp4 prime}
    \iota: \mathrm{GSp}_{4}' \to \overline{\mathrm{GSp}}_{4}' \subset \check{G}.
\end{equation}
Note that $\mathrm{GSp}_{4}' \simeq \overline{\mathrm{GSp}}_{4}' \times \mu_2$ splits as a product, since $\lambda$, when restricted to the subgroup $\overline{\mathrm{GSp}}_{4}'$, acquires a natural square root given by
$$\lambda^{1/2}:(g,x) \longmapsto \nu(g)x.$$
We consider the (Hamiltonian action associated to the) $\check{G}$-space
$$\check{G} \acts \check{W} := [\mathrm{GSp}_{4}' \backslash \check{G}]$$
with trivial $\Ggr$-action. Note that $\check{W}$, as a Deligne--Mumford stack, can be understood as the homogeneous space $\overline{\mathrm{GSp}}_{4}'\backslash \check{G}$ modulo the trivial $\mu_2$-action.

As the notation suggests, we are proposing a numerical level duality between the actions $G \acts W$ and $\check{G} \acts \check{W}$. 
\begin{thm} \label{conjecture stacky Shalika dual pair}
    Let $f$ be a Whittaker normalized cuspidal automorphic form on $G$ with $L$-parameter $\varphi_f$ valued in $\check{G}$. Then 
    \begin{equation} \label{equation PW and LcheckW}
        P_W(f) = L_{\check{W}}(\varphi_f).
    \end{equation}
\end{thm}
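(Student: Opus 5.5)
The strategy is to compute both sides of \eqref{equation PW and LcheckW} independently and reduce each to the Jacquet--Shalika identity of Theorem \ref{theorem Jacquet-Shalika}, applied to forms on the ambient group $\mathrm{GL}_4\times\Gm$.

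\textbf{Automorphic side: spectral expansion over central characters.} View $G=\{(g,\lambda):\det g=\lambda^2\}$ as a subgroup of $\mathrm{GL}_4\times\Gm$, so that $G$ contains $\mathrm{SL}_2^\Delta$ while $\mathrm{GL}_4\times \Gm$ contains the larger group $\mathrm{GL}_2^\Delta$. The integrand of $P_W(f)$ is left $[U][\mathrm{SL}_2]$-invariant. To reach a Jacquet--Shalika period on $\mathrm{GL}_4$, we expand $f$ in the direction of the quotient torus $\mathrm{GL}_2^\Delta\times\Gm \to$ (characters trivial on the image of $G$). Concretely, restriction of automorphic forms from $\mathrm{GL}_4\times\Gm$ to $G$ has abelian cokernel $[\Gm]$ via $(g,\lambda)\mapsto \det g\,\lambda^{-2}$. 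So we write $f$ as the restriction of a cusp form $F$ on $\mathrm{GL}_4\times\Gm$ and Fourier-expand over characters $\xi$ of $[\Gm]$ in the unramified setting. This leaves a finite sum over unramified quadratic-type twists. Performing the $[\mathrm{SL}_2]$-integral inside $[\mathrm{GL}_2]$ via $\det$ produces
\[
\int_{[\mathrm{SL}_2]} = \frac{1}{\mathrm{vol}}\sum_{\chi}\int_{[\mathrm{GL}_2]}\chi^{-1}(\det m)\,(\cdots),
\]
where $\chi$ runs over characters of $[\Gm]/\det$-image that are compatible with the central character of $F$. Compatibility forces $\omega_F=\chi^2$, so $\chi$ ranges over square roots of a fixed character, that is, over a torsor for unramified quadratic characters $\mathrm{Cov}_{\mu_2}(\F)$. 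Each term is exactly $P_M(F\times\chi)$, and Theorem \ref{theorem Jacquet-Shalika} evaluates it as $\Delta^{-5/4}L(1,\varphi_F,\wedge^2_0)$ when $\varphi_F\times\chi$ lands in $\mathrm{GSp}_4$, and as $0$ otherwise. The normalizing constants $\Delta^{-(7+\varepsilon_\Sh)/4}$ agree by construction, and the Whittaker normalizations match because the Whittaker functional on $G$ is the restriction of that on $\mathrm{GL}_4\times\Gm$.

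\textbf{Spectral side.} Apply Definition \ref{defn_spectral_period_DM_stack} together with the splitting $\mathrm{GSp}_4'\simeq\overline{\mathrm{GSp}}_4'\times\mu_2$ and Lemma \ref{lemma spectral induction} with $Y=\mathrm{pt}$. Then $L_{\check W}(\varphi_f)$ vanishes unless $\varphi_f$ lands in a conjugate of $\overline{\mathrm{GSp}}_4'$. In that case it equals
\[
\Delta^{\frac{\dim \mathrm{GSp}_4-\dim\check G}{4}}\,L\bigl(1,\varphi_f,\check{\mathfrak g}/\mathfrak{gsp}_4\bigr)
\]
times the number of unramified lifts $\widetilde\varphi$ to $\mathrm{GSp}_4'$ (with $\mathfrak z$-factors trivial). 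Two identifications complete the comparison. First, $\check{\mathfrak g}/\mathfrak{gsp}_4\simeq\wedge^2_0$ and the $\Delta$-exponent equals $-5/4$. Second, lifts of $\varphi_f$ through the $\mu_2$-cover $\pi$ are in bijection with unramified square roots of $\lambda\circ\varphi_f$. Each such square root corresponds, under class field theory, to a $\chi$ with $\chi^2=\omega_F$, and distinction by $\mathrm{GSp}_4$ with similitude $\chi$ is exactly the condition $\lambda^{1/2}=\nu\cdot x$. Matching the two finite sums term by term then gives the theorem.

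\textbf{Main obstacle.} The delicate step is the bookkeeping in the automorphic expansion. We must check that the finitely many $\chi$ arising from $[\mathrm{GL}_2]/[\mathrm{SL}_2]$ exactly match the lifts counted on the spectral side, with the correct volume factor, namely the $1/|\mathrm{Cov}_{\mu_2}(\F)|$ and the class-number-type volume of $[\Gm]$. The subtlety is that several $\chi$ can yield the same symplectic distinction while $\varphi_F\times\chi$ lies in $\mathrm{GSp}_4$ for only some of them. To control this, I would first parametrize the fibre of restriction from $\mathrm{GL}_4\times\Gm$ to $G$ explicitly in unramified idele-class terms, and only then compare volumes.
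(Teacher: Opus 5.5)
Your architecture is the paper's: write $f=(F\boxtimes\chi)|_G$, turn the $[\SL_2]$-integral into a finite sum of character-twisted Shalika periods on $\GL_4\times\Gm$, evaluate each one by Theorem \ref{theorem Jacquet-Shalika}, and compare with \eqref{spectral L W check}. You reach the finite sum differently, and your route is the more economical one. Set $\phi(g)=\int_{[U]}(F\boxtimes\chi)\bigl(u\,(\mathrm{diag}(g,g),\det g)\bigr)\Psi(u)\,du$. The function $g\mapsto\int_{[\SL_2]}\phi(lg)\,dl$ is left $\GL_2(F)\SL_2(\A)$-invariant, so it factors through $\det$ onto $F^\times\backslash\A^\times/\mathfrak o^\times=\mathrm{Pic}(\Sigma)$. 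Finite Fourier analysis therefore replaces the paper's spectral decomposition of $L^2([\GL_2])$. It also makes the vanishing of the cuspidal and Eisenstein contributions (Proposition \ref{Auxiliary:Lemma:Spectral:Expansion}) unnecessary. Your preliminary expansion over the cokernel of $G\hookrightarrow\GL_4\times\Gm$ plays no role.

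The gap is the final comparison.

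\emph{Automorphic side.} The expansion gives $P_W(f)=c\sum_{\chi}P_M(F\times\chi)$, with $\chi$ running over the $|\mathrm{Hom}(\Gamma_F^{\mathrm{ur}},\mu_2)|$ unramified square roots. Testing on $\phi=\xi\circ\det$ shows $c=\mathrm{vol}[\SL_2]/\mathrm{vol}[\mathrm{PGL}_2]$, which is $1/2$ with the paper's Haar measures. This constant does not depend on the number of square roots. By Theorem \ref{theorem Jacquet-Shalika}, $P_M(F\times\chi)\neq 0$ forces $\varphi_F$ to be symplectic with similitude $\chi$. If this held for both $\chi$ and $\chi\kappa$ with $\kappa\neq 1$ quadratic, then $\varphi_F\simeq\varphi_F\otimes\kappa$. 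So for generic $\varphi_f$ exactly one summand survives: the one attached to the preferred lift, as your own remark about $\lambda^{1/2}=\nu\cdot x$ already shows.

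\emph{Spectral side.} Each of the $|\mathrm{Hom}(\Gamma_F^{\mathrm{ur}},\mu_2)|$ lifts contributes the same $\Delta^{-5/4}L(1,\varphi_f,\wedge^2_0)$, because $\check{\mathfrak g}/\overline{\mathfrak{gsp}}_4'$ only sees $\iota\circ\widetilde\varphi$.

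So ``matching term by term'' cannot work. Everything hinges on the constant and on the number of surviving twists, which you leave open, so as it stands the proposal does not prove the statement.

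The paper's own proof does not supply this missing step in a form you could import. Proposition \ref{Spectral:Expansion:Shalika} records the constant as $1/|\Sigma(\omega_\Pi^{-1}\chi^{-2})|$. The proof of Corollary \ref{theorem stacky Shalika dual pair} then asserts that every $\varepsilon$-twisted Shalika period contributes $L(1,\varphi_\mu,\wedge^2_0)$, which is exactly what the distinction criterion excludes for the non-preferred $\varepsilon$. Even granting that assertion, $|\Sigma|^{-1}\cdot|\Sigma|$ copies of $\Delta^{-5/4}L(1,\varphi_\mu,\wedge^2_0)$ do not produce the factor $|\mathrm{Hom}(\Gamma_F^{\mathrm{ur}},\mu_2)|$ in \eqref{spectral L W check}. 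Closing the argument needs an honest evaluation of $c$ and of the nonvanishing twists. After that, you must re-examine how lifts are weighted in Definition \ref{defn_spectral_period_DM_stack}, or how $P_W$ is normalized, so that the two counts actually agree.
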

We will provide the proof of this auxiliary result in \S \ref{subsection Shalika for GSpin6} as Corollary \ref{theorem stacky Shalika dual pair} after having examined more carefully the non-standard Shalika model on $G$. The proof is conceptually clear: one considers a spectral expansion along the embedding $G \subset \mathrm{GL}_4 \times \Gm$ to relate the Shalika period on $G$ to a finite sum of twisted Shalika periods on $\mathrm{GL}_4 \times \Gm$, and we use the formula of Jacquet--Shalika (Theorem \ref{theorem Jacquet-Shalika}) to conclude.
\begin{rem}
    Note that Theorem \ref{conjecture stacky Shalika dual pair} is not an instance of BZSV's numerical conjectures, as both $W$ and $\check{W}$ fail to be hyperspherical albeit in a very mild way. Indeed, we may understand the space $W$ as obtained by Hamiltonian restriction, dual to central twisting as discussed in \S \ref{subsub central twists} and Remark \ref{remark central twist}. 
\end{rem}

\begin{rem}
    In principle, one can expect the full numerical duality, i.e., that $P_{\check{W}} = L_W$ as well, but the statement \eqref{equation PW and LcheckW} is the important one for us in the following. 
\end{rem}

Let us explicate the spectral period $L_{\check{W}}(\varphi_f)$ in classical terms, i.e., in terms of usual Langlands $L$-functions and distinction properties. First of all, we have that $L_{\check{W}}(\varphi_f)$ vanishes unless $\varphi_f$ can be lifted to an unramified $L$-parameter factoring through $\iota$; in particular, $\varphi_f$ must have image inside $\overline{\mathrm{GSp}}_{4}'$ inside $\check{G}$. 

In the distinction case, we may pick a preferred lift $$\widetilde{\varphi}_f := \varphi_f \times (\lambda^{1/2} \circ \varphi_f): \Gamma_F^{\mathrm{ur}} \to \mathrm{GSp}_{4}'$$
induced by the preferred square root $\lambda^{1/2}$, and the set of unramified lifts will be a torsor under $\mathrm{Hom}(\Gamma_F^{\mathrm{ur}}, \mu_2)$:
\begin{align}\label{torsors_L_Wcheck}
    \big\{\text{Lifts of }\varphi_f \text{ to }\mathrm{GSp}_{4}'\big\} \longleftrightarrow \big\{\text{square roots of } \lambda \circ \varphi_f\big\} \longleftrightarrow\mathrm{Hom}(\Gamma_F^{\mathrm{ur}}, \mu_2)
\end{align}
$$\widetilde{\varphi}_f \cdot \varepsilon \longleftrightarrow (\lambda^{1/2} \circ \varphi_f) \cdot \varepsilon \longleftrightarrow \varepsilon$$
so we may write (see Definition \ref{defn_spectral_period_DM_stack})
\begin{align}
    L_{\check{W}}(\varphi_f) &=  \Delta^{-\frac{5}{4}} \sum_{\varepsilon \in \mathrm{Hom}(\Gamma_F^{\mathrm{ur}}, \mu_2)} L(1,\check{\mathfrak{g}}/\overline{\mathfrak{gsp}}_{4}', \widetilde{\varphi}_f \cdot \varepsilon) \nonumber\\
    &= \Delta^{-\frac{5}{4}} \cdot |\mathrm{Hom}(\Gamma_F^{\mathrm{ur}}, \mu_2)| \cdot L(1,\wedge^2_0, \varphi_f), \label{spectral L W check}
\end{align}
where we use the fact that the adjoint action of $\widetilde{\varphi}_f \cdot \varepsilon$ on the Lie algebra quotient $\check{\mathfrak{g}}/\overline{\mathfrak{gsp}}_{4}'$ factors through $\iota$, and hence depends only on $\varphi_f$.

\subsection{Shalika normalization}
\label{subsection Shalika normalization} 
A fundamental annoyance in period formulae of the form \begin{center}
    ``$P_X(f) = L_{\check{X}}(\varphi_f)$"
\end{center} for an automorphic form $f$ with $L$-parameter $\varphi_f$ is that of normalization: which scaling of $f$ should one take so that the period formula is true on the nose? 

From the perspective of BZSV's numerical conjectures (conjectures in \S 14 of \cite{BZSV}) and even in the singular examples of \cite{CV}, one has taken the preference that $f$ should be \textit{Whittaker normalized}, i.e. the normalized Whittaker integral of $f$ should be 1. Such a normalization is advantageous for period integrals that unfold to an adelic integral of Whittaker functions (which encompass the vast majority of examples\footnote{With notable exceptions, such as the ones unfolding to non-unique models, see \cite{BumpFurusawaGinzburgNonUnique}.}), since the local integrals can directly be expressed in terms of irreducible characters of $\check{G}$ via the appropriately shifted Casselman--Shalika formulae (see \S 2.2 of \textit{op. cit.}). 

On the other hand, Whittaker normalization is \textit{disadvantageous} for non-generic forms, or for period integrals that do not easily unfold to the Whittaker model. For instance, the Godement--Jacquet period \cite{Godement-Jacquet} naturally unfolds to the $L^2$-period (i.e. group case period) of the automorphic forms involved. The $L^2$/group case-period being understood (via the formula of Lapid--Mao \cite{Lapid-Mao} and explicated in \S B.2 of \cite{CV}), it is more advantageous in this case to normalize our automorphic forms to have unit $L^2$-norm (as was done in \S B.3.1 of \textit{op. cit.}).

More generally, without preferentially distinguishing a normalization, the numerical conjectures of \\ \cite{BZSV} can be enunciated in the following form. 
\begin{conj}[Conjecture 14.2.1 \cite{BZSV}, tempered polarized case.] \label{conjecture ratio of periods} Let $f$ be an \textit{arbitrarily normalized} unramified tempered automorphic form on $G$ with $L$-parameter $\varphi_f$. For any pair of hyperspherical dual varieties $(M_1, \check{M}_1 = T^*\check{X}_1)$ and $(M_2, \check{M}_2 = T^*\check{X}_2)$, we have an equality of ratios
    \begin{equation}
        \frac{P_{M_1}(f)}{P_{M_2}(f)} \overset{\cdot}{=} \frac{L_{\check{X}_1}(\varphi_f)}{L_{\check{X}_2}(\varphi_f)},
    \end{equation}
where the equality ($\overset{\cdot}{=}$) is to be understood if $P_{M_2}(f) \neq 0$ and $L_{\check{X}_2}(\varphi_f) \neq 0$, and undefined otherwise.
\end{conj}
In practice, one is interested in some period $P_{M_1}$ and is free to choose a convenient $P_{M_2}$ for normalization purposes. It is particularly advantageous to choose an $M_2$ which appears as an open subvariety of $M_1$, in which case the the global prefactors in Definition \ref{definition automorphic period} cancel (assuming that $M_1$ and $M_2$ are both hyperspherical and hence in particular \textit{smooth}). 

The spectral side analogue of the preceding paragraph is, in our experience, the following phenomenon. If one chooses $M_2 \subset M_1$ an open subvariety, it is likely that $\check{X}_1$ is a fiber bundle over $\check{X}_2$ (in the hyperspherical case, in fact vector bundles). In particular, if the fiber is some variety $Y$ and $L_{\check{X}_2}(\varphi_f) \neq 0$, the global prefactors in Definition \ref{definition spectral period} simplify to those of the spectral period for $Y$ itself. For instance, if $\check{X}_2 = H \backslash \check{G}$ for some reductive subgroup $H \subset \check{G}$ and $\check{X}_1 = \mathrm{Ind}_{H}^{\check{G}}(Y)$ for some $H$-space $Y$, then
\begin{equation} \label{equation quotient of spectral periods}
    \frac{L_{\check{X}_1}(\varphi)}{L_{\check{X}_2}(\varphi)} = L_{Y}(\varphi),
\end{equation}
whenever both sides converge. 

\begin{example} \label{example Whittaker normalization}
Let us explicate the preceding discussion in the simplest nontrivial example possible. Consider the group $G = \mathrm{PGL}_2$ with 
$$M_1 = T^*(T \backslash G) \text{ and } M_2 = \mathrm{Whittaker}.$$
where $T \subset G$ is the diagonal torus. Note that $M_2$ is a dense open subvariety in $M_1$, via the following explicit parametrization:
$$M_2 \simeq \begin{bmatrix} 0 & \ast \\ 1 & 0\end{bmatrix} \times G \hookrightarrow \begin{bmatrix} 0 & \ast \\ \ast & 0\end{bmatrix} \times^T G \simeq M_1.$$
Dually, we have chosen the $\check{G} = \mathrm{SL}_2$ spaces
$$\check{X}_1 = \mathbf{A}^2 \, \text{ and } \, \check{X}_2 = \mathrm{pt},$$
with $\check{G}$ acting by the standard (right) representation on $\mathbf{A}^2$ (represented by row vectors). Indeed, $\check{X}_1$ is a vector bundle over $\check{X}_2$. Conjecture \ref{conjecture ratio of periods} in this case reads as follows: for $f$ an unramified cusp form on $G$ with $L$-parameter $\varphi_f$, \textit{arbitrarily normalized}, we have
\begin{equation}\label{equation example of normalization}
    \frac{\Delta^{\frac{-\mathrm{dim}(T)}{4}}\int_{[T]}f(t) \, dt}{\Delta^{-\frac{\mathrm{dim}(U) + \langle 2\rho, 2\check{\rho}\rangle}{4}} \int_{[U]}f(u\partial^{1/2}) \psi(u)du} \overset{\cdot}{=} \frac{\mathfrak{z}_{\check{X}_1}(\varphi_f) \cdot   L(1/2, \varphi_f,\mathrm{std})}{1}
\end{equation}
where the equality ($\overset{\cdot}{=}$) is to be understood when $f$ is \textit{generic}. Now it is indeed for those \textit{Whittaker normalized forms} $f$ (i.e., those $f$ for which the denominator on the left hand side of \eqref{equation example of normalization} equals 1) for which the equation 
\begin{equation}
    \Delta^{\frac{-\mathrm{dim}(T)}{4}}\int_{[T]}f(t) \, dt = \mathfrak{z}_{\check{X}_1}(\varphi_f) \cdot L(1/2, \varphi_f,\mathrm{std})
\end{equation}
holds on the nose. It is straightforward to check that this notion coincides with the \textit{Whittaker normalization} in \cite{CV}; for more details on the rest of this computation, see \S B.4.1 of \textit{op. cit}. On the other hand, we will return to this example in Example \ref{example Whittaker normalization 2}, to analyze the (lack of) discrepancy in this integral.

This example can be easily generalized to an analogous relation between the Bump--Friedberg integral \cite{Bump-Friedberg} and the Jacquet--Shalika integral. More examples of this type have been discovered in the context of Hitchin moduli spaces, see \cite[\S 4.3]{CHY}.
\end{example}

With the above discussion in mind, for automorphic periods unfolding to the Shalika models $P_M$ and $P_W$ (which will be presently of interest to us), it is advantageous to consider the following normalization. 
\begin{defn}\label{def shalikanormalized}
    Let $f$ be an unramified automorphic form on $\check{G}$ (resp. $G$). Then we say that $f$ is \textit{Shalika normalized} if 
    $$P_M(f) = 1 \quad \text{(resp. } P_W(f) = 1\text{).}$$
    In particular, as an immediate consequence of Theorem \ref{theorem Jacquet-Shalika}, an unramified automorphic form $f$ can be Shalika normalized only if its $L$-parameter $\varphi_f$ is conjugate to the symplectic subgroup of the Langlands dual group of $\check{G}$ and $G$, respectively. 
\end{defn}

\subsubsection{Shalika normalization, explicitly}\label{subsubsecn=2}  Recall that the groups involved are
$$\check{G} = \frac{\mathrm{GL}_{4} \times \Gm}{(z, z^{-2})}  \, \, \text{ and } \, G = \bigg\{(g,\lambda) \in \mathrm{GL}_{4} \times \Gm: \mathrm{det}(g) = \lambda^{2}\bigg\}.$$ 
First, we write
$$a_\partial := {\rm diag}(\partial,\partial,1,1) \in \mathrm{GSp}_4(\mathbb{A}),$$
Then the following element of $\check{G}(\mathbb{A})$ (resp. $G(\mathbb{A})$) play a crucial role:
\begin{equation}\label{Definition:a0}a_0 := (a_\partial^{-1} , 1) \text{ (resp. }a_0 := (a_\partial^{-1} , \partial^{-1})\text{)}.\end{equation} 
They are characterized by either (a) being the idèle whose local components are the most antidominant points in the diagonal torus of  $\check{G}(F_v)$ (resp. of $G(F_v)$) at which the local Shalika functions are nonvanishing, or (b) effectuating the same action as that of $\partial^{1/2}$ on a preferred base point appearing in the definition of the theta series \eqref{equation theta series}.

 Let $\Pi$ be a cuspidal unramified automorphic representation of $\check{G}(\A)$. We can identify $\Pi$ with $\pi \otimes \chi$, where  $\pi$ is a cuspidal unramified automorphic representation of $\GL_4(\A)$ and $\chi:[\mathbf{G}_m]\to \C^\times$ is an unramified character  such that $\omega_\pi = \chi^2$. Then we may compute, for a factorizable Shalika normalized cusp form $f \times \chi$ in the space of $\Pi$ and a toral element $a = (a_1,a_2) \in \check{G}(\A)$ with which we translate,
\begin{align} 
    P_M(\Pi(a)&(f \times \chi)) \nonumber \\ &= \Delta^{-\frac{7+\varepsilon_\Sh}{4}} \cdot \int_{[U][\mathrm{PGL}_2]}f( u\left[\begin{smallmatrix} g& \\ &g\end{smallmatrix}\right] \cdot e^{\check{\lambda}_\Sh}(\partial^{1/2})a_1)\chi^{-1}(\mathrm{det}(g))\Psi(u) \, d(u,g) \nonumber \\  &= \Delta^{-\frac{7+\varepsilon_\Sh}{4}} \cdot \int_{[U][\mathrm{PGL}_2]}f( u\left[\begin{smallmatrix} g& \\ &g\end{smallmatrix}\right] a_\partial^{-1} \partial^{1/2} a_1)\chi^{-1}(\mathrm{det}(g))\Psi(u) \, d(u,g) \nonumber\\  &= \Delta^{-\frac{7+\varepsilon_\Sh}{4}} \cdot \int_{[U][\mathrm{PGL}_2]}f( u\left[\begin{smallmatrix} g& \\ &g\end{smallmatrix}\right] a_\partial^{-1} a_1) \chi(\partial)\chi^{-1}(\mathrm{det}(g))\Psi(u) \, d(u,g) \nonumber\\ &= \Delta^{-\frac{7+\varepsilon_\Sh}{4}}\Delta^{\frac{\mathrm{dim}(U)}{2}}  \chi(\partial) \int_{[U][\mathrm{PGL}_2]} f(u \left[\begin{smallmatrix} g& \\ &g\end{smallmatrix}\right]  a_\partial^{-1} a_1)\chi^{-1}(\mathrm{det}(g)a_2)\Psi(u) \, d^\psi u  \,dg \nonumber \\
    &= \Delta^{-\frac{7}{4}} \cdot \Omega_f^\circ \cdot \chi^{-1}(a_2 \partial^{-1})\prod_v \, \Omega_{f,v}^{\mathrm{ur}}(a_{1,v}), \label{equation expliciting Shalika normalization}
\end{align}
where, in the third equality, we used that $\omega_\pi=\chi^2$,  $\Omega_{f,v}^{\mathrm{ur}}$ is the local spherical function which will appear in Proposition \ref{CS:Formula:Shalika:simplified}, in which they are related to Weyl characters of the dual group via a Casselman--Shalika type formula, and $\Omega_f^\circ$ is the globally defined constant\footnote{If the reader wishes to, they may compare this with $W_f^0$ in \cite[\S 2.2]{CV}.}
\begin{equation}
     \Omega_f^\circ := \int_{[U][\mathrm{PGL_2}]} f(u \left[\begin{smallmatrix} g& \\ &g\end{smallmatrix}\right] a_\partial^{-1})\chi^{-1}(\mathrm{det}(g))\Psi(u) \, d^\psi u \, dg. 
\end{equation}
For $f$ to be Shalika normalized, we must have that both sides of equation \eqref{equation expliciting Shalika normalization} are equal to 1 when we set $a$ to be the identity; in particular, we see that for $f \times \chi$ to be $P_M$-normalized is to say that $f$ satisfies the condition
\begin{equation} \label{equation Omegaf0}
    \Omega_f^\circ = \Delta^{\frac{7}{4}} \chi^{-1}(\partial).
\end{equation}
The derivation of the analogue of \eqref{equation Omegaf0} for the period $P_W$ is similar but considerably more involved, and we will return to it in \S \ref{subsection Shalika for GSpin6}, where an Euler factorization formula will be given as Proposition \ref{Shalika:GSpin:In:GL}.

\section{Geometric preliminaries}

In this section, we introduce in detail the varieties with action of interest to us. There is no need, at the level of the discussion in this section, to bias between the automorphic or spectral roles of $G = \mathrm{GSpin}_6$ and $\check{G} = \mathrm{GSO}_6$. We compute in Proposition \ref{proposition spectral X period} and Proposition \ref{proposition spectral Xcheck period} their spectral periods by applying the following key computational observation \cite[\S 3.2.4]{CV}.

\begin{prop} \label{proposition quotient of L functions}
    Let $V$ be an irreducible $G$-representation, and let $Z \subset V$ be a conical, $G$-equivariantly embedded affine variety defined by the complete intersection of $G$-semiinvariant homogeneous polynomials $f_1, \ldots, f_r$ of degrees $d_1, \ldots, d_r$ with eigencharacters $\chi_1, \ldots, \chi_r$. Equip $V$ with the scaling action of $\Ggr$, then for a $G$-valued $L$-parameter $\varphi$ which fixes only the origin in $V$, its spectral $Z$-period is computed in terms of Langlands $L$-functions as
    \begin{equation}
        L_0(Z,\varphi) = \frac{L(1/2,\varphi,V )}{\prod_{i=1}^r \, L(d_i/2, \varphi , \chi_i)}
    \end{equation}
    where $L_0(Z,\varphi)$ is the contribution of the fixed point $0 \in Z$ to the spectral period as in Definition \ref{definition spectral period}.
\end{prop}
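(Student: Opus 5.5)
The plan is to compute the graded Frobenius trace on the completed local ring $\widehat{\mathcal{O}}_{Z,0}$ place by place, using the Koszul resolution of the complete intersection. Since $Z$ is conical and the $\Ggr$-action on $V$ is scaling, the completion at $0$ is simply the completion of the graded ring $\mathcal{O}_Z = \mathrm{Sym}(V^*)/(f_1,\ldots,f_r)$. Each graded piece is finite-dimensional, so the graded trace of $\mathrm{Fr}_v$ on $\widehat{\mathcal{O}}_{Z,0}$ equals the graded trace on $\mathcal{O}_Z$ itself, as a formal power series in $q_v^{-1/2}$.

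Because $f_1,\ldots,f_r$ form a regular sequence in $\mathrm{Sym}(V^*)$, the Koszul complex
$$0 \to \mathrm{Sym}(V^*) \otimes \wedge^r E \to \cdots \to \mathrm{Sym}(V^*) \otimes E \to \mathrm{Sym}(V^*) \to \mathcal{O}_Z \to 0$$
is exact. Here $E = \bigoplus_i \C\cdot e_i$, with $e_i$ placed in degree $d_i$ and transforming under $G$ by the character that makes $e_i \mapsto f_i$ equivariant, namely $\chi_i$ (up to the dualization convention fixed by the right action on functions). This is a $G\times\Ggr$-equivariant resolution, so the graded trace is additive in it:
$$\mathrm{gtr}(\mathrm{Fr}_v \mid \mathcal{O}_Z) = \mathrm{gtr}(\mathrm{Fr}_v\mid \mathrm{Sym}V^*)\cdot \sum_k (-1)^k\, \mathrm{gtr}(\mathrm{Fr}_v\mid \wedge^k E).$$
The first factor is $\det(1-q_v^{-1/2}\varphi(\mathrm{Fr}_v)\mid V)^{-1}$, i.e. the local factor $L_v(1/2,\varphi,V)$, exactly as in \cite[Lemma 3.1]{CV}. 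The alternating sum factors as $\prod_i \bigl(1-\chi_i(\varphi(\mathrm{Fr}_v))\, q_v^{-d_i/2}\bigr) = \prod_i L_v(d_i/2,\varphi,\chi_i)^{-1}$. Taking the product over $v$ gives the claimed formula.

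I expect the main obstacle to be bookkeeping of conventions rather than substance. One has to check that the character by which $G$ acts on the Koszul generator $e_i$ is $\chi_i$ and not $\chi_i^{-1}$, and likewise that the trace on $\mathrm{Sym}V^*$ produces $V$ and not $V^*$; both must be consistent with the right-action conventions of \S\ref{subsubsection graded group actions} and with the example following Definition \ref{definition spectral period}. One should also note that the hypothesis that $\varphi$ fixes only $0$ is not needed for this local computation. It only ensures that $L_0$ is the sole contribution to the spectral period and that the Euler products converge in the conical case (Remark \ref{rem convergence of spectral period}), so the identity is one of formal Euler products interpreted by analytic continuation.
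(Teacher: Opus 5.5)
Your proposal is correct. The Koszul resolution of the regular sequence $f_1,\dots,f_r$ gives the graded character of $\mathcal{O}_Z$ as that of $\mathrm{Sym}(V^*)$ times $\prod_i(1-\chi_i\,t^{d_i})$. Under the paper's right-action convention, linear functions on $V$ transform like $V$ itself and each Koszul generator $e_i$ carries $\chi_i$, so the factors are $L(1/2,\varphi,V)$ and $L(d_i/2,\varphi,\chi_i)^{-1}$ as you state.

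The paper gives no proof of this proposition; it simply cites \cite[\S 3.2.4]{CV}. Your regular-sequence computation is essentially the argument behind that citation.
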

It will turn out that the spectral periods involved in our calculations can all be expressed as (finite sums of finite quotients of) Langlands $L$-functions. 

\subsection{The affine cone of \texorpdfstring{$\mathrm{LGr}(2,4)$}{LGr(2,4)}}
Let $\mathrm{GSp}_4$ be the symplectic group over $k$ with respect to the the sympectic form given by $J=\left[\begin{smallmatrix}  & J_2\\-J_2 &\end{smallmatrix}\right]$, with $J_2 = \left[\begin{smallmatrix}  & 1\\1 &\end{smallmatrix}\right]$. We denote by $W_4$ the four dimensional standard representation of $\mathrm{GSp}_4$ and fix its standard basis $\{e_1,e_2,f_2,f_1 \}$. Consider the $\mathrm{GSp}_4$-space
$$Y = \text{ affine cone over the Lagrangian Grasmannian }\mathrm{LGr}(2,4) \text{ of isotropic 2-planes in } k^4.$$
Concretely, we may realize $Y$ as the union of a cone point $y_0$, whose complement $\mathring{Y}$ is explicitly described as \[\mathring{Y} = \{(L,\xi)\,:\, L \in \mathrm{LGr}(2,4),\, 0 \ne \xi \in \wedge^2L\}.\]
Considering the 5-dimensional second fundamental representation of $\mathrm{GSp}_4$
\begin{equation}\label{equation 5dim representation V}
    V:= V_{\omega_2} = \mathrm{Ker}(\wedge^2 W_4 \overset{J}{\longrightarrow} \wedge^4 W_4),
\end{equation}
we see that the Plücker embedding of the affine cone $Y$ defines $Y$ as a conical subvariety of $V$, defined by a concrete homogeneous equation. 
\begin{lem}\label{Plucker:embedding}
    Under the Plücker embedding $i: Y \hookrightarrow \wedge^2 W_4$ sending the cone point $y_0$ to $0$, and on the complement sending an isotropic 2-plane $L \subset W_4$ with orientation $\alpha \in \wedge^2 L$ to $\alpha \in \wedge^2 W_4$, we have the following geometric facts:
    \begin{enumerate}
        \item The image of $i$ is contained in the subspace $ V$, and
        \item $i(Y) \subset  V$ is defined by the vanishing locus of the quadratic form
        \begin{equation} \label{equation quadratic form Q}
            Q:  V \otimes  V \to \wedge^2 W_4 \otimes \wedge^2 W_4 \overset{\wedge}{\longrightarrow} \wedge^4 W_4.
        \end{equation}
    \end{enumerate}
\end{lem}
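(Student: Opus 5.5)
Both claims can be checked in coordinates, using the basis $\{e_1,e_2,f_2,f_1\}$ and the symplectic form $J$. In this basis $\langle e_1,f_1\rangle=\langle e_2,f_2\rangle=1$ up to sign, and all other basis pairings vanish. The contraction $\wedge^2 W_4 \to k$ induced by $J$ sends $\alpha=\sum a_{ij}\,w_i\wedge w_j$ to $\sum a_{ij}\langle w_i,w_j\rangle$. Its kernel is the 5-dimensional space $V$, cut out by one linear condition of the form $a_{e_1f_1}\pm a_{e_2f_2}=0$.

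For part (1), take an isotropic plane $L$ with basis $u,w$, so that $\xi=c\,u\wedge w$. Contracting $\xi$ with $J$ gives $c\langle u,w\rangle$, which is $0$ because $L$ is isotropic. The cone point is sent to $0\in V$. Hence $i(Y)\subset V$.

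For part (2), the inclusion $i(Y)\subset\{Q=0\}$ is immediate, since $Q(\xi)=\xi\wedge\xi$ and $u\wedge w\wedge u\wedge w=0$. For the reverse inclusion I would use the classical fact that a bivector $\alpha\in\wedge^2W_4$ is decomposable if and only if $\alpha\wedge\alpha=0$; this is the Plücker quadric for $\mathrm{Gr}(2,4)$. So a nonzero $\alpha\in V$ with $Q(\alpha)=0$ equals $u\wedge w$ for some plane $L=\mathrm{span}(u,w)$. The condition $\alpha\in V$ then says $\langle u,w\rangle=0$, so $L$ is Lagrangian and $\alpha$ is an orientation of $\wedge^2L$. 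The map $i$ is injective on $\mathring Y$, because $L$ is recovered from $\alpha$ as $\{v: v\wedge\alpha=0\}$. Therefore $i(Y)=V\cap\{Q=0\}$ set-theoretically.

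To obtain the statement scheme-theoretically, which is what Proposition \ref{proposition quotient of L functions} requires, I would note that $Q|_V$ is a nondegenerate quadratic form on the 5-dimensional space $V$. In coordinates it is $a_{12}a_{f_2f_1}-a_{1f_2}a_{2f_1}+a_{1f_1}a_{2f_2}$, and after imposing the linear condition defining $V$ it becomes a split form of rank 5 in odd characteristic. Its zero locus is therefore an irreducible, reduced, normal quadric cone of dimension 4. This matches $\dim Y=\dim\mathrm{LGr}(2,4)+1=4$. Since $Y$ is normal, being the affine cone over a projectively normal variety in its Plücker embedding, and $i$ is a bijective closed immersion onto a reduced variety, the two coincide. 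Equivalently, one can use that the map of coordinate rings $\mathrm{Sym}(V^*)/(Q)\to\mathcal{O}_Y$ is graded surjective and that both sides have the same Hilbert function.

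The main obstacle is this last reducedness and equality-of-ideals point, rather than the set-theoretic statement. I would handle it by writing $Q|_V$ explicitly and checking that its discriminant is nonzero.
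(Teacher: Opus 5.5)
Your proof is correct and follows essentially the same route as the paper's. Part (1) comes from the isotropy of $L$. For part (2), both arguments combine $\xi\wedge\xi=0$ for decomposable $\xi$ with the criterion that a nonzero $\alpha$ with $\alpha\wedge\alpha=0$ is decomposable, $\alpha=u\wedge w$, and then use $\alpha\in V$ to conclude that $\mathrm{span}(u,w)$ is Lagrangian.

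The paper stops at this set-theoretic equality. Your extra observation that $Q|_V$ is a nondegenerate rank-$5$ form goes further: it makes $\{Q=0\}\subset V$ a reduced, normal quadric cone, so the equality holds scheme-theoretically. That is what the paper implicitly relies on when it applies Proposition~\ref{proposition quotient of L functions} to $Y\subset V$ and invokes the normality of $Y$ in the following lemma.
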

\begin{proof}
  Firstly, the image of $\mathring{Y}$ via the embedding
    \begin{align*}
        i:\mathring{Y}\to \wedge^2W_4,\,
        (L,\alpha)\mapsto \alpha
    \end{align*}
    is contained in $V$ as  $\alpha \in \wedge^2 W_4 \setminus \{0\}$ is the orientation of an isotropic $2$-plane in $W_4$. Moreover, any such $\alpha$ is trivially isotropic with respect to the quadratic form $Q$. Finally, $i(\mathring{Y})$ surjects onto the vanishing locus in $V \setminus \{0\}$ of $Q$, since any non-zero vector $v \in V$ such that $v \wedge v = 0$ is decomposable, i.e. it can be written as $v = u_1 \wedge u_2$, with $u_1,u_2 \in W_4$ isotropic vectors that define a Lagrangian subspace $\langle u_1,u_2 \rangle \subset W_4$. The result follows.
\end{proof} 

Lemma \ref{Plucker:embedding} let us endow $Y$ with a natural integral structure. Indeed, let $\mathrm{GSp}_{4/\mathcal{O}_F}$ be the symplectic group over $\mathcal{O}_F$ acting on the rank 4 module $\mathbf{W}_4$ of basis $\{e_1,e_2,f_2,f_1\}$. Then, the maximal admissible $\mathcal{O}_F$-lattice $\mathbf{V}$ of $V$, invariant under the action of $\mathrm{GSp}_{4/\mathcal{O}_F}$, induces an integral structure $\mathbf{Y}$ on $Y$, with compatible $\mathrm{GSp}_4$-action, such that
\begin{equation}\label{Integral:Str:Y}\mathbf{Y}(\mathcal{O}_F) = \{\xi \in \mathbf{V} :\, Q(\xi,\xi)=0\}.\end{equation}

\subsection{The space \texorpdfstring{$(G,X)$}{(G,X)} and its spectral period}\label{Section:The:Space:G:X}

We consider the split form $G= \mathrm{GSpin}_{3,3}$ of $\mathrm{GSpin}_{6}$, defined by the short exact sequence 
\begin{equation}\label{ses_G}
    1 \to G  \to  \mathrm{GL}_4 \times \mathbf{G}_m \to \mathbf{G}_m \to 1,
\end{equation}
where the map $\mathrm{GL}_4 \times \mathbf{G}_m \to \mathbf{G}_m$ is given by $(g, \lambda) \mapsto \mathrm{det}(g) \lambda^{-2}$. Set
\begin{align}\label{GSp4embedsintoGSpin33}
    H = \mathrm{GSp}_4 = \big\{(g, \nu(g)): g \in \mathrm{GSp}_4\big\} \subset G
\end{align}
where $\nu$ is the similitude character of $\mathrm{GSp}_4$, and we equip the affine cone $Y$ over $\mathrm{LGr}(2,4)$ with an $H$-action as follows.

Observe that we may realize $\mathring{Y}$ as a homogeneous space for $H$. Choosing the base point $f_2 \wedge f_1 \in \mathring{Y}$, we see that its stabilizer $P_0$ is a subgroup of the Siegel parabolic subgroup $P = N M \subset H$ (where we have written $N$ for the unipotent radical of $P$ and $M \simeq \mathrm{GL}_2 \times \mathrm{GL}_1$ is the Levi subgroup of $P$) given by
$$P_0 = NM_0, \text{ where } M_0 = \left\{m_0(m,\lambda): =\left[\begin{smallmatrix}  m& \\ &\lambda J_2{}^tm^{-1}J_2\end{smallmatrix}\right]: \mathrm{det}(m)\lambda^{-2} = 1\right\}    \subset M.$$

\begin{prop}\label{Quotient:Equals:First:Variety}
 We have
    \[P_0\backslash H\simeq \mathring{Y}.\]
\end{prop}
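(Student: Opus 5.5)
We will show that $H$ acts transitively on $\mathring{Y}$ and that the stabilizer of the base point $f_2\wedge f_1$ is exactly $P_0$. The orbit map $h \mapsto (f_2\wedge f_1)\cdot h$ then induces a bijective $H$-equivariant morphism $P_0\backslash H \to \mathring{Y}$, which we upgrade to an isomorphism.

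\emph{Transitivity.} A point of $\mathring{Y}$ is a pair $(L,\xi)$ with $L$ Lagrangian and $0\neq \xi\in\wedge^2 L$. By Witt's theorem, $\mathrm{Sp}_4 \subset H$ acts transitively on $\mathrm{LGr}(2,4)$, so we may move $L$ to $L_0=\langle f_2,f_1\rangle$. The stabilizer of $L_0$ is the Siegel parabolic $P=NM$. For $m_0(m,\lambda)\in M$ the element acts on $L_0$ through a $\mathrm{GL}_2$-factor, either $m$ or $\lambda J_2{}^tm^{-1}J_2$ depending on the convention for the right action. In either case it scales $f_2\wedge f_1$ by a character of $M$: $\det(m)$ or $\lambda^2\det(m)^{-1}$. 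This character is surjective onto $\Gm$, so $M$ acts transitively on the nonzero vectors of $\wedge^2 L_0$. Over an algebraically closed field, and also on $k$-points since $\det$ is surjective on $k$-points, this gives transitivity.

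\emph{Stabilizer.} An element stabilizing $f_2\wedge f_1$ must stabilize its span in $\wedge^2 W_4$, hence stabilize $L_0$. So it lies in $P$. The unipotent radical $N$ acts trivially on $L_0$, because it acts trivially on the graded pieces $L_0$ and $W_4/L_0$ of the flag. The stabilizer is therefore $N$ times the kernel of the character of $M$ computed above. With the paper's convention this kernel is $\{\det(m)\lambda^{-2}=1\}$, i.e. $M_0$. Whether this is $\det(m)=1$ or $\det(m)\lambda^{-2}=1$ depends on which block of $m_0(m,\lambda)$ acts on $\langle f_2,f_1\rangle$ under the right action, so this sign/convention check has to be done carefully. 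It is the main potential snag, since it must match the stated definition of $M_0$.

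\emph{Isomorphism.} To promote the bijective equivariant morphism $P_0\backslash H\to\mathring{Y}$ to an isomorphism, we argue as follows. Both sides are smooth; $\mathring{Y}$ is a $\Gm$-torsor over $\mathrm{LGr}(2,4)$. In characteristic zero, a bijective equivariant morphism onto a normal homogeneous space is an isomorphism. For odd $q$ and in general, it suffices to check that the scheme-theoretic stabilizer is smooth, equivalently that the orbit map is separable. This holds because the stabilizer computation is valid functorially on $R$-points: the stabilizer of $L_0$ is the parabolic as a group scheme, and the kernel of a surjective character is smooth. Alternatively, one can compare $P_0\backslash H \to P\backslash H = \mathrm{LGr}(2,4)$, a $P/P_0\simeq\Gm$-torsor, with the tautological $\Gm$-torsor $\mathring{Y}\to\mathrm{LGr}(2,4)$ and observe that the orbit map is a morphism of torsors, hence an isomorphism.
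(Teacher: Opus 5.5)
Your proposal is correct and is essentially the paper's argument: the paper likewise uses $P\backslash H\simeq\mathrm{LGr}(2,4)$, $g\mapsto L_0g$, to see that $g\mapsto (L_0g,(f_2\wedge f_1)g)$ is surjective onto $\mathring{Y}$ with stabilizer $P_0$. Your torsor comparison over $\mathrm{LGr}(2,4)$ only makes explicit the scheme-theoretic upgrade that the paper leaves implicit. Your convention check comes out in the paper's favour: for the right action on row vectors, $\langle f_2,f_1\rangle$ is moved by the block $\lambda J_2{}^tm^{-1}J_2$, which scales $f_2\wedge f_1$ by $\lambda^2\det(m)^{-1}$, so the kernel is exactly $M_0$. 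This kernel is smooth because $\det(m)$ enters to the first power, not merely because the character is surjective (cf.\ $x\mapsto x^p$, which is surjective with non-smooth kernel).
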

\begin{proof}
     We have an isomorphism $P\backslash H\simeq \mathrm{LGr}(2,4)$ given by $g\mapsto L_0 g $, where $L_0 := \langle f_2,f_1\rangle$ is the isotropic plane in $W_4$ stabilized by $P$. Since
     \[\mathring{Y} = \{(L,\xi)\,:\, L \in \mathrm{LGr}(2,4),\, 0 \ne \xi \in \wedge^2L\},\]
     the map 
     \begin{align*}
         H\to \mathring{Y},\,\,g\mapsto (L_0 g,f_2\wedge f_1 g),
     \end{align*}
     is surjective, with kernel $P_0$. 
\end{proof}
\begin{lem}
    The homogeneous $H$-space $\mathring{Y}$ is quasi-affine, and 
    $$Y \simeq \spec \Gamma(\mathring{Y}, \mathcal{O}_{\mathring{Y}})$$
    is the affine closure of $\mathring{Y}$.
\end{lem}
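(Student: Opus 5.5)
Our plan is to realize both $\mathring{Y}$ and $Y$ inside the $5$-dimensional representation $V$ via Lemma \ref{Plucker:embedding}, and then to compare rings of functions using normality and a codimension estimate. By Proposition \ref{Quotient:Equals:First:Variety}, $\mathring{Y} \simeq P_0\backslash H$, and by Lemma \ref{Plucker:embedding} the map $i$ identifies $\mathring{Y}$ with $i(Y)\setminus\{0\}$, where $i(Y) = \{Q = 0\} \subset V$. The first step is therefore to observe that $\mathring{Y}$ is an open subvariety, namely the complement of the origin, of the affine variety $Z := \{Q=0\} \subset V$. This already shows that $\mathring{Y}$ is quasi-affine.

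The second step is to show that $Z$ is normal. Here $Q$ is a nondegenerate quadratic form on the $5$-dimensional space $V$; indeed $V \simeq$ the standard representation of $\mathrm{SO}_5$ via $\mathrm{Sp}_4 \to \mathrm{SO}_5$. So $Z$ is the quadric cone of a nondegenerate quadratic form in $5 \geq 3$ variables, which is irreducible of dimension $4$. It is a hypersurface, hence Cohen--Macaulay, so it satisfies $S_2$. Its singular locus is only the origin, because the gradient of $Q$ vanishes only at $0$, and the origin has codimension $4 \geq 2$, so $R_1$ holds. Serre's criterion then gives normality. (Over an integral base we would invoke the same reasoning fiberwise away from characteristic $2$, which is excluded by our convention that $q$ is odd.)

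The third step is the extension argument. Since $Z$ is normal and $\{0\}$ has codimension $\geq 2$, algebraic Hartogs gives $\Gamma(Z\setminus\{0\}, \mathcal{O}) = \Gamma(Z,\mathcal{O}) = k[V]/(Q)$. Hence $\spec \Gamma(\mathring{Y},\mathcal{O}_{\mathring{Y}}) \simeq Z \simeq Y$, and the natural open immersion $\mathring{Y} \hookrightarrow Y$ is the affinization map. This identifies $Y$ as the affine closure of $\mathring{Y}$.

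The only step requiring care is the second one: identifying $Q$ on $V$ as nondegenerate and checking that $Z$ is reduced and irreducible, so that Lemma \ref{Plucker:embedding} describes $Y$ scheme-theoretically as $\{Q=0\}$. We would do this by writing $Q$ in the explicit basis $e_1\wedge e_2,\ e_1\wedge f_2,\ e_1\wedge f_1 - e_2\wedge f_2,\ e_2\wedge f_1,\ f_2\wedge f_1$ of $V$. There it takes the form $x_1x_5 - x_2x_4 + c\,x_3^2$ with $c \neq 0$ (up to sign), which is visibly nondegenerate, and the quadric is then irreducible and reduced.
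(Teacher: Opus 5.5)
Your proposal is correct and follows the paper's argument. Both obtain quasi-affineness by realizing $\mathring{Y}$ via Lemma \ref{Plucker:embedding} as the punctured quadric $\{Q=0\}\setminus\{0\}$ in $V$, and both identify $Y$ with the affinization through Hartogs-type extension of functions across the cone point of a normal variety. The one difference is that you actually justify the normality of $Y$, via nondegeneracy of $Q$ on $V$ and Serre's criterion, whereas the paper simply asserts it.
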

    \begin{proof} By Lemma \ref{Plucker:embedding}, we see that $\mathring{Y}$ is an open subscheme of the one defined by the vanishing locus in $V$ of the quadratic form $Q$. As the latter is affine, we conclude that $\mathring{Y}$ is quasi-affine. Thus, there is an open immersion of $\mathring{Y}$ into $\spec\Gamma(\mathring{Y}, \mathcal{O}_{\mathring{Y}})$. Since $Y$ is normal, we have an identification of global functions (see for instance \cite[Exercise I.3.20]{Hartshorne}) 
    $$ \Gamma(\mathring{Y}, \mathcal{O}_{\mathring{Y}}) \simeq \Gamma(Y, \mathcal{O}_{Y}).$$
\end{proof}

As an affine cone, there is a natural scaling action on $Y$ restricted from the linear scaling action on $\wedge^2 W_4$; we will let this be the $\Ggr$-action on $Y$. With respect to these actions, we compute the weight of an eigenvolume form on $\mathring{Y}$.

\begin{lem}\label{eigen:char:G}
     There is an $H \times \Ggr$-eigenvolume form $\omega$ on $\mathring{Y}$ such that 
    \[(g,\lambda)^*\omega = \eta_Y(g,\lambda) \omega = (\nu(g)\lambda)^{3}\omega,\]
    where $g\in H$ and $\lambda\in \Ggr$.
\end{lem}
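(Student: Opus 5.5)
We will realize $\mathring{Y}$ as the homogeneous space $P_0\backslash H$ (Proposition \ref{Quotient:Equals:First:Variety}) and use the standard criterion for eigenvolume forms on homogeneous spaces. The character space of an invariant volume form on $P_0\backslash H$ is governed by the modular characters: an $H$-eigenvolume form with eigencharacter $\eta$ exists if and only if $\eta|_{P_0}$ equals the ratio of modular characters $\delta_{P_0}$ (relative to $\delta_H = 1$, $H$ being reductive), i.e.\ the character by which $P_0$ acts on $\det(\mathfrak{h}/\mathfrak{p}_0)$. So the first step is to compute the action of $P_0$ on the top exterior power of the tangent space $T_{y}\mathring{Y}\simeq \mathfrak{h}/\mathfrak{p}_0$ at the base point $y = f_2\wedge f_1$.

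Concretely, $\mathfrak{h}/\mathfrak{p}_0$ is $4$-dimensional. It contains $\mathfrak{p}/\mathfrak{p}_0$ (one-dimensional, the direction of $\mathfrak{m}/\mathfrak{m}_0$, on which $P_0$ acts trivially since $M$ is abelian modulo $M_0$'s commutator) and the quotient $\mathfrak{h}/\mathfrak{p}\simeq \bar{\mathfrak{n}}$, the opposite Siegel unipotent radical, which is $\mathrm{Sym}^2$-type of dimension $3$. For $m_0(m,\lambda)$ the adjoint action on $\bar{\mathfrak{n}}$ has determinant $\det(m)^{-3}\lambda^{3}$ (weights of the form $\lambda\,t_i^{-1}t_j^{-1}$ for $i\le j$ with $m=\mathrm{diag}(t_1,t_2)$). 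Using $\det(m)=\lambda^2$ on $M_0$, this gives $\lambda^{-3}$, i.e.\ the inverse determinant on the tangent space is $\lambda^{3}$ in the convention of right actions and pullbacks. Since $\nu(m_0(m,\lambda))=\lambda$, the character $g\mapsto\nu(g)^3$ of $H$ restricts to the required character of $P_0$, so an $H$-eigenform with $\eta_Y(g,1)=\nu(g)^3$ exists; sign/convention checks (right action, pullback) will be done carefully here, as this is where I expect the only real care to be needed.

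For the $\Ggr$-part, note that scaling $\lambda\in\Ggr$ on $\mathring{Y}\subset\wedge^2W_4$ coincides with the action of the central-type element of $H$ realized by $\mathrm{diag}(1,1,\lambda,\lambda)$-like elements; more cleanly, scalar $t\,\mathrm{Id}\in H$ acts on $\wedge^2 W_4$ by $t^2$ and has $\nu = t^2$, so the $\Ggr$-action by $\lambda$ agrees with the action of an element of $H$ of similitude $\lambda$ (e.g.\ $\mathrm{diag}(1,1,\lambda,\lambda)$ acts on $f_2\wedge f_1$ by $\lambda$ and on the whole cone it is in the $M$-stabilizer of $L_0$ direction). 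Since $\Ggr$ commutes with $H$ and $\mathring{Y}$ is homogeneous, the $\Ggr$-weight is determined by evaluating at the base point, giving $\lambda^3$; alternatively a direct count: the eigenform on the $4$-dimensional cone, written as $\omega = \xi\,d(\cdot)$ in Plücker coordinates as a residue $\mathrm{Res}\,(dv_1\wedge\cdots\wedge dv_5/Q)$, has weight $5-2=3$. This residue description also gives a direct global construction of $\omega$ and provides a consistency check: $GSp_4$ acts on $\det V$ by $\nu^5$ and on $Q$ by $\nu^2$, yielding $\nu^3$ again. Combining gives $(g,\lambda)^*\omega=(\nu(g)\lambda)^3\omega$.
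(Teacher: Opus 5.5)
Your proposal is correct, and its main line is the paper's argument: you identify $\mathring{Y}\simeq P_0\backslash H$, check that $\nu^3$ restricts on $P_0$ to the determinant of the isotropy action on $\mathfrak{h}/\mathfrak{p}_0$, and read off the $\Ggr$-weight from the scalars $t\,\mathrm{Id}$, which act on the cone by $t^2$ and satisfy $\nu(t\,\mathrm{Id})=t^2$. Your weight count is right ($\lambda^{3}\det(m)^{-3}=\lambda^{-3}$ on $\bar{\mathfrak{n}}$, trivial on $\mathfrak{p}/\mathfrak{p}_0$, then inverted because right translation by $p$ acts on the tangent space by $\mathrm{Ad}(p^{-1})$). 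Your residue form $\mathrm{Res}(dv_1\wedge\cdots\wedge dv_5/Q)$ is a correct and more explicit alternative that gives both weights at once as $\nu^{5-2}$ and $\lambda^{5-2}$; it is nowhere vanishing on $\mathring{Y}$ because $Q|_V$ is nondegenerate, so the cone is smooth away from $0$.

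The one slip is the parenthetical about $\mathrm{diag}(1,1,\lambda,\lambda)$. That element scales $f_2\wedge f_1$ by $\lambda^2$, not $\lambda$, and it is not central, so it does not realize the grading on the cone. Only $t\,\mathrm{Id}$ with $t^2=\lambda$ does, as in your own sentence about scalar matrices.
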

\begin{proof}
    Note that $P_0$ acts on the top exterior power of the cotangent fiber at $v_0$ by the modulus character $\delta_{P_0}$. Indeed, the tangent space at $v_0$ may be identified with the Lie algebra quotient $T_{v_0}\mathring{Y} \simeq \mathfrak{h}/\mathfrak{p}_0$, on which $P_0$ acts by the \textit{right} adjoint action. Taking the top exterior power and dualizing, we see that $P_0$ acts by $\delta_{P_0}$. 

    Now the character $\eta_Y: H \to \Gm$ defined by $\eta_Y(g) = \nu(g)^3$ satisfies $\eta_Y|_{P_0} = \delta_{P_0}$. Therefore according to \cite[\S 4.1]{Sakellaridis:Venkatesh}, we have an eigenvolume form on $\mathring{Y}$ with eigencharacter $\eta_Y$. Moreover, since the grading torus $\Ggr$ acts as the square root of the center, the result follows. 
\end{proof}

Finally, we define the $G$-space
$$X := \mathrm{Ind}_H^G(Y) := Y \times^H G$$
which inherits the structure of a graded $G$-variety (including an integral structure) from the structure of $Y$ as a graded $H$-variety. One verifies readily that there exists a $G$-eigenform $\omega_X$ on $X$ whose eigencharacter $\eta_X$ can be described as 
\begin{equation}
    \eta_X(g,a) = a^{3}  \text{ for } (g,a) \in G
\end{equation}
restricting to $\eta_Y$ on the subgroup $H$.

As an induced space, the spectral $X$-period is completely determined by the spectral $Y$-period by applying Lemma \ref{lemma spectral induction}, so we compute the latter. 
\begin{prop} \label{proposition spectral X period}
    The spectral $X$-period distinguishes $L$-parameters that factor through $H$, and for $\varphi: \Gamma_F \to H$ an unramified $L$-parameter with big enough image so that its only fixed point on $Y$ is the cone point, the spectral $X$-period is given by
    $$L_X(\varphi) = (\nu \circ \varphi)(\partial)^{-3/2} \cdot \Delta^{-3/2} \cdot L(1, \varphi, \wedge^2_0) \cdot \frac{L(1/2, \varphi, \wedge_0^2 \otimes \nu)}{L(1, \varphi, \nu^{2})},$$
    where  $\wedge_0^2:= V \otimes \nu^{-1}$, with $V$  the 5-dimensional second fundamental representation \eqref{equation 5dim representation V} of $H = \mathrm{GSp}_4$. 
\end{prop}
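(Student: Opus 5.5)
The plan is to apply Lemma \ref{lemma spectral induction} to $X = \mathrm{Ind}_H^G(Y)$ and then evaluate the remaining $Y$-period with Proposition \ref{proposition quotient of L functions}.

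First, Lemma \ref{lemma spectral induction} shows that $L_X(\varphi)$ vanishes unless $\varphi$ is conjugate into $H$. When it is, the lemma gives
\[
L_X(\varphi) = \Delta^{\frac{\dim H - \dim G}{4}} \cdot L(1,\varphi,\mathfrak{g}/\mathfrak{h}) \cdot \sum_{c \in C_\varphi} L_Y(\varphi^c).
\]
Here $\dim H = 11$ and $\dim G = 16$.

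\begin{itemize}
\item To identify $\mathfrak{g}/\mathfrak{h}$, use $\mathfrak{g} = \{(X,t) : \mathrm{tr} X = 2t\} \simeq \mathfrak{gl}_4$ and $\mathfrak{h} \simeq \mathfrak{gsp}_4$. The quotient is $\mathfrak{gl}_4/\mathfrak{gsp}_4 \simeq \wedge^2 W_4 / (\text{trivial line}) \otimes \nu^{-1}$, which is the 5-dimensional $\wedge^2_0 = V \otimes \nu^{-1}$. This produces the factor $L(1,\varphi,\wedge^2_0)$.
\item For the sum over $C_\varphi$, the hypothesis that $\varphi$ has large image should force $C_\varphi$ to reduce to a single class, or else every term is equal after conjugation, since $N_G(H)/H$ is trivial here. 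If a multiplicity appears anyway, I will check it against the normalization conventions.
\end{itemize}

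Next, compute $L_Y(\varphi)$ using Definition \ref{definition spectral period}, with the only fixed point being the cone point. By Lemma \ref{Plucker:embedding}, $Y \subset V$ is the hypersurface $Q = 0$. The polynomial $Q$ has degree $2$ and is $H$-semi-invariant with eigencharacter $\nu^2$, because $v \wedge v \in \wedge^4 W_4$ scales by $\det = \nu^2$. The $\Ggr$-action on $Y$ is the scaling action on $V$. Proposition \ref{proposition quotient of L functions} then gives
\[
L_0(Y,\varphi) = \frac{L(1/2,\varphi,V)}{L(1,\varphi,\nu^{2})}.
\]
Since $V = \wedge^2_0 \otimes \nu$, the numerator is $L(1/2,\varphi,\wedge^2_0 \otimes \nu)$.

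The step requiring the most care is the bookkeeping of the prefactors and the dual-variable conventions. One subtlety is whether semi-invariance should be read via the character $\nu^{2}$ or its inverse, given the right-action and dual conventions; I would settle this by checking the weights on $\mathcal{O}_V = \mathrm{Sym}(V^*)$ and confirming that the Koszul resolution yields a numerator over $V$ and a denominator over $\nu^{2}$ as stated.

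The prefactors combine as follows.
\begin{itemize}
\item $\mathfrak{z}_Y(\varphi) = (\eta_Y \circ \varphi)(\partial^{-1/2})$, and Lemma \ref{eigen:char:G} gives $\eta_Y = \nu^3$. This yields $(\nu\circ\varphi)(\partial)^{-3/2}$.
\item The $Y$-power of $\Delta$ is $\Delta^{(\varepsilon_Y - \dim Y)/4}$ with $\varepsilon_Y = 3$ and $\dim Y = 4$, giving $\Delta^{-1/4}$.
\item The induction factor is $\Delta^{(11-16)/4} = \Delta^{-5/4}$.
\end{itemize}
Together these give $\Delta^{-3/2}$, matching the statement.
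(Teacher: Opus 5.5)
Your route is the paper's. You use Lemma \ref{lemma spectral induction} to pass from $X$ to $Y$, identify $\mathfrak{g}/\mathfrak{h}\simeq V\otimes\nu^{-1}=\wedge^2_0$, and apply Proposition \ref{proposition quotient of L functions} to the quadric $Y=\{Q=0\}\subset V$, where $Q$ has degree $2$ and character $\nu^2$. Your prefactor $\mathfrak{z}_Y(\varphi)=(\nu\circ\varphi)(\partial)^{-3/2}$, coming from $\eta_Y=\nu^3$, agrees with the paper's $(\det V\circ\varphi)(\partial)^{-1/2}(\nu\circ\varphi)(\partial)$ because $\det V=\nu^5$. The powers also match: $\Delta^{-5/4}\cdot\Delta^{-1/4}=\Delta^{-3/2}$.

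The one step you should not keep is the claim that $N_G(H)/H$ is trivial; it is false.
\begin{itemize}
\item The element $z=(1_4,-1)$ is central in $G$ and lies in $G$, since $\det(1_4)=(-1)^2$. It is not in $H$, since $\nu(1_4)=1$.
\item In fact $N_G(H)=H\sqcup Hz$. Equivalently, $H\backslash G\simeq\mathrm{Sp}_4\backslash\mathrm{SL}_4$ is a double cover of $\mathrm{GSp}_4\backslash\mathrm{GL}_4$.
\item So for $\varphi$ with Zariski-dense image in $H$, the set $C_\varphi$ of Lemma \ref{lemma spectral induction} is $\{H,Hz\}$, and $[(y_0,1)]$ and $[(y_0,z)]$ are two distinct fixed points of $X$.
\item Since $z$ is central, $\varphi^z=\varphi$, so the sum equals $2L_Y(\varphi)$. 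Your fallback remark that all terms are equal is true, but it does not reduce the sum to a single term.
\end{itemize}

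The paper's proof makes the same tacit simplification. It quotes the lemma with a single $L_Y(\varphi)$, and it likewise writes $L_{\check{M}}(\varphi)=\Delta^{-5/4}L(1,\varphi,\wedge^2_0)$ for $\check{M}=H\backslash G$, which has the same $C_\varphi$. Hence the factor $|C_\varphi|$ cancels in the ratio \eqref{Shalika normalized spectral period X}, which is what the main theorems actually use. You should either carry $|C_\varphi|$ explicitly or read the formula as normalized by $L_{\check{M}}$. Do not justify dropping the sum by claiming $N_G(H)/H$ is trivial.
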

\begin{proof}
By Lemma \ref{lemma spectral induction}, we have 
$$L_X(\varphi) = \Delta^{\frac{\mathrm{dim}(H)-\mathrm{dim}(G)}{4}} \cdot L(1, \varphi, \mathfrak{g}/\mathfrak{h})   L_Y(\varphi).$$
Firstly, note that the $\mathfrak{g}/\mathfrak{h}$, which we view as an $H$-representation via the adjoint action, corresponds to the twist $\wedge_0^2= V \otimes \nu^{-1}$ of $V$. Moreover,  an application of Proposition \ref{proposition quotient of L functions} to the $H$-equivariant closed embedding $Y \subset V$, defined by the vanishing locus of the quadratic form $Q$ of \eqref{equation quadratic form Q} gives \begin{align*}
    L_Y(\varphi) &= \Delta^{\frac{\varepsilon_Y - {\rm dim}(Y)}{4}}(\mathrm{det}(V) \circ \varphi)(\partial)^{-1/2} (\nu \circ \varphi)(\partial)  \cdot \frac{L(1/2,\varphi,V )}{L(1,\varphi,\nu^2)} \\ &= \Delta^{-1/4}(\nu \circ \varphi)(\partial)^{-3/2}\cdot \frac{L(1/2,\varphi,V )}{ L(1,\varphi,\nu^2)}.
\end{align*}
The result follows.
\end{proof}
Let $\varphi$ be as in Proposition \ref{proposition spectral X period}, in particular $\varphi$ factors through (a conjugate of) $H$. Since $$L_{\check{M}}(\varphi) =\Delta^{-5/4}  \cdot L(1, \varphi, \wedge^2_0),$$ the Shalika normalized spectral period (as in Conjecture \ref{conjecture ratio of periods}) is given by

\begin{align}\label{Shalika normalized spectral period X}
        \frac{L_X(\varphi)}{L_{\check{M}}(\varphi)} &= (\nu \circ \varphi)(\partial)^{-3/2} \cdot \Delta^{-1/4} \cdot \frac{L(1/2, \varphi, \wedge_0^2 \otimes \nu)}{L(1, \varphi, \nu^{2})}.
\end{align}
From the automorphic perspective, if $\varphi = \varphi_{f \times \chi}$ is the $L$-parameter of $f \times \chi$,  where $f$ is a cusp form on $\mathrm{GL}_4$ and $\chi$ is a Hecke character such that  $\omega_f = \chi^2$, then we can calculate $\mathfrak{z}_X$ directly using the central character $\omega_f \times \chi$ of $f \times \chi$ restricted to the central cocharacter $z \mapsto (1, z^{3})$ into $\check{G}$, dual to $\eta_X$:
\begin{equation}
    \mathfrak{z}_X(\varphi) = (\omega_f \times \chi)(\mathrm{Id}, (\partial^{-1/2})^{3}) = \chi(\partial^{-3/2}).
\end{equation}
Indeed, if $\varphi$ factors through $H$, $\nu \circ \varphi = \chi$.

\subsection{The space \texorpdfstring{$(\check{G}, \check{X})$}{(Gcheck,Xcheck)} and its spectral period} \label{section Gcheck Xcheck}

Dually, we consider the Langlands dual reductive group $\check{G} = \mathrm{GSO}_{3,3}$ of $G$, presented as the quotient
\begin{equation}\label{ses_checkG}
   \check{G} = \frac{\mathrm{GL}_4 \times \mathrm{GL}_1}{(z \mathrm{Id}, z^{-2}: z \in \Gm)}.
\end{equation}
Recall from  \eqref{equation GSp2nbarprime}, the auxiliary quotient group $\overline{H}'$ of $\mathrm{GSp}_4 \times \mathrm{GL}_1$ given by
\begin{align}\label{GSp4embedsintoGSO33}
\overline{H}' = \frac{\mathrm{GSp}_4 \times \mathrm{GL}_1}{(z\mathrm{Id}, z^{-2}: z \in \Gm)} \subset \check{G}
\end{align}
embedded naturally in $\check{G}$ as a subgroup. Moreover, recall that the isogenous cover $H'$ of $\overline{H}'$, given in \eqref{equation GSp4 prime}, is obtained by taking the square root of the character
$(g,x) \longmapsto \mathrm{det}(g)x^2.$ That is, elements of $H'$ can be represented by triples
\begin{equation*}
    H' = \left\{(g,x;y) \, : \,  (g,x) \in \overline{H}'\, \text{ and } y^2 = \mathrm{det}(g)x^2 \, \right\}
\end{equation*}
and $H'$ is equipped with an isogeny
$$\iota: H' \to \check{G} \, \text{ by } (g,x,y) \mapsto (g,x).$$
Note that since the similitude character $\nu(g)$ is a square root of $\mathrm{det}(g)$, the map 
\begin{equation} \label{equation map splitting isogeny}
    j: \overline{H}' \longrightarrow H'
\end{equation}
$$(g,x) \mapsto (g,x; \nu(g)x)$$
defines a section of the isogeny $H' \to \overline{H}' = \mathrm{Im}(\iota)$. Thus, $H'$ is a direct product via the isomorphism
\begin{equation} \label{equation direct product}
    H' \overset{\sim}{\longrightarrow} \overline{H}' \times \{\pm 1\}
\end{equation}
$$(g,x;y) \longmapsto (g,x) \times \frac{y}{\nu(g)x}.$$
We let the group $H'$ act on $Y$ as 
\begin{equation}\label{eq_new_action_H'}
    (g,x;y)\cdot (L,\xi) := (Lg,\xi g\nu(g)^{-1}y).
\end{equation}

Let 
$$P'_0 := \{ (p,x,\mu(p)\nu(p)^{-1}) \in H'\,:\, p \in P, x^2 = \mu(p)^2 \nu(p)^{-4}\} \hookrightarrow P',$$
where $P$, resp. $P'$, denote the Siegel parabolic of $H$, resp. $H'$, and 
$\mu$ denotes the character
\begin{align*}P \twoheadrightarrow \mathrm{GL}_2 \times \mathrm{GL}_1  \xrightarrow[]{(m,\lambda) \to  \mathrm{det}(m)} \mathrm{GL}_1.
\end{align*}
We now consider the homogeneous $H'$-space 
$$\mathring{Y}' := P'_0\backslash H'.$$ 
\begin{lem}\label{Description:Variety:Lemma}
    As varieties, $\mathring{Y} \simeq \mathring{Y}'$. In particular, the homogeneous $H' \times \Ggr$-space $\mathring{Y}'$ is quasi-affine. 
\end{lem}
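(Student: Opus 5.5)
The plan is to show that the orbit map of the explicit action \eqref{eq_new_action_H'} on the base point $f_2\wedge f_1$ identifies $\mathring{Y}'$ with $\mathring{Y}$. Quasi-affineness then follows at once from the preceding lemma for $\mathring{Y}$. First I would verify that \eqref{eq_new_action_H'} is a well-defined right action of $H'$ on $\mathring{Y}$. Representatives $(g,x;y)$ and $(zg,z^{-2}x;y)$ of the same element act identically: $Lzg=Lg$, and $\xi\, zg\,\nu(zg)^{-1}y = z^{2}\xi g\,z^{-2}\nu(g)^{-1}y$, because $g$ acts on $\wedge^2$ through its square. Multiplicativity then follows from $\nu$ being a character. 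I would also check that the $\Ggr$ scaling action on $\xi$ commutes with this action, which is clear.

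Next comes transitivity. Write $h=(g,x;y)$. Its image in $\mathrm{GSp}_4$ modulo the center acts transitively on $\mathrm{LGr}(2,4)$, and on a fixed line $\wedge^2 L$ the scalar $\xi\mapsto \xi g\nu(g)^{-1}y$ can be made arbitrary. To see this, use the center and the $\mu_2$ factor via the splitting \eqref{equation direct product}: the elements $(\mathrm{Id},x;\pm x)$ act on $\xi$ by $\pm x$, and $x$ ranges over $\Gm$. Hence $H'$ acts transitively on $\mathring{Y}$.

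Then I would compute the stabilizer of $(L_0, f_2\wedge f_1)$. It consists of $(g,x;y)$ with $g=p\in P$ (after using the central quotient) and with $f_2\wedge f_1\, p = \mu'(p) f_2\wedge f_1$, where $\mu'$ is the determinant of the action of $p$ on $L_0$. The scalar condition reads $\mu'(p)\nu(p)^{-1}y=1$, so $y=\nu(p)\mu'(p)^{-1}$. Combined with $y^2=\det(p)x^2=\nu(p)^2x^2$, this gives $x^2 = \mu'(p)^{-2}$ in this normalization. Matching the lower-right block via $\lambda J_2{}^tm^{-1}J_2$, one has $\mu'(p)=\nu(p)^2\mu(p)^{-1}$. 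This recovers exactly the defining conditions $y=\mu(p)\nu(p)^{-1}$ and $x^2=\mu(p)^2\nu(p)^{-4}$ of $P'_0$. Since we are over characteristic-zero or odd-characteristic fields and the stabilizers are smooth, the orbit map $P'_0\backslash H'\to\mathring{Y}$ is a bijective, separable, equivariant morphism onto a smooth homogeneous space, hence an isomorphism of varieties.

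The main obstacle I expect is the bookkeeping in the stabilizer computation. The central quotient $(z\mathrm{Id},z^{-2})$ lets one normalize a representative so that $g\in P$, but one must make sure that this choice does not lose stabilizer elements, and that the sign conventions for $\mu$ versus the action on $\wedge^2L_0$ (with right actions) match the definition of $P'_0$ exactly. To handle this I would first pass to the product decomposition $H'\simeq\overline{H}'\times\{\pm1\}$. There, $\overline{H}'\simeq \mathrm{GSp}_4\times\Gm/\Gm$ acts through $\mathrm{GSp}_4$ modulo a twist, so the computation reduces to the known stabilizer $P_0$ of Proposition \ref{Quotient:Equals:First:Variety}, extended by the central and $\mu_2$ data.
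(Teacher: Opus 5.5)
Your proposal is correct and follows the paper's proof, which simply observes that the action \eqref{eq_new_action_H'} of $H'$ on $\mathring{Y}$ is transitive and that the stabilizer of $f_2\wedge f_1$ is exactly $P'_0$. You fill in the details the paper calls evident: well-definedness modulo $(z\mathrm{Id},z^{-2})$, the elements $(\mathrm{Id},x;\pm x)$ rescaling $\xi$, the stabilizer computation via $\mu'(p)=\nu(p)^2\mu(p)^{-1}$, and the smoothness argument showing that the bijective orbit map $P'_0\backslash H'\to\mathring{Y}$ is an isomorphism.
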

\begin{proof}
It is evident that the action \eqref{eq_new_action_H'} of $H'$ on $\mathring{Y}$ is transitive, and the stabilizer of the generic point $f_2 \wedge f_1$ is exactly $P'_0$.
\end{proof}

Since $\mathring{Y}'$ is quasi-affine, we may denote by 
$$\mathring{Y}' \hookrightarrow Y' := \spec \Gamma(\mathring{Y}', \mathcal{O}_{\mathring{Y}'})$$
its affine closure with the extended $H'$-action (while $Y \simeq Y'$, we will use separate symbols to remember whether we consider an $H$ or $H'$-action, respectively). We regard $Y'$ as a graded $H'$-action via the scaling action of $\Ggr$ (which coincides with the action of the variable $y$).

Using the map $\iota$ and the $H'$-space $Y'$, we finally define 
$$\check{X} := \mathrm{Ind}_{H'}^{\check{G}}(Y')$$
which is a Deligne--Mumford stack with right $\check{G}$-action. We may use the isomorphism \eqref{equation direct product} to equivalently present $\check{X}$ as the induction of a Deligne--Mumford stack $[Y/\mu_2]$ along the \textit{inclusion} $\overline{H}' \subset \check{G}$
$$\check{X} \simeq \mathrm{Ind}_{\overline{H}'}^{\check{G}}([Y/\mu_2])$$
which is psychologically convenient when computing automorphic and spectral periods associated to $\check{X}$.

Next, we would like to equip $(\check{G}, \check{X})$ with the structure of a graded $\check{G}$-space. Since $\check{X}$ is defined via induction, by the discussion of \S \ref{subsection period induction} it suffices to construct a graded action of $H'$ on $Y'$ and induce. Here we have the analogue of Lemma \ref{eigen:char:G}.
\begin{lem}\label{Lemma Gcheck eigenmeasure}
    There is a $H'\times \Ggr$-eigenvolume form $\omega$ on $\mathring{Y}'$ such that 
    \[((g,x;y),\lambda)^*\omega = \left(y\lambda\right)^3\omega,\]
    where $(g,x;y)\in H'$ and $\lambda\in \Ggr$.
\end{lem}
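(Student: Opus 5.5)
The plan is to mimic the proof of Lemma \ref{eigen:char:G}: reduce the existence of an eigenvolume form on the homogeneous space $\mathring{Y}' = P_0'\backslash H'$ to a comparison of characters on the stabilizer $P_0'$, using \cite[\S 4.1]{Sakellaridis:Venkatesh}. At the base point $v_0 = f_2\wedge f_1$ we have $T_{v_0}\mathring{Y}' \simeq \mathfrak{h}'/\mathfrak{p}_0'$, and $P_0'$ acts on the top exterior power of the cotangent fiber through the modulus character $\delta_{P_0'}$. So it suffices to produce a character $\eta$ of $H'$ with $\eta|_{P_0'} = \delta_{P_0'}$. Then we check that the $\Ggr$-action, which coincides with the action of the variable $y$, contributes the factor $\lambda^3$.

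\textbf{Step 1: compute $\delta_{P_0'}$.} Since $P_0'$ has the same unipotent radical $N$ as the Siegel parabolic and $\mathfrak{h}'/\mathfrak{p}_0' \simeq \mathfrak{h}/\mathfrak{p}_0$, the computation reduces to the one for $P_0$. The form $\delta_{P_0}$ restricted from $P$ equals $\nu^{3}$ on $P_0$; this is exactly the identity used in Lemma \ref{eigen:char:G}. More concretely, the modulus of $P$ on $\mathfrak{n}$ is $\det(m)^3\nu^{-3}$. On the extra one-dimensional direction $\mathfrak{m}/\mathfrak{m}_0$, the action is trivial because $M$ is abelian modulo its derived group. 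On $P_0$ we have $\det(m) = \nu^2$, so this gives $\nu^3$.

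For $P_0'$, I will transport this along the isomorphism $\mathring{Y}\simeq \mathring{Y}'$ of Lemma \ref{Description:Variety:Lemma}. An element $(p,x;y)\in P_0'$ acts on $\mathring{Y}$ by $p$ followed by the scalar $\nu(p)^{-1}y$ on the fibre coordinate $\xi$. The cone $\mathring{Y}$ is $3$-dimensional, and the scaling by $c$ multiplies a volume form by $c^3$ (this is the $\Ggr$-weight in Lemma \ref{eigen:char:G}). Hence the character is
$$\nu(p)^3\cdot(\nu(p)^{-1}y)^3 = y^3.$$

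\textbf{Step 2: the global form.} The character $\eta(g,x;y) := y^3$ is an algebraic character of $H'$, and by Step 1 it restricts to $\delta_{P_0'}$. Alternatively, one can pull back the eigenform $\omega$ of Lemma \ref{eigen:char:G} along $\mathring{Y}'\simeq\mathring{Y}$. By \eqref{eq_new_action_H'}, $(g,x;y)$ acts as $g\in H$ composed with the scaling by $c = \nu(g)^{-1}y$. Lemma \ref{eigen:char:G} then gives the eigenvalue
$$(\nu(g)\cdot \nu(g)^{-1}y)^3 = y^3.$$
Finally, $\Ggr$ acts by scaling with $\lambda$ and contributes $\lambda^3$, yielding $(y\lambda)^3$.

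\textbf{Main obstacle.} I expect the main difficulty to be sign and convention bookkeeping. One must confirm that "$\Ggr$ acts as the square root of the center" in Lemma \ref{eigen:char:G} matches exactly the scaling by $c$ used here. One must also check that the right-action conventions do not flip $y$ to $y^{-1}$. The second route above, pulling back via Lemma \ref{eigen:char:G}, avoids recomputing modulus characters, so I would present that one.
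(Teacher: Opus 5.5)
Your argument is correct, but the route you say you would present (the second one) differs from the paper's.

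\textbf{The paper's route.} It applies the homogeneous-space criterion of \cite[\S 4.1]{Sakellaridis:Venkatesh} directly to $\mathring{Y}'=P_0'\backslash H'$. It identifies the character of $P_0'$ on the top cotangent fibre at $v_0$ with $\delta_{P_0'}$, and then simply asserts that $\eta_{Y'}(g,x;y)=y^3$ restricts to $\delta_{P_0'}$.

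\textbf{Your route.} You observe that, by \eqref{eq_new_action_H'}, $H'$ acts on $\mathring{Y}$ through $(g,x;y)\mapsto(g,\nu(g)^{-1}y)\in H\times\Ggr$. The eigenform of Lemma \ref{eigen:char:G} is therefore already an $H'$-eigenform, with character $(\nu(g)\cdot\nu(g)^{-1}y)^3=y^3$.

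\textbf{Comparison.} Your argument is shorter and shows the two eigenforms coincide on the common underlying variety. It also supplies the check $\eta_{Y'}|_{P_0'}=\delta_{P_0'}$ that the paper leaves to the reader, because $P_0'$ maps into the stabilizer of $v_0$ in $H\times\Ggr$. The paper's route needs no input from Lemma \ref{eigen:char:G} and is the uniform recipe for any homogeneous space, but the restriction identity has to be verified separately.

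\textbf{A point you should state.} The map to $H\times\Ggr$ depends on the representative of $(g,x)\in\overline{H}'$: changing it multiplies the image by $(z\mathrm{Id},z^{-2})$. This element acts trivially on $Y$, and $\eta_Y(z\mathrm{Id},z^{-2})=(z^2z^{-2})^3=1$, so nothing breaks.

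\textbf{A misstatement to correct.} $\mathring{Y}$ is $4$-dimensional, not $3$-dimensional: it is a cone over the $3$-dimensional $\mathrm{LGr}(2,4)$. The scaling weight $3$ is not the dimension. It comes from $Y$ being a quadric cone in the $5$-dimensional $V$, since the residue of $dv_1\wedge\cdots\wedge dv_5/Q$ has weight $5-2=3$. Equivalently, the centre of $H$ acts by $z^2$ on $\xi$ and by $\nu^3=z^6$ on $\omega$. Because you actually justify the weight by citing Lemma \ref{eigen:char:G}, your argument is unaffected. The distinction does matter later, where the paper uses $\varepsilon_Y-\dim Y=3-4=-1$.

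\textbf{A conflation in your Step 1.} The element $p$ ranges over $P$, not $P_0$, so the factor $\nu(p)^3$ is the value of the global eigencharacter $\eta_Y$, not $\delta_{P_0}(p)$. Your second route handles this correctly.
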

\begin{proof}
    Note that $P'_0$ acts on the top exterior power of the cotangent fiber at $v_0$ by the modulus character $\delta_{P'_0}$. Indeed, the tangent space at $v_0$ may be identified with the Lie algebra quotient $T_{v_0}\mathring{Y}' \simeq \mathfrak{h}'/\mathfrak{p}'_0$, on which $P_0'$ acts by the \textit{right} adjoint action. Taking the top exterior power and dualizing, we see that $P'_0$ acts by $\delta_{P'_0}$. 

    Now the character $\eta_{Y'}: H' \to \G_m$ defined by $\eta_{Y'}(g,x;y) = y^3$ satisfies $\eta_{Y'}|_{P'_0} = \delta_{P'_0}$. Therefore, according to \cite[\S 4.1]{Sakellaridis:Venkatesh}, there is a $H'$-eigenvolume form on $\mathring{Y}'$ with character $\eta_{Y'}$. Moreover, since the grading torus $\Ggr$ acts exactly as the action of the $y$-component, we obtain the formula in the statement. 
\end{proof}

With the graded $\check{G}$-variety $\check{X}$ well-defined, we may now compute the associated spectral period. As $Y$ and $Y'$ are isomorphic as varieties, it is not surprising that the resulting spectral expressions are closely related. 

Consider $\varphi \times \chi: \Gamma_F \to \check{G}$, and we try to lift it along $\iota$ to obtain some $\varphi_0 \times \chi_1 \times \varepsilon$. What this means is that $\varphi \times \chi = \varphi_0 \times \chi_1$ is valued in $\overline{H}'$, and $\mathrm{det}(\varphi_0)\chi_1^2 = \varepsilon^2$. Thus, the only data we need to provide the lift is $\varepsilon$, in other words, a square root of $\mathrm{det}(\varphi)\chi^2$.

\begin{prop}\label{proposition spectral Xcheck period}
    Let $\varphi \times \chi: \Gamma_F \to \check{G}$ be an unramified $L$-parameter. Then the spectral $\check{X}$-period distinguishes $L$-parameters that lift along the isogeny $\iota: H' \to \check{G}$, in which case if $\varphi \times \chi$ has big enough image so that its only fixed point on $Y$ is the cone point, $L_{\check{X}}(\varphi \times \chi)$ is given by
    $$\Delta^{-3/2}\cdot L(1, \varphi, \wedge_0^2) \cdot \sum_{\varepsilon \in S(\mathrm{det}(\varphi)\chi^2)} \, \varepsilon(\partial^{-1/2})^3 \,  \frac{L(1/2, \varphi \times \chi, \wedge^{\varsigma_0}_0 \otimes \varepsilon ) }{L(1,\varepsilon^2)},$$
    where $S(\mathrm{det}(\varphi)\chi^2)$ denotes the set of unramified square roots of the character $\mathrm{det}(\varphi)\chi^2: \Gamma_F^{\mathrm{ur}} \to \kk^\times$ and $\wedge^{\varsigma_0}_0$ is the irreducible algebraic representation of $\overline{H}'$ given by $\wedge^2_0 \boxtimes \varsigma_0$.
\end{prop}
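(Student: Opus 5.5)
We use Definition \ref{defn_spectral_period_DM_stack} applied to $\check{X} = \mathrm{Ind}_{H'}^{\check{G}}(Y')$ along the isogeny $\iota: H' \to \check{G}$. By that definition, $L_{\check{X}}(\varphi\times\chi)$ vanishes unless $\varphi \times \chi$ lands in a conjugate of $\overline{H}' = \mathrm{Im}(\iota)$. In the distinction case it equals
\[
\Delta^{\frac{\dim H' - \dim \check{G}}{4}} \cdot L(1,\varphi\times\chi,\check{\mathfrak g}/\overline{\mathfrak h}') \cdot \sum_{\widetilde{\varphi}} \mathfrak z_{Y'}(\widetilde\varphi)\, L_{Y'}(\widetilde\varphi),
\]
where $\widetilde{\varphi}$ runs over unramified lifts. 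As explained just before the proposition, these lifts are $\varphi_0 \times \chi_1 \times \varepsilon$ with $\varepsilon \in S(\det(\varphi)\chi^2)$. The proof therefore has three steps: compute the prefactor, compute $L_{Y'}$ of each lift, and assemble.

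\textbf{The prefactor.} Since $\dim H' = 11$ and $\dim \check G = 16$, the first factor is $\Delta^{-5/4}$. Next we identify $\check{\mathfrak g}/\overline{\mathfrak h}' \simeq \mathfrak{gl}_4/\mathfrak{gsp}_4$ as an $\overline{H}'$-representation under the adjoint action. The $\mathrm{GL}_1$ factor and the centre act trivially, so this is the 5-dimensional $\wedge^2_0 = V\otimes\nu^{-1}$, exactly as in Proposition \ref{proposition spectral X period}. Hence the factor is $L(1,\varphi,\wedge^2_0)$. It depends only on $\varphi$, as already noted in \eqref{spectral L W check}.

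\textbf{The spectral $Y'$-period of a lift.} Fix a lift $\widetilde\varphi = (\varphi_0,\chi_1;\varepsilon)$. By \eqref{eq_new_action_H'}, the action of $H'$ on $Y' \subset V$ is the linear action of $(g,x;y)$ via $g$ twisted by the scalar $\nu(g)^{-1}y$. So $Y'$ sits $H'$-equivariantly inside the representation $V \otimes \nu^{-1} \otimes y = \wedge^2_0 \otimes y$, where $y$ denotes the character $(g,x;y)\mapsto y$.

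Under the splitting \eqref{equation direct product}, $y = \nu(g)x\cdot \epsilon$. Evaluated on $\widetilde\varphi$, the representation $\wedge^2_0\otimes y$ becomes $\wedge^{\varsigma_0}_0\otimes\varepsilon$ composed with $\varphi\times\chi$. Here I will need to check that the twist by $x$ is absorbed in the stated convention $\wedge^{\varsigma_0}_0 = \wedge^2_0\boxtimes\varsigma_0$, with $\varepsilon$ recording the full $y$-character. This bookkeeping is the main place I expect sign or normalisation slips, so I would verify it on the torus.

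The quadratic form $Q$ is $H'$-semi-invariant with eigencharacter $y^2$. On the lift this evaluates to $\varepsilon^2 = \det(\varphi)\chi^2$, which is a 1-dimensional character of degree $d=2$. Proposition \ref{proposition quotient of L functions}, applied with only the cone point fixed, then gives
\[
\frac{L(1/2,\varphi\times\chi,\wedge^{\varsigma_0}_0\otimes\varepsilon)}{L(1,\varepsilon^2)}.
\]

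The global constants of Definition \ref{definition spectral period} are as follows. By Lemma \ref{Lemma Gcheck eigenmeasure}, $\varepsilon_{Y'}=3$ and $\dim Y'=4$, giving $\Delta^{-1/4}$. There is also a determinant factor $\det(\wedge^2_0\otimes y)(\partial)^{-1/2}\cdot(y^2)(\partial)$. Since $\wedge^2_0$ has trivial determinant, this equals $\varepsilon(\partial)^{-5/2}\varepsilon(\partial) = \varepsilon(\partial^{-1/2})^3$.

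I have to decide how this interacts with $\mathfrak z_{Y'}(\widetilde\varphi) = (\eta_{Y'}\circ\widetilde\varphi)(\partial^{-1/2}) = \varepsilon(\partial^{-1/2})^3$. Comparing with the proof of Proposition \ref{proposition spectral X period}, the $\mathfrak z$ factor there is exactly the $(\nu\circ\varphi)(\partial)^{-3/2}$ already produced, counted once. So I would take the convention that these coincide and obtain a single factor $\varepsilon(\partial^{-1/2})^3$.

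\textbf{Assembly.} Multiplying, $\Delta^{-5/4}\cdot\Delta^{-1/4} = \Delta^{-3/2}$, and summing over $\varepsilon\in S(\det(\varphi)\chi^2)$ gives the stated formula. The hardest step is the careful identification of the twisting characters in the second step; the remaining steps mirror Proposition \ref{proposition spectral X period} verbatim.
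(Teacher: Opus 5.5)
Your proposal is correct and is essentially the paper's proof. It applies Definition \ref{defn_spectral_period_DM_stack}, identifies $\check{\mathfrak g}/\overline{\mathfrak h}'$ with $\wedge^{\varsigma_0}_0$ to get $\Delta^{-5/4}L(1,\varphi,\wedge^2_0)$, and applies Proposition \ref{proposition quotient of L functions} to $Y'\subset\wedge^{\varsigma_0}_0\boxtimes\varsigma_1$ cut out by $Q$ with eigencharacter $y^2$; like the paper, it counts the $\mathfrak z$-factor once, giving a single $\varepsilon(\partial^{-1/2})^3$. The only blemishes are cosmetic: the factor coming from $Q$ is $(y^2)(\partial)^{1/2}=\varepsilon(\partial)$ rather than $(y^2)(\partial)$, although you evaluated it correctly, and the detour through the splitting is unnecessary because $y\circ\widetilde\varphi=\varepsilon$ holds by the very definition of the lift, so there is no $x$-twist to absorb.
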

\begin{proof}
    We compute $L_{\check{X}}$ using Definition \ref{defn_spectral_period_DM_stack}. Assuming that $\varphi \times \chi: \Gamma_F \to \check{G}$ lifts along $\iota$, we know that $\varphi$ has image contained in $\overline{H}'$, and the adjoint representation of $\varphi \times \chi$ on $\check{\g}/\overline{\mathfrak{h}'}$ is precisely $ \wedge^{\varsigma_0}_0$, thus explaining the factor $L(1, \varphi, \wedge_0^2) = L(1, \varphi \times \chi,  \wedge^{\varsigma_0}_0)$. To conclude, we need to compute the spectral period of $Y'$ evaluated at lifts of $\varphi \times \chi$ along $\iota$. Since $Y'$ is a conical closed subvariety in the $H'$-representation $V':=\wedge^{\varsigma_0}_0 \boxtimes \varsigma_1$ defined by the quadratic form $Q$ of \eqref{equation quadratic form Q}, an application of Proposition \ref{proposition quotient of L functions} gives 
    \begin{align*}
        L_{Y'}(\varphi \times \chi \times \varepsilon) = \varepsilon(\partial)^{-3/2}\cdot \Delta^{-1/4} \cdot \frac{L(1/2,\varphi, \wedge^{\varsigma_0}_0 \otimes \varepsilon )}{ L(1,\varepsilon^2)}.
\end{align*}
\end{proof}

Thanks to \eqref{spectral L W check}, the Shalika normalized $\check{X}$-spectral period (in the sense of Conjecture \ref{conjecture ratio of periods}) is given by 
\begin{equation}\label{spectral_side_Xcheck_W_check}
    \frac{L_{\check{X}}(\varphi)}{L_{\check{W}}(\varphi)} =  \frac{\Delta^{-1/4}}{|S(\mathrm{det}(\varphi)\chi^2)|} \sum_{\varepsilon \in S(\mathrm{det}(\varphi)\chi^2)} \, \varepsilon(\partial^{-1/2})^3 \cdot \frac{L(1/2, \varphi,\wedge^{\varsigma_0}_0 \otimes \varepsilon)}{L(1,\varepsilon^2)},
\end{equation}
assuming that $\varphi$ lifts along the isogeny $\iota: H' \to \check{G}$. Note that  $|S(\mathrm{det}(\varphi)\chi^2)| = |\mathrm{Hom}(\Gamma_F^{\mathrm{ur}}, \mu_2)| $, which follows by \eqref{torsors_L_Wcheck}.

\section{Automorphic preliminaries}

In order to compute the automorphic local periods of \S \ref{subsec_automorphic_side_G} and \S \ref{subsec_automorphic_side_Gcheck}, we need a Casselman--Shalika formula for the unramified Shalika periods $P_W$ and $P_M$ on $G$ and $\check{G}$, respectively. While the formula for $\check{G}$ is essentially given by the $\GL_4$-case of \cite[Theorem 2.1]{SakellaridisCSShalika}, the corresponding formula for $G$ requires additional work and constitutes one of the new technical contributions of this manuscript. In \S\ref{subsection Shalika for GSpin6}, we relate the period $P_W$ on $G$ to Shalika periods on $\GL_4 \times \GL_1$, and then apply \cite{SakellaridisCSShalika} to derive the desired formula (see Propositions \ref{Shalika:GSpin:In:GL} and \ref{Final:Formula:Shalika:GSpin}). As part of this argument, we establish Theorem \ref{conjecture stacky Shalika dual pair}.

\subsection{Sakellaridis' Casselman--Shalika formula on \texorpdfstring{$\mathrm{GSO}_{6}$}{GSO(6)}} \label{Subsection:CS:Sakellaridis}
Let $v$ be a place of $F$. Since $$\check{G} = \frac{\mathrm{GL}_4 \times \mathrm{GL}_1}{(z \mathrm{Id}, z^{-2}: z \in \Gm)},$$
we can identify any generic unramified representation of $\check{G}(F_v)$ with $\pi_v \otimes \chi_v$, where $\pi_v$ is a generic unramified representation $\GL_4(F_v)$ and $\chi_v$ is an unramified character  of $\GL_1(F_v)$ such that $\omega_{\pi_v} \chi_v^{-2}=1$. We are therefore left to spell out a formula for the unramified Shalika functional attached to $\pi_v$ and $\chi_v$.

We recall that, if $S:=U \GL_2$ denotes the Shalika subgroup of $\GL_4$, a local Shalika functional of $\pi_v$ is an element of \begin{equation}\label{Hom:Space:Shalika}
{\rm Hom}_{S(F_v)}(\pi_v, \Psi_v^{-1} \times \chi_v),\end{equation}
where $\Psi_v : U(F_v) \simeq {\rm Mat}_2(F_v) \to \kk^\times$ is given by $t \mapsto \psi_v({\rm Tr}(t))$. Recall, from \S \ref{subsec_add_chars}, that $\psi_v$ has conductor $\varpi_v^{-2m_v}\mathfrak{o}_v$, thus it might be ramified.

The ${\rm Hom}$-space \eqref{Hom:Space:Shalika} is at most one dimensional; more precisely, a generic representation $\pi_v$ of $\GL_4(F_v)$ has a non-trivial Shalika functional with respect to the character $\chi_v$ if and only if it has central character $\chi_v^2$ and its Langlands parameter factors through $\mathrm{GSp}_4(\kk)$ with similitude character $\chi_v$ (see, for instance, \cite[Theorem 1.5 (i)]{Gan:Takeda:JM}). This implies that the only generic, unramified representations with non-trivial Shalika functional are isomorphic to 
\[\mathrm{Ind}_{B(F_v)}^{\GL_4(F_v)}(\xi_1\otimes\xi_2\otimes(\chi_v\xi_2^{-1})\otimes(\chi_v\xi_1^{-1})),\]
where $\mathrm{Ind}_{B(F_v)}^{\GL_4(F_v)}$ denotes normalized induction from the upper triangular Borel subgroup.  

When these conditions are in place, after choosing a spherical vector $\phi_0$ for $\pi_v$ such that $\phi_0(1)=1$, we denote by $P_{M,v}$ a generator of \eqref{Hom:Space:Shalika}, normalized so that $$P_{M,v}\left(\pi_v \left[\begin{smallmatrix}
      \varpi_v^{-2m_v}I_2 & \\ & I_2
\end{smallmatrix}\right]\phi_0\right)=1.$$   In what follows, the Shalika function attached to $\phi_0$, $\chi_v$, and $\psi_v$ is defined as \begin{equation}\label{new:def:spher:Shal}\Omega_{\phi_0,v}^{\chi_v} : \GL_4(F_v) \to \kk,\;\;
g\mapsto P_{M,v}(\pi_v(g) \phi_0).
\end{equation}
When clear from the context, we suppress $\phi_0$ and $\chi_v$ from the notation and call it by $\Omega_{v}$. Moreover, in the following we denote by $\Omega_{v}^{\rm ur}$ the unramified Shalika function associated to $\phi_0$, $\chi_v$ and  an unramified additive character $\psi_v^{\rm ur}$, normalized so that $\Omega_{v}^{\rm ur}(1)=1$.

A  special case of the main result of \cite{SakellaridisCSShalika} provides a formula of $\Omega_{v}^{\rm ur}$ evaluated at $$\varpi_v^{(n,n)} : = \left[\begin{smallmatrix}
    \varpi_v^n I_2 & \\ & I_2
\end{smallmatrix}\right],$$ with $n \in \ZZ$.  In particular, it is shown to be non-zero only if $n \geq 0$. We now record the formula of \emph{loc. cit.}

\begin{prop}\label{Sakellaridis_prop}
Let $\xi = \xi_1\otimes\xi_2\otimes(\chi_v\xi_2^{-1})\otimes(\chi_v\xi_1^{-1})$ be a character of the maximal diagonal split torus of $\GL_4(F_v)$ and consider $\pi_v$ the unramified subquotient of the normalized induction $\mathrm{Ind}_{B(F_v)}^{\GL_4(F_v)}\xi$. The unramified Shalika function $\Omega_{v}^{\rm ur}$ attached to $\pi_v$ and $\chi_v$ satisfies 
$$ \Omega_{v}^{\rm ur}( \varpi_v^{(n,n)}) = 
    \frac{q_v^{-2n}}{1+ q_v^{-1}} \cdot \frac{ \mathcal{A}\Big( e^{\rho+n(\alpha_1+\alpha_2)} \cdot \prod_{\alpha \in \Phi_{\GSp_4}^{+,s}} (1 - q_v^{-1} e^{-\alpha}) \Big) ({\rm Sat}_v)}{{\mathcal{A}\left(e^{\rho}\right)}({\rm Sat}_v)},$$
    where we have used the following set of notations:
    \begin{itemize}
        \item $\rho = \frac{1}{2}\sum_{\alpha \in \Phi_{\GSp_4}^+} \alpha$ is the half sum of positive roots of $\mathrm{GSp}_4$;
        \item ${\rm Sat}_v$ denotes the Satake parameter of $\pi_v$;
        \item $\mathcal{A}(\cdot)$ denotes the alternator with respect to the Weyl group of $\GSp_4$; 
        \item $\Phi_{\mathrm{GSp}_4}^{+,s}$ denotes the set of positive short roots of $\mathrm{GSp}_4$.
    \end{itemize}
    Moreover, $e^{\alpha}(\mathrm{Sat}_v) = \xi(a_{\alpha})$ where, if $\alpha$ is the root $\mathrm{diag}(t_1,\cdots,t_4)\mapsto t_it_j^{-1}$, then $a_{\alpha}$ is the diagonal element with $\varpi_v$ on the $i$th row, $\varpi_v^{-1}$ on the $j$th row and $1$'s otherwise, and $e^{n(\alpha_1+\alpha_2)}(\mathrm{Sat}_v) = \xi(\varpi_{v}^{(n,n)}) =(\xi_1\xi_2)(\varpi_v^n)$. 
\end{prop}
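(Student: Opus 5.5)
The plan is to obtain Proposition \ref{Sakellaridis_prop} as a specialization of Sakellaridis' general Casselman--Shalika formula for the Shalika model of $\GL_{2n}$ \cite[Theorem 2.1]{SakellaridisCSShalika}, taking $n=2$. There is no new harmonic analysis to do; the work is in matching normalizations.

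The first step is to fix the data. The unramified Shalika functional in \eqref{Hom:Space:Shalika} is taken with respect to the unramified additive character $\psi_v^{\rm ur}$ and the character $\chi_v$ on $\GL_2^\Delta$. Its existence forces the inducing character to be $\xi=\xi_1\otimes\xi_2\otimes(\chi_v\xi_2^{-1})\otimes(\chi_v\xi_1^{-1})$. Equivalently, the Satake parameter ${\rm Sat}_v$ lies in the dual $\GSp_4$ with similitude $\chi_v$, as recalled from \cite{Gan:Takeda:JM}. This identifies the relevant dual group for the Weyl-type formula as $\GSp_4$, and $\mathcal{A}$ as the alternator over its Weyl group.

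The second step is to quote Sakellaridis' formula. It expresses $\Omega_v^{\rm ur}$ on dominant torus elements modulo $\GL_2^\Delta$, that is on $\varpi_v^{(n,n)}$, as
\[
\delta^{1/2}\text{-factor}\times\frac{\mathcal{A}\bigl(e^{\rho}\,e^{\check\lambda}\prod_{\alpha\in\Phi^{+,s}}(1-q_v^{-1}e^{-\alpha})\bigr)}{\mathcal{A}(e^\rho)},
\]
up to a constant. Here $\Phi^{+,s}$ is the set of positive short roots, which reflects the weights of the $\GSp_4$-spherical variety $\mathrm{Sp}_4\backslash\GL_4$ in the sense of Sakellaridis. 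The formula also gives vanishing for $n<0$. In our setting the cocharacter $\varpi_v^{(n,n)}$ pairs with the Satake torus to give $e^{n(\alpha_1+\alpha_2)}({\rm Sat}_v)=(\xi_1\xi_2)(\varpi_v^n)$, and we check this identification directly from the dictionary $e^\alpha({\rm Sat}_v)=\xi(a_\alpha)$. The prefactor $q_v^{-2n}$ is $\delta_{P}^{1/2}(\varpi_v^{(n,n)})$ for the Shalika parabolic: the modulus character is $|\det a|^2|\det b|^{-2}$, so at $\varpi_v^{(n,n)}$ it equals $q_v^{-4n}$, and its square root is $q_v^{-2n}$.

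The last step, which I expect to be the main obstacle, is the constant $\frac{1}{1+q_v^{-1}}$. It is forced by the normalization $\Omega_v^{\rm ur}(1)=1$. So we evaluate the numerator at $n=0$: $\mathcal{A}\bigl(e^\rho\prod_{\alpha\in\Phi^{+,s}}(1-q_v^{-1}e^{-\alpha})\bigr)/\mathcal{A}(e^\rho)$. By the Macdonald-type identity, summing $\prod(1-q^{-1}e^{-\alpha})$ over $W$-translates gives the Poincaré-polynomial factor $\sum_{w\in W_s}q^{-\ell_s(w)}$ for the short-root subsystem. The positive short roots of $\GSp_4$ form a system of type $A_1\times A_1$, and the constant comes from the length-one contributions, so the value is the constant $1+q_v^{-1}$. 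Dividing by it gives the stated normalization. The care needed here is in matching Sakellaridis' $\rho$-shift and sign conventions for $\mathcal{A}$, and in using the normalization at $1$ rather than at $\varpi_v^{(-2m_v,-2m_v)}$ (which is the normalization relevant for ramified $\psi_v$).
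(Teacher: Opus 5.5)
Your strategy is the paper's: the proof there is a citation of \cite[(78)]{SakellaridisCSShalika} plus bookkeeping. However, you skip the one point the paper actually has to argue. Sakellaridis proves his formula only for the Shalika model with \emph{trivial} character on $\GL_2^\Delta$, i.e.\ for $\pi_v$ with trivial central character and Satake parameter in $\mathrm{Sp}_4$. You quote it directly with the character $\chi_v$ and the dual group $\GSp_4$, which is not what the reference states.

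The paper closes this by noting that the computations of Propositions 8.1 and 8.2 of \cite{SakellaridisCSShalika} go through verbatim once $\chi_v^2=\omega_{\pi_v}$. You can also reduce to the trivial case by a twist:
\begin{itemize}
\item Pick an unramified $\eta$ with $\eta^2=\chi_v$ (possible since $\kk$ is algebraically closed), and set $\pi_v'=\pi_v\otimes(\eta^{-1}\circ\det)$ on the same space.
\item The same functional is then a Shalika functional for $\pi_v'$ with trivial character, and the spherical vector and the normalization at $1$ are unchanged.
\item On the automorphic side, $\Omega_{\pi_v,\chi_v}(\varpi_v^{(n,n)})=\eta(\varpi_v)^{2n}\,\Omega_{\pi_v',1}(\varpi_v^{(n,n)})=\chi_v(\varpi_v)^{n}\,\Omega_{\pi_v',1}(\varpi_v^{(n,n)})$.
\item Dually, $\mathrm{Sat}_v'=\eta(\varpi_v)^{-1}\mathrm{Sat}_v$ differs by a central element of $\GSp_4$. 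The roots and $\rho$ are trivial on the centre, while every monomial of the numerator alternator picks up $\eta(\varpi_v)^{-2n}=\chi_v(\varpi_v)^{-n}$ from $e^{n(\alpha_1+\alpha_2)}$.
\end{itemize}
The two factors cancel, so the formula for general $\chi_v$ follows from the trivial-character one.

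Your justification of the constant is also wrong as written, although the value $1+q_v^{-1}$ is correct. The Weyl group of the short-root subsystem $A_1\times A_1$ has Poincar\'e polynomial $(1+q_v^{-1})^2$ and has two length-one elements, so neither reading of your sentence gives $1+q_v^{-1}$.

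What actually holds is the unequal-parameter Macdonald identity for $W(C_2)$, with parameter $q_v^{-1}$ on short roots and $0$ on long roots. Only those $w$ whose inversion set consists of short roots survive, namely $1$ and $s_{\alpha_1-\alpha_2}$. The reflection in $\alpha_1+\alpha_2-\alpha_0$ has length $3$ and inverts both long positive roots, so it contributes nothing.

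Equivalently, as in the proof of Proposition \ref{CS:Formula:Shalika:simplified} and Remark \ref{Vanishing:Character}, at $n=0$ the numerator equals $(1+q_v^{-1})\bigl(\mathcal{A}(\rho)-q_v^{-1}\mathcal{A}(\rho+(-1,-1;1))\bigr)$. The second alternator vanishes because $\rho+(-1,-1;1)=\alpha_1-\tfrac12\alpha_0$ is fixed by the reflection in $2\alpha_2-\alpha_0$. In any case this constant is already part of the cited formula, so it does not need to be rederived.
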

\begin{proof}
This is essentially \cite[(78)]{SakellaridisCSShalika}. Observe that, despite the fact that the formula of \emph{loc.cit.} is proven in the case where $\chi_v$ is trivial, the result extends to arbitrary unramified characters $\chi_v$. Indeed, the calculations of Propositions 8.1 and 8.2 of \emph{loc.cit.} are identical as long as $\chi_v^2 = \omega_{\pi_v}$ (which is in any case a necessary condition to have a non-trivial Shalika model).
Thus, we obtain an identical formula as the one of \cite[(74)]{SakellaridisCSShalika}. Note that the embedding of maximal diagonal tori $T_{\GSp_4(\kk)} \hookrightarrow T_{\GL_4(\kk)}$ induces a surjection $\mathrm{X}^*(T_{\GL_4(\kk)}) \to \mathrm{X}^*(T_{\GSp_4(\kk)})$. This allows us to write each term $\xi(a_\alpha)$ as $e^\lambda({\rm Sat}_v)$ for some character $\lambda \in \mathrm{X}^*(T_{\GSp_4(\kk)})$. 
\end{proof}
 \begin{rem}
We observe that, in \cite{BFGsplitorthogonal}, the authors also proved a formula similar to that of Proposition \ref{Sakellaridis_prop} for Shalika models of $\mathrm{PGSO}_6$.
 \end{rem}

Now, fix an unramified representation $\pi_v$ of $\GL_4(F_v)$ whose Satake parameter $\mathrm{Sat}_v$ lies in $\GSp_4(\kk)$. 
For $(a,b;c) \in \Z_{\geq 0}^2 \times \Z$, with $a \geq b \geq 0$, let $V_{(a,b;c)}$ be the irreducible representation of $\GSp_4(\kk)$ with highest weight $a \alpha_1 + b \alpha_2 + c \alpha_0$: \begin{align}\label{highest_weight_GSp4}
(a,b;c) \,:\, {\rm diag}(x_1,x_2,\nu x_2^{-1},\nu x_1^{-1}) \mapsto x_1^a x_2^b \nu^{c}.
\end{align} 
When $a + b \equiv 0 $ [mod $2$], the representation $V_{(a,b;-\frac{a+b}{2})}$ has trivial central character. Set
\begin{align*}
    s_{(a,b;c)} := \mathrm{Tr}(V_{(a,b;c)}|\mathrm{Sat}_v).
    \end{align*}
We can write \begin{align}\label{playing_with_traces}
   s_{(a,b;c)} = s_{(0,0;c)}s_{(a,b;0)}=  \chi_{v}(\varpi_v)^{c}s_{(a,b;0)},
    \end{align} where $\chi_{v}(\varpi_v)$ is the similitude of $\mathrm{Sat}_v$. Also, for any character $\eta\in \mathrm{X}^*(T_{\GSp_4}(\kk))$, with $T_{\GSp_4}$ denoting the maximal diagonal torus of $\GSp_4$, we let $\mathcal{A}(\eta) := \mathcal{A}(e^{\eta})({\rm Sat}_v)$. 
\begin{lem}\label{lemma:simplificationCSstep1}
    Suppose that $\pi_v$ appears as an irreducible subquotient of \[\mathrm{Ind}_{B(F_v)}^{\GL_4(F_v)}(\xi_1\otimes\xi_2\otimes(\chi_v\xi_2^{-1})\otimes(\chi_v\xi_1^{-1})).\] Then, we have
    \[s_{(a,b;c)} =  \frac{\mathcal{A}(\rho+(a,b;c))}{\mathcal{A}(\rho)}.\]
\end{lem}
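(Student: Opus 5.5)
This is the Weyl character formula for $\GSp_4(\kk)$, evaluated at the semisimple element $\mathrm{Sat}_v$. The plan is to identify $\mathrm{Sat}_v$ with an explicit element $t_\xi$ of the maximal torus $T_{\GSp_4}(\kk)$ and then apply Weyl's formula.

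\textbf{Step 1: choose the torus representative.} The inducing character $\xi=\xi_1\otimes\xi_2\otimes(\chi_v\xi_2^{-1})\otimes(\chi_v\xi_1^{-1})$ gives
\[
t_\xi=\mathrm{diag}\big(\xi_1(\varpi_v),\,\xi_2(\varpi_v),\,\chi_v(\varpi_v)\xi_2(\varpi_v)^{-1},\,\chi_v(\varpi_v)\xi_1(\varpi_v)^{-1}\big),
\]
which lies in $T_{\GSp_4}(\kk)$ with similitude $\nu=\chi_v(\varpi_v)$. Since $\pi_v$ is a subquotient of this induced representation, its Satake parameter is the $\GL_4(\kk)$-conjugacy class of $t_\xi$. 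Any two elements of $T_{\GSp_4}$ that are conjugate in $\GL_4$ are already conjugate in $\GSp_4$ under its Weyl group, so $t_\xi$ represents $\mathrm{Sat}_v$ in $\GSp_4(\kk)$.

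\textbf{Step 2: match the alternator convention.} Check that the convention $e^\alpha(\mathrm{Sat}_v)=\xi(a_\alpha)$ of Proposition \ref{Sakellaridis_prop} agrees with evaluating $e^\lambda$ at $t_\xi$ under the pairing \eqref{highest_weight_GSp4}, that is, $e^{(a,b;c)}(t_\xi)=x_1^ax_2^b\nu^c$. For a root $t_it_j^{-1}$, $\xi(a_\alpha)$ is the corresponding ratio of entries of $t_\xi$. The weight $n(\alpha_1+\alpha_2)$ evaluates to $(\xi_1\xi_2)(\varpi_v^n)$, consistent with $x_1^n x_2^n$.

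\textbf{Step 3: apply Weyl's formula.} For the connected reductive group $\GSp_4$ with dominant weight $\lambda=(a,b;c)$, the character of $V_\lambda$ at a regular element $t$ is
\[
\frac{\sum_{w}\mathrm{sgn}(w)\,e^{w(\lambda+\rho)}(t)}{\sum_w \mathrm{sgn}(w)\,e^{w\rho}(t)}.
\]
Since $\rho$ has half-integral coefficients, both numerator and denominator should be read as multiplied by the same $e^{-\rho}$, or on a double cover of the torus; the ratio is well defined. The $\nu^c$ factor is Weyl-invariant, so it factors out of the numerator, consistent with \eqref{playing_with_traces}.

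\textbf{Step 4: extend to all $\mathrm{Sat}_v$.} Both sides are regular class functions, once the right side is read as the polynomial quotient given by the Weyl character formula. The identity therefore extends by continuity (density of regular elements) to any $t_\xi$, including those where $\mathcal{A}(\rho)$ vanishes; in that case the statement should be read as this polynomial identity.

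The main obstacle is only bookkeeping: making sure that the torus identification and the sign and coordinate conventions match those of Proposition \ref{Sakellaridis_prop}.
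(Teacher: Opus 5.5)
Your route is the paper's: the lemma is just Weyl's character formula for $\GSp_4$ evaluated at $\mathrm{Sat}_v$, and the paper proves it by citing Fulton--Harris. Your Steps 2--4 are correct bookkeeping that the paper leaves implicit: matching the convention $e^\alpha(\mathrm{Sat}_v)=\xi(a_\alpha)$, handling the half-integrality of $\rho=(2,1;-3/2)$, and reading the formula as a polynomial identity at singular points.

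The one step that fails as written is in Step 1, where you claim that two elements of $T_{\GSp_4}(\kk)$ that are $\GL_4(\kk)$-conjugate are already $W(\GSp_4)$-conjugate. This is false, because the $\GL_4$-class determines the similitude only up to sign. For example, $\mathrm{diag}(1,-1,-1,1)$ (with $\nu=1$) and $\mathrm{diag}(1,-1,1,-1)$ (with $\nu=-1$) both lie in $T_{\GSp_4}$ and have the same eigenvalues. However, $s_{(0,0;1)}=\nu$ equals $1$ at the first and $-1$ at the second. So the $\GL_4$-Satake parameter of $\pi_v$ alone does not pin down the class $\mathrm{Sat}_v\in\GSp_4(\kk)$, nor the numbers $s_{(a,b;c)}$. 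Indeed, the same unramified $\pi_v$ is a subquotient of two inductions of the given shape whose values $\chi_v(\varpi_v)$ have opposite signs.

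What rescues your argument is the paper's convention, stated at \eqref{playing_with_traces} and built into Proposition~\ref{Sakellaridis_prop}, that $\mathrm{Sat}_v$ is the lift with similitude $\chi_v(\varpi_v)$. Once the similitude $\nu$ is fixed, your claim is true. The eigenvalues come in pairs $\{x_i,\nu x_i^{-1}\}$, and moving $y_1$ into the first slot by a signed permutation forces $\{y_2,\nu y_2^{-1}\}=\{x_2,\nu x_2^{-1}\}$. With that correction, $t_\xi$ represents $\mathrm{Sat}_v$ and the rest of your proof goes through.
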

\begin{proof}
    This follows from the Weyl's character formula for $\GSp_4$ (see, for instance, \cite[\S 24]{FultonHarris}).
\end{proof}

\begin{prop}\label{CS:Formula:Shalika:simplified}
For $n > 0$, we have
$$\Omega_v^{\rm ur}(\varpi_v^{(n,n)}) =  q_v^{-2n} \, \bigg( s_{(n,n;0)} - \chi_v(\varpi_v) s_{(n-1,n-1;0)}q_v^{-1}\bigg).$$
\end{prop}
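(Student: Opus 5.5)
The plan is to start from Proposition \ref{Sakellaridis_prop} and expand the product over the positive short roots of $\mathrm{GSp}_4$. With the simple roots $\alpha_1$ (short) and $\alpha_2$ (long), the positive short roots are $\alpha_1$ and $\alpha_1+\alpha_2$. We expand
\[\prod_{\alpha \in \Phi^{+,s}}(1-q_v^{-1}e^{-\alpha}) = 1 - q_v^{-1}e^{-\alpha_1} - q_v^{-1}e^{-\alpha_1-\alpha_2} + q_v^{-2}e^{-2\alpha_1-\alpha_2}.\]
The alternator is linear, so the numerator becomes a sum of four alternators $\mathcal{A}(\rho+\lambda)$. The four weights are
\[\lambda \in \{n(\alpha_1+\alpha_2),\; n(\alpha_1+\alpha_2)-\alpha_1,\; n(\alpha_1+\alpha_2)-\alpha_1-\alpha_2,\; n(\alpha_1+\alpha_2)-2\alpha_1-\alpha_2\}.\]
Each of these must be written in the $(a,b;c)$-coordinates of \eqref{highest_weight_GSp4}. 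In those coordinates $e^{\alpha_1}=x_1x_2^{-1}$, $e^{\alpha_2}=x_2^2\nu^{-1}$ and $e^{n(\alpha_1+\alpha_2)}=(x_1x_2\nu^{-1})^n$, which matches $(\xi_1\xi_2)(\varpi_v^n)$ as stated in Proposition \ref{Sakellaridis_prop}.

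Next we identify each term using Lemma \ref{lemma:simplificationCSstep1} and the usual straightening rule $\mathcal{A}(w(\rho+\lambda)) = (-1)^{\ell(w)}\mathcal{A}(\rho+\lambda)$.
\begin{itemize}
\item The weight $n(\alpha_1+\alpha_2)$ is $(n,n;-n)$, which gives $s_{(n,n;-n)}$.
\item Subtracting $\alpha_1+\alpha_2 = (1,1;-1)$ gives $(n-1,n-1;-(n-1))$, which is dominant for $n\geq 1$ and gives $s_{(n-1,n-1;-(n-1))}$.
\item The weight $n(\alpha_1+\alpha_2)-\alpha_1 = (n-1,n+1;-n)$ is not dominant. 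Adding $\rho$ puts it on the wall fixed by the reflection $s_{\alpha_1}$, so its alternator vanishes.
\item The weight $n(\alpha_1+\alpha_2)-2\alpha_1-\alpha_2 = (n-2,n;-(n-1))$ also satisfies $\langle \rho+\lambda,\alpha_1^\vee\rangle = 0$, so it vanishes too.
\end{itemize}
I expect these to be the only vanishings. The point to check carefully is the pairing with $\alpha_1^\vee$, which sends $(a,b)$ to $a-b$ and sends $\rho$ to $1$.

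This leaves
\[\Omega_v^{\rm ur}(\varpi_v^{(n,n)}) = \frac{q_v^{-2n}}{1+q_v^{-1}}\Big(s_{(n,n;-n)} - q_v^{-1}s_{(n-1,n-1;-(n-1))}\Big).\]
Finally, \eqref{playing_with_traces} converts the similitude twists into powers of $\chi_v(\varpi_v)$.

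The main obstacle is the normalization. The stated formula has no $(1+q_v^{-1})^{-1}$, its coefficient is $\chi_v(\varpi_v)$ rather than $1$, and its traces are untwisted $s_{(n,n;0)}$. So I need to recheck the sign and normalization conventions in Sakellaridis' formula, in particular whether the constant is absorbed by the normalization $\Omega_v^{\rm ur}(1)=1$ and whether $e^{\rho}$ versus the modulus shift accounts for it. I would first test the case $n=0$, where the same computation should give $1$, and use it to calibrate. If the $(1+q_v^{-1})^{-1}$ indeed persists at $n=0$ via the $\rho$-shifted term, the normalizing constant must be fixed by that evaluation.

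For the similitude powers, \eqref{playing_with_traces} gives $s_{(n,n;-n)} = \chi_v(\varpi_v)^{-n}s_{(n,n;0)}$. The discrepancy should then disappear once one rewrites the overall factor $\chi_v(\varpi_v)^{-n}$ through the identification $e^{n(\alpha_1+\alpha_2)}(\mathrm{Sat}_v) = (\xi_1\xi_2)(\varpi_v^n)$, or through the convention for $\varpi_v^{(n,n)}$ acting via $\chi_v$ on the Shalika character. I would settle this bookkeeping by evaluating both sides on a single explicit Satake parameter.
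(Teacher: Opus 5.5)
Your outline (expand the product over positive short roots, use linearity of $\mathcal{A}$, straighten by Weyl reflections, then apply Lemma \ref{lemma:simplificationCSstep1} and \eqref{playing_with_traces}) is the paper's outline. However, two of your steps are wrong, and the fixes you propose for the resulting mismatches do not work.

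\textbf{First gap: the two alternators you declare zero do not vanish.} Use your own rule. The weights $\rho+n(\alpha_1+\alpha_2)-\alpha_1$ and $\rho+n(\alpha_1+\alpha_2)-2\alpha_1-\alpha_2$ have first two coordinates $(n+1,n+2)$ and $(n,n+1)$. So their pairing with $\alpha_1^\vee$ (in your labelling) is $-1$, not $0$. What is true is that the reflection in the short simple root (swapping $x_1,x_2$ and fixing $\nu$) carries them to $\rho+n(\alpha_1+\alpha_2)$ and $\rho+n(\alpha_1+\alpha_2)-\alpha_1-\alpha_2$. Hence their alternators are the negatives of the other two. With their coefficients $-q_v^{-1}$ and $+q_v^{-2}$, the numerator becomes $(1+q_v^{-1})\big(\mathcal{A}(\rho+n(\alpha_1+\alpha_2))-q_v^{-1}\mathcal{A}(\rho+n(\alpha_1+\alpha_2)-\alpha_1-\alpha_2)\big)$. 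This factor exactly cancels the prefactor $(1+q_v^{-1})^{-1}$ of Proposition \ref{Sakellaridis_prop}, and this pairing is the key step of the paper's proof.

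``Calibrating'' at $n=0$ cannot replace it. The formula is already normalized so that $\Omega_v^{\rm ur}(1)=1$. So the value $(1+q_v^{-1})^{-1}$ your computation gives at $n=0$ (where $s_{(-1,-1;1)}=0$ by Remark \ref{Vanishing:Character}) signals an error, not a free constant. Nothing in your argument would justify rescaling every $n$ by the same factor.

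\textbf{Second gap: the stray factor $\chi_v(\varpi_v)^{-n}$ comes from misreading the notation.} It is not a convention about $\varpi_v^{(n,n)}$. In the paper, $\alpha_1,\alpha_2,\alpha_0$ denote the coordinate characters $x_1,x_2,\nu$ of \eqref{highest_weight_GSp4}, not simple roots. The simple roots are $\alpha_1-\alpha_2$ and $2\alpha_2-\alpha_0$ (cf.\ Remark \ref{Vanishing:Character}). Hence $n(\alpha_1+\alpha_2)=(n,n;0)$, which is exactly what $e^{n(\alpha_1+\alpha_2)}(\mathrm{Sat}_v)=(\xi_1\xi_2)(\varpi_v^n)$ asserts.

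Your character $(x_1x_2\nu^{-1})^n$ evaluates to $(\xi_1\xi_2)(\varpi_v^n)\chi_v(\varpi_v)^{-n}$. So your claim that it matches is false, and testing a single Satake parameter would only confirm the discrepancy.

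With the correct reading:
\begin{itemize}
\item $\rho=(2,1;-\tfrac32)$;
\item the positive short roots are $\alpha_1-\alpha_2$ and $\alpha_1+\alpha_2-\alpha_0$;
\item the two surviving weights are $\rho+(n,n;0)$ and $\rho+(n-1,n-1;1)$.
\end{itemize}
Lemma \ref{lemma:simplificationCSstep1}, together with $s_{(n-1,n-1;1)}=\chi_v(\varpi_v)s_{(n-1,n-1;0)}$ from \eqref{playing_with_traces}, then gives the stated formula directly.
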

\begin{proof}
Let us denote $$\widetilde{\mathcal{A}}(\rho+(n,n;0)):= \mathcal{A}\Big( e^{\rho+n(\alpha_1+\alpha_2)} \cdot \prod_{\alpha \in \Phi_{\GSp_4}^{+,s}} (1 - q_v^{-1} e^{-\alpha}) \Big) ({\rm Sat}_v).$$ Expanding out the definition of the alternator (and writing $t = q_v^{-1}$ to save some space), we have
\begin{align*}
    &\widetilde{\mathcal{A}}(\rho+(n,n;0)) = \mathcal{A}((n+2,n+1;0)-3\alpha_0/2)-t\mathcal{A}((n+1,n;0)-\alpha_0/2)\\
    & \quad \quad \quad \quad \quad \quad \quad \,\, -t\mathcal{A}((n+1,n+2;0)-3\alpha_0/2)+t^2\mathcal{A}((n,n+1)-\alpha_0/2). 
\end{align*}
Since the Weyl element $ \omega_{\alpha_1-\alpha_2}$ swaps $\alpha_1\leftrightarrow\alpha_2$ and fixes $\alpha_0$, one can reorder the third and fourth alternators to get 
\begin{align*}
   \mathcal{A}((n+2,n+1&;0)-3\alpha_0/2)-t\mathcal{A}((n+1,n;0)-\alpha_0/2)\\ +t&\mathcal{A}((n+2,n+1;0)-3\alpha_0/2)-t^2\mathcal{A}((n+1,n;0)-\alpha_0/2)\\ &= (t+1)(\mathcal{A}((n+2,n+1;0)-3\alpha_0/2)-t\mathcal{A}((n+1,n;0)-\alpha_0/2)) \\&= (t+1)(\mathcal{A}(\rho+(n,n;0))-t\mathcal{A}(\rho+(n-1,n-1;1))).
\end{align*}
Applying Proposition \ref{Sakellaridis_prop} and Lemma \ref{lemma:simplificationCSstep1}, we have that
\begin{align*}
    \Omega_v^{\rm ur}(\varpi^{(n,n)})&= q_v^{-2n}\, \frac{\mathcal{A}(\rho+(n,n;0)) - q_v^{-1}\mathcal{A}(\rho+(n-1,n-1;1))}{\mathcal{A}(\rho)} \\ 
&=    q_v^{-2n} \, \bigg( s_{(n,n;0)} - \chi_v(\varpi_v) s_{(n-1,n-1;0)}q_v^{-1}\bigg).
\end{align*}
\end{proof}

 \begin{rem}\label{Vanishing:Character}
Note that the formula of Proposition \ref{CS:Formula:Shalika:simplified} holds formally when $n = 0$, for the following reason.  Let $s_1$ and $s_2$ be the Weyl elements associated to the simple roots $\alpha_1-\alpha_2$ and $2\alpha_2-\alpha_0$ respectively. The vector $\rho+(-1,-1;1)$ is invariant under the action of $s_2s_1s_2$, implying that
\[ s_{(-1,-1;1)} :=\frac{\mathcal{A}(\rho+(-1,-1;1))}{\mathcal{A}(\rho)} = 0.\]
Thus, when $n=0$, we recover $\Omega_v^{\rm ur}(1) = 1$, as expected.
 \end{rem}
 
\subsection{The Shalika period on \texorpdfstring{$\mathrm{GSpin}_6$}{GSpin(6)}} \label{subsection Shalika for GSpin6}
Throughout this section, we regard \(\GL_2\) as a subgroup of \(\GL_4\) via the diagonal embedding
    \[L := \left\{\left[\begin{smallmatrix}m& \\ &m\end{smallmatrix}\right],\;m\in \mathrm{GL}_2\right\}.\]
If $F_1$ and $F_2$ are automorphic forms on $\GL_2$, with central characters satisfying $\omega_{F_1} = \omega_{F_2}^{-1}$ so that $F_1F_2$ descends to the central quotient, we define
\[\langle F_1, F_2\rangle := \int_{Z_{\GL_2}(\A)\setminus [\GL_2]}F_1(g)F_2(g)dg,\]
whenever the integral is convergent. 
\begin{prop}\label{Auxiliary:Lemma:Spectral:Expansion}
    Let $\mathrm{Eis}(l,f_s)$ be an Eisenstein series of $\GL_2$ attached to a section $f_s\in \mathrm{Ind}_{B_{\GL_2}(\A)}^{\GL_2(\A)}(\chi_1|\cdot|^{s}\boxtimes \chi_2|\cdot|^{-s})$, with $s\in i\R$ and $\chi_1,\chi_2$ unitary characters. Then
    \[\int_{[\SL_2]}\mathrm{Eis}(l,f_s)\, dl = 0.\]

\end{prop}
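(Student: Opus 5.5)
Since $\SL_2$ sits inside the Borel-normalizing structure of $\GL_2$, the plan is to unfold the Eisenstein series along $B_{\SL_2}(F)\backslash \SL_2(F)$ and use the Iwasawa decomposition of $\SL_2(\A)$. Note that $B_{\GL_2}(F)\backslash \GL_2(F) = B_{\SL_2}(F)\backslash \SL_2(F)$, because $\GL_2(F) = B_{\GL_2}(F)\SL_2(F)$. Hence, in the region of absolute convergence, the restriction of $\mathrm{Eis}(l,f_s)$ to $[\SL_2]$ is the $\SL_2$-Eisenstein series attached to the restricted section $f_s|_{\SL_2(\A)}$. This restriction lies in $\mathrm{Ind}_{B_{\SL_2}(\A)}^{\SL_2(\A)}(\chi_1\chi_2^{-1}|\cdot|^{2s})$ under $\mathrm{diag}(a,a^{-1})$. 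However, the integral over $[\SL_2]$ of an Eisenstein series does not converge absolutely: $\mathrm{Eis}$ is only of moderate growth, with constant term $f_s + M(s)f_s$ of size $y^{1/2\pm s}$. So the first step is to make sense of the integral. I would interpret it as a regularized integral in the sense of Zagier/Jacquet--Lapid--Rogawski, or equivalently as the limit of truncated integrals $\int_{[\SL_2]}\Lambda^T\mathrm{Eis}$. In either reading I expect the value $0$ for $s\in i\R$, $s\neq 0$, and by a continuity argument also at $s=0$.

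The key computation uses the Maass--Selberg type identity: for the truncated Eisenstein series,
\[
\int_{[\SL_2]}\Lambda^T \mathrm{Eis}(l,f_s)\,dl = \frac{T^{\lambda}}{\lambda}\,c_1(f_s) - \frac{T^{-\lambda}}{\lambda}\,c_2(M(s)f_s),
\]
up to volume constants, where $\lambda$ is the exponent of $y^{1/2+s-1}$ relative to the constant function, so essentially $\lambda = s\pm 1/2$ shifted. This is the standard computation $\int \Lambda^T E = \langle \Lambda^T E, \Lambda^T 1\rangle$, and it involves only the $\Psi$-trivial constant terms. The regularized integral is the $T$-independent part, which is $0$, since both exponents are nonzero for $s\in i\R$. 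Equivalently, one can argue spectrally: $\mathrm{Eis}$ is orthogonal to the constant function because the constant lies in the residual spectrum at $s=1/2$, and the regularized inner product of Eisenstein series at distinct exponents vanishes.

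Alternatively, and more concretely, I would unfold with the Iwasawa decomposition. The integral becomes $\int_{B_{\SL_2}(F)\backslash\SL_2(\A)} f_s$, which equals $\int_{K}\int_{[N]}\int_{F^\times\backslash\A^\times} f_s(\mathrm{diag}(a,a^{-1})k)|a|^{-2}\,d^\times a\,dk$. The inner integral is $\int_{F^\times\backslash\A^\times}\chi_1\chi_2^{-1}(a)|a|^{2s+1-2}\,d^\times a$. This is a Tate integral over the idele class group of a unitary character times $|a|^{2s-1}$, which vanishes by the usual "$\int_{[\Gm]}$ of a nontrivial character" argument (in the formal renormalized sense of the footnote in \S\ref{subsub central twists}), because $|\cdot|^{2s-1}$ is nontrivial for $s\in i\R$.

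The main obstacle is justifying this unfolding, since the integral diverges. I would handle it in the truncation or regularization framework first, as above, and then present the formal unfolding as the heuristic that the regularization makes rigorous. In the function field setting this is cleaner: one may even use that the cuspidal-orthogonal projection of the constant is compactly detectable, making the truncated integral a finite computation.
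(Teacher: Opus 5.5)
\textbf{There is a genuine gap.} Your starting claim that $\int_{[\SL_2]}\mathrm{Eis}(l,f_s)\,dl$ does not converge absolutely is false for $s\in i\R$, and absolute convergence is exactly the step the paper's proof rests on. You correctly note that the constant term $f_s+M(s)f_s$ has size $y^{1/2\pm s}$, i.e.\ $y^{1/2}$ on the unitary axis. But growth of this order is integrable: on a Siegel set the invariant measure is $\delta_B(a)^{-1}\,da$, i.e.\ $y^{-2}\,dy$, and $\int^\infty y^{1/2}\,y^{-2}\,dy<\infty$.

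The paper makes this precise by reduction theory. It writes $\SL_2(\A)=\SL_2(F)B(c)K$ and collapses along $N$. For $a=\mathrm{diag}(x,x^{-1})$ the constant term contributes $|x|^{\pm 2s+1}$, which against $\delta_B(a)^{-1}=|x|^{-2}$ gives $|x|^{-1}$, summable over the shells $|x|^2=q^{-n}$, $n\le c$. Your own unfolding paragraph already produces the exponent $|a|^{2s-1}$, which decays in the cusp.

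With convergence in hand, the paper finishes in one line. The map $f_s\mapsto\int_{[\SL_2]}\mathrm{Eis}(l,f_s)\,dl$ is an $\SL_2(\A)$-invariant functional on $I(\chi_1\chi_2^{-1},2s)$, and there is no nonzero such functional; the paper cites irreducibility of $I(\chi_1\chi_2^{-1},2s)$.

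You instead declare the integral divergent and take its value to be ``the $T$-independent part'' of a truncated integral. What that shows is the vanishing of a regularized (Zagier/Jacquet--Lapid--Rogawski) integral, which is a different statement. The proposition is about the honest integral, and it is used that way when the Eisenstein terms are dropped from the spectral expansion of the $\mathrm{GSpin}_6$ Shalika period.

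Your other routes do not fill the gap. The ``concrete'' unfolding to $\int_{B_{\SL_2}(F)\backslash\SL_2(\A)}f_s$ is not valid. The series defining $\mathrm{Eis}$ does not converge absolutely for $\mathrm{Re}(s)=0$. Moreover, $\int_{F^\times\backslash\A^\times}\chi_1\chi_2^{-1}(a)|a|^{2s-1}\,d^\times a$ diverges (as $|a|\to 0$ when $\mathrm{Re}(s)=0$), so it ``vanishes'' only by convention. The statement that $\mathrm{Eis}$ is orthogonal to the constants is just the claim restated.

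The truncation route can be repaired, but not as you set it up. Your boundary terms are off: the exponents are $s-\tfrac12$ and $-s-\tfrac12$, not $\lambda$ and $-\lambda$. Over a function field the boundary terms are geometric sums in $q^{T\lambda}$ rather than $T^{\lambda}/\lambda$. Both exponents have real part $-\tfrac12$, so the boundary terms tend to $0$ as $T\to\infty$. Combined with absolute convergence and dominated convergence, this gives $\int_{[\SL_2]}\mathrm{Eis}=\lim_{T\to\infty}\int_{[\SL_2]}\Lambda^T\mathrm{Eis}=0$.

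That would be a legitimate, more computational alternative to the paper's invariance argument. It still needs the convergence you denied, and it requires actually carrying out the truncated computation for an arbitrary section $f_s$, which the representation-theoretic argument avoids.
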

\begin{proof}
   First note that the restriction of $\mathrm{Eis}(l,f_s)$ to $\SL_2(\A)$ is an Eisenstein series of $\SL_2$ associated to the induced representation $I(\chi_1\chi_2^{-1},2s) := \mathrm{Ind}_{B_{\SL_2}(\A)}^{\SL_2(\A)}(\chi_1\chi_2^{-1}|\cdot|^{2s})$. We first prove that the integral over $[\SL_2]$ of these Eisenstein series is absolutely convergent.
    Following \cite[\S 1.1]{Harder:Chevalley}, there exists a positive constant $ c \in \R$ such that $$\SL_2(\A) = \SL_2(F) B(c) K,$$ where $K = \SL_2(\mathfrak{o})$ and 
    \[B(c) := \{b= nt \in B_{\SL_2}(\A),\; \text{ with }\;t = \mathrm{diag}(x,x^{-1})\in T_{\SL_2}(\A),\;|x|^{2}\geq q^{-c}\}.\]
    Therefore, we have
    \[\int_{[\SL_2]}\mathrm{Eis}(l,f_s)dl = \int_{(B(c)\cap \SL_2(F))\setminus B(c)}\int_{K}\mathrm{Eis}(bk,f_s)dk \, db.\]
    Let $A(c) :=  B(c) \cap T_{\SL_2}(\A)$. Writing $b=na$, with $n\in N_{\GL_2}(\A)$ and
$a=\mathrm{diag}(x,x^{-1})\in T_{\SL_2}(\A)$, the Haar measure decomposes as
\[
db=\delta_B(a)^{-1}\,dn\,da,
\, \text{with }
\delta_B(a)=|x|^2.
\]
Therefore, since $(B(c)\cap \SL_2(F)) = B_{\SL_2}(F)$, collapsing the integral with respect to $N=N_{\GL_2}$ we get 
    \[\int_{B(c)\cap \SL_2(F)\backslash B(c)}\int_{K}\mathrm{Eis}(bk,f_s)dk \, db = \int_{ (A(c)\cap \SL_2(F))\backslash A(c)}\int_{K} \delta_B(a)^{-1} \mathrm{Eis}_{N}(ak,f_s)dk \, da.\]
    Explicitly, the constant term of the Eisenstein series along $[N]$ is $$\mathrm{Eis}_{N}(ak,f_s) = f_s(ak)+(M(s)f_s)(ak),$$ where $M(s)$ is the classical intertwining operator \[M(s):I(\chi_1\chi_2^{-1},2s)\to I(\chi_1^{-1}\chi_2,-2s).\]
    Since $K$ is compact, one reduces to show that the integral over $A(c)$ is absolutely convergent. By an explicit computation, we have that $f_s(ak)+(M(s)f_s)(ak)$ equals
    \[\chi_1(x)\chi_2^{-1}(x)|x|^{2s+1}f_s(k)+\chi_1^{-1}(x)\chi_2(x)|x|^{-2s+1}M(s)f_s(k),\]
    where we have denoted $a = {\rm diag}(x,x^{-1})$. Let us factor 
    \[T_{\SL_2}(F)\backslash A(c) = \bigsqcup_{n\leq c} C^n,\]
    where $C^n := F^{\times}\backslash \{x\in \A^{\times}\,:\, |x|^2 =  q^{-n}\}$. Then, using that $\delta_B(a)=|x|^2$, we have \begin{align*}\left|\int_{T_{\SL_2}(F)\setminus A(c)}\chi_1(x)\chi_2(x)^{-1}|x|^{2s-1}dx\right| &\leq \sum_{n\leq c}\int_{C^n}|x|^{-1}dx\\&= \mathrm{vol}(C^0)\sum_{n\leq c}q^{n},
    \end{align*}
    where we have used $\mathrm{vol}(C^0)$ is finite. Similarly, a bound for the term corresponding to $(M(s)f_s)(ak)$ is obtained.  Since $\sum_{n\leq c}q^{n}$ converges, we thus obtain the absolute convergence of the integral. This in turn implies that integrating along $[\SL_2]$ gives an element
    \[\int_{[\SL_2]}\mathrm{Eis}(l,\cdot)dl\in\mathrm{Hom}_{\SL_2(\A)}(I(\chi_1\chi_2^{-1},2s),\kk).\]
    However, this ${\rm Hom}$ space is trivial since $I(\chi_1\chi_2^{-1},2s)$ is irreducible. This concludes the proof.
   \end{proof}
Let us introduce the notation for subgroups that are involved in the evaluation of the automorphic period $P_W$ on $\mathrm{GSpin}_6$ introduced in \S \ref{subsec:stackyJS}. We denote by $S' = UL' \subset \GL_4\times \GL_1$, where $U \simeq \mathrm{Mat}_2$ is the maximal unipotent subgroup of the Siegel parabolic of $\GL_4$ and $L'$ is the image of $\GL_2$ under the map
    \begin{align}\label{Embedding:L:Shalika}
        \GL_2&\to \GL_4\times \GL_1,\\
        m &\mapsto \left(  \left[\begin{smallmatrix} m& \\ &m\end{smallmatrix}\right],\mathrm{det}(m)\right).\nonumber
    \end{align}
Note that $S' \subset \mathrm{GSpin}_6\simeq \{(g,\lambda)\in \GL_4\times \GL_1\,:\,\mathrm{det}(g)\lambda^{-2} = 1\}$. Moreover, denote by $$\widetilde{L} = \{\left[\begin{smallmatrix} m& \\ &m\end{smallmatrix}\right],\;m\in \SL_2\},$$  by $\widetilde{L}'$ the image of $\SL_2$ into $\mathrm{GSpin}_6$ under the map \eqref{Embedding:L:Shalika}, and $\widetilde{S}' := U\widetilde{L}'$. The automorphic period $P_W$ on $\mathrm{GSpin}_6$ is given by
\begin{equation}\label{defPWGSpin6}P_W(f) =  \Delta^{-\frac{7+\varepsilon_\Sh}{4}}\int_{[U\widetilde{L}']}f(ule^{\check{\lambda}_\Sh}(\partial^{1/2}))\Psi(u)du \,dl,\end{equation}
following \S \ref{subsec:stackyJS}. In what follows, we study the value of $P_W$ on a cusp form restricted from $\mathrm{GL}_4 \times \mathrm{GL}_1$. 

\begin{prop}\label{Spectral:Expansion:Shalika}
    Let $\Pi\boxtimes\chi$ be a cuspidal automorphic representation of $\GL_4\times\GL_1$. Given a cusp form $F \in \Pi$, we have 
    \[P_W(F \times \chi|_{\mathrm{GSpin}_6(\A)}) = \frac{1}{|\Sigma(\omega_\Pi^{-1}\chi^{-2})|} \cdot \sum_{\substack{\varepsilon: [\GL_1] \to \kk^\times\\\varepsilon^2 = \omega_\Pi^{-1}\chi^{-2}}}P_M(F \times (\varepsilon\chi)^{-1}),\]
    where $\omega_\Pi$ is the central character of $\Pi$ and $\Sigma(\omega_\Pi^{-1}\chi^{-2})$ denotes the set of Hecke characters $\varepsilon:[\mathrm{GL}_1] \to \kk^\times$ squaring to $\omega_{\Pi}^{-1}\chi^{-2}$.
\end{prop}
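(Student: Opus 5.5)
The plan is to expand the restriction of $F\times\chi$ to $\widetilde{L}'\simeq\SL_2$ spectrally along the intermediate group $L'\simeq\GL_2$, in the spirit of \eqref{equation spectral expansion}. Fix $u$ and set
$$\phi(m) := \int_{[U]} (F\times\chi)\big(u\,\iota(m)\,e^{\check{\lambda}_\Sh}(\partial^{1/2})\big)\Psi(u)\,du,$$
where $\iota$ is the map \eqref{Embedding:L:Shalika}. Since $\GL_2$ normalizes $(U,\Psi)$ (as $\Psi$ is conjugation invariant under the diagonal $\GL_2$), $\phi$ is an automorphic form on $\GL_2$. Its central character is $z\mapsto\omega_\Pi(z)\chi(z^2)$, because $z\in Z_{\GL_2}$ maps to $(z\mathrm{Id}_4,z^2)$. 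Moreover $\phi$ is of moderate growth, and in fact rapidly decreasing, since $F$ is cuspidal.

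By \eqref{defPWGSpin6}, $P_W(F\times\chi)$ is $\Delta^{-\frac{7+\varepsilon_\Sh}{4}}\int_{[\SL_2]}\phi(l)\,dl$. The unramified adelic quotient of $\SL_2$ sits inside that of $\GL_2$. Since $\GL_2 = Z\cdot\SL_2$ only up to determinant classes, I would write
$$[\GL_2]\big/ Z(\A)\cdots$$
and fibre $[\GL_2]$ over $\det:[\GL_2]\to[\Gm]$ with fibres $[\SL_2]$-translates. An $\SL_2$-period of $\phi$ then becomes the average over characters $\eta$ of $[\Gm]$ of $\int_{[\GL_2]}\phi(g)\eta(\det g)\,dg$, suitably normalized by the volume of $[\Gm]$ modulo the central contributions. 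This is Fourier analysis on the finite-by-$\Z$ group $\det([\GL_2])$.

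Next I would combine this with the central character constraint. The twisted integral $\int \phi(g)\eta^{-1}(\det g)$ over $Z(\A)\backslash[\GL_2]$ is nonzero only when $\eta^2 = \omega_\Pi\chi^2$, since $\det(z)=z^2$. Writing $\eta=(\varepsilon\chi)^{-1}\cdots$ identifies the surviving terms with characters $\varepsilon$ with $\varepsilon^2=\omega_\Pi^{-1}\chi^{-2}$. The remaining continuous directions in $\det$ are absorbed by the central $Z(\A)$-quotient used in $P_M$, which integrates over $[\mathrm{PGL}_2]$. Each surviving term is then, after matching prefactors (the $\Delta$-powers agree by definition and $e^{\check\lambda_\Sh}(\partial^{1/2})$ agrees), exactly $P_M(F\times(\varepsilon\chi)^{-1})$.

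To justify the rearrangement rigorously, I would alternatively use the spectral decomposition of $L^2(Z\backslash[\GL_2])$: cuspidal part, one-dimensional part, and continuous part. Cusp forms have vanishing $\SL_2$-period, since their restriction to $\SL_2$ has no $\SL_2(\A)$-invariant functional. Continuous Eisenstein series with unitary parameter integrate to zero over $[\SL_2]$ by Proposition \ref{Auxiliary:Lemma:Spectral:Expansion}. Only the one-dimensional characters $\eta\circ\det$ survive, and for these $\int_{[\SL_2]}\eta(\det)=1$ with the normalized measure. The coefficient of $\eta\circ\det$ is $\langle\phi,\eta^{-1}\circ\det\rangle$ divided by $\mathrm{vol}(Z(\A)\backslash[\GL_2])$. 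The count of such $\eta$ with the right central character is $|\Sigma(\omega_\Pi^{-1}\chi^{-2})|$, which is also the source of the normalizing factor, since the volume ratio equals the number of unramified quadratic twists.

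I expect the main obstacle to be this last volume computation. One must check that the measure normalizations on $[\SL_2]$, $Z(\A)\backslash[\GL_2]$ and $[\mathrm{PGL}_2]$ differ by exactly $|\Sigma|$. Equivalently, $\det:Z(\A)\backslash[\GL_2]\to [\Gm]/[\Gm]^2$ has image of size $|\mathrm{Hom}([\Gm]/[\Gm]^2,\mu_2)|$ with fibres the images of $[\SL_2]$. A secondary concern is the convergence needed to swap the spectral sum with the $[\SL_2]$-integral; this should follow from the rapid decay of $\phi$ together with the absolute convergence shown in Proposition \ref{Auxiliary:Lemma:Spectral:Expansion}.
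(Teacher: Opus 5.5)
Your spectral route is the paper's proof. The paper likewise restricts to the Shalika subgroup, isolates the $\Psi$-coefficient, expands along $\GL_2$, kills cusp forms and unitary Eisenstein series by Proposition~\ref{Auxiliary:Lemma:Spectral:Expansion}, and keeps only the characters $\varepsilon\circ\det$. The only difference is that the paper expands in $L^2([U])$ at the same time, which is equivalent to your forming $\phi$ first. Your first route, finite Fourier analysis in the $\det$-direction, is genuinely different and better. It needs only absolute convergence of $\int_{[\SL_2]}\phi(lg)\,dl$, and neither Proposition~\ref{Auxiliary:Lemma:Spectral:Expansion} nor any interchange of a spectral integral with the $[\SL_2]$-integral.

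\textbf{The gap.} It is exactly the step you flag, and the justification you propose is false. Put $A:=F^\times\backslash\A^\times/\mathfrak o^\times$, the Picard group of the curve, so $A\simeq\Z\oplus J(\F)$.

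The fibres of $\det\colon Z(\A)\backslash[\GL_2]\to A/A^2$ are not copies of $[\SL_2]$. Suppose $z\in\A^\times$ satisfies $z^2=c^{-1}u^{-1}$ with $c\in F^\times$ and $u\in\mathfrak o^\times$. Then $l$ and $\mathrm{diag}(c,1)\,z\,l\,\mathrm{diag}(u,1)$ both lie in $\SL_2(\A)$ and have the same image. So each fibre is $[\SL_2]$ modulo $A[2]=J(\F)[2]$, which acts by twisting with $2$-torsion line bundles.

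Concretely, set $\Phi(a):=\int_{[\SL_2]}\phi(lg_a)\,dl$ with $\det g_a=a$, so that $\Phi(az^2)=(\omega_\Pi\chi^2)(z)\Phi(a)$. With the paper's measure conventions (integral points of volume one) one finds
\[
\int_{[\mathrm{PGL}_2]}\phi(g)\,\eta^{-1}(\det g)\,dg=\frac{1}{|A[2]|}\sum_{a\in A/A^2}\eta^{-1}(a)\,\Phi(a).
\]
Orthogonality over the torsor $\{\eta:\eta^2=\omega_\Pi\chi^2\}$ then gives
\[
\int_{[\SL_2]}\phi=\frac{|A[2]|}{|A/A^2|}\sum_{\eta}\int_{[\mathrm{PGL}_2]}\phi\,\eta^{-1}(\det)=\frac12\sum_{\eta}\int_{[\mathrm{PGL}_2]}\phi\,\eta^{-1}(\det),
\]
because $|A/A^2|=2|J(\F)/2J(\F)|=2|A[2]|$. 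This is just $\tau(\SL_2)/\tau(\mathrm{PGL}_2)=1/2$.

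So your argument actually yields $P_W=\tfrac12\sum_\varepsilon P_M(F\times(\varepsilon\chi)^{-1})$. If both quotients are given total mass one, it yields $\sum_\varepsilon P_M$ instead. Neither agrees with $1/|\Sigma(\omega_\Pi^{-1}\chi^{-2})|=1/(2|J(\F)[2]|)$ unless $J(\F)[2]=0$, for instance in genus $0$. Your heuristic ``volume ratio $=$ number of unramified quadratic twists'' overcounts by $|J(\F)[2]|$. Note also that $\Sigma(\cdot)$, as literally defined, contains ramified quadratic twists and is infinite; for spherical $F$ only the unramified ones contribute.

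The paper's proof does not derive its constant either. Using its own identification $\langle r_{\partial^{1/2}}F_\Psi\boxtimes\chi,\varepsilon\rangle=\Delta^{7/4}P_M(F\times\varepsilon^{-1}\chi^{-1})$, its penultimate display equals $\sum_\varepsilon P_M$ with no $1/|\Sigma|$. That unweighted sum is what Corollary~\ref{theorem stacky Shalika dual pair} actually uses. Before relying on the displayed factor, you need to fix the measure on $[\mathrm{PGL}_2]$ in $P_M$ and carry out the computation above.
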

Note that since all those $\varepsilon$ satisfying $\varepsilon^2 = \omega_\Pi^{-1}\chi^{-2}$ have bounded ramification, the sum on the right hand side is finite.
\begin{proof}
    Starting with an arbitrary cusp form $F \in \Pi$, we restrict $F\boxtimes \chi$ to $S'(\A)$ and analyze its spectral expansion. Since $F\boxtimes \chi|_{Z_L'} = \omega_\Pi\chi^{2}$, the relevant part of the $\GL_2$-spectrum has central character $\omega_\Pi^{-1}\chi^{-2}$. Then $F\boxtimes \chi|_{S'(\A)}$ can be spectrally expanded as 
    \begin{small}
    \[\sum_{\substack{\psi\in L^2([U])\\f\in L^2_{\mathrm{disc}}([\GL_2], \, \omega_\Pi^{-1}\chi^{-2})\\\psi,f,\; \mathrm{orthonormal}}}\langle F_{\psi}\boxtimes\chi,f\rangle \overline{\psi} \overline{f}+\sum_{(\chi_1,\chi_2)\in \mathcal{U}}\int_{-\infty}^{\infty}\sum_{\substack{f_s\in I(\chi_1,\chi_2,is)}}\langle F_{\psi},\mathrm{Eis}(\cdot,f_{is})\rangle \mathrm{Eis}(\cdot,f_{is})ds\,\overline{\psi}.\]
    \end{small}
  Here, $f$ ranges through an orthonormal basis of the discrete spectrum of $L^2([\mathrm{GL}_2])$ with central character $\omega_\Pi^{-1}\chi^{-2}$, while $F_\psi$ denotes the Fourier coefficient obtained by integrating $F$ over $[U]$ against $\psi$ using the self-dual measure on $[U]$.  Given a pair of unitary characters $(\chi_1,\chi_2)$ of of $[\G_m]$ and $s \in \R$, we have written 
    $$I(\chi_1,\chi_2,is) := \mathrm{Ind}_{B_{\GL_2}(\A)}^{\GL_2(\A)}(\chi_1|\cdot|^{is}\boxtimes \chi_2|\cdot|^{-is}).$$ 
Then, the set $\mathcal{U}$ denotes the set of such pairs $(\chi_1,\chi_2)$ so that $I(\chi_1,\chi_2,is)$ has central character $\omega_\Pi^{-1}\chi^2$.
The terms in this expansion are all well defined: the function $F$ is a cusp form on $\GL_4$ and we are restricting to the reductive subgroup $\GL_2$, therefore all the integrals are absolutely convergent. Moreover, since $[U]$ is compact abelian, we have the orthogonal decomposition
    \[L^2([U]) = \bigoplus_{\psi:[U]\to \kk^{\times}}\psi.\]
    We now integrate a right translation of $F\boxtimes\chi|_{S'}$ by $\partial^{1/2}$ over $[\widetilde{S}'] \simeq [U \SL_2]$ against the Shalika character $\Psi$, which gives a scalar multiple of $P_W$
    \begin{align}  \int_{[U \SL_2 ]}F(ul\partial^{1/2})\Psi(u) d^\psi u \, dl &= \Delta^{\frac{7 + \varepsilon_\Sh-2\mathrm{dim}(U)}{4}}P_W(F\boxtimes \chi|_{\mathrm{GSpin_6(\A)}})\\
    &= \Delta^{7/4} \cdot  P_W(F\boxtimes \chi|_{\mathrm{GSpin_6(\A)}}),
    \end{align}
    where the right translation by $\partial^{1/2}$ implicitly means the right translation by $e^{\check{\lambda}_\Sh}(\partial^{1/2})$, where the global power of $\Delta^{7/4}$ comes from
\begin{equation} \label{equation stupid 7/4}
    \frac{7+\varepsilon_\Sh-2\mathrm{dim}(U)}{4} = \frac{7+ 8-2 \cdot 4}{4} = \frac{7}{4}.
\end{equation}
    Using the spectral expansion above, we obtain that $P_W(F\boxtimes \chi|_{\mathrm{GSpin_6(\A)}})$ can be compared to the spectral expansion
    \begin{align*}
 &\sum_{\substack{\varepsilon: [\GL_1] \to \kk^\times\\\varepsilon^2 = \omega_\Pi^{-1}\chi^{-2}}}\langle r_{\partial^{1/2}} \cdot F_{\Psi}\boxtimes\chi,\varepsilon\rangle\\
 &+\sum_{(\chi_1,\chi_2)\in \mathcal{U}}\int_{-\infty}^{\infty}\sum_{\substack{f_s \in I(\chi_1,\chi_2,is)}}\langle r_{\partial^{1/2}} \cdot F_{\Psi},\mathrm{Eis}(\cdot,f_{is})\rangle \int_{[\SL_2]}\mathrm{Eis}(l,f_{is})dlds.
\end{align*}
     To arrive at the preceding formula, we have used the following facts. First, $\int_{[\SL_2]}f(r)dr = 0$ for every cuspidal automorphic form $f$ of $\GL_2$. Second, the remaining part of the discrete spectrum of $\GL_2$, i.e., the residual spectrum, consists of characters of $[\G_m]$ composed with the determinant, hence their integral over $[\SL_2]$ equals the Tamagawa number of $\SL_2$, which is $1$. Finally, $\int_{[U]}\overline{\psi}(u)\Psi(u)du \neq 0$ if and only if $\psi = \Psi$, in which case it contributes $\mathrm{vol}([U]) = \Delta^{\frac{\mathrm{dim}(U)}{2}} = \Delta^2$. Dropping the Eisenstein terms by applying Proposition \ref{Auxiliary:Lemma:Spectral:Expansion}, and identifying $\langle r_{\partial^{1/2}} \cdot F_{\Psi}\boxtimes \chi, \varepsilon\rangle = \Delta^{7/4} \cdot P_{M}(F \times \varepsilon^{-1}\chi^{-1})$, lets us conclude that 
      \begin{align}    P_W(F\times\chi|_{\mathrm{GSpin}_6(\A)}) &= \Delta^{-7/4} \cdot \sum_{\substack{\varepsilon: [\GL_1] \to \kk^\times\\\varepsilon^2 = \omega_\Pi^{-1}\chi^{-2}}}\langle r_{\partial^{1/2}} \cdot F_{\Psi}\boxtimes\chi,\varepsilon\rangle \nonumber\\
      &= \frac{1}{|\Sigma(\omega_\Pi^{-1}\chi^{-2})|} \cdot \sum_{\substack{\varepsilon: [\GL_1] \to \kk^\times\\\varepsilon^2 = \omega_\Pi^{-1}\chi^{-2}}}P_M(F \times (\varepsilon\chi)^{-1}). \label{final eq 5.8}
      \end{align} 
\end{proof}
\begin{rem}
   \textit{A priori}, even if one assumes that $\Pi \boxtimes \chi$ is unramified in Proposition \ref{Spectral:Expansion:Shalika}, it may not be the case that the twisted representation $\Pi \boxtimes \chi\varepsilon$ is, since only $\varepsilon^2 = \omega_\Pi^{-1}\chi^{-2}$ is guaranteed to be unramified. However we will soon find out that, for our purposes, these $\varepsilon$'s can always be taken to be unramified, so the reader may wish to assume this while reading.
\end{rem} 

Applying Jacquet--Shalika's duality for the period $P_M$, we may establish the period duality for $P_W$ as enunciated in \S \ref{section Shalika model}.
\begin{cor}[Theorem \ref{conjecture stacky Shalika dual pair}]\label{theorem stacky Shalika dual pair}
    Let $\mu$ be an unramified Whittaker normalized cusp form on $G = \mathrm{GSpin}_6$ with $L$-parameter $\varphi_\mu$ valued in $\check{G} = \mathrm{GSO}_6$. Consider the (Hamiltonian) actions $G \acts W$ and $\check{G} \acts \check{W}$ defined in \S \ref{subsec:stackyJS}. Then
    $$P_W(\mu) = L_{\check{W}}(\varphi_\mu).$$
\end{cor}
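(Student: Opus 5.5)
The plan is to realize $\mu$ as the restriction of a cusp form on $\mathrm{GL}_4\times\mathrm{GL}_1$, apply Proposition \ref{Spectral:Expansion:Shalika}, and evaluate each resulting term with Theorem \ref{theorem Jacquet-Shalika}.

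First, we lift $\mu$. Since $G=\mathrm{GSpin}_6$ is the kernel of $(g,\lambda)\mapsto\det(g)\lambda^{-2}$ in $\mathrm{GL}_4\times\mathrm{GL}_1$, we choose an unramified cuspidal automorphic representation $\Pi\boxtimes\chi$ of $\mathrm{GL}_4\times\mathrm{GL}_1$ and a cusp form $F\times\chi$ in it whose restriction to $G(\mathbb{A})$ is $\mu$. On the dual side this corresponds to lifting $\varphi_\mu$ along the surjection $\mathrm{GL}_4\times\mathrm{GL}_1\to\check{G}$. The lift exists because the kernel is central, and Whittaker normalization is preserved since the unipotent radicals coincide. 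Proposition \ref{Spectral:Expansion:Shalika} then gives
\[P_W(\mu)=\frac{1}{|\Sigma(\omega_\Pi^{-1}\chi^{-2})|}\sum_{\varepsilon^2=\omega_\Pi^{-1}\chi^{-2}}P_M\big(F\times(\varepsilon\chi)^{-1}\big).\]

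Second, we evaluate each summand with Theorem \ref{theorem Jacquet-Shalika}. The summand vanishes unless $\varphi_F\times(\varepsilon\chi)^{-1}$ lands in $\mathrm{GSp}_4$ with similitude character $(\varepsilon\chi)^{-1}$. When it does, the summand equals $\Delta^{-5/4}L(1,\varphi_F,\wedge^2_0)$, which is independent of $\varepsilon$, because $\wedge_0^2$ is insensitive to the similitude twist. Next we compare with \eqref{spectral L W check}. The $\varepsilon$ contributing nonzero terms are exactly the square roots $\varepsilon$ corresponding to lifts of $\varphi_\mu$ along $\iota\colon\mathrm{GSp}_4'\to\check{G}$, via the bijection \eqref{torsors_L_Wcheck}. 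This is because the condition that $\varphi_F$ is symplectic with similitude $(\varepsilon\chi)^{-1}$ is equivalent to $\varphi_\mu$ landing in $\overline{\mathrm{GSp}}_4'$ with $\lambda^{1/2}\circ\varphi_\mu$ matching $\varepsilon$ up to the fixed normalization. Consequently:
\begin{itemize}
\item if $\varphi_\mu$ does not lift, every term vanishes, matching the vanishing of $L_{\check{W}}$;
\item if it lifts, the contributing $\varepsilon$ form a torsor under the quadratic characters of $[\mathrm{GL}_1]$.
\end{itemize}

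Third, we match the counting constants, which is the main obstacle. In the lifting case, Proposition \ref{Spectral:Expansion:Shalika} sums over $\varepsilon$ in a coset of all quadratic Hecke characters, including ramified ones. The spectral side \eqref{spectral L W check} has the factor $|\mathrm{Hom}(\Gamma_F^{\mathrm{ur}},\mu_2)|$, which counts only unramified ones. We must therefore show two things:
\begin{itemize}
\item any $\varepsilon$ with nonzero $P_M$ is unramified. For this we use that $F\times(\varepsilon\chi)^{-1}$ is unramified, so a nonzero unramified Shalika functional at each place forces $\varepsilon_v$ unramified; this is where the preceding remark is used.
\item the normalizing denominator $|\Sigma(\omega_\Pi^{-1}\chi^{-2})|$ is compatible with the unramified count, so the averaged sum produces exactly $|\mathrm{Hom}(\Gamma_F^{\mathrm{ur}},\mu_2)|\cdot\Delta^{-5/4}L(1,\wedge_0^2,\varphi_\mu)$ and not merely this up to a ratio of group orders.
\end{itemize}
For the second point, I would recheck the measure normalization in Definition \ref{definition DM automorphic period} ($dh$ divided by the number of components) against the $[\mathrm{GL}_1]$-expansion in Proposition \ref{Spectral:Expansion:Shalika}, and verify that $\Sigma$ should be read as unramified characters when everything is unramified, so that the prefactor agrees with \eqref{spectral L W check}. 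Finally, the powers of $\Delta$ agree directly: both sides carry $\Delta^{-5/4}$, since the $\Delta^{7/4}$ from \eqref{equation stupid 7/4} was already absorbed into Proposition \ref{Spectral:Expansion:Shalika}.
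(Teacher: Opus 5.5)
\textbf{There is a genuine gap in your second step.} Your outline is the paper's: Proposition~\ref{Spectral:Expansion:Shalika}, then Theorem~\ref{theorem Jacquet-Shalika} term by term, then comparison with \eqref{spectral L W check}. But the step that makes the two sums correspond is false, and the constant you cannot match in your third step is a symptom of it.

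Theorem~\ref{theorem Jacquet-Shalika} distinguishes $F\times(\varepsilon\chi)^{-1}$ only when $\varphi_F$ is symplectic with similitude exactly $(\varepsilon\chi)^{-1}$. Suppose this holds for $\varepsilon_0$, and take $\varepsilon=\varepsilon_0\kappa$ with $\kappa\neq 1$ an unramified quadratic character. Then it holds for $\varepsilon$ only if $\varphi_F\otimes\kappa\cong\varphi_F$. Locally: at every $v$ with $\kappa_v(\varpi_v)=-1$, the Satake parameter would have to be symplectic for both similitudes $\pm s_v$. Otherwise the local space of Shalika functionals is zero and the global period vanishes. So unless $\Pi\otimes(\kappa\circ\det)\cong\Pi$, the $\varepsilon_0\kappa$-term is zero. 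For $\Pi$ with no quadratic self-twist, exactly one $\varepsilon$ contributes.

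Your own criterion says the same thing. With the square root $\lambda^{1/2}(g,x)=\nu(g)x$ fixed, the condition that $\lambda^{1/2}\circ\varphi_\mu$ matches $\varepsilon$ selects only the $\varepsilon$ attached to the preferred lift $\widetilde\varphi_\mu$. The lifts $\widetilde\varphi_\mu\cdot\kappa$ with $\kappa\neq 1$ in \eqref{torsors_L_Wcheck} have zero automorphic counterpart. So the claim that the contributing $\varepsilon$ form a torsor under quadratic characters does not follow, and it is false in general.

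\textbf{Consequences for your third step.} Nothing there can produce \eqref{spectral L W check}. The automorphic side is a single copy of $\Delta^{-5/4}L(1,\varphi_\mu,\wedge^2_0)$ times a universal constant. The right side of \eqref{spectral L W check} has $|\mathrm{Hom}(\Gamma_F^{\mathrm{ur}},\mu_2)|\geq 2$ copies, a number that depends on the curve.

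That universal constant is not $1/|\Sigma|$. It is also not something Definition~\ref{definition DM automorphic period} controls, since $W$ is an honest variety. It is the residual coefficient of the $\mathrm{GL}_2$ spectral expansion:
\[
\mathrm{vol}([\mathrm{SL}_2])/\|\varepsilon\circ\det\|^2=\mathrm{vol}([\mathrm{SL}_2])/\mathrm{vol}([\mathrm{PGL}_2])=\tau(\mathrm{SL}_2)/\tau(\mathrm{PGL}_2)=1/2 .
\]
The local normalizations cancel because both groups have dimension $3$ and $q_v^3-q_v$ points over the residue field. With Theorem~\ref{theorem Jacquet-Shalika} as normalized in the paper, your route therefore gives $P_W(\mu)=\tfrac12\,\Delta^{-5/4}L(1,\varphi_\mu,\wedge^2_0)$ when $\Pi$ has no nontrivial quadratic self-twist. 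This is not $L_{\check W}(\varphi_\mu)$.

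\textbf{The paper's proof does not fill this.} It makes the same identification you do: it evaluates every $\varepsilon$-twisted Shalika period as $L(1,\widetilde\varphi_\mu\otimes\varepsilon,\check{\mathfrak g}/\overline{\mathfrak h}')=L(1,\varphi_\mu,\wedge^2_0)$ without checking the $\varepsilon$-dependent distinction condition. Following it will not close the gap.

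\textbf{Two smaller points.}
\begin{enumerate}
\item Your argument that the contributing $\varepsilon$ are unramified is circular as written. The correct reason is that an unramified generic $\widetilde\Pi_v$ has a Shalika functional for $(\widetilde\chi_v\varepsilon_v)^{-1}$ only if that character is unramified.
\item The global lift $\Pi\boxtimes\chi$ need not be unramified. It is unramified only locally, up to the twists of Lemma~\ref{Description:Representations:Restriction}; these are harmless because they are trivial on $S'$.
\end{enumerate}
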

\begin{proof}
    By the preceding proposition, we may write the automorphic $W$-period of a cusp form $f$ on $G$ as a sum over $\varepsilon$-twisted usual Shalika periods on the group $\mathrm{GL}_4 \times \Gm$. The latter, by Jacquet--Shalika's Theorem \ref{theorem Jacquet-Shalika}, may be evaluated as a sum over $\varepsilon$-twisted $L$-values: fixing a preferred lift  $\widetilde{\varphi}_\mu: \Gamma_F \to \mathrm{GL}_4 \times \Gm$ of the $L$-parameter $\varphi_\mu$ of $\mu$, it is the sum over $\varepsilon$ of the $L$-values $L(1,\widetilde{\varphi}_\mu \otimes \varepsilon, \check{\mathfrak{g}}/\overline{\mathfrak{h}}') = L(1, \varphi_\mu, \wedge^2_0)$. Thus,  $P_W(\mu)$ is precisely the spectral $\check{W}$-period of $\varphi_\mu$ given in \eqref{spectral L W check}, as explained in the paragraph following the statement of Conjecture \ref{conjecture stacky Shalika dual pair}.
\end{proof}

We continue on to finer information regarding the local spherical functions associated to the periods $P_W$ and $P_M$. To this end, fix an unramified cuspidal automorphic representation $\sigma$ of $\mathrm{GSpin}_6$. Since the morphism $\mathrm{GSpin}_6\to \GL_4\times\GL_1$ is injective, \cite[Theorem 1.1.1]{Labesse:Sch} implies that there exists a cuspidal automorphic representation $\Pi\boxtimes\chi$ of $\GL_4\times\GL_1$ such that $\sigma\subset \Pi\boxtimes\chi|_{\mathrm{GSpin}_6(\A)}$. We fix such a representation $\Pi\boxtimes\chi$ for the rest of the section and deduce in the following lemma that, if it were to be ramified, it must be so in a very mild way. 
\begin{lem}\label{Description:Representations:Restriction}
    Let $v$ be a place of $F$. Then there exists an unramified representation $\widetilde{\Pi}_v\boxtimes\widetilde{\chi}_v$ of $\GL_4(F_v)\times\GL_1(F_v)$ and a character $\eta_v:F_v^{\times}\to \kk^{\times}$ such that $\sigma_v$ occurs in the restriction to $\mathrm{GSpin}_6(F_v)$ of $\widetilde{\Pi}_v\boxtimes\widetilde{\chi}_v$ and
    \[\Pi_v\boxtimes\chi_v\simeq (\widetilde{\Pi}_v\otimes \eta_v\circ \mathrm{det})\boxtimes\widetilde{\chi}_v\eta_v^{-2}.\]
\end{lem}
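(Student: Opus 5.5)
The plan is to use the structure of the inclusion $\mathrm{GSpin}_6(F_v) \subset \GL_4(F_v)\times\GL_1(F_v)$. By the short exact sequence \eqref{ses_G}, its image is the kernel of $(g,\lambda)\mapsto \det(g)\lambda^{-2}$. The quotient is abelian, and $\mathrm{GSpin}_6(F_v)$ together with the center $Z = \{(z\mathrm{Id}, w)\}$ generates a finite-index normal subgroup. Indeed, $\det(z\mathrm{Id})w^{-2} = z^4w^{-2}$, so the image of $Z$ in $F_v^\times$ contains the squares, and $F_v^\times/(F_v^\times)^2$ is finite.

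The key input is the standard Gelbart--Knapp / Labesse--Langlands type fact for such restrictions. If two irreducible representations $\Pi_v\boxtimes\chi_v$ and $\Pi'_v\boxtimes\chi'_v$ of $\GL_4(F_v)\times\GL_1(F_v)$ contain a common constituent $\sigma_v$ on restriction to $\mathrm{GSpin}_6(F_v)$, then they differ by a character of the quotient. That is,
\[\Pi'_v\boxtimes\chi'_v \simeq (\Pi_v\boxtimes\chi_v)\otimes (\mu\circ(\det(g)\lambda^{-2}))\]
for some character $\mu$ of $F_v^\times$. I would take this from \cite{Labesse:Sch}, or re-derive it by Frobenius reciprocity: $\mathrm{Hom}_{\mathrm{GSpin}_6}(\Pi'\boxtimes\chi', \Pi\boxtimes\chi)\neq 0$ forces $\Pi'\boxtimes\chi'$ to occur in $(\Pi\boxtimes\chi)\otimes\mathrm{Ind}(\mathbf{1})$, and the induced representation decomposes into characters of the abelian quotient. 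Writing $\eta_v=\mu$, the twist is $(\Pi_v\otimes\eta_v\circ\det)\boxtimes \chi_v\eta_v^{-2}$, which is exactly the shape in the statement.

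It then remains to produce one unramified extension $\widetilde{\Pi}_v\boxtimes\widetilde{\chi}_v$ of $\sigma_v$. Since $\sigma$ is unramified, $\sigma_v$ has a $\mathrm{GSpin}_6(\mathfrak{o}_v)$-fixed vector and is a constituent of an unramified principal series $\mathrm{Ind}_{B_G}^{G}(\theta)$ of $\mathrm{GSpin}_6(F_v)$. The torus $T_G$ of $G$ is the kernel of $T_{\GL_4}\times\GL_1 \to \Gm$, and $T_G(\mathfrak{o}_v)$ maps onto the corresponding unit subgroup. Hence an unramified $\theta$ on $T_G(F_v)$ extends to an unramified character $\widetilde\theta$ of $T_{\GL_4}(F_v)\times F_v^\times$. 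I would check this via cocharacter lattices: $X_*(T_G)$ is saturated in $X_*(T_{\GL_4}\times\Gm)$, so restriction of unramified characters is surjective. Restriction of $\mathrm{Ind}(\widetilde\theta)$ to $\mathrm{GSpin}_6(F_v)$ is $\mathrm{Ind}(\theta)$, because $B$-orbits match by the Iwasawa decomposition and the quotient is abelian. So the unramified constituent $\widetilde\Pi_v\boxtimes\widetilde\chi_v$ of $\mathrm{Ind}(\widetilde\theta)$ restricts to contain the spherical vector, and hence contains $\sigma_v$. Applying the previous paragraph to $\Pi_v\boxtimes\chi_v$ and $\widetilde\Pi_v\boxtimes\widetilde\chi_v$ gives $\eta_v$.

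The main obstacle is the last step: making sure the $\mathrm{GSpin}_6(\mathfrak{o}_v)$-spherical vector actually lies in the constituent of the restriction isomorphic to $\sigma_v$. The restriction of $\widetilde\Pi_v$ may have several constituents, each appearing with multiplicity one. The first thing I would try is to use that the spherical vector of $\widetilde\Pi_v$ is $\mathrm{GSpin}_6(\mathfrak{o}_v)$-fixed, and that the unique unramified constituent of $\mathrm{Ind}(\theta)$ is $\sigma_v$. This identifies the constituent generated by the spherical vector with $\sigma_v$.
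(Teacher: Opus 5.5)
Your proposal is correct and has the same structure as the paper's proof. Both combine two ingredients:
\begin{itemize}
\item the existence of an unramified $\widetilde{\Pi}_v\boxtimes\widetilde{\chi}_v$ whose restriction to $\mathrm{GSpin}_6(F_v)$ contains $\sigma_v$, which the paper cites from the proof of \cite[Proposition 5.3.3]{Labesse:Sch} and which you rebuild explicitly via unramified principal series and $\mathrm{Ind}(\widetilde\theta)|_{\mathrm{GSpin}_6(F_v)}\simeq\mathrm{Ind}(\theta)$;
\item the fact, cited in the paper from \cite[\S 6.1]{Asgari:Choiy}, that two representations of $\GL_4(F_v)\times\GL_1(F_v)$ sharing a constituent on $\mathrm{GSpin}_6(F_v)$ differ by a character of the quotient $F_v^\times$.
\end{itemize}
If you re-derive the second ingredient by Frobenius reciprocity, note that $\mathrm{Ind}_{\mathrm{GSpin}_6}^{\GL_4\times\GL_1}\mathbf{1}$ is not a direct sum of characters, because $F_v^\times$ is noncompact. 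So first twist to match central characters, then run the argument on the finite-index subgroup $Z\cdot\mathrm{GSpin}_6(F_v)$ that you already introduced.
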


\begin{proof}
    By the proof of \cite[Proposition 5.3.3]{Labesse:Sch}, the unramified (irreducible) representation $\sigma_{v}$ occurs in the restriction to $\mathrm{GSpin}_6(F_v)$ of some unramified representation $\widetilde{\Pi}_v\boxtimes\widetilde{\chi}$ of $\GL_4(F_v)\times \GL_1(F_v)$. On the other hand, by \cite[\S 6.1]{Asgari:Choiy} the representations of $\GL_4(F_v)\times \GL_1(F_v)$ whose restriction to $\mathrm{GSpin}_6(F_v)$ contains $\sigma_v$ differ from one another up to twisting by a character on $$ (\GL_4(F_v)\times\GL_1(F_v)) / \mathrm{GSpin}_6(F_v) \simeq F_v^\times.$$  Explicitly, each is obtained by twists
    \[(\Pi_v\boxtimes\chi_v) \otimes \eta_v :=(\Pi_v\otimes \eta_v\circ\mathrm{det})\boxtimes \chi_v\eta_v^{-2},\]
    for some character $\eta_v:F_v^{\times}\to \kk^{\times}$. Since both restrictions of $\Pi_v\boxtimes \chi_v$ and $\widetilde{\Pi}_v\boxtimes\widetilde{\chi}_v$ contain $\sigma_v$, we conclude that there exists a character $\eta_v$ satisfying the statement of the lemma.
\end{proof}
\begin{rem} Note that, when $\Pi_v\boxtimes\chi_v$ is unramified, we can take $\eta_v = 1$.
\end{rem}
\begin{lem}\label{TadicLemma}
    Let $\Pi_v\boxtimes\chi_v$ be an (irreducible) generic representation of $\GL_4(F_v)\times\GL_1(F_v)$, the restriction $\Pi_v\boxtimes\chi_v|_{\mathrm{GSpin}_6(F_v)}$ is multiplicity free, containing at most one unramified constituent.  
\end{lem}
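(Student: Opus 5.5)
The plan is to reduce both claims to standard facts about restriction from a group to a normal subgroup with abelian quotient. Write $\widetilde{G}_v := \GL_4(F_v)\times\GL_1(F_v)$ and $G_v := \mathrm{GSpin}_6(F_v)$. By \eqref{ses_G}, $G_v$ is the kernel of $(g,\lambda)\mapsto \det(g)\lambda^{-2}$, and this map $\widetilde{G}_v \to F_v^\times$ is surjective, since $(\mathrm{diag}(t,1,1,1),1)\mapsto t$. Moreover $\widetilde{G}_v = Z_{\widetilde{G}_v}\, G_v\, \{(\mathrm{diag}(t,1,1,1),1)\}$, and the center $Z_{\widetilde{G}_v}$ (pairs $(z\mathrm{Id},w)$) maps onto $(F_v^\times)^4(F_v^\times)^2 = (F_v^\times)^2$. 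Hence $\widetilde{G}_v/Z_{\widetilde{G}_v}G_v \simeq F_v^\times/(F_v^\times)^2$, which is finite because $v$ has odd residue characteristic. By Clifford theory (as in Gelbart--Knapp, or Tadić's treatment of restriction from $\GL_n$ to $\SL_n$), $\Pi_v\boxtimes\chi_v|_{G_v}$ is therefore a finite direct sum of irreducibles, all conjugate under $\widetilde{G}_v$, each occurring with a common multiplicity $m$.

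\textbf{Step 1: multiplicity one via genericity.} I would argue as in the $\SL_n$ case. The maximal unipotent subgroup $N$ of $\GL_4$, embedded as $(n,1)$, lies in $G_v$. Fix a generic character $\psi_N$. Since $\Pi_v$ is generic, $\dim \mathrm{Hom}_N(\Pi_v\boxtimes\chi_v,\psi_N)=1$ by uniqueness of Whittaker models for $\GL_4$. The diagonal torus of $\widetilde{G}_v$ surjects onto $F_v^\times$ and acts transitively on generic characters up to the subgroup fixing them. So every $G_v$-constituent is generic for some $\psi_N^{t}$, where $t$ runs over representatives of the torus modulo $Z\,T_{G_v}$. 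Moreover, a given constituent $\tau$ supports at most one $\psi_N$-Whittaker functional, because each one extends to a $\psi_N$-functional on $\Pi_v$, which is unique. Comparing dimensions, $m\cdot\#\{\text{constituents that are } \psi_N\text{-generic}\} \le 1$, with equality for a suitable choice of $\psi_N$. This gives $m=1$. This Whittaker-counting argument is the step I expect to be the main obstacle: one must check carefully that the twisting action of $\widetilde{G}_v$ on constituents matches the action of the torus on generic characters, so that no constituent carries two $\psi_N$-functionals.

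\textbf{Step 2: at most one unramified constituent.} The hyperspecial $K=\GL_4(\mathfrak o_v)\times\GL_1(\mathfrak o_v)$ satisfies $K\cap G_v = G_v(\mathfrak o_v)$. A $G_v(\mathfrak o_v)$-fixed vector in a constituent $\tau$ gives a nonzero vector of $\Pi_v\boxtimes\chi_v$ fixed by $G_v(\mathfrak o_v)$. The group $K$ normalizes $G_v$ and maps onto $\mathfrak o_v^\times/(\mathfrak o_v^\times)^2$. So the space $(\Pi_v\boxtimes\chi_v)^{G_v(\mathfrak o_v)}$ carries an action of the finite abelian group $K/(K\cap Z G_v)$, and any two unramified constituents would be $K$-conjugate and hence isomorphic (conjugation by $k\in K$ preserves $G_v(\mathfrak o_v)$). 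By Step 1, isomorphic constituents appear only once, so they coincide. Alternatively, I would identify unramified constituents through Satake parameters in $\check{G}$: conjugating by $\widetilde{G}_v$ does not change the parameter of an unramified representation, and for a fixed parameter the spherical constituent is unique.
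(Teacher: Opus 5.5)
Your Step 1 is correct, and easier than you expect. Since $N\subset G_v$ and $\Pi_v\boxtimes\chi_v|_{G_v}\simeq\bigoplus_i\tau_i^{\oplus m}$, you get $1=\dim\mathrm{Hom}_N(\Pi_v,\psi_N)=m\sum_i\dim\mathrm{Hom}_N(\tau_i,\psi_N)$. This forces $m=1$ without tracking how $\widetilde G_v$ permutes generic characters. It is a self-contained substitute for the paper's appeal to Tadi\'c (Lemma 2.1 and Proposition 2.8 there).

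The gap is in your main argument for Step 2: nothing justifies ``any two unramified constituents would be $K$-conjugate''. The constituents are permuted transitively by $\widetilde G_v/ZG_v\simeq F_v^\times/(F_v^\times)^2$, but $K$ only reaches the unit classes. An element of the other coset, e.g.\ $g=(\mathrm{diag}(\varpi_v,1,1,1),1)$, conjugates $G_v(\mathfrak o_v)$ to a hyperspecial subgroup that is \emph{not} $G_v$-conjugate to it. Indeed, elements of $G_v$ have $\mathrm{val}\det$ even, so they move vertex types of the $\SL_4$-building by even amounts, while $g$ moves them by one. So your argument does not exclude $\tau$ and $\tau^g$ being non-isomorphic and both $G_v(\mathfrak o_v)$-spherical. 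Studying the $K$-action on $(\Pi_v\boxtimes\chi_v)^{G_v(\mathfrak o_v)}$ cannot see this direction at all.

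There are two smaller inaccuracies as well. First, that action factors through $K/G_v(\mathfrak o_v)\simeq\mathfrak o_v^\times$, not through $K/(K\cap ZG_v)$. Central elements of $K$ act by $\omega_{\Pi_v}$, which can be ramified; this is precisely the situation of Lemma~\ref{Description:Representations:Restriction}. Second, ``$K$-conjugate, hence isomorphic'' needs $\mathrm{Ad}(k)$ to act trivially on the spherical Hecke algebra, not merely to preserve $G_v(\mathfrak o_v)$.

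Your fallback via Satake parameters is the route that works, and it is the paper's route. By Gelbart--Knapp and Asgari--Choiy, the constituents form the $L$-packet of $\mathrm{pr}\circ\varphi$, which contains a unique unramified member. However, ``conjugating by $\widetilde G_v$ does not change the parameter of an unramified representation'' is misleading as stated. A conjugate of a spherical constituent is typically not spherical for $G_v(\mathfrak o_v)$; this already happens for $\SL_2\subset\GL_2$ with the principal series induced from an unramified quadratic character.

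Here is a precise version that avoids the local Langlands correspondence. Since $\widetilde G_v=\widetilde T_vG_v$, every constituent is $\tau^t$ for some $t$ in the diagonal torus $\widetilde T_v$. This torus centralizes $T_{G_v}$ and normalizes $B_{G_v}$. So if $\tau$ is a subquotient of $\mathrm{Ind}_{B_{G_v}}^{G_v}(\lambda)$ with $\lambda$ unramified, then so is every $\tau^t$. That principal series has a unique spherical subquotient, and Step 1 finishes the argument.

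Alternatively, your first route can be repaired by showing $\dim(\Pi_v\boxtimes\chi_v)^{G_v(\mathfrak o_v)}\le 1$. Its $\eta$-isotypic part under $\mathfrak o_v^\times$ is the $K$-fixed space of the twist by $\widetilde\eta^{-1}\circ\delta$, where $\delta(g,\lambda)=\det(g)\lambda^{-2}$ and $\widetilde\eta$ extends $\eta$ to $F_v^\times$. Two such twists differing by a ramified character cannot both be unramified; compare cuspidal supports on the $\GL_4$ factor.
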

\begin{proof}
  By \cite[Lemma 2.1]{Tadic} and the discussion after it, we know that $\Pi_v\boxtimes\chi_v|_{\mathrm{GSpin}_6(F_v)}$ decomposes as 
  \[\Pi_v\boxtimes\chi_v|_{\mathrm{GSpin}_6(F_v)} = m\bigoplus_{\sigma\in \mathcal{O}(\Pi_v\boxtimes\chi_v)}\sigma,\]
  with $\mathcal{O}(\Pi_v\boxtimes\chi_v)$ being the set of equivalence classes of all representations of $\mathrm{GSpin}_6(F_v)$ which are isomorphic to a subrepresentation of $\Pi_v\boxtimes\chi_v|_{\mathrm{GSpin}_6(F_v)}$ and $m\in \Z_{>0}$ denoting the common multiplicity of each $\sigma$ in $\Pi_v\boxtimes\chi_v$. Applying \cite[Proposition 2.8]{Tadic}, we have that $m =1$. Now, observe that, by \cite[Lemmas 2.1 and 2.4]{Gelbart:Knapp}, we have the equality $\mathcal{O}(\Pi_v\boxtimes\chi_v) = \mathcal{O}((\Pi_v\boxtimes\chi_v) \otimes \nu)$, for any character $\nu$ on $$ (\GL_4(F_v)\times\GL_1(F_v)) / \mathrm{GSpin}_6(F_v).$$
  Moreover, by \cite[\S 6.1]{Asgari:Choiy}, $\mathcal{O}(\Pi_v\boxtimes\chi_v)$ corresponds to the $L$-packet for $\mathrm{GSpin}_6$ of parameter ${\rm pr} \circ \varphi$, where $\varphi$ is the $L$-parameter of $\Pi_v\boxtimes\chi_v$ and ${\rm pr}$ is the projection from $\GL_4 \times \GL_1$ to $\mathrm{GSO}_{6}$ given by \eqref{ses_checkG}.  Hence, by this discussion and the proof of Lemma \ref{Description:Representations:Restriction}, any unramified $\sigma \in \mathcal{O}(\Pi_v\boxtimes\chi_v)$ appears in the $L$-packet of the same unramified $L$-parameter. As the latter contains a unique unramified consituent, the result follows. 
\end{proof}
\begin{rem}\label{remark:on:spherical:vectors}
    Lemma \ref{TadicLemma} implies that, when $\Pi_v\boxtimes\chi_v$ is unramified, its spherical vector gives a spherical vector of the unique unramified representation in the restriction $\Pi_v\boxtimes\chi_v|_{\mathrm{GSpin}_6(F_v)}$. 
\end{rem}
Let $\mu_v\in \sigma_v$ be a spherical vector. By Lemma \ref{Description:Representations:Restriction} and Remark \ref{remark:on:spherical:vectors}, there exists a spherical vector $\widetilde{F}_v\boxtimes\widetilde{\chi}_v$ in the unramified representation $\widetilde{\Pi}_v\boxtimes\widetilde{\chi}_v$  such that 
\begin{equation}\label{Factorization:mu}\mu_v = (\widetilde{F}_v\otimes\eta_v\circ \mathrm{det})\boxtimes\widetilde{\chi}_v\eta_v^{-2},\end{equation}
where the latter can be seen inside $\Pi_v\boxtimes\chi_v$ via the isomorphism  \begin{equation}\label{iso_after_factorization}
    \Pi_v\boxtimes\chi_v\simeq (\widetilde{\Pi}_v\otimes \eta_v\circ \mathrm{det})\boxtimes\widetilde{\chi}_v\eta_v^{-2}.
\end{equation}

Finally we arrive at the analogue of \eqref{equation expliciting Shalika normalization} for $P_M$, where a global period is directly computed in terms of local spherical functions.

\begin{prop}\label{Shalika:GSpin:In:GL}
    Let $\mu \in \sigma$ be a Shalika normalized cusp form (in the sense that $P_W(\mu) = 1$ as in Definition \ref{def shalikanormalized}), then
    \begin{align*}
        P_W(\sigma(h,a)\mu) &= \Delta^{-7/4}(\omega_\Pi\chi^{2})(\partial^{1/2})\,\Omega_\mu^\circ \sum_{ \varepsilon \in S(\omega_{\Pi}^{-1}\chi^{-2})}\prod_{v} \,  \widetilde{\chi}_v(a_v\partial_v^{-1})\Omega_v^{{\rm ur},\varepsilon_v}(h_v),
    \end{align*}
    where \begin{itemize}
        \item $S(\omega_{\Pi}^{-1}\chi^{-2}):= \{\text{unramified characters } \varepsilon\,:\, \varepsilon^2 = \omega_\Pi^{-1}\chi^{-2} \},$ 
        \item $\Omega_\mu^\circ = \frac{\Delta^{7/4} \,\omega_\Pi^{-1}(\partial^{1/2}) (\chi^{-1}\widetilde{\chi})(\partial)}{  |S(\omega_{\Pi}^{-1}\chi^{-2})|}$ is the normalizing constant ensuring $P_W(\mu) = 1$,
        \item  $\Omega_v^{{\rm ur},\varepsilon_v}$ is the unramified Shalika function attached to $\widetilde{F}_v$,  $(\widetilde{\chi}_v\varepsilon_v)^{-1}$, and $\psi_v^{\mathrm{ur}}$, normalized so that $\Omega_v^{{\rm ur},\varepsilon_v}\left(1\right)=1$, with $\widetilde{F}_v \boxtimes \widetilde{\chi}_v$ the vector corresponding to $\mu_v$ via \eqref{Factorization:mu}.
    \end{itemize}
\end{prop}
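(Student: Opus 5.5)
We plan to deduce the formula from Proposition \ref{Spectral:Expansion:Shalika}, applied to a cusp form $F\boxtimes\chi$ in $\Pi\boxtimes\chi$ whose restriction to $\mathrm{GSpin}_6(\A)$ realizes $\mu$. By Lemma \ref{TadicLemma} and Remark \ref{remark:on:spherical:vectors}, such a factorizable $F$ exists and its local components are $\widetilde F_v\otimes\eta_v\circ\det$ as in \eqref{Factorization:mu}. Translating by $(h,a)\in\mathrm{GSpin}_6(\A)$ commutes with restriction, so
\[
P_W(\sigma(h,a)\mu)=\frac{1}{|\Sigma(\omega_\Pi^{-1}\chi^{-2})|}\sum_{\varepsilon^2=\omega_\Pi^{-1}\chi^{-2}}P_M\bigl(\Pi(h)F\times(\varepsilon\chi)^{-1}\text{ evaluated at }(h,a)\bigr).
\]
The next step is to show that only unramified $\varepsilon$ contribute. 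Locally, a nonzero Shalika functional on $\Pi_v\boxtimes(\varepsilon_v\chi_v)^{-1}$ that does not vanish on the spherical vector requires the twisted datum to be unramified up to the $\eta_v$-twist. Using \eqref{iso_after_factorization}, we rewrite $\Pi_v$ with character $(\varepsilon_v\chi_v)^{-1}$ as $\widetilde\Pi_v$ with character $(\widetilde\chi_v\varepsilon_v')^{-1}$ for a reparametrized $\varepsilon'_v$. The $\eta_v\circ\det$ factor is absorbed into the $\mathrm{GL}_2^\Delta$-character, because $\det(\mathrm{diag}(m,m))=\det(m)^2$ matches $\eta_v^{-2}$ on the $\mathrm{GL}_1$ factor. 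The condition that the $\mathrm{GL}_2(\mathfrak o_v)$-invariant vector pair nontrivially then forces $\varepsilon_v'$ to be unramified. After reindexing, the sum runs over $S(\omega_\Pi^{-1}\chi^{-2})$, and the count $|\Sigma|$ is replaced accordingly. We expect this to be the main obstacle: we need the ramified twists to vanish, together with the bookkeeping that turns $\varepsilon$ into $\varepsilon'$ and $\chi$ into $\widetilde\chi$. We would first try a direct local argument via the $K_v$-invariance and the uniqueness of Shalika functionals.

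For each unramified $\varepsilon$, we then apply the explicit unfolding \eqref{equation expliciting Shalika normalization} to $P_M$ at the toral or general element $(h,a)$. This gives
\[
\Delta^{-7/4}\,\Omega^\circ_{F,\varepsilon}\,(\varepsilon\widetilde\chi)(a\partial^{-1})^{\pm1}\prod_v\Omega_v^{\mathrm{ur},\varepsilon_v}(h_v).
\]
We next compare the global constants $\Omega^\circ_{F,\varepsilon}$ across the different $\varepsilon$. They differ by the characters evaluated at $a_0$ from \eqref{Definition:a0}, which produces the factor $(\omega_\Pi\chi^2)(\partial^{1/2})$ and the $\varepsilon(\partial)$-type terms. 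We absorb these into the product $\prod_v\widetilde\chi_v(a_v\partial_v^{-1})$ and the Shalika functions, using that $\varepsilon_v$ enters $\Omega_v^{\mathrm{ur},\varepsilon_v}$ only through the character, and that $\varepsilon(a)$ cancels for $(h,a)\in\mathrm{GSpin}_6$ against $\det$.

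Finally, we evaluate the resulting expression at $(h,a)=1$. Each $\Omega_v^{\mathrm{ur},\varepsilon_v}(1)=1$, so the sum equals $|S|$ times the constant. Imposing $P_W(\mu)=1$ then determines $\Omega_\mu^\circ$ as stated, exactly as \eqref{equation Omegaf0} was obtained for $P_M$.
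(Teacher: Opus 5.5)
Your outline follows the paper's proof step for step. You realize $\mu$ by a factorizable $F\boxtimes\chi$ and expand along $[\mathrm{GL}_2]\supset[\mathrm{SL}_2]$ as in Proposition~\ref{Spectral:Expansion:Shalika}. You use that the $\eta_v$-twist is trivial on $S'(F_v)$ to pass to $\widetilde\Pi_v\boxtimes\widetilde\chi_v$, and you discard ramified $\varepsilon_v$; your $K_v$-invariance argument for this works, using toral representatives for $S(F_v)\backslash\mathrm{GL}_4(F_v)/\mathrm{GL}_4(\mathfrak o_v)$. You then factor by local uniqueness and normalize at the identity.

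The explicit prefactors are harmless. They are uniform in $\varepsilon$, since $\varepsilon(\partial)^{-1}=(\omega_\Pi\chi^2)(\partial^{1/2})$ for every $\varepsilon\in S(\omega_\Pi^{-1}\chi^{-2})$. This is exactly the factor the paper extracts from the central element in $e^{\check{\lambda}_\Sh}(\partial^{1/2})=(\partial^{1/2}\mathrm{Id},\partial)\,a_0$ before expanding. Two bookkeeping corrections: no reparametrization $\varepsilon\mapsto\varepsilon'$ is needed, and the $a$-dependence is just $\prod_v\widetilde\chi_v(a_v)$, because $\sigma_v(h_v,a_v)\mu_v=\widetilde\Pi_v(h_v)\widetilde F_v\boxtimes\widetilde\chi_v(a_v)$, so there is no $\varepsilon(a)$ to cancel.

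The gap is the step where you compare the constants $\Omega^\circ_{F,\varepsilon}$ across $\varepsilon$ and then evaluate the sum at the identity as $|S|$ times a single constant.

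\emph{Why the comparison does not follow.} Local uniqueness gives, for each $\varepsilon$ separately, a factorization $c_\varepsilon\prod_v\Omega_v^{\mathrm{ur},\varepsilon_v}(h_v)$. Here $c_\varepsilon$ is essentially the global Shalika period of $F$ at $a_\partial^{-1}$ against the automorphic character $(\varepsilon\chi)(\det g)$. Translating by $a_0$ or by central elements only moves the argument of one fixed $\varepsilon$-period. It cannot compare periods against different characters $(\varepsilon\chi)\circ\det$. Likewise, $P_W(\mu)=1$ is a single equation for $\sum_\varepsilon c_\varepsilon$.

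\emph{Why the terms need not all exist.} Let $\varepsilon'=\varepsilon\kappa$ with $\kappa$ a nontrivial unramified quadratic character, and take $v$ with $\kappa_v(\varpi_v)=-1$. Then $\Omega_v^{\mathrm{ur},\varepsilon_v}$ and $\Omega_v^{\mathrm{ur},\varepsilon'_v}$ both exist only if $\mathrm{Sat}_{\widetilde\Pi_v}$ is conjugate into $\mathrm{GSp}_4(\kk)$ with similitude $s$ and also with similitude $-s$. That forces its eigenvalues to be stable under $t\mapsto -t$, i.e.\ $\widetilde\Pi_v\simeq\widetilde\Pi_v\otimes\kappa_v$. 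So unless $\Pi\simeq\Pi\otimes(\kappa\circ\det)$, at most one of $c_\varepsilon,c_{\varepsilon'}$ is nonzero. Since $S$ is a torsor under $\mathrm{Hom}(\mathrm{Pic}(\Sigma)(\F_q),\pm1)\ni(-1)^{\deg}$, it always has at least two elements. Hence ``each $\Omega_v^{\mathrm{ur},\varepsilon_v}(1)=1$, so the sum is $|S|$ times the constant'' fails, e.g., whenever $\Pi\not\simeq\Pi\otimes((-1)^{\deg}\circ\det)$.

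The paper's proof makes the same identification: it writes one $\Omega_\mu^\circ$ for all $\varepsilon$ right after invoking local multiplicity one, and then divides by $|S(\omega_\Pi^{-1}\chi^{-2})|$. So on this point you match the paper. However, neither your translation argument nor multiplicity one justifies it. The formula as stated needs the extra input that every $\varepsilon\in S$ contributes with one and the same global constant, and you have not established that.
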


\begin{proof}
By the discussion above, given $\mu$, there is a factorizable cusp form $F\boxtimes \chi \in \Pi\boxtimes\chi$ such that any local component satisfies \eqref{Factorization:mu}. Observe that, a priori, $F\boxtimes \chi$ might be a ramified vector in $\Pi\boxtimes\chi$: its local components are obtained by twists of spherical vectors $\widetilde{F}_v \boxtimes \widetilde{\chi}_v$ via the (possibly ramified) character $\eta_v$ as in \eqref{iso_after_factorization}. Moreover, for every $g_v = (h_v,a_v)\in \mathrm{GSpin}_6(F_v)$, one has that 
    \begin{align*}\sigma_v(g_v)\mu_v &= (\widetilde{\Pi}_v\otimes (\eta_v\circ\mathrm{det})    )(h_v)  \widetilde{F}_v  \boxtimes \widetilde{\chi}_v(a_v)\eta_v^{-2}(a_v)\\ &= (\widetilde{\Pi}_v(h_v)\widetilde{F}_v)\boxtimes \widetilde{\chi}_v(a_v),\end{align*}
    where we used that $\eta_v$ is trivial on $\mathrm{GSpin}_6(F_v)$. 
    
   A similar computation to the one of \eqref{equation expliciting Shalika normalization} lets us write 
   \begin{align}
       P_W(\sigma(h,a)\mu)  &= \Delta^{-\frac{7+\varepsilon_\Sh }{4}}\int_{[U\widetilde{L}']}\mu(ul(h,a)e^{\check{\lambda}_\Sh}(\partial^{1/2}))\Psi(u)du \,dl \nonumber \\ &=\Delta^{-\frac{7+\varepsilon_\Sh }{4}}\int_{[U\widetilde{L}']}(F\boxtimes \chi)(ul  (h\partial^{1/2} \mathrm{Id},a\partial)a_0 )\Psi(u)du \,dl \nonumber \\ &= \Delta^{-\frac{7+\varepsilon_\Sh }{4}} (\omega_\Pi\chi^{2})(\partial^{1/2})\int_{[U\widetilde{L}']}(F\boxtimes \chi)(ul   (h,a)a_0)\Psi(u)du \,dl \nonumber \\ &= \Delta^{-7/4} (\omega_\Pi\chi^2)(\partial^{1/2})\int_{[U\widetilde{L}']}(F\boxtimes \chi)(ul  (h,a)a_0 )\Psi(u)d^\psi u \,dl, \label{PW first manipulations}
   \end{align}
     where recall that $a_0 = (a_\partial^{-1} , \partial^{-1})$  is the element introduced in \S \ref{subsubsecn=2} and where we have written $$ \Delta^{\frac{-7 -\varepsilon_\Sh + 2\mathrm{dim}(U)}{4}} = \Delta^{-7/4}.$$  
     Mimicking the computations that led to \eqref{final eq 5.8} in the proof of Proposition \ref{Spectral:Expansion:Shalika}, we thus have 
    \begin{align} P_W(\sigma(h,a)\mu)  &= \Delta^{-7/4} (\omega_\Pi\chi^2)(\partial^{1/2}) \nonumber \\  &\cdot \sum_{\substack{\varepsilon: [\GL_1] \to \kk^\times\\\varepsilon^2 = \omega_\Pi^{-1}\chi^{-2}}}   \int_{[U][\mathrm{PGL}_2]} F(u \left[\begin{smallmatrix}
        g & \\ & g 
    \end{smallmatrix}\right]  h a_\partial^{-1} ) \chi(\partial^{-1} \, a) \chi\varepsilon({\rm det}(g)) \Psi(u) d g d^\psi u \nonumber \\   
    &=  \Delta^{-7/4} (\omega_\Pi\chi^{2})(\partial^{1/2}) \Omega_\mu^\circ \nonumber \\ &\cdot  \sum_{\substack{\varepsilon: [\GL_1] \to \kk^\times\\\varepsilon^2 = \omega_\Pi^{-1}\chi^{-2}}} \prod_v \widetilde{\chi}(a_v \partial_v^{-1}) P_{M,v}(\widetilde{\Pi}_v( h_v a_{\partial_v}^{-1}) \widetilde{F}_v \times (\varepsilon_v\widetilde{\chi}_v)^{-1}), \label{eq for normalized things}
    \end{align}
    where in the first equality we used the  embedding of the Shalika subgroup $U \GL_2$ into $\mathrm{GSpin}_6$ of \eqref{Embedding:L:Shalika}, 
      $a_{\partial_v}$ is the $v$-component of the element $a_{\partial}$ introduced in \S \ref{subsubsecn=2}, and to establish the last equality we used the fact that the multiplicity of the local Shalika functionals at each place is at most one. Here, $\Omega_\mu^\circ$ is a global constant to be determined shortly when $\mu$ is Shalika normalized (in the sense of $P_W(\mu) = 1$, as in Definition \ref{def shalikanormalized}), and $P_{M,v}$ are the local Shalika functionals of \S\ref{Subsection:CS:Sakellaridis}.

    Note that each Shalika functional appearing in the right hand side of \eqref{eq for normalized things} coincides with the unramified Shalika function $\Omega_{\widetilde{F}_v}^{\varepsilon_v^{-1}\widetilde{\chi}_v^{-1}}$ for $\GL_4$, as we now explain. Firstly, if $v$ is a place at which $\Pi_v\boxtimes\chi_v$ is ramified, we have an identification of intertwining spaces
    \[ \mathrm{Hom}_{S'(F_v)}(\Pi_v\boxtimes \chi_v\varepsilon_v,\Psi_{v}) = \mathrm{Hom}_{S'(F_v)}(\widetilde{\Pi}_v\boxtimes \widetilde{\chi}_v\varepsilon_v,\Psi_{v}), \]
    where recall that $S'$ embeds in $\GL_4\times \GL_1$ via \eqref{Embedding:L:Shalika} and, using Lemma \ref{Description:Representations:Restriction}, recall that $\widetilde{\Pi}_v \boxtimes \widetilde{\chi}_v$ denotes an unramified representation containing $\sigma_v$ such that 
    \[\Pi_v\boxtimes\chi_v\varepsilon_v\simeq (\widetilde{\Pi}_v\otimes \eta_{v}\circ \mathrm{det})\boxtimes\widetilde{\chi}_{v}\varepsilon_v\eta_{v}^{-2}.\]
    In establishing the equality above, we have used that the twist by $\eta_{v}$ is trivial once restricted to $S'(F_v)$, see \eqref{Embedding:L:Shalika}. Hence, $\varepsilon_v$ can be assumed to be unramified. This is because 
\[ \mathrm{Hom}_{S'(F_v)}(\widetilde{\Pi}_v\boxtimes \widetilde{\chi}_v\varepsilon_v,\Psi_{v}) = \mathrm{Hom}_{U(F_v)\GL_2(F_v)}(\widetilde{\Pi}_v,\Psi_{v} \times (\widetilde{\chi}_{v}\varepsilon_v)^{-1}), \]
and, since $\widetilde{\Pi}_v$ and $\widetilde{\chi}_{v}$ are unramified, the latter is zero if 
$\varepsilon_v$ is ramified. Thus, the Shalika function $$P_{M,v}((\widetilde{\Pi}_v(h_va_{\partial_v}^{-1})\widetilde{F}_v) \times (\widetilde{\chi}_{v}\varepsilon_v)^{-1})$$ must coincide with $\Omega_{\widetilde{F}_v}^{(\widetilde{\chi}_v\varepsilon_v)^{-1}}(h_va_{\partial_v}^{-1})$. 
In particular, since 
$$\Omega_{\widetilde{F}_v}^{(\widetilde{\chi}_v\varepsilon_v)^{-1}}(h_v a_{\partial_v}^{-1})= \Omega_v^{{\rm ur},\varepsilon_v}\left (h_v\right),$$ with $\Omega_{v}^{{\rm ur},\varepsilon_v}$ the unramified Shalika function associated to $\tilde{F}_v$, $(\widetilde{\chi}_{v}\varepsilon_v)^{-1}$, and the unramified additive character $\psi_v^{\rm ur}$ (normalized so that $\Omega_{v}^{{\rm ur},\varepsilon_v}(1)=1$), 
we can write \eqref{eq for normalized things} as \begin{align}\label{eq for normalized things 2}P_W(\sigma(h,a)\mu)  &=\Delta^{-7/4} (\omega_\Pi\chi^{2})(\partial^{1/2})\, \Omega_\mu^\circ \sum_{\substack{\varepsilon: [\GL_1] \to \kk^\times \text{ unr. }\\\varepsilon^2 = \omega_\Pi^{-1}\chi^{-2}}}\prod_{v}\widetilde{\chi}_v(a_v  \varpi_v^{-2m_v})\Omega_v^{{\rm ur},\varepsilon_v}\left (h_v\right).
    \end{align} 
Finally, we compute the normalizing constant $\Omega_\mu^\circ$ when $\mu$ is Shalika normalized. To do this, we set $(h,a) = \mathrm{id}$ in \eqref{eq for normalized things 2} to get $$ 1  = \Delta^{-7/4} (\omega_\Pi\chi^{2})(\partial^{1/2})\,\Omega_\mu^\circ \, \widetilde{\chi}(\partial^{-1}) |S(\omega_{\Pi}^{-1}\chi^{-2})|,$$
hence $$ \Omega_\mu^\circ = \frac{\Delta^{7/4} \,\omega_\Pi^{-1}(\partial^{1/2}) (\chi^{-1}\widetilde{\chi})(\partial)}{  |S(\omega_{\Pi}^{-1}\chi^{-2})|},$$ as desired. 
\end{proof}

\begin{rem}
    Observe that $S(\omega_{\Pi}^{-1}\chi^{-2})$ is a torsor under the group of unramified quadratic Hecke characters
    \[\mathrm{Hom}([\Gm],\pm 1)\simeq \mathrm{Hom}(\mathrm{Pic}(\Sigma)(\F_q),\pm 1),\]
    and thus it is a finite set.
\end{rem}

\begin{rem}\label{Langlands:Parameters:GSpin}
    Let $\varphi_{\Pi_v}\times\chi_v$ denote the $L$-parameter attached to $\Pi_v\boxtimes\chi_v$. According to \cite[\S 6.1]{Asgari:Choiy}, the Langlands parameter of $\sigma_v$ is obtained by composition of $\varphi_{\Pi_v} \times\chi_v$ with the natural projection
    \[\mathrm{pr}:\GL_4(\kk)\times \GL_1(\kk)\to (\GL_4(\kk)\times \GL_1(\kk))/(z\mathrm{Id},z^{-2}).\]
    Furthermore, given any character $\eta_v$, one has
    \[\mathrm{pr}\circ (\varphi_{(\Pi_v\otimes\eta_v\circ\mathrm{det})}\times\chi_v\eta^{-2}_v) = \mathrm{pr}\circ (\varphi_{\Pi_v} \times\chi_v).\]
\end{rem}
From now on,  denote by $\mathrm{Sat}_{\sigma_v}$ and $\mathrm{Sat}_{\widetilde{\Pi}_v}$  the Satake parameters of $\sigma_v$ and $\widetilde{\Pi}_v$, respectively. Recall, from \S \ref{Subsection:CS:Sakellaridis}, that  $\Omega_v^{{\rm ur},\varepsilon_v}$ is non-trivial if and only if $\widetilde{\Pi}_v$ has central character $(\widetilde{\chi}_v\varepsilon_v)^{-2}$ (which is true by assumption on $\varepsilon_v$) and  $\mathrm{Sat}_{\widetilde{\Pi}_v}\in \GSp_4(\kk)$ with similitude equal to $(\widetilde{\chi}_v\varepsilon_v)^{-1}(\varpi_v)$. Consequently by Lemma \ref{Description:Representations:Restriction} we have that
\[\mathrm{Sat}_{\sigma_v} = \mathrm{pr}(\mathrm{Sat}_{\widetilde{\Pi}_v}\times  \widetilde{\chi}_v(\varpi_v)) \] lies in the image of $\overline{H}'(\kk)$, defined in \eqref{GSp4embedsintoGSO33}. Moreover, any element in the preimage of $\mathrm{Sat}_{\sigma_v}$ via $\iota$ is of the form $$\mathrm{Sat}_{\widetilde{\Pi}_v}\times \widetilde{\chi}_v(\varpi_v) \times   \tilde{\varepsilon}_v(\varpi_v) \in H'(\kk),$$ for a given square-root $\tilde{\varepsilon}_v(\varpi_v)$ of ${\rm det}( \mathrm{Sat}_{\widetilde{\Pi}_v})\widetilde{\chi}_v^2(\varpi_v) = \varepsilon^{-2}_v(\varpi_v)$. We fix $$\mathrm{Sat}_{\sigma_v}^\iota :=\mathrm{Sat}_{\widetilde{\Pi}_v}\times \widetilde{\chi}_v(\varpi_v) \times   \varepsilon^{-1}_v(\varpi_v) \in H'(\kk)$$ a preferred lift of $\mathrm{Sat}_{\sigma_v}$.

Observe that any irreducible algebraic representation of $\overline{H}'$ can be written as $$V_{(a,b;c)}\boxtimes \varsigma_d,$$ where  $V_{(a,b;c)}$ is the irreducible representation of $\GSp_4(\kk)$ of highest weight $a \alpha_1 + b \alpha_2 + c \alpha_0$ and recall that $\varsigma_d$ is the weight $d$ character (as in \S \ref{Subsection:Groups:Notation}), satisfying $$a+b+2c-2d = 0.$$ 
In particular, $a+b \equiv 0$ [mod $2$] and $d = c + (a + b)/2.$ If $d' \in \Z$, we can further consider the $H'$-representation $$V_{(a,b;c)}\boxtimes \varsigma_d \boxtimes  \varsigma_{d'},$$ and denote 
\[s_{(a,b;c)}^{\varsigma_d,\varsigma_{d'}} := \mathrm{Tr}(V_{(a,b;c)}\boxtimes \varsigma_d\boxtimes  \varsigma_{d'}|\mathrm{Sat}_{\sigma_v}^\iota).\]
Similarly to \eqref{playing_with_traces}, we have \begin{align}
s_{(a,b;c)}^{\varsigma_d,\varsigma_{d'}} &= \varsigma_d(\mathrm{Sat}_{\sigma_v}^\iota)\varsigma_{d'}(\mathrm{Sat}_{\sigma_v}^\iota) s_{(a,b;c)} = (\widetilde{\chi}_v^d\varepsilon_v^{-d'})(\varpi_v) s_{(a,b;c)} \nonumber \\ &=  (\widetilde{\chi}_v^d\varepsilon_v^{-d'})(\varpi_v) s_{(0,0;c)} s_{(a,b;0)} = \varepsilon_v^{-d' - c}(\varpi_v)\widetilde{\chi}_v^{d-c} (\varpi_v)  s_{(a,b;0)}, \label{Relation:Between:Traces:GSpin}
\end{align}
with  $s_{(a,b;c)} := \mathrm{Tr}(V_{(a,b;c)}|\mathrm{Sat}_{\widetilde{\Pi}_v}).$

\begin{prop}\label{Final:Formula:Shalika:GSpin}
     For every $n \geq 0$, we have that 
    \[\Omega_v^{{\rm ur},\varepsilon_v}(\varpi_v^{(n,n)}) = (\widetilde{\chi}_v\varepsilon_v)^{-n}(\varpi_v)q_v^{-2n} \, \bigg(s_{(n,n;-n)}^{\varsigma_0,\varsigma_0}   - s_{(n-1,n-1;1-n)}^{\varsigma_{0},\varsigma_0} q_v^{-1}\bigg).\]
\end{prop}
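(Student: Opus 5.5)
The plan is to reduce directly to Proposition \ref{CS:Formula:Shalika:simplified} (together with Remark \ref{Vanishing:Character} for $n=0$). Then we rewrite the traces of $\GSp_4$-representations evaluated at $\mathrm{Sat}_{\widetilde{\Pi}_v}$ as traces of $H'$-representations evaluated at the preferred lift $\mathrm{Sat}_{\sigma_v}^\iota$, using \eqref{Relation:Between:Traces:GSpin}.

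First, by definition $\Omega_v^{{\rm ur},\varepsilon_v}$ is the unramified Shalika function of $\GL_4$ attached to the spherical vector $\widetilde{F}_v$ of $\widetilde{\Pi}_v$, the character $(\widetilde{\chi}_v\varepsilon_v)^{-1}$, and $\psi_v^{\rm ur}$. By the discussion preceding the proposition, $\mathrm{Sat}_{\widetilde{\Pi}_v}\in\GSp_4(\kk)$ has similitude $(\widetilde{\chi}_v\varepsilon_v)^{-1}(\varpi_v)$. Applying Proposition \ref{CS:Formula:Shalika:simplified} with $\chi_v$ replaced by $(\widetilde{\chi}_v\varepsilon_v)^{-1}$ gives
\[\Omega_v^{{\rm ur},\varepsilon_v}(\varpi_v^{(n,n)}) = q_v^{-2n}\Big(s_{(n,n;0)} - (\widetilde{\chi}_v\varepsilon_v)^{-1}(\varpi_v)\, s_{(n-1,n-1;0)}\,q_v^{-1}\Big),\]
where $s_{(a,b;c)}=\mathrm{Tr}(V_{(a,b;c)}|\mathrm{Sat}_{\widetilde{\Pi}_v})$. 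The case $n=0$ holds formally by Remark \ref{Vanishing:Character}, since $s_{(-1,-1;0)}$ vanishes in the same way.

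Next, convert each term using \eqref{Relation:Between:Traces:GSpin} with $d=d'=0$. Here $d=c+(a+b)/2$ must equal $0$, so $c=-(a+b)/2$. For $(a,b)=(n,n)$ this gives $c=-n$, and for $(n-1,n-1)$ it gives $c=1-n$, matching the indices in the statement. The formula then reads
\[s_{(a,b;c)}^{\varsigma_0,\varsigma_0}=\varepsilon_v^{-c}(\varpi_v)\,\widetilde{\chi}_v^{-c}(\varpi_v)\,s_{(a,b;0)}.\]
Hence
\[s_{(n,n;-n)}^{\varsigma_0,\varsigma_0}=(\widetilde{\chi}_v\varepsilon_v)^{n}(\varpi_v)\,s_{(n,n;0)}\quad\text{and}\quad s_{(n-1,n-1;1-n)}^{\varsigma_0,\varsigma_0}=(\widetilde{\chi}_v\varepsilon_v)^{n-1}(\varpi_v)\,s_{(n-1,n-1;0)}.\]
Solving for $s_{(n,n;0)}$ and $s_{(n-1,n-1;0)}$ and substituting, the extra factor $(\widetilde{\chi}_v\varepsilon_v)^{-1}(\varpi_v)$ in the second term combines with $(\widetilde{\chi}_v\varepsilon_v)^{-(n-1)}$ to give $(\widetilde{\chi}_v\varepsilon_v)^{-n}$. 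Factoring this out of both terms yields exactly the claimed identity.

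The main obstacle is the bookkeeping of signs and normalizations in \eqref{Relation:Between:Traces:GSpin}. Specifically, one must check that the similitude of $\mathrm{Sat}_{\widetilde{\Pi}_v}$ is $(\widetilde{\chi}_v\varepsilon_v)^{-1}(\varpi_v)$, so that $s_{(0,0;c)}=(\widetilde{\chi}_v\varepsilon_v)^{-c}(\varpi_v)$. One must also check that $\varsigma_d,\varsigma_{d'}$ evaluated on $\mathrm{Sat}^\iota_{\sigma_v}$ give $\widetilde{\chi}_v^{d}$ and $\varepsilon_v^{-d'}$. I would verify these conventions against the definition of the lift $\mathrm{Sat}^\iota_{\sigma_v}$ and the character $(a,b;c)$ in \eqref{highest_weight_GSp4} before the final substitution. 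If a sign mismatch appeared, I would recheck the exponent of $\varepsilon_v$ in \eqref{Relation:Between:Traces:GSpin}.
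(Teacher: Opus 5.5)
Your proposal is correct and follows the paper's proof exactly: apply Proposition \ref{CS:Formula:Shalika:simplified} (with Remark \ref{Vanishing:Character} covering $n=0$) with the similitude character $(\widetilde{\chi}_v\varepsilon_v)^{-1}$, then rewrite the traces via \eqref{Relation:Between:Traces:GSpin}. Since $d=d'=0$, the $\varsigma$-factors are trivial, so the only convention you actually need is that $\mathrm{Sat}_{\widetilde{\Pi}_v}$ has similitude $(\widetilde{\chi}_v\varepsilon_v)^{-1}(\varpi_v)$; this gives $s_{(0,0;c)}=(\widetilde{\chi}_v\varepsilon_v)^{-c}(\varpi_v)$, and your worry about the exponent of $\varepsilon_v$ in $\varsigma_{d'}$ never comes into play.
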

\begin{proof}
    By Proposition \ref{CS:Formula:Shalika:simplified}
    and Remark \ref{Vanishing:Character}, if $n \geq 0$, 
     $\Omega_v^{{\rm ur},\varepsilon_v}(\varpi_v^{(n,n)})$ equals
    \begin{align*}
 q_v^{-2n} \, \bigg( s_{(n,n;0)} - \widetilde{\chi}_v^{-1}\varepsilon_v^{-1}(\varpi_v) s_{(n-1,n-1;0)}q_v^{-1}\bigg).
 \end{align*} 
The result then follows from \eqref{Relation:Between:Traces:GSpin}.
\end{proof}
 
\begin{rem}
    Notice that the right hand side of the formula of Proposition \ref{Final:Formula:Shalika:GSpin} does not depend on the lift $\mathrm{Sat}_{\sigma_v}^{\iota}$  chosen, as $\mathrm{Sat}_{\widetilde{\Pi}_v}\in \GSp_4(\kk)$ has similitude equal to $(\widetilde{\chi}_v\varepsilon_v)^{-1}(\varpi_v)$.
\end{rem}

\section{Singular relative Langlands duality}

In this final section, our aim is to prove the main result of the present manuscript. 
\begin{thm} \label{theorem main duality}
    Let $G = \mathrm{GSpin}_6$ and let $\check{G} = \mathrm{GSO}_6$ be its Langlands dual group. Let $(G,X)$ and $(\check{G}, \check{X})$ be the graded actions defined in \S \ref{Section:The:Space:G:X} and \S \ref{section Gcheck Xcheck}, respectively. Then $(G,X)$ and $(\check{G},\check{X})$ are weakly numerically dual with discrepancy $-5$ in both directions. 
\end{thm}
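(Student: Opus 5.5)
We prove the two identities of Definition \ref{definition weak numerical duality} separately, in each direction reducing via Conjecture \ref{conjecture ratio of periods}-style ratios to Shalika-normalized statements. Since Corollary \ref{theorem stacky Shalika dual pair} gives $P_W(\mu)=L_{\check W}(\varphi_\mu)$ and Theorem \ref{theorem Jacquet-Shalika} gives $P_M(f\times\chi)=L_M(\varphi_f\times\chi)$ for Whittaker normalized forms, it suffices to prove the Shalika-normalized identities
$$\frac{P_X(\mu)}{P_W(\mu)}=\Delta^{-5/4}\frac{L_{\check X}(\varphi_\mu)}{L_{\check W}(\varphi_\mu)},\qquad \frac{P_{\check X}(f)}{P_M(f)}=\Delta^{-5/4}\frac{L_X(\varphi_f)}{L_{\check M}(\varphi_f)}.$$
The right-hand sides are already computed in \eqref{spectral_side_Xcheck_W_check} and \eqref{Shalika normalized spectral period X}. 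When the relevant Shalika period vanishes, the spectral period also vanishes by the distinction statements in Propositions \ref{proposition spectral X period} and \ref{proposition spectral Xcheck period}. The automorphic side should vanish as well, because it unfolds to the Shalika model.

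For the automorphic side on $G$, we first use Lemma \ref{lemma automorphic induction}, with $\mathrm{H}^1(F,\mathrm{GSp}_4)=0$, to write $P_X(\mu)=\int_{[\mathrm{GSp}_4]}\mu(h)\theta_Y(h)\,dh$. Next, we stratify $Y(F)$ into the cone point and the single orbit $\mathring Y(F)=P_0(F)\backslash H(F)$. The cone point contributes $\int_{[H]}\mu$, which vanishes by cuspidality. On the open orbit we unfold to $\int_{P_0(F)\backslash H(\mathbb A)}$. Expanding along the Siegel unipotent $N$ and using cuspidality, together with a Fourier expansion in the $\mathrm{GL}_2$-orbits of characters of $[N]$, should isolate the nondegenerate orbit. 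That orbit unfolds to the Shalika functional $P_W$ translated by the Levi $M_0\backslash M$ and a $\mathrm{GL}_1$ direction. The result is an integral over the torus direction $t\in F_v^\times$ of $\mathbf 1_{Y(\mathfrak o_v)}$ times a modulus character times $P_W(\sigma(\varpi^{(n,n)},\ast)\mu)$.

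Using Proposition \ref{Shalika:GSpin:In:GL}, this becomes $\sum_\varepsilon\prod_v\sum_{n\ge0}q_v^{-n(\cdots)}\Omega_v^{\mathrm{ur},\varepsilon_v}(\varpi_v^{(n,n)})$. Proposition \ref{Final:Formula:Shalika:GSpin} then expresses each local factor as a generating function $\sum_n x^n(s_{(n,n;-n)}-q^{-1}s_{(n-1,n-1;1-n)})$. The $s_{(n,n)}$ are traces on $\mathrm{Sym}^n$ of the 5-dimensional representation modulo the quadric, so the series equals
$$\frac{\sum_n x^n s_{(n,n)}}{1}(1-q^{-1}x\cdot\nu)=\frac{L(1/2,\wedge_0^{\varsigma_0}\otimes\varepsilon)}{L(1,\varepsilon^2)}$$
at the appropriate $x$. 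This uses $\mathrm{gtr}(\mathcal O(\overline{\mathbf{A}^5}/Q))=(1-x^2)\,\mathrm{gtr}\,\mathrm{Sym}$, which is exactly Proposition \ref{proposition quotient of L functions}. Summing over $\varepsilon\in S(\omega_\Pi^{-1}\chi^{-2})$ with the normalization $1/|S|$ reproduces the sum in \eqref{spectral_side_Xcheck_W_check}.

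The dual direction follows the same pattern. For $P_{\check X}$, Lemma \ref{example DM stack periods split isogeny} (the isogeny splits via \eqref{equation direct product}) reduces it to $\int_{[\overline H']}f\,\theta_Y$, with the $H'$-action \eqref{eq_new_action_H'}. The unfolding now leads to $P_M$, and the local computation uses Proposition \ref{CS:Formula:Shalika:simplified} directly, producing the single Euler product of \eqref{Shalika normalized spectral period X}, namely $L(1/2,\wedge_0^2\otimes\nu)/L(1,\nu^2)$.

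The main obstacle will be the bookkeeping of the global constants. We must track the powers of $\Delta$ from $C_Y$, the self-dual measure switch \eqref{equation switching to self dual measure}, the volume of $[N]$, and the translation by $\partial^{1/2}$ versus $a_0$. We must also track the central character factors $\chi(\partial^{-3/2})$ and $\varepsilon(\partial^{-1/2})^3$ and the factor $\mathfrak z$. These must be matched so that the leftover is exactly $\Delta^{-5/4}$ in both directions, giving the discrepancy $a_{12}=a_{21}=-5$. A second delicate point is justifying that only the open Shalika-type orbit survives the unfolding, together with the convergence and analytic continuation in the sense of Remark \ref{remark convergence in numerical duality}. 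I would handle this first for central characters with large real part and then continue analytically.
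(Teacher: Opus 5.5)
Your global architecture matches the paper's: reduce to the two Shalika-normalized ratio identities, unfold to Shalika-type models, then evaluate with Sakellaridis' formula and the quadric generating function. However, two steps on the $\mathrm{GSpin}_6$ side fail as written.

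\textbf{First, the open orbit does not unfold to $P_W$.} Your expansion is along the wrong group. $N$ is the $3$-dimensional Siegel unipotent of $\mathrm{GSp}_4$, which is not the unipotent radical of any parabolic of $G$, so cuspidality says nothing about $\int_{[N]}\mu$. Moreover, once you integrate over $[N]$, no nontrivial character of $N$ survives.

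What one must do (Lemma \ref{lemma_2_on_G_side}) is Fourier expand $\int_{[N]}\mu(n\,\cdot)\,dn$ along $[U/N]\simeq[\mathbf{G}_a]$, with $U\simeq\mathrm{Mat}_2$. The characters that occur are $X\mapsto\psi(\mathrm{Tr}(\mathrm{diag}(\alpha,-\alpha)X))$, and $M_0(F)$ scales $\alpha$. The term $\alpha=0$ is a constant term along $U$ and vanishes. The open orbit yields $\Omega^{\mathrm{tw}}_\mu$, a period against $\psi(\mathrm{Tr}(\mathrm{diag}(1,-1)X))$ over $\{\mathrm{diag}(m,J_2{}^tm^{-1}J_2): m\in\mathrm{SL}_2\}$.

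The Shalika character $\psi(\mathrm{Tr}\,X)$ is nontrivial on $N$, so it never appears. It is also not $G(F)$-conjugate to $\psi_1$ in general. An element $(\mathrm{diag}(m_1,m_2),\lambda)$ of the Levi of $G$ sends $A\mapsto m_2^{-1}Am_1$, which changes $\det A$ by the square $(\det m_1/\lambda)^2$. So $I$ and $\mathrm{diag}(1,-1)$ lie in the same orbit only if $-1\in F^{\times 2}$. Hence Proposition \ref{Shalika:GSpin:In:GL} cannot be applied directly.

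The missing idea (Proposition \ref{Prop_Shalika_twisted}) has three parts:
\begin{enumerate}
\item View $\mu$ as the restriction of a cusp form on $\mathrm{GL}_4\times\mathrm{GL}_1$.
\item Conjugate there by $w_0=\mathrm{diag}(1,1,-1,1)\in\mathrm{GL}_4(F)$.
\item Evaluate through the spectral expansion of Proposition \ref{Spectral:Expansion:Shalika} and local multiplicity one, using that $w_0\in\mathrm{GL}_4(\mathfrak{o}_v)$ commutes with the relevant torus elements.
\end{enumerate}
On the $\check G$ side the same twist is harmless, since $w_0$ lies in $\check G(F)$.

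\textbf{Second, your local computation conflates the two $\mathrm{GL}_1$-directions produced by the unfolding.} These are $P_0\backslash P$ (variable $x$, cut down to $\mathbb{A}^\times\cap\mathfrak{o}$ by $\Phi_Y$, Lemma \ref{lemma_1_on_G_side}) and $L\backslash M_0$ (variable $y$, Lemma \ref{lemma_2_on_G_side}). The local factor is the double sum of Theorem \ref{final_unfolding_thm_G_side}, which factors as a Tate series times a Shalika series.

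Your displayed identity is off by a Tate factor. By Lemma \ref{lemma_on_Lstd_G_side}, $\sum_{a\ge0}q_v^{-a/2}\varepsilon_v^{-a}(\varpi_v)s^{\varsigma_0,\varsigma_0}_{(a,a;-a)}$ already equals $L(1/2,\sigma_v,\wedge_0^{\varsigma_0}\otimes\varepsilon_v^{-1})/L(1,\varepsilon_v^{-2})$. Multiplying by $1-q_v^{-3/2}\varepsilon_v^{-1}(\varpi_v)$, which comes from the second Casselman--Shalika term, therefore leaves an extra $L(3/2,\varepsilon_v^{-1})^{-1}$. That factor is cancelled only by the series $\sum_{n\ge0}q_v^{-3n/2}\varepsilon_v^{-n}(\varpi_v)$ from the $x$-direction (Theorem \ref{local_Zeta_G_side_thm}). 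The same cancellation, against the Tate integral $L(3/2,\chi)$, occurs on $\check G$.

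\textbf{Finally, the exponent $-5$ is the actual content of the statement, and you only assert that the constants ``must be matched''.} They have to be computed:
\begin{itemize}
\item $C_Y=\Delta^{-5/2}$;
\item $\Delta^{3/2}$ from the self-dual measure on $[N]$;
\item $\Omega^\circ_\mu$ from Proposition \ref{Shalika:GSpin:In:GL};
\item $\prod_v|\partial_v|^{9/4}=\Delta^{-9/4}$.
\end{itemize}
Together these give $P_X=\Delta^{-3/2}(\cdots)$ against $L_{\check X}/L_{\check W}=\Delta^{-1/4}(\cdots)$, and likewise $\Delta^{-3/2}$ against $\Delta^{-1/4}$ on the $\check G$ side.

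Incidentally, there is no cone-point term, since $\theta_Y$ sums over $\mathring{Y}(F)$. In any case, a $[\mathrm{GSp}_4]$-period would not vanish merely by cuspidality.
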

Theorem \ref{theorem main duality} will be a combination of the equality of periods
$$P_X = \Delta^{-5/4} \cdot L_{\check{X}}/L_{\check{W}}  \quad \text{ and } \quad P_{\check{X}} = \Delta^{-5/4} \cdot L_X/L_{\check{M}}$$
applied to Shalika normalized cusp forms on $G$ and $\check{G}$, respectively. These two statements appear below as Theorem \ref{thm:Final:PX} and Theorem \ref{thm PXcheck final}, respectively. We remind the reader that the notion of numerical duality should be interpreted in the sense of Remark \ref{remark convergence in numerical duality}. 

\subsection{\texorpdfstring{$(G,X)$}{(G,X)} on the automorphic side}\label{subsec_automorphic_side_G} 
Let $\sigma$ be a cuspidal tempered unramified representation of $G(\mathbb{A})$ and let $\mu \in \sigma$ be a factorizable unramified cusp form. By Lemma \ref{lemma automorphic induction}, we write 
\[P_{X}(\mu) = C_Y\int_{[H]}\mu(h)|\eta_{Y}(h)|^{1/2}\sum_{y\in \mathring{Y}(F)}\Phi_{Y}(y\partial^{1/2}h)dh,\]
where $\Phi_{Y}\in \mathcal{S}(Y(\mathbb{A}))$ is the characteristic function of the $\mathfrak{o}$-points of the variety $Y$ (with respect to the integral structure given in \eqref{Integral:Str:Y}), and where the constant $C_Y$, introduced in  \eqref{equation betaX}, is explicitly
\[C_Y = \Delta^{\frac{\mathrm{dim}(Y) - \mathrm{dim}(H)}{4}}|\eta_{Y}(\partial^{1/2})|^{1/2} = \Delta^{-5/2}.\]

\begin{lem}\label{lemma_1_on_G_side}
    We have $$P_X(\mu) =  C_Y\int_{\A^\times \cap \mathfrak{o}}|x|^3\int_{[P_0]}\mu(p h_{x\partial^{-1/2}} )|\eta_{Y}(p)|^{1/2} dp \, dx,$$ where for any $t \in \mathbb{A}^\times$ we have denoted $h_t := {\rm diag}(1,t^{-1},t,1)$ and $dp$ denotes the left invariant Haar measure on $P_0(\mathbb{A})$.
\end{lem}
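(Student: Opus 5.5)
The plan is to unfold the theta series $\sum_{y\in\mathring{Y}(F)}\Phi_Y(y\partial^{1/2}h)$ along the single $H(F)$-orbit $\mathring{Y}(F)=P_0(F)\backslash H(F)$, which holds by Proposition \ref{Quotient:Equals:First:Variety} and the vanishing of the relevant Galois cohomology. Here $P_0$ is the unipotent radical $N$ times $M_0\simeq\{(m,\lambda):\det m=\lambda^2\}$, and this group is connected. Since $\mathrm{H}^1(F,P_0)\to \mathrm{H}^1(F,H)$ has trivial kernel ($\mathrm{H}^1(F,M_0)$ injects into $\mathrm{H}^1(F,M)=0$, given the exact sequence $1\to M_0\to M\to\Gm\to1$ with a surjection on $F$-points up to squares), we get $\mathring{Y}(F)=P_0(F)\backslash H(F)$. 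There is a subtlety here, because $M(F)\to F^\times$ via $\det(m)\lambda^{-2}$ is surjective. Unfolding $[H]$ then gives
\[
P_X(\mu)=C_Y\int_{P_0(F)\backslash H(\mathbb{A})}\mu(h)\,|\eta_Y(h)|^{1/2}\,\Phi_Y(y_0\partial^{1/2}h)\,dh ,
\]
with $y_0=f_2\wedge f_1$.

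Next I use the Iwasawa decomposition $H(\mathbb{A})=P(\mathbb{A})H(\mathfrak{o})$ and $P=P_0\cdot\{h_t\}$, where $t\mapsto h_t$ is a one-parameter subgroup of $M$ complementary to $M_0$. Indeed, $h_t=\mathrm{diag}(1,t^{-1},t,1)$ has $m=\mathrm{diag}(1,t^{-1})$ and $\lambda=1$, so $\det(m)\lambda^{-2}=t^{-1}$ and $M=M_0\rtimes\{h_t\}$. Because $\mu$, $\Phi_Y$ and $|\eta_Y|$ are right $H(\mathfrak{o})$-invariant, the integral becomes an integral over $P_0(F)\backslash P_0(\mathbb{A})$ and $t\in\mathbb{A}^\times$, up to units, with the measure factor $\delta_P(h_t)/\delta_{P_0}$-type Jacobian; the decomposition $dh=\delta(\cdot)^{-1}dp\,dt$ has to be tracked here. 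The vector $y_0 h_t$ equals $f_2\wedge f_1\cdot h_t$, which is $t\,y_0$ (or $t^{-1}y_0$ depending on conventions). Hence $\Phi_Y(y_0\partial^{1/2}h_t)=\mathbf 1_{\mathfrak{o}}(t\,\partial^{1/2}\text{-scaling})$, because $y_0$ is a primitive vector of the lattice $\mathbf V$. The $\Ggr$-scaling by $\partial^{1/2}$ commutes and acts as the central square root, so it can be absorbed into a shift $x=t\partial^{1/2}$, i.e. $t=x\partial^{-1/2}$. This explains $h_{x\partial^{-1/2}}$ and the domain $x\in\mathbb{A}^\times\cap\mathfrak{o}$.

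Finally I collect the character factors. First, $|\eta_Y(h_t)|^{1/2}=1$ since $\nu(h_t)=1$, while $|\eta_Y(p)|^{1/2}$ stays. Then the modulus factor from the change of variables $p\,h_t$ must combine with the $\Ggr$ prefactor $|\eta_Y(\partial^{1/2})|^{1/2}$, already inside $C_Y$, to produce exactly $|x|^3$. I expect $\delta_{P_0}$-type contributions $|t|^{3}$ from conjugating $N$ (which has 3 roots, each scaled by $t$) past $h_t$, together with the $\partial$-part absorbed as above. This bookkeeping is the main obstacle: making sure that the Jacobian of $P_0(\mathbb{A})\times\mathbb{A}^\times\to P(\mathbb{A})$ (left Haar measure, order of factors $p\,h_t$), the $\partial^{1/2}$ shift, and the normalization $C_Y=\Delta^{-5/2}$ combine to give precisely $|x|^3$ with no stray power of $\Delta$. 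I would verify this by testing the formula on a character and checking $\delta_P(h_t)=|t|^{-3}$ directly.
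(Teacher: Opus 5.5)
Your outline follows the paper's proof. You unfold along $\mathring{Y}(F)=P_0(F)\backslash H(F)$, use $H(\mathbb{A})=P(\mathbb{A})H(\mathfrak{o})$, parametrize $P_0\backslash P$ by $t\mapsto h_t$, read the support off the primitivity of $f_2\wedge f_1$ in $\mathbf{V}$, and use $\delta_P(h_t)^{-1}=|t|^3$. The only structural difference is the order of steps: you rescale the scalar variable after the Iwasawa step, while the paper first substitutes $h_{\partial^{1/2}}h\mapsto h$ on $P_0(\mathbb{A})\backslash H(\mathbb{A})$.

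Three minor corrections:
\begin{itemize}
\item The cohomology argument is cleaner via $M_0\cong\mathrm{SL}_2\times\Gm$, using $(m,\lambda)=(\lambda I,\lambda)(\lambda^{-1}m,1)$; this gives $\mathrm{H}^1(F,P_0)=0$ directly.
\item With right actions, $y_0h_t=t\,y_0$; there is no ambiguity.
\item The Jacobian is $\delta_P(h_t)^{-1}$, not a $\delta_{P_0}$-type factor.
\end{itemize}

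\textbf{The gap.} The genuine gap is the step you defer. Your proposed way of closing it cannot work, because $C_Y$ appears unchanged on both sides of the identity, so nothing inside it can absorb a power of $\Delta$. Carry out your own route: for $p=p_0h_t$ one has $d_lp=\delta_P(h_t)^{-1}\,d_lp_0\,dt=|t|^3\,d_lp_0\,dt$, so
\[
P_X(\mu)=C_Y\int_{\mathbb{A}^\times}\mathbf{1}_{\mathfrak{o}}(t\partial^{1/2})\,|t|^3\int_{[P_0]}\mu(p\,h_t)\,|\eta_Y(p)|^{1/2}\,dp\,dt .
\]
Substituting $t=x\partial^{-1/2}$ leaves $dt=dx$. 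However, it turns $|t|^3$ into $|x|^3|\partial|^{-3/2}=\Delta^{3/2}|x|^3$. You therefore obtain the displayed formula multiplied by $\Delta^{3/2}$, not the displayed formula itself.

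The paper reaches the stated constant only through its substitution $h_{\partial^{1/2}}h\mapsto h$, justified by $\eta_Y(h_{\partial^{1/2}})=1$. That substitution is a left translation on $P_0(\mathbb{A})\backslash H(\mathbb{A})$ by an element normalizing $P_0$. Equivalently, it is the $\Ggr$-action of $\partial^{1/2}$ on $\mathring{Y}$, which has weight $3$ on $\omega$. The quotient measure, which on the $P$-slice is $|x|^3\,dx$, is not invariant under it. Its Jacobian is $\delta_P(h_{\partial^{1/2}})=\Delta^{3/2}$, exactly your extra factor.

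So you cannot assert that the bookkeeping ``must'' produce exactly $|x|^3$. Completed honestly, your argument proves the identity with $C_Y\Delta^{3/2}$ in place of $C_Y$. That discrepancy also feeds into the constant $C_Y'$ used afterwards, and it has to be confronted rather than assumed away.
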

\begin{proof}
    By Proposition \ref{Quotient:Equals:First:Variety} and the fact that $\mathrm{Ker}(\mathrm{H}^1(F, P_0)\to \mathrm{H}^1(F, H)) =0$, one has 
    \[\mathring{Y}(F) \simeq P_0(F)\setminus H(F).\]
    Therefore, by unfolding the sum with the integral, one obtains 
    \[P_{X}(\mu) = C_Y\int_{P_0(\A)\setminus H(\A)}\int_{[P_0]}\mu(ph)|\eta_{Y}(ph)|^{1/2}\Phi_{Y}(v_0\partial^{1/2}ph)dp \, dh,\]
    where $v_0 = (\langle f_2, f_1 \rangle, f_2\wedge f_1)\in \mathring{Y}(F)$, $dp$ denotes the left invariant Haar measure on $P_0(\mathbb{A})$, and by $d h$ we mean the product of  $| \eta_Y(h)|^{-1}$ with the measure attached to the eigenvolume form $\omega$ of Lemma \ref{eigen:char:G}. Since $P_0$ acts trivially on $v_0$, one obtains that $P_{X}(\mu)$ equals 
    \[C_Y\int_{P_0(\A)\setminus H(\A)}\Phi_Y(v_0\partial^{1/2}h)|\eta_{Y}(h)|^{1/2}\int_{[P_0]}\mu(ph)|\eta_{Y}(p)|^{1/2}dp \, dh.\]
Since $\partial^{1/2}$ acts on $v_0$ as the diagonal element $h_{\partial^{1/2}}:={\rm diag}(1, \partial^{-1/2} , \partial^{1/2} , 1)$, we make the change of variables $h_{\partial^{1/2}} h \mapsto h$ to get 
  \[C_Y\int_{P_0(\A)\setminus H(\A)}\Phi_Y(v_0 h)|\eta_{Y}(h)|^{1/2}\int_{[P_0]}\mu(ph_{\partial^{1/2}}^{-1} h)|\eta_{Y}(p)|^{1/2}dp \, dh,\]
  since $\eta_Y(h_{\partial^{1/2}})=1$. Now, recall that $P_0$ sits inside the Siegel parabolic $P$ of $H$; therefore, using the Iwasawa decomposition $H(\A) = P(\A) H(\mathfrak{o})$, the fact that $\mu$ is unramified, and that  $H(\mathfrak{o})$ preserves $\mathbf{Y}(\mathfrak{o})$  we reduce the integral to 
     \[C_Y\int_{P_0(\A)\setminus P(\A)}\Phi_Y(v_0 h)|\eta_{Y}(h)|^{1/2} \delta_P^{-1}(h)\int_{[P_0]}\mu(ph_{\partial^{1/2}}^{-1} h)|\eta_{Y}(p)|^{1/2} dp \, d_r h,\]
     where $d_r h$ now denotes the right invariant Haar measure on $P(\mathbb{A})$. To justify the appearance of $d_r h$, we have used the fact that the measure attached to the eigenvolume form $\omega$ restricted to $P(\A)$ equals $|\eta_Y(h)| \delta_P(h)^{-1} d_r h$. Consider the isomorphism \begin{align*}  \mathbf{G}_m &\to  P_0 \backslash P,\\
   x &\mapsto h_x = {\rm diag}(1,x^{-1},x,1).\end{align*}
    Then, since $\eta_{Y}(h_x) = 1$, the integral equals 
\[C_Y\int_{\A^\times}\Phi_Y(xv_0) \delta_P^{-1}(h_x)\int_{[P_0]}\mu(p h_{x\partial^{-1/2}})|\eta_{Y}(p)|^{1/2} dp \, dx.\]
Finally, recall that the integral structure $\mathbf{Y}$ of $Y$ given in \eqref{Integral:Str:Y} is determined by the choice of an admissible lattice $V_{\mathfrak{o}}$ of the irreducible $H$-representation $V$ of highest weight $(1,1;0)$, which is invariant under the action of $H({\mathfrak{o}})$. Thus, under the identification of Proposition \ref{Quotient:Equals:First:Variety}, an element $h\in P_0(\A) \backslash H(\A)$ is in $\mathbf{Y}(\mathfrak{o})$ if and only if  $ v_0 h \in \mathbf{Y}(\mathfrak{o})$. In particular, $h_x \in P_0(\A)\setminus P(\A)$ satisfies the latter if $x \in \A^\times \cap \mathfrak{o}$. Thus, we can write   
$$P_X(\mu) =  C_Y\int_{\A^\times \cap \mathfrak{o}}\delta_P^{-1}(h_x)\int_{[P_0]}\mu(p h_x h_{\partial^{1/2}}^{-1})|\eta_{Y}(p)|^{1/2} dp \, dx.$$
Finally, since $\delta_P^{-1}(h_x) = |x|^3$, we obtain the desired equality.
\end{proof}

Let $$I_{P_0}(\mu)(g):= \int_{[P_0]}\mu(p g)|\eta_{Y}(p)|^{1/2} dp$$ be the inner integral arising in Lemma \ref{lemma_1_on_G_side}.  In the following proposition, we manipulate $I_{P_0}(\mu)$ to show that $P_X(\mu)$ unfolds to the Shalika period.

\begin{lem}\label{lemma_2_on_G_side}
We have $$I_{P_0}(\mu)(g) =\Delta^{3/2} \int_{\A^\times} |y|^{-3/2} \Omega^{\rm tw}_\mu((a_y,y)  g)  dy, $$
where $a_y := {\rm diag}(y,y,1,1)$ and 
$$ \Omega^{\rm tw}_\mu(g) :=  \int_{[U][\mathrm{SL}_2]} \, \mu\left( u\left[\begin{smallmatrix} m& \\ &J_2{}^tm^{-1}J_2\end{smallmatrix}\right] g\right)\psi_1(u) \, d^\psi u \, dm,$$
with $\psi_1:[U]\to \C^\times$ the character defined by
    $\left[\begin{smallmatrix}I_2&X\\ &I_2\end{smallmatrix}\right]\mapsto   \psi\left(\mathrm{Tr}\left(\left[\begin{smallmatrix}1& \\ &-1\end{smallmatrix}\right]X\right)\right)$.
\end{lem}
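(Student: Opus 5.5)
The plan is a standard unfolding of the unipotent radical $N$ of $P_0 = NM_0$, followed by a collapse of the remaining Levi integral onto the Shalika-type subgroup. Here $N \simeq U$ is the Siegel unipotent of $H \subset \mathrm{GL}_4$; since $H = \mathrm{GSp}_4$ sits in $G$ via $g \mapsto (g,\nu(g))$, the subgroup $N$ is the symmetric part $\{X : J_2{}^tX J_2 = X\}$, or equivalently a $3$-dimensional subspace of $U \simeq \mathrm{Mat}_2$. The steps are as follows.

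First, I split $[P_0]$ as $[N]\times[M_0]$ and write $dp = dn\,dm_0$. On $P_0$ the factor $|\eta_Y(p)|^{1/2} = |\nu(p)|^{3/2}$ depends only on $m_0$, and $\nu(m_0(m,\lambda)) = \lambda$ with $\det m = \lambda^2$.

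Second, I expand the function $n \mapsto \mu(n m_0 g)$ on $[N]$ in Fourier series. The constant term vanishes: it is the constant term of $\mu$ along the unipotent radical of a parabolic of $G$, once the integral over $[N]$ is enlarged to the full $[U]$ by adding the complementary anti-symmetric direction, and cuspidality kills it. The nonconstant characters of $[N]$ are indexed by nonzero rational $2\times 2$ matrices in the dual of the symmetric space, i.e. by binary quadratic forms. The Levi $M_0(F)$ acts on them by $m\cdot X$, up to scaling by $\lambda$.

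Third, I separate the forms by rank. Rank-one (degenerate) forms contribute Fourier coefficients along a character trivial on a larger unipotent subgroup; I would show these are constant terms of $\mu$ along the unipotent radical of the Klingen-type parabolic, hence vanish by cuspidality. For the nondegenerate forms, I use that the stabilizer structure together with the integration over $[M_0]$, the scaling $\lambda$ included, lets one collapse all orbits. The main subtlety is that anisotropic forms must not contribute. I expect to handle this by a second Fourier expansion along the anti-symmetric direction of $U$ (the part of $U$ not in $H$): because $\mu$ lives on $G \supset U$, one can integrate over the remaining coordinate of $[U]$. Among nondegenerate forms, only those extending to a Shalika-type character with split stabilizer survive; the split orbit has representative $\psi_1$ with $X \mapsto \mathrm{Tr}(\mathrm{diag}(1,-1)X)$, whose stabilizer in $M_0$ is $\mathrm{SL}_2$ embedded as $m \mapsto \mathrm{diag}(m, J_2{}^tm^{-1}J_2)$. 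This orbit bookkeeping, and showing the other rational orbits vanish, is the step I expect to be the main obstacle. I would try first to reduce it to the known vanishing of non-split Shalika-type coefficients via the inclusion $G \subset \mathrm{GL}_4\times\Gm$ and Proposition \ref{Spectral:Expansion:Shalika}.

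Fourth, after collapsing, I am left with an integral over $\mathrm{Stab}(F)\backslash M_0(\A)$. I parametrize $\mathrm{SL}_2\backslash M_0 \simeq \Gm$ by $y \mapsto (a_y, y)$; this is legitimate because $a_y = \mathrm{diag}(y,y,1,1)$ has $\nu = y$ and $\det = y^2$, so it lies in $M_0$. The modulus character $\delta_{P_0}^{-1}$ from moving $N$ past $a_y$, combined with $|\eta_Y|^{1/2} = |y|^{3/2}$, gives $|y|^{3/2}\cdot|y|^{-3} = |y|^{-3/2}$. Finally, converting $dn$ to the self-dual measure on the $3$-dimensional $[N]$ produces $\Delta^{3/2}$ by \eqref{equation switching to self dual measure}, and this gives the stated formula.
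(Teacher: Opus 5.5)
There is a genuine gap in your second and third steps: you Fourier expand in the wrong direction. Since $I_{P_0}$ integrates over $[N]$, the inner integral $\int_{[N]}\mu(nm_0g)\,dn$ is \emph{exactly} the constant term of the function $n\mapsto\mu(nm_0g)$ on $[N]$. Every nontrivial character of $N$ (your binary quadratic forms, of any rank or splitting type) integrates to zero against $dn$. So the whole orbit analysis of quadratic forms is irrelevant, including the anisotropic orbits you flag as the main obstacle. Your claim that this constant term ``vanishes by cuspidality'' is also false. $N=\{X: X_{11}=X_{22}\}$ has codimension one in $U$ and is not the unipotent radical of a parabolic of $G$; if its constant term vanished you would get $I_{P_0}(\mu)\equiv 0$. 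The confusion shows in your conclusion as well: the character $\psi_1$, $X\mapsto\psi(\mathrm{Tr}(\mathrm{diag}(1,-1)X))$, is \emph{trivial} on $N$, so it cannot be a ``split nondegenerate form'' arising from an expansion along $N$.

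The missing idea is the one you mention only in passing, and it must be the main step rather than a patch for anisotropic orbits. Note that $U\simeq\mathrm{Mat}_2$ is abelian and lies in $G$ with trivial $\mathrm{GL}_1$-component. The left $N(\mathbb{A})$-invariant function $u\mapsto\int_{[N]}\mu(nug)\,d^\psi n$ therefore lives on the one-dimensional $[N\backslash U]\simeq F\backslash\mathbb{A}$. Its Fourier expansion gives $\int_{[N]}\mu(ng)\,d^\psi n=\sum_{\alpha\in F}\mu_{U,\psi_\alpha}(g)$, where $\psi_\alpha(n(X))=\psi(\mathrm{Tr}(\mathrm{diag}(\alpha,-\alpha)X))$ runs over the characters of $U$ trivial on $N$. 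The $\alpha=0$ term is the constant term along $U$, the unipotent radical of a parabolic of $G$, and it is this term that cuspidality kills. For $m_0=m_0(m,\lambda)$ one has $\psi_\alpha(m_0n(X)m_0^{-1})=\psi_{\lambda\alpha}(n(X))$, and $\lambda$ ranges over all of $F^\times$ (take $m=\mathrm{diag}(\lambda^2,1)$). Hence the $\alpha\neq0$ terms form a single $M_0(F)$-orbit, with stabilizer $\{\mathrm{diag}(m,J_2{}^tm^{-1}J_2): m\in\mathrm{SL}_2\}$. There are no other orbits to dispose of, so no appeal to Proposition~\ref{Spectral:Expansion:Shalika} is needed. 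With this replacement, your fourth step goes through as you describe: the factor $\Delta^{3/2}$ from the three-dimensional $[N]$, the parametrization $y\mapsto(a_y,y)$ of $\mathrm{SL}_2\backslash M_0$, and $|\eta_Y|^{1/2}\delta_{P_0}^{-1}=\delta_{P_0}^{-1/2}$ giving $|y|^{-3/2}$.
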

    \begin{proof}
    Let $\widetilde{P} = \widetilde{M} U$ be the parabolic subgroup of $\GL_4$ with Levi component $\widetilde{M}\simeq \GL_2\times \GL_2$ and unipotent radical $U \simeq {\rm Mat}_2$. By the short exact sequence \eqref{ses_G}, there is a parabolic subgroup $Q$ of $G$ with maximal unipotent isomorphic to $U$.  Write $P_0 = N M_0$, hence identify $d p$ with $ \delta_{P_0}(m)^{-1} d u \,  d m$ (for instance, see \cite[IV (9)]{Cartier}).  we collapse $I_{P_0}(\mu)(g)$ over $[N]$ and Fourier expand the cusp form $\mu$ over $U$, to write
    $$\int_{[N]} \mu(n g) dn = \Delta^{3/2} \int_{[N]} \mu(n g) d^\psi n = \Delta^{3/2} \sum_{\alpha \in F} f_{U,\psi_\alpha}(g),$$
    where we have used the self-dual measure $d^\psi n$ to Fourier expand, and we denoted  by $\psi_\alpha:[U/N]\to \C^\times$ the character
    \[n(X):= \left[\begin{smallmatrix}I_2&X\\ &I_2\end{smallmatrix}\right]\mapsto \psi_{\alpha}(n(X)):= \psi(\mathrm{Tr}(A_{\alpha}X)), \text{ with } A_{\alpha} = \left[\begin{smallmatrix}\alpha& \\ &-\alpha\end{smallmatrix}\right],\]
     and $\mu_{U,\psi_\alpha}(g) :=  \int_{[U]}\mu(ug)\psi_\alpha(n)d^\psi n$. Thus, since $\delta_{P_0}(p) = |\eta_{Y}(p)|$, we have
    $$I_{P_0}(\mu)(g) = \Delta^{3/2} \int_{[M_0]} \delta^{-1/2}_{P_0}(m)\sum_{\alpha \in F} \mu_{U,\psi_\alpha}(mg) dm.$$ 
By direct computation, the group $M_0(F)$ acts on these characters as 
    \[\psi_{\alpha}(m_0 n(X)m_0^{-1}) = \psi_{\lambda^{-1}{\rm det}(m)\alpha}(n(X)) = \psi_{\lambda \alpha}(n(X)),\]
    where we have denoted $m_0 = m_0(m,\lambda)$ and used the equality $\lambda^2 = {\rm det}(m)$. Thus, the group $M_0(F)$ acts on the set of such characters with two orbits, represented by $\psi_0$ and $\psi_1$, with stabilizers $M_0(F)$ and 
    \[L(F) = \left\{\left[\begin{smallmatrix} m& \\ &J_2{}^tm^{-1}J_2\end{smallmatrix}\right]: m\in \SL_2(F)\right\},\]
    respectively. By cuspidality of $\mu$, $\mu_{\psi_0}$ vanishes, thus we can write 
    $$ I_{P_0}(\mu)(g) = \Delta^{3/2}\int_{L(F)\backslash M_0(\A)} \delta^{-1/2}_{P_0}(m) \mu_{U,\psi_1}(m g) dm.$$ 
   Collapse the sum over $[L]$ to write 
    $$I_{P_0}(\mu)(g) = \Delta^{3/2} \cdot \int_{L(\A)\backslash M_0(\A)} \delta^{-1/2}_{P_0}(m) \Omega^{\rm tw}_\mu( m g)  dm.$$ 
    Finally, fix the isomorphism \begin{align*}  \mathbf{G}_m &\to  L  \backslash M_0,\\
   y &\mapsto a_y = {\rm diag}(y,y,1,1),\end{align*}
   and use the fact that $|\eta_{Y}(a_y)| = \delta_{P_0}(a_y) = |y|^3$ (see Proposition \ref{eigen:char:G}) to write
   \begin{align*}
       I_{P_0}(\mu)(g) &= \Delta^{3/2}\int_{\A^\times} \delta_{P_0}(a_y)^{-1/2}  \Omega_\mu^{\rm tw}((a_y,y)  g)   dy\\ &=\Delta^{3/2} \int_{\A^\times} |y|^{-3/2} \Omega_\mu^{\rm tw}((a_y,y)  g)   dy.
   \end{align*} 
\end{proof}
We now recall some of the notation introduced in \S \ref{subsection Shalika for GSpin6}. By \cite[Theorem 1.1.1]{Labesse:Sch}, there is a cuspidal representation $\Pi\boxtimes\chi$ of $\GL_4\times \GL_1$ so that $\sigma$ is a subrepresentation of $\Pi\boxtimes\chi|_{G(\A)}$. Moreover, by Lemma \ref{Description:Representations:Restriction},  for each place $v$ of $F$ there exist an unramified representation $\widetilde{\Pi}_v\boxtimes\widetilde{\chi}_v$ and a character  $ \eta_v$ on $$ (\GL_4(F_v)\times\GL_1(F_v)) / G(F_v) \simeq F_v^\times$$
such that  \begin{equation*}\Pi_v\boxtimes\chi_v\simeq (\widetilde{\Pi}_v\otimes\eta_v)\boxtimes\eta_v^{-2}\widetilde{\chi}_v.\end{equation*} 
As in Remark \ref{remark:on:spherical:vectors} and Proposition \ref{Shalika:GSpin:In:GL}, we choose a factorizable cusp form $f\boxtimes\chi\in \Pi\boxtimes\chi$ such that $f_v\boxtimes\chi_v = \widetilde{f}_v\otimes\eta_v\boxtimes\widetilde{\chi}_v\eta_v^{-2}$ with $ \widetilde{f}_v\boxtimes\widetilde{\chi}_v$ a spherical vector in $\widetilde{\Pi}_v\boxtimes\widetilde{\chi}_v$, whose restriction to $G(F_v)$ equals $\mu_v$.

\begin{prop}\label{Prop_Shalika_twisted}  Let $\mu\in \sigma$ be a factorizable unramified Shalika normalized tempered cusp form. We have  
\[\Omega_\mu^{\rm tw}( a_yh_{x\partial^{-1/2}},y) = \Omega^{\circ}_{\mu}\sum_{ \varepsilon \in S(\omega_{\Pi}^{-1}\chi^{-2})}\prod_{v}\widetilde{\chi}_v(y_v)\Omega_{v}^{\mathrm{ur}, \varepsilon_v}(a_{y_v\partial_v}  h_{x_v\partial^{-1/2}_v}),\]
    where \begin{itemize}
        \item $S(\omega_{\Pi}^{-1}\chi^{-2})= \{\text{unramified characters } \varepsilon\,:\, \varepsilon^2 = \omega_\Pi^{-1}\chi^{-2} \},$ 
        \item $\Omega_\mu^\circ = \frac{\Delta^{7/4} \,\omega_\Pi^{-1}(\partial^{1/2}) (\chi^{-1}\widetilde{\chi})(\partial)}{  |S(\omega_{\Pi}^{-1}\chi^{-2})|}$ is the global constant appearing in Proposition \ref{Shalika:GSpin:In:GL},
        \item  $\Omega_v^{{\rm ur},\varepsilon_v}$ is the unramified Shalika function attached to $\widetilde{F}_v$,  $(\widetilde{\chi}_v\varepsilon_v)^{-1}$, and $\psi_v^{\mathrm{ur}}$, normalized so that $\Omega_v^{{\rm ur},\varepsilon_v}\left(1\right)=1$.
    \end{itemize}
\end{prop}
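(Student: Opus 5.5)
The plan is to reduce the twisted Shalika integral $\Omega^{\rm tw}_\mu$ to the standard Shalika period $P_W$ by a rational conjugation, and then read off the Euler product from Proposition \ref{Shalika:GSpin:In:GL}.

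\textbf{Step 1: untwisting the character.} Choose a rational element that carries $(L,\psi_1)$ to $(\widetilde{L}',\Psi)$:
\[
\gamma = \left(\mathrm{diag}(1,-1,1,1)\cdot w,\; \lambda_\gamma\right) \in G(F).
\]
Here $w$ is the permutation $\mathrm{diag}(I_2, J_2)$, which converts $J_2{}^tm^{-1}J_2$ into ${}^tm^{-1}$ and then, for $m \in \SL_2$, into a conjugate of $m$. The sign adjusts the trace character, and $\lambda_\gamma$ is chosen so that $\gamma \in G(F)$.

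\begin{itemize}
\item The element $\gamma$ normalizes $U$, conjugates $L$ onto $\widetilde{L}'$, and transforms $\psi_1$ into $\Psi$; this should be verified by a direct matrix computation.
\item Because $\gamma$ is rational, left $G(F)$-invariance of $\mu$ gives $\Omega^{\rm tw}_\mu(g) = \int_{[U\widetilde{L}']} \mu(ul\gamma g)\Psi(u)\,d^\psi u\,dl$.
\item The torus elements $a_y$ and $h_t$ commute with $\gamma$ up to diagonal elements. Moreover $\gamma$ lies in $G(\mathfrak{o}_v)$ at every place, so right $G(\mathfrak{o})$-invariance lets us discard it on the right.
\item The net effect is that $\Omega^{\rm tw}_\mu(a_y h_{x\partial^{-1/2}},y)$ equals the untwisted Shalika integral of $\mu$ translated by $(a_y h', y)$, where $h'$ is a Weyl-conjugate of $h_{x\partial^{-1/2}}$.
\end{itemize}

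\textbf{Step 2: applying the Euler factorization.} Insert the translate from Step 1 into Proposition \ref{Shalika:GSpin:In:GL}, following the manipulation in \eqref{PW first manipulations}. Comparing normalizations gives
\[
\Omega^{\rm tw}_\mu(g) = \Delta^{7/4}\,(\omega_\Pi\chi^2)(\partial^{-1/2})\; P_W\!\left(\sigma\bigl(g\,a_0^{-1} e^{\check{\lambda}_\Sh}(\partial^{1/2})^{-1}\bigr)\mu\right).
\]
Taking $g = (a_y h_{x\partial^{-1/2}}, y)$, the translation element is $(a_{y\partial} h_{x\partial^{-1/2}}, y\partial)$ up to central factors. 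Proposition \ref{Shalika:GSpin:In:GL} then yields
\[
\Omega^\circ_\mu \sum_{\varepsilon} \prod_v \widetilde{\chi}_v(y_v\partial_v\partial_v^{-1})\, \Omega^{{\rm ur},\varepsilon_v}_v\!\left(a_{y_v\partial_v} h_{x_v\partial_v^{-1/2}}\right),
\]
which is the claimed formula. The prefactors $\Delta^{-7/4}$ and $(\omega_\Pi\chi^2)(\partial^{1/2})$ coming from the proposition cancel exactly against those in the displayed comparison.

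\textbf{Main obstacle.} I expect the difficulty to be bookkeeping rather than conceptual. Three things must be tracked carefully:
\begin{itemize}
\item the precise choice of $\gamma$, including its $\GL_1$-component, so that $\gamma \in G$;
\item the fact that conjugating $h_t$ by $w$ produces either $h_t$ or $h_{t^{-1}}$ times a central or $\widetilde{L}'$ element, which must be absorbed using unimodularity and the central character;
\item the exact powers of $\Delta$ and of $\partial$, including the passage between $d^\psi u$ and $du$ as in \eqref{equation stupid 7/4}.
\end{itemize}
I would first fix $\gamma$ concretely and check these identities at a single place $v$, using the unramified Shalika functional. I would then globalize, using that $\mu$ is factorizable, unramified and Shalika normalized.
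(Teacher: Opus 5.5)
Step 1 does not go through, and this is a genuine gap rather than bookkeeping. Your element has Levi blocks $A=\mathrm{diag}(1,-1)$ and $B=J_2$. It therefore sends $\left[\begin{smallmatrix} m& \\ &J_2{}^tm^{-1}J_2\end{smallmatrix}\right]$ to $\mathrm{diag}(AmA^{-1},{}^tm^{-1})$. For $m$ upper unipotent, the first block is upper triangular and the second lower triangular, so $\gamma$ does not carry $L$ onto $\widetilde{L}'$.

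More importantly, no choice can work inside $G(F)$ in general. For $m\in\SL_2$ one checks that $J_2{}^tm^{-1}J_2=\delta m\delta^{-1}$ with $\delta=\mathrm{diag}(-1,1)$. Hence any element of $\GL_4(F)$ that normalizes $U$ and conjugates $L$ onto $\widetilde{L}$ has Levi part $\mathrm{diag}(A,cA\delta^{-1})$ with $c\in F^\times$. Its determinant is therefore $-c^2\mathrm{det}(A)^2$. The basic solution is $w_0=\mathrm{diag}(I_2,\delta)=\mathrm{diag}(1,1,-1,1)$, which has determinant $-1$.

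Since $G=\{(g,\lambda):\mathrm{det}(g)=\lambda^2\}$, such an element lifts to $G(F)$ only if $-1$ is a square in $F$. So when $F$ contains no primitive fourth root of unity, left $G(F)$-invariance of $\mu$ alone cannot untwist $\psi_1$. Your strategy, with $\gamma=(w_0,\sqrt{-1})$ in place of your element, is valid only in the special case recorded in the paper's remark on fourth roots of unity.

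The missing idea is to leave $G$. Take the cuspidal representation $\Pi\boxtimes\chi$ of $\GL_4\times\GL_1$ with $\sigma\subset(\Pi\boxtimes\chi)|_{G(\A)}$, and the form $F\boxtimes\chi$ restricting to $\mu$, as set up before Proposition \ref{Shalika:GSpin:In:GL}. Then $(w_0,1)$ is rational in $\GL_4\times\GL_1$, even though no lift of $w_0$ lies in $G(F)$. This element has three useful properties:
\begin{enumerate}
\item It conjugates $(L,\psi_1)$ to the standard Shalika pair, up to a sign in the character that the determinant-one element $(\mathrm{diag}(-I_2,I_2),1)\in G(\mathfrak{o})$ removes.
\item Being diagonal, it commutes with $a_yh_{x\partial^{-1/2}}a_\partial$.
\item It lies in $\GL_4(\mathfrak{o}_v)$, so after moving it to the right it is absorbed by the spherical vectors $\widetilde{F}_v$.
\end{enumerate}
What remains is the $P_W$-period of a $G(\A)$-translate, to which Proposition \ref{Shalika:GSpin:In:GL} applies; its proof is a spectral expansion of restrictions of $\GL_4\times\GL_1$ forms. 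This is the paper's argument.

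Separately, your Step 2 display double counts the $\partial$-shift. Since $e^{\check{\lambda}_\Sh}(\partial^{1/2})=(\partial^{1/2}\mathrm{Id},\partial)\,a_0$ with the first factor central, the correct relation is $\int_{[U\widetilde{L}']}\mu(ulg)\Psi(u)\,d^\psi u\,dl=\Delta^{7/4}(\omega_\Pi\chi^2)(\partial^{-1/2})\,P_W(\sigma(ga_0^{-1})\mu)$. There should be no further factor $e^{\check{\lambda}_\Sh}(\partial^{1/2})^{-1}$. With this correction, your translation element $(a_{y\partial}h_{x\partial^{-1/2}},y\partial)$ and the cancellation of prefactors are correct.
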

\begin{proof}
 Let $w_0 = {\rm diag}(1,1,-1,1) \in \GL_4(F)$. Proceeding as in Proposition \ref{Shalika:GSpin:In:GL}, we have \begin{align*}\Omega_\mu^{\rm tw}( a_yh_{x\partial^{-1/2}},y) &= \int_{[U][\mathrm{SL}_2]} \, (\widetilde{f}\boxtimes\widetilde{\chi} )\left( \left( u\left[\begin{smallmatrix} m& \\ &J_2{}^tm^{-1}J_2\end{smallmatrix}\right] a_yh_{x\partial^{-1/2}},y\right)\right)\psi_1(u) \, d^\psi u \, dm \\  
  & =  \widetilde{\chi}(y) \int_{[U][\mathrm{SL}_2]} \, \widetilde{f} \left( u\left[\begin{smallmatrix} m& \\ &m \end{smallmatrix}\right] w_0 a_yh_{x\partial^{-1/2}}\right)\Psi(u) \, d^\psi u \, dm \\  & = \widetilde{\chi}(y\partial) \Delta^{7/4} (\omega_\Pi\chi^{2})(\partial^{-1/2}) P_W( \widetilde{\Pi}(w_0 a_y  h_{x\partial^{-1/2}}a_{\partial})\widetilde{f}\boxtimes\widetilde{\chi}),
 \end{align*}
where, for the first equality, we have used that $h_{x\partial^{-1/2}}$ and $a_y$ embed into $G$ as 
 $h_{x\partial^{-1/2}} \mapsto \left( h_{x\partial^{-1/2}}, 1\right)$ and
 $a_y\mapsto \left(a_y , y\right)$, while the second one follows from the fact that,  for any $m \in \SL_2$, $$w_0\left[\begin{smallmatrix} m& \\ &J_2{}^tm^{-1}J_2\end{smallmatrix}\right]w_0^{-1} =  \left[\begin{smallmatrix} m& \\ &m\end{smallmatrix}\right],$$ that $\psi_1(w_0 u w_0^{-1}) = \Psi(u)$. Finally, for the third equality, we used \eqref{PW first manipulations} and took into account the missing twist by $a_0=(a_{\partial}^{-1},\partial^{-1})$ in $\Omega_\mu^{\rm tw}$. By Proposition \ref{Shalika:GSpin:In:GL} and the fact that $w_0$ commutes with $ a_y  h_{x\partial^{-1/2}}a_{\partial}$, we thus have that $\Omega_\mu^{\rm tw}( a_yh_{x\partial^{-1/2}},y)$ equals  \[\Omega^{\circ}_{\mu}\sum_{ \varepsilon \in S(\omega_{\Pi}^{-1}\chi^{-2})}\prod_{v}\widetilde{\chi}_v(y_v)\Omega_{v}^{\mathrm{ur}, \varepsilon_v}(a_{y_v\partial_v}  h_{x_v\partial^{-1/2}_v}),\]
 as desired. 
\end{proof}

\begin{rem}
    Note that the fact that $\mu$ is Shalika normalized implies that $$\Omega_\mu^{\rm tw}(a_0) = \Omega^{\circ}_{\mu},$$ 
   as one might expect. Indeed, we invite the reader to compare this with $\Omega_f^\circ$ in  \eqref{equation Omegaf0} and $W_f^0$ in \cite[\S 2.2]{CV}. 
\end{rem}

\begin{rem}
     If $F$ contains a primitive fourth root of unity, then $\Omega_\mu^{\rm tw}(g)$ easily corresponds to a constant multiple of  $P_W(\sigma( g a_0) \mu)$, by using conjugation by $(w_0, \sqrt{-1}) \in G(F)$.
\end{rem}

Combining the results above, we may express the period $P_X$ as a finite sum of Eulerian integrals.
\begin{thm}\label{final_unfolding_thm_G_side}  
    Let $\mu\in \sigma$ be a factorizable unramified Shalika normalized tempered cusp form on $G$. Then
   $$P_X(\mu)= C_Y'\cdot \Omega_\mu^\circ \cdot \sum_{ \varepsilon \in S(\omega_{\Pi}^{-1}\chi^{-2})}\prod_{v} Z(\widetilde{f}_v,\widetilde{\chi}_v\varepsilon_v) ,$$
   where
   \begin{itemize}
        \item $C_Y' = C_Y \cdot \Delta^{3/2} = \Delta^{-1},$
        \item $\Omega_\mu^\circ$ is the global constant of Proposition \ref{Prop_Shalika_twisted}, and
       \item $Z(\widetilde{f}_v,\widetilde{\chi}_v\varepsilon_v)$ is the local zeta integral 
       $$\sum_{n\geq 0}\sum_{a \geq  0} |\varpi_v^{3(n - a +3m_v)/2}|\varepsilon^{-1}_v(\varpi_v^{n-m_v})\widetilde{\chi}_v^{-1}(\varpi_v^{2m_v-a} )\Omega_{v}^{\mathrm{ur}, \varepsilon_v}({\varpi_v^{(a,a)}}).$$
   \end{itemize}
   
\end{thm}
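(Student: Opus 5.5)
The plan is to chain the three preceding results. First, Lemma \ref{lemma_1_on_G_side} writes $P_X(\mu)$ as $C_Y\int_{\A^\times\cap\mathfrak{o}}|x|^3 I_{P_0}(\mu)(h_{x\partial^{-1/2}})\,dx$. Next, Lemma \ref{lemma_2_on_G_side} replaces $I_{P_0}(\mu)$ by $\Delta^{3/2}\int_{\A^\times}|y|^{-3/2}\Omega^{\rm tw}_\mu((a_y,y)h_{x\partial^{-1/2}})\,dy$; this is where $C_Y'=C_Y\Delta^{3/2}=\Delta^{-1}$ comes from. Finally, Proposition \ref{Prop_Shalika_twisted} evaluates the integrand as $\Omega^\circ_\mu$ times a finite sum over $\varepsilon\in S(\omega_\Pi^{-1}\chi^{-2})$ of Euler products $\prod_v\widetilde{\chi}_v(y_v)\Omega_v^{\mathrm{ur},\varepsilon_v}(a_{y_v\partial_v}h_{x_v\partial_v^{-1/2}})$.

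Because the $\varepsilon$-sum is finite, it can be pulled outside the double adelic integral. Each summand then factors as a product over places of local integrals over $(x_v,y_v)$ with $x_v\in\mathfrak{o}_v\smallsetminus\{0\}$ and $y_v\in F_v^\times$. The integrand is $K$-invariant, so each local integral becomes a double sum over valuations $x_v=\varpi_v^{k}$, $k\ge0$, and $y_v=\varpi_v^{j}$, $j\in\Z$, with unit measure on $\mathfrak{o}_v^\times$.

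The key local computation is to rewrite $a_{y_v\partial_v}h_{x_v\partial_v^{-1/2}}$, modulo the Shalika subgroup and $\GL_4(\mathfrak{o}_v)$, as a central element times $\varpi_v^{(a,a)}$. Since $h_t=\mathrm{diag}(1,t^{-1},t,1)$ lies in $\GSp_4$ with trivial similitude, it should be absorbable by the diagonal $\GL_2$ of the Shalika subgroup in the following way. One factors $\mathrm{diag}(1,t^{-1},t,1)$ as $\mathrm{diag}(1,t^{-1},1,t^{-1})\cdot\mathrm{diag}(1,1,t,t)$, which requires reordering coordinates by the Weyl element relating $J_2{}^tm^{-1}J_2$ to $m$. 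The factor $\mathrm{diag}(m,m)$ with $m=\mathrm{diag}(1,t^{-1})$ is absorbed through the character $(\widetilde{\chi}_v\varepsilon_v)^{-1}(\det m)=(\widetilde{\chi}_v\varepsilon_v)(t)$. The remaining factor $\mathrm{diag}(1,1,t,t)$ is $t\cdot\varpi^{(-k,-k)}$-type, that is, central times a negative power.

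Combining these, the Shalika function is evaluated at $\varpi_v^{(a,a)}$ with $a$ equal to the valuation of $y_v\partial_v$ shifted by that of $x_v\partial_v^{-1/2}$. The central contributions produce powers of $\varepsilon_v$ and $\widetilde{\chi}_v$ evaluated at $\varpi_v^{m_v}$. Proposition \ref{Sakellaridis_prop} states that the Shalika function vanishes unless $a\ge0$, so the second sum runs only over $a\ge0$. Reindexing by $n=k$ and by $a$, and collecting $|x|^3|y|^{-3/2}$ with the shift by $\partial$, should yield $|\varpi_v^{3(n-a+3m_v)/2}|\,\varepsilon_v^{-1}(\varpi_v^{n-m_v})\,\widetilde{\chi}_v^{-1}(\varpi_v^{2m_v-a})\,\Omega_v^{\mathrm{ur},\varepsilon_v}(\varpi_v^{(a,a)})$, which is $Z(\widetilde f_v,\widetilde\chi_v\varepsilon_v)$.

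I expect the main obstacle to be this bookkeeping: getting the exponents involving $m_v$ exactly right, and correctly identifying which torus directions lie in the center and which lie in the Shalika $\GL_2$ (together with their character contributions). Justifying the interchange of sum, integral and product will use temperedness and absolute convergence for $\Re$ of the central character large, as in Remark \ref{remark convergence in numerical duality}.
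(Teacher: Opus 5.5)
Your proposal is correct and follows the paper's proof. You chain Lemma \ref{lemma_1_on_G_side}, Lemma \ref{lemma_2_on_G_side} and Proposition \ref{Prop_Shalika_twisted}, pull the finite $\varepsilon$-sum outside, factor over places, reduce the torus argument to $\varpi_v^{(a,a)}$, and then substitute $a=r-n+3m_v$; the bookkeeping you sketch does give the stated $Z(\widetilde{f}_v,\widetilde{\chi}_v\varepsilon_v)$. The paper does the absorption in one step, writing $a_{y\partial}h_t=b_t\,a_{y\partial t^{-1}}$ with $b_t=\mathrm{diag}(t,1,t,1)$ and $t=x_v\partial_v^{-1/2}$, which is exactly your $\mathrm{diag}(m,m)\cdot tI_4$ and produces the factor $(\widetilde{\chi}_v\varepsilon_v)^{-1}(x_v\partial_v^{-1/2})$. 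No Weyl element is needed at this stage, since the $w_0$-conjugation was already absorbed in Proposition \ref{Prop_Shalika_twisted} and $w_0$ commutes with the torus.
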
 
\begin{proof}
    Using Lemmas \ref{lemma_1_on_G_side}, \ref{lemma_2_on_G_side}, and Proposition \ref{Prop_Shalika_twisted}, we get
    \begin{align*}
        P_X(\mu) =  C_Y' \cdot \Omega_\mu^\circ \cdot &\sum_{ \varepsilon \in S(\omega_{\Pi}^{-1}\chi^{-2})}\prod_{v}  \int_{F_v^\times \cap \mathfrak{o}_v}|x_v|^3  \int_{F_v^\times} |y_v|^{-3/2}\widetilde{\chi}_v(y_v)\Omega_{v}^{\mathrm{ur}, \varepsilon_v}(a_{y_v\partial_v}  h_{x_v\partial_v^{-1/2}}) d y_v d x_v.
    \end{align*}
    Moreover, if $b_t : = {\rm diag}(t,1,t,1)$, we have \begin{align*}
        \Omega_{v}^{\mathrm{ur}, \varepsilon_v}(a_{y_v\partial_v}  h_{x_v\partial_v^{-1/2}}) &= (\widetilde{\chi}_v\varepsilon_v)^{-1}(x_v\partial_v^{-1/2})\Omega_{v}^{\mathrm{ur}, \varepsilon_v}(b_{x_v\partial_v^{-1/2}}^{-1}a_{y_v\partial_v}  h_{x_v\partial_v^{-1/2}}) \\ &= (\widetilde{\chi}_v\varepsilon_v)^{-1}(x_v\partial_v^{-1/2})\Omega_{v}^{\mathrm{ur}, \varepsilon_v}( a_{y_v x_v^{-1}\partial_v^{3/2}}).
    \end{align*}
    Thus, $P_X(\mu)$ equals  \begin{align*}
           &C_Y' \, \Omega_f^\circ  \!\!\!\!\!\!\sum_{ \varepsilon \in S(\omega_{\Pi}^{-1}\chi^{-2})}\prod_{v}  \int_{F_v^\times \cap \mathfrak{o}_v}|x_v|^3\varepsilon^{-1}_v(x_v\partial^{-1/2}_v)   \int_{F_v^\times} |y_v|^{-3/2}\widetilde{\chi}_v(y_vx_v^{-1} \partial^{1/2}_v)\Omega_{v}^{\mathrm{ur}, \varepsilon_v}(a_{y_v x_v^{-1}\partial^{3/2}_v}) d y_v d x_v.
    \end{align*}
    We now examine each local integral separately.
 Using $F_v^{\times}\cap \mathfrak{o}_v = \bigsqcup_{n\geq 0}\varpi_v^{n}\mathfrak{o}_v^{\times}$ and that all the data are unramified, each local integral is 
    \begin{align*}
    \sum_{n\geq 0}&|\varpi_v^{3n}|\varepsilon^{-1}_v(\varpi_v^{n-m_v})\sum_{r \in \ZZ}|\varpi_v^{-3r/2}|\widetilde{\chi}_v(\varpi_v^{r-n+m_v} )\Omega_{v}^{\mathrm{ur}, \varepsilon_v}(a_{\varpi_v^{r-n+3m_v}}).
    \end{align*}
    Since the conductor of $\psi_v^{\mathrm{ur}}$ is $\mathfrak{o}_v$, by standard arguments we see that 
    \[\Omega_{v}^{\mathrm{ur}, \varepsilon_v}(a_{\varpi_v^{s}}) = 0,\]
    whenever $s< 0$. Thus, $\Omega_{v}^{\mathrm{ur}, \varepsilon_v}(a_{\varpi_v^{r-n+3m_v}}) = 0$ if $r < n-3m_v$ and we can write the integral as 
    \begin{align*}
    \sum_{n\geq 0}\sum_{r \geq n-3m_v } |\varpi_v^{3n-3r/2}|\varepsilon^{-1}_v(\varpi_v^{n-m_v})\widetilde{\chi}_v(\varpi_v^{r-n+m_v} )\Omega_{v}^{\mathrm{ur}, \varepsilon_v}(a_{\varpi_v^{r-n+3m_v}}),
    \end{align*}
    We make the change of variables $a  = r-n+3m_v$ to write the sums as
    \begin{align*}
    \sum_{n\geq 0}\sum_{a \geq  0} |\varpi_v^{3(n - a +3m_v)/2}|\varepsilon^{-1}_v(\varpi_v^{n-m_v})\widetilde{\chi}_v^{-1}(\varpi_v^{2m_v-a} )\Omega_{v}^{\mathrm{ur}, \varepsilon_v}({\varpi_v^{(a,a)}}).
    \end{align*}
    \end{proof}

\subsubsection{The local computation}
We now compute the local integrals of Theorem \ref{final_unfolding_thm_G_side}. Recall that  $\Omega_{v}^{\mathrm{ur}, \varepsilon_v}$ is non-trivial if and only if $\widetilde{\Pi}_v$ has central character $(\widetilde{\chi}_v\varepsilon_v)^{-2}$ (which is true by assumption on $\varepsilon_v$) and  $\mathrm{Sat}_{\widetilde{\Pi}_v}\in \GSp_4(\kk)$ with similitude equal to $(\widetilde{\chi}_v\varepsilon_v)^{-1}(\varpi_v)$. Thus, 
\[\mathrm{Sat}_{\sigma_v} = \mathrm{pr}(\mathrm{Sat}_{\widetilde{\Pi}_v}\times  \widetilde{\chi}_v(\varpi_v))\]
lies in the image of $\overline{H}'(\kk)$, defined in \eqref{GSp4embedsintoGSO33}. Fix $\mathrm{Sat}_{\sigma_v}^\iota$ a preferred lift of $\mathrm{Sat}_{\sigma_v}$, which is of the form $$\mathrm{Sat}_{\sigma_v}^\iota :=\mathrm{Sat}_{\widetilde{\Pi}_v}\times \widetilde{\chi}_v(\varpi_v) \times   \varepsilon^{-1}_v(\varpi_v) \in H'(\kk).$$ 
As in the discussion preceding Proposition \ref{Final:Formula:Shalika:GSpin}, denote 
\[s_{(a,b;c)}^{\varsigma_d,\varsigma_{d'}} = \mathrm{Tr}(V_{(a,b;c)}\boxtimes \varsigma_d\boxtimes  \varsigma_{d'}|\mathrm{Sat}_{\sigma_v}^\iota).\]
Recall that $\wedge_0^2$ is the $5$-dimensional representation of $\GSp_4(\kk)$ of highest weight $(1,1;-1)$ and 
$\wedge_0^{\varsigma_{0}} = \wedge_0^2\boxtimes \varsigma_0$ is the corresponding irreducible algebraic representation of $\overline{H}'(\kk)$. Moreover, recall that $\sigma_v$ is assumed to be tempered.
\begin{lem}\label{lemma_on_Lstd_G_side} For ${\rm Re}(s) \gg 0$, we have an equality  
     $$\frac{L(s,\sigma_v,\wedge_0^{\varsigma_{0}} \otimes \varepsilon_v^{-1})}{L(2s,\varepsilon_v^{-2})} =\sum_{k = 0}^\infty q_v^{-ks}\varepsilon_v^{-k}(\varpi_v) s_{(k,k;-k)}^{\varsigma_0,\varsigma_0}.$$  
\end{lem}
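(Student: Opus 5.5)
This is the classical generating-function identity for symmetric powers of the standard representation of $\mathrm{SO}_5$, transported to $\overline{H}'$ and twisted. First we reduce to a statement about $\mathrm{Sat}_{\widetilde{\Pi}_v}\in\GSp_4(\kk)$. By \eqref{Relation:Between:Traces:GSpin} with $(a,b;c)=(k,k;-k)$ and $d=d'=0$, we get
\[
s^{\varsigma_0,\varsigma_0}_{(k,k;-k)}=\varepsilon_v^{k}(\varpi_v)\,\widetilde{\chi}_v^{k}(\varpi_v)\,s_{(k,k;0)}.
\]
Hence the $k$th term on the right-hand side is $q_v^{-ks}\widetilde{\chi}_v^{k}(\varpi_v)s_{(k,k;0)}$. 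Since the similitude of $\mathrm{Sat}_{\widetilde{\Pi}_v}$ is $\nu:=(\widetilde{\chi}_v\varepsilon_v)^{-1}(\varpi_v)$, relation \eqref{playing_with_traces} gives $\widetilde{\chi}_v^k(\varpi_v)s_{(k,k;0)}=\varepsilon_v^{-k}(\varpi_v)\,\mathrm{Tr}(V_{(k,k;-k)}\mid \mathrm{Sat}_{\widetilde{\Pi}_v})$. Here $V_{(k,k;-k)}$ is the $k$th Cartan power of $\wedge^2_0=V_{(1,1;-1)}$, i.e. the harmonic part of $\mathrm{Sym}^k(\wedge^2_0)$.

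On the left-hand side, $\wedge_0^{\varsigma_0}$ evaluated at the image of $\mathrm{Sat}_{\sigma_v}$ in $\overline{H}'(\kk)$ is $\wedge^2_0(\mathrm{Sat}_{\widetilde{\Pi}_v})$, because the $\varsigma_0$ factor is trivial. Let $t:=\varepsilon_v^{-1}(\varpi_v)q_v^{-s}$. Then the numerator is $\det(1-t\,\wedge^2_0(\mathrm{Sat}_{\widetilde{\Pi}_v}))^{-1}=\sum_k t^k\,\mathrm{Tr}(\mathrm{Sym}^k\wedge^2_0)$, and the denominator is $(1-t^2)^{-1}$. So the claim reduces to the identity of formal power series in $t$
\[
(1-t^2)\sum_{k\ge 0} t^k\,\mathrm{Tr}\big(\mathrm{Sym}^k \wedge^2_0\mid g\big)=\sum_{k\ge0}t^k\,\mathrm{Tr}\big(V_{(k,k;-k)}\mid g\big),\qquad g\in\GSp_4(\kk).
\]
This is equivalent to $\mathrm{Sym}^k\wedge^2_0\simeq V_{(k,k;-k)}\oplus \mathrm{Sym}^{k-2}\wedge^2_0$ as $\GSp_4$-representations.

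To prove the decomposition, note that $\wedge^2_0$ factors through $\mathrm{PGSp}_4\simeq \mathrm{SO}_5$ as the standard representation, and the invariant quadratic form $Q$ of \eqref{equation quadratic form Q}, suitably twisted by $\nu^{-1}$, is invariant. Multiplication by $Q$ embeds $\mathrm{Sym}^{k-2}$ into $\mathrm{Sym}^k$, and the classical theory of harmonic polynomials identifies the cokernel with the irreducible representation of highest weight $k$ times the highest weight of $\wedge^2_0$, namely $(k,k;-k)$. Equivalently, the coordinate ring of the cone $Y$ from Lemma \ref{Plucker:embedding} decomposes as $\bigoplus_k V_{(k,k;-k)}^*$, which is multiplicity-free with degree-$k$ piece the $k$th Cartan power; this is the same computation underlying Proposition \ref{proposition quotient of L functions}. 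Convergence for $\mathrm{Re}(s)\gg0$ is automatic because temperedness bounds the Satake eigenvalues.

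The main obstacle is not the representation theory but tracking the twists: one must check that the $\varepsilon_v$ and $\widetilde{\chi}_v$ powers coming from \eqref{Relation:Between:Traces:GSpin} and from $\nu^{-k}$ combine exactly to $\varepsilon_v^{-k}$, and that $\wedge^{\varsigma_0}_0\otimes\varepsilon_v^{-1}$ is read with the correct sign convention for the lift $\mathrm{Sat}^\iota_{\sigma_v}$. We would settle this by checking the identity directly on the torus element $\mathrm{diag}(x_1,x_2,\nu x_2^{-1},\nu x_1^{-1})$ at $k=1$.
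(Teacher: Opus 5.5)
Your proposal is correct and is essentially the paper's argument. The paper also reduces to the decomposition $\mathrm{Sym}^k(\wedge^2_0)\simeq\bigoplus_{i=0}^{\lfloor k/2\rfloor}V_{(k-2i,k-2i;-(k-2i))}$, quoting it as the expansion of $L(s,\widetilde{\Pi}_v,\wedge^2_0\otimes\varepsilon_v^{-1})$ from \cite[Lemma 6.2]{CauchiGutiCS} instead of proving it via harmonic polynomials for $\mathrm{SO}_5$, and then splits off $L(2s,\varepsilon_v^{-2})$ by a Cauchy product, which is your multiplication by $1-t^2$; your twist bookkeeping is right but can be shortened, since the first equality in \eqref{Relation:Between:Traces:GSpin} with $d=d'=0$ directly gives $s^{\varsigma_0,\varsigma_0}_{(k,k;-k)}=\mathrm{Tr}(V_{(k,k;-k)}\mid\mathrm{Sat}_{\widetilde{\Pi}_v})$.
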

\begin{proof}
    Write $s_{(k-2i,k-2i;-k+2i)}=\mathrm{Tr}(V_{(k-2i,k-2i;-k+2i)}|\mathrm{Sat}_{\widetilde{\Pi}_v})$. Thanks to \cite[Lemma 6.2]{CauchiGutiCS}, we have \begin{align*}
        L(s,\sigma_v,\wedge_0^{\varsigma_{0}} \otimes \varepsilon_v^{-1}) &= L(s,\widetilde{\Pi}_v,\wedge_0^2\otimes \varepsilon_v^{-1}) \\ &= \sum_{k=0}^\infty \sum_{i=0}^{\lfloor k/2\rfloor}q_v^{-ks} \varepsilon_v^{-k}(\varpi_v) s_{(k-2i,k-2i;-k+2i)}  \\ &=\sum_{k=0}^\infty \sum_{i=0}^{\lfloor k/2\rfloor}q_v^{-ks} \varepsilon_v^{-k}(\varpi_v) s_{(k-2i,k-2i;-k+2i)}^{\varsigma_0,\varsigma_0} ,
    \end{align*} 
    where, for the last equality, we have used  \eqref{Relation:Between:Traces:GSpin} to deduce that $$s_{(k-2i,k-2i;-k+2i)}^{\varsigma_0,\varsigma_0} = s_{(k-2i,k-2i;-k+2i)}.$$
    Use the Cauchy product formula to further write the sum as $$\left( \sum_{k=0}^\infty q_v^{-2ks}\varepsilon_v^{-2k}(\varpi_v)\right) \left( \sum_{k = 0}^\infty q_v^{-ks}\varepsilon_v^{-k}(\varpi_v) s_{(k,k;-k)}^{\varsigma_0,\varsigma_0} \right).$$
The result then follows.
\end{proof}

\begin{thm}\label{local_Zeta_G_side_thm}
We have $$Z(\widetilde{f}_v,\widetilde{\chi}_v\varepsilon_v) = \gamma_v\frac{L(1/2,\sigma_v,\wedge_0^{\varsigma_{0}} \otimes \varepsilon_v^{-1})}{L(1,\varepsilon_v^{-2})},$$
where $\gamma_v : = | \partial_v|^{9/4}( \varepsilon_v \widetilde{\chi}_v^{-2})(\partial_v^{1/2})$.
\end{thm}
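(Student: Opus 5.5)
We evaluate the double sum defining $Z(\widetilde{f}_v,\widetilde{\chi}_v\varepsilon_v)$ in Theorem \ref{final_unfolding_thm_G_side} by separating it into a geometric sum over $n$ and a sum over $a$. The sum over $a$ is then rewritten via Proposition \ref{Final:Formula:Shalika:GSpin} as a telescoping generating series of the characters $s^{\varsigma_0,\varsigma_0}_{(k,k;-k)}$, which Lemma \ref{lemma_on_Lstd_G_side} identifies with the desired ratio of $L$-functions.

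\textbf{Step 1 (the $n$-sum).} Write $|\varpi_v|=q_v^{-1}$. The $n$-dependence of the summand is $q_v^{-3n/2}\varepsilon_v^{-1}(\varpi_v)^{n}$. Summing over $n\ge 0$ gives
\[
(1-q_v^{-3/2}\varepsilon_v^{-1}(\varpi_v))^{-1},
\]
and we expect this factor to cancel in Step 3.

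\textbf{Step 2 (collecting constants).} Pull out the $m_v$-dependent factors
\[
q_v^{-9m_v/2}\,\varepsilon_v(\varpi_v^{m_v})\,\widetilde{\chi}_v^{-1}(\varpi_v^{2m_v}).
\]
Using $\partial_v^{1/2}=\varpi_v^{m_v}$ and $|\partial_v|=q_v^{-2m_v}$, this is exactly $\gamma_v=|\partial_v|^{9/4}(\varepsilon_v\widetilde{\chi}_v^{-2})(\partial_v^{1/2})$. This bookkeeping should be checked against the exponents in Theorem \ref{final_unfolding_thm_G_side}.

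\textbf{Step 3 (the $a$-sum).} What remains is
\[
\sum_{a\ge0} q_v^{3a/2}\,\widetilde{\chi}_v(\varpi_v)^{a}\,\Omega_v^{\mathrm{ur},\varepsilon_v}(\varpi_v^{(a,a)}).
\]
Insert Proposition \ref{Final:Formula:Shalika:GSpin}. The factor $(\widetilde{\chi}_v\varepsilon_v)^{-a}q_v^{-2a}$ combines with the prefactor to give $q_v^{-a/2}\varepsilon_v^{-a}(\varpi_v)$, so the sum becomes
\[
\sum_{a\ge0} q_v^{-a/2}\varepsilon_v^{-a}(\varpi_v)\bigl(s^{\varsigma_0,\varsigma_0}_{(a,a;-a)}-q_v^{-1}s^{\varsigma_0,\varsigma_0}_{(a-1,a-1;1-a)}\bigr).
\]
The $a=0$ second term vanishes by Remark \ref{Vanishing:Character}. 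Reindexing the second term by $a\mapsto a+1$ turns the expression into
\[
(1-q_v^{-3/2}\varepsilon_v^{-1}(\varpi_v))\sum_{k\ge0}q_v^{-k/2}\varepsilon_v^{-k}(\varpi_v)s^{\varsigma_0,\varsigma_0}_{(k,k;-k)}.
\]
The prefactor cancels the factor from Step 1. By Lemma \ref{lemma_on_Lstd_G_side} at $s=1/2$, the remaining series is $L(1/2,\sigma_v,\wedge_0^{\varsigma_0}\otimes\varepsilon_v^{-1})/L(1,\varepsilon_v^{-2})$.

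\textbf{Convergence.} Lemma \ref{lemma_on_Lstd_G_side} is stated for $\mathrm{Re}(s)\gg0$. Temperedness of $\sigma_v$, so that the Satake parameters are unitary, should give absolute convergence of all series at $s=1/2$. Failing that, we would insert a parameter $q_v^{-as}$, prove the identity for large $s$, and specialize by analytic continuation in the sense of Remark \ref{remark convergence in numerical duality}.

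\textbf{Main obstacle.} The delicate point is the exact matching of twists: the $\widetilde{\chi}_v^{a}$ from the zeta integral must cancel the $(\widetilde{\chi}_v\varepsilon_v)^{-a}$ of Proposition \ref{Final:Formula:Shalika:GSpin}, and the power of $\varepsilon_v$ attached to the $q_v^{-1}$-term must produce exactly $q_v^{-3/2}\varepsilon_v^{-1}$. I would verify this via \eqref{Relation:Between:Traces:GSpin}, tracking the similitude $(\widetilde{\chi}_v\varepsilon_v)^{-1}(\varpi_v)$ carefully, since a sign error in an exponent would break the cancellation with Step 1.
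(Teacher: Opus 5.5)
Your proposal is correct and follows the paper's proof essentially verbatim. The paper likewise inserts Proposition \ref{Final:Formula:Shalika:GSpin}, pulls out $\gamma_v$, and telescopes the $a$-sum using $s^{\varsigma_0,\varsigma_0}_{(-1,-1;1)}=0$ to produce the factor $(1-\varepsilon_v^{-1}(\varpi_v)q_v^{-3/2})=L(3/2,\varepsilon_v^{-1})^{-1}$; it then applies Lemma \ref{lemma_on_Lstd_G_side} at $s=1/2$ and cancels that factor against the geometric $n$-sum. Your exponent and twist bookkeeping checks out, and your convergence remark (unitarity of the relevant parameters from temperedness) is a harmless addition that the paper leaves implicit.
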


\begin{proof}
    By Proposition \ref{Final:Formula:Shalika:GSpin}, for every $a \geq 0$, we have
    \[\Omega_v^{{\rm ur},\varepsilon_v}(\varpi_v^{(a,a)}) = (\widetilde{\chi}_v\varepsilon_v)^{-a}(\varpi_v)q_v^{-2a} \, \bigg(s_{(a,a;-a)}^{\varsigma_0,\varsigma_0}   - s_{(a-1,a-1;1-a)}^{\varsigma_0,\varsigma_0} q_v^{-1}\bigg),\]
    where recall \[s_{(a,a;-a)}^{\varsigma_0,\varsigma_0} = \mathrm{Tr}(V_{(a,a;-a)}\boxtimes \varsigma_0\boxtimes \varsigma_0|\mathrm{Sat}_{\sigma_v}^\iota).\]
We plug the formula into $Z(\widetilde{f}_v,\widetilde{\chi}_v\varepsilon_v)$ to write the sum as 
  $$| \partial_v|^{9/4}( \varepsilon_v \widetilde{\chi}_v^{-2})(\partial_v^{1/2}) \sum_{n\geq 0}\sum_{a \geq  0} q_v^{-3n/2 -a/2}\varepsilon_v(\varpi_v^{-n-a})\bigg(s_{(a,a;-a)}^{\varsigma_0,\varsigma_0} - s_{(a-1,a-1;1-a)}^{\varsigma_0,\varsigma_0} q_v^{-1}\bigg).$$

We first analyze 
$$I_1 := \sum_{a \geq  0} q_v^{-a/2}\varepsilon_v^{-a}(\varpi_v)\bigg(s_{(a,a;-a)}^{\varsigma_0,\varsigma_0} - s_{(a-1,a-1;1-a)}^{\varsigma_0,\varsigma_0} q_v^{-1}\bigg).$$
Use that $s_{(-1,-1;1)}^{\varsigma_0,\varsigma_0}=0$ to write 
\begin{align*}
    I_1 &=\sum_{a \geq  0} q_v^{-a/2}\varepsilon_v^{-a}(\varpi_v)s_{(a,a;-a)}^{\varsigma_0,\varsigma_0} - \sum_{a \geq  0} q_v^{-a/2-1} \varepsilon_v^{-a}(\varpi_v) s_{(a-1,a-1;1-a)}^{\varsigma_0,\varsigma_0}  \\ &=\sum_{a \geq  0} q_v^{-a/2}\varepsilon_v^{-a}(\varpi_v)s_{(a,a;-a)}^{\varsigma_0,\varsigma_0} - \varepsilon_v^{-1}(\varpi_v)q_v^{-3/2} \sum_{a \geq  0} q_v^{-(a-1)/2} \varepsilon_v^{1-a}(\varpi_v) s_{(a-1,a-1;1-a)}^{\varsigma_0,\varsigma_0} \\ 
    &= (1 - \varepsilon_v^{-1}(\varpi_v)q_v^{-3/2})\sum_{a \geq  0} q_v^{-a/2}\varepsilon_v^{-a}(\varpi_v)s_{(a,a;-a)}^{\varsigma_0,\varsigma_0}.
\end{align*}
By Lemma \ref{lemma_on_Lstd_G_side}, we obtain 

$$I_1 = \frac{L(1/2,\sigma_v,\wedge_0^{\varsigma_{0}} \otimes \varepsilon_v^{-1})}{L(1,\varepsilon_v^{-2})L(3/2,\varepsilon_v^{-1})}, $$
thus 
\begin{align*}
Z(\widetilde{f}_v,\widetilde{\chi}_v\varepsilon_v) &=\gamma_v \frac{L(1/2,\sigma_v,\wedge_0^{\varsigma_{0}} \otimes \varepsilon_v^{-1})}{L(1,\varepsilon_v^{-2})L(3/2,\varepsilon_v^{-1})} \sum_{n \geq 0}  q_v^{-3n/2}\varepsilon_v(\varpi_v^{-n}) \\ 
&= \gamma_v \frac{L(1/2,\sigma_v,\wedge_0^{\varsigma_{0}} \otimes \varepsilon_v^{-1})}{L(1,\varepsilon_v^{-2})L(3/2,\varepsilon_v^{-1})} \sum_{n \geq 0}  q_v^{-3n/2}\varepsilon_v(\varpi_v^{-n}) \\ &=  \gamma_v\frac{L(1/2,\sigma_v,\wedge_0^{\varsigma_{0}} \otimes \varepsilon_v^{-1})}{L(1,\varepsilon_v^{-2})},
\end{align*}
where we denoted $\gamma_v : = | \partial_v|^{9/4}( \varepsilon_v \widetilde{\chi}_v^{-2})(\partial_v^{1/2})$.
\end{proof}

Combining Theorems \ref{final_unfolding_thm_G_side} and \ref{local_Zeta_G_side_thm}, we have the final expression for the period $P_X$.
 \begin{thm}\label{thm:Final:PX}
    Let $\mu\in \sigma$ be a factorizable unramified Shalika normalized tempered cusp form  on $G$. Then,
   $$P_X(\mu)=\frac{\Delta^{-3/2} (\omega_\Pi^{-1}\chi^{-2})(\partial^{1/2})}{|S(\omega_{\Pi}^{-1}\chi^{-2})|}\sum_{ \varepsilon \in S(\omega_{\Pi}^{-1}\chi^{-2})} \varepsilon(\partial^{1/2})  \frac{L(1/2,\sigma,\wedge_0^{\varsigma_{0}} \otimes \varepsilon^{-1})}{L(1,\varepsilon^{-2})}.$$    
 \end{thm}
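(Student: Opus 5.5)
The plan is to substitute the local evaluation of Theorem \ref{local_Zeta_G_side_thm} into the global unfolding of Theorem \ref{final_unfolding_thm_G_side}, and then collect the global constants. Concretely, Theorem \ref{final_unfolding_thm_G_side} gives
$$P_X(\mu) = \Delta^{-1}\,\Omega_\mu^\circ \sum_{\varepsilon \in S(\omega_\Pi^{-1}\chi^{-2})} \prod_v Z(\widetilde{f}_v,\widetilde{\chi}_v\varepsilon_v).$$
For each fixed $\varepsilon$, I will replace every local factor by $\gamma_v\, L(1/2,\sigma_v,\wedge_0^{\varsigma_0}\otimes\varepsilon_v^{-1})/L(1,\varepsilon_v^{-2})$. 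Taking the product over $v$ (for $\mathrm{Re}$ of the central parameter large, then by analytic continuation as in Remark \ref{remark convergence in numerical duality}) produces the global quotient $L(1/2,\sigma,\wedge_0^{\varsigma_0}\otimes\varepsilon^{-1})/L(1,\varepsilon^{-2})$, multiplied by $\prod_v\gamma_v$.

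Next I compute $\prod_v \gamma_v$. Since $\prod_v|\partial_v| = |\partial| = \Delta^{-1}$, the product is
$$\prod_v \gamma_v = \Delta^{-9/4}\,\varepsilon(\partial^{1/2})\,\widetilde{\chi}^{-2}(\partial^{1/2}) = \Delta^{-9/4}\,\varepsilon(\partial^{1/2})\,\widetilde{\chi}^{-1}(\partial).$$
I then insert the value of $\Omega_\mu^\circ$ from Proposition \ref{Shalika:GSpin:In:GL}:
$$\Omega_\mu^\circ = \frac{\Delta^{7/4}\,\omega_\Pi^{-1}(\partial^{1/2})\,(\chi^{-1}\widetilde{\chi})(\partial)}{|S(\omega_\Pi^{-1}\chi^{-2})|}.$$

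Multiplying the three contributions, the powers of $\Delta$ combine as $-1 + 7/4 - 9/4 = -3/2$. The $\widetilde{\chi}(\partial)$ factors cancel, leaving $\chi^{-1}(\partial) = \chi^{-2}(\partial^{1/2})$. Together with $\omega_\Pi^{-1}(\partial^{1/2})$, this gives $(\omega_\Pi^{-1}\chi^{-2})(\partial^{1/2})$, and the factor $\varepsilon(\partial^{1/2})$ stays inside the sum. This yields exactly the stated formula.

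The main obstacle is the bookkeeping of the $\partial$-twists rather than any conceptual step. First, the local characters $\widetilde{\chi}_v$ depend on the choice made in Lemma \ref{Description:Representations:Restriction}, so I must confirm that $\prod_v \widetilde{\chi}_v(\partial_v)$ is the global $\widetilde{\chi}(\partial)$ entering $\Omega_\mu^\circ$; this holds by construction, since the same local data are used there. Second, the expression $\varepsilon_v(\partial_v^{1/2})$ has to be read as $\varepsilon_v(\varpi_v^{m_v})$, which is legitimate because every $\varepsilon \in S(\omega_\Pi^{-1}\chi^{-2})$ is unramified. Finally, the interchange of the finite sum over $\varepsilon$ with the Euler product is harmless because the sum is finite.
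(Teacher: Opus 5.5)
Your proposal is correct and is exactly the paper's argument: the paper proves Theorem~\ref{thm:Final:PX} simply by substituting Theorem~\ref{local_Zeta_G_side_thm} into Theorem~\ref{final_unfolding_thm_G_side}. Your bookkeeping of the constants also checks out: $\Delta^{-1}\cdot\Delta^{7/4}\cdot\Delta^{-9/4}=\Delta^{-3/2}$, and $\widetilde{\chi}(\partial)$ from $\Omega_\mu^\circ$ cancels against $\prod_v\widetilde{\chi}_v^{-2}(\partial_v^{1/2})$, leaving $(\omega_\Pi^{-1}\chi^{-2})(\partial^{1/2})\,\varepsilon(\partial^{1/2})$. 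This makes explicit what the paper leaves implicit.
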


In particular, since $\varepsilon \in S(\omega_{\Pi}^{-1}\chi^{-2})$, switching $\varepsilon$ to $\varepsilon^{-1}$ for aesthetic reasons, the formula can be read as 
$$P_X(\mu)=\frac{\Delta^{-3/2}}{|S(\omega_{\Pi}\chi^2)|}\sum_{ \varepsilon \in S(\omega_{\Pi}\chi^{2})} \varepsilon(\partial^{-1/2})^3  \frac{L(1/2,\sigma,\wedge_0^{\varsigma_{0}} \otimes \varepsilon)}{L(1,\varepsilon^2)}.$$ 
where $S(\omega_\Pi \chi^2)$ now denotes the set of unramified square roots of the character $\omega_\Pi \chi^2$. Comparing with the spectral side \eqref{spectral_side_Xcheck_W_check}, we obtain the following. 

\begin{cor}
Let $\mu$ be a factorizable unramified Shalika normalized tempered cusp form on $G$. Then we have the period identity
    $$P_X(\mu) = \Delta^{-5/4} \cdot L_{\check{X}}(\varphi_\mu)/L_{\check{W}}(\varphi_\mu)$$
with discrepancy $-5$.
\end{cor}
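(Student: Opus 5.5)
The corollary should follow by dividing the automorphic formula of Theorem \ref{thm:Final:PX} by the spectral ratio \eqref{spectral_side_Xcheck_W_check}, after matching the parameters on both sides.

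\textbf{Step 1: match the index sets.} Let $\varphi_\mu = \mathrm{pr}\circ(\varphi_{\widetilde\Pi}\times\widetilde\chi)$ be the $L$-parameter of $\sigma$, as in Remark \ref{Langlands:Parameters:GSpin}. Under this identification, the character $\mathrm{det}(\varphi)\chi^2$ appearing in Proposition \ref{proposition spectral Xcheck period} is the Galois counterpart of $\omega_\Pi\chi^2$ under our normalization of class field theory. This uses that twisting by $\eta_v$ leaves both $\mathrm{pr}\circ\varphi$ and $\omega_\Pi\chi^2$ unchanged, since $\eta_v^4\eta_v^{-4}=1$. Consequently $S(\mathrm{det}(\varphi)\chi^2)$ corresponds bijectively to $S(\omega_\Pi\chi^2)$, the set of unramified square roots, and both sets have cardinality $|\mathrm{Hom}(\Gamma_F^{\mathrm{ur}},\mu_2)|$.

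\textbf{Step 2: match the terms.} For each $\varepsilon$, I would check that
\[
L(1/2,\sigma,\wedge_0^{\varsigma_0}\otimes\varepsilon)=L(1/2,\varphi_\mu,\wedge_0^{\varsigma_0}\otimes\varepsilon),
\]
where the right-hand side is evaluated on the lift $\widetilde\varphi\cdot\varepsilon$ to $H'$. Lemma \ref{lemma_on_Lstd_G_side} gives this locally, since traces on $\mathrm{Sat}^\iota_{\sigma_v}$ are exactly traces on the lift.

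Next I would confirm that the prefactors $\varepsilon(\partial^{-1/2})^3$ agree. On the automorphic side the factor $(\omega_\Pi\chi^2)(\partial^{1/2})\,\varepsilon(\partial^{1/2})$ becomes $\varepsilon(\partial^{-1/2})^3$ after the substitution $\varepsilon\mapsto\varepsilon^{-1}$, because $\varepsilon^{-2}=\omega_\Pi\chi^2$. This is the rewriting already displayed after Theorem \ref{thm:Final:PX}. On the spectral side the same factor is $\mathfrak z_{Y'}$ of the lift.

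\textbf{Step 3: compare the powers of $\Delta$.} The automorphic side is
\[
\Delta^{-3/2}\,|S|^{-1}\sum_\varepsilon(\cdots),
\]
while the spectral ratio \eqref{spectral_side_Xcheck_W_check} is
\[
\Delta^{-1/4}\,|S|^{-1}\sum_\varepsilon(\cdots).
\]
Their quotient is $\Delta^{-3/2+1/4}=\Delta^{-5/4}$, so in the notation of Definition \ref{definition weak numerical duality} the discrepancy is $a=-5$.

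Because $\mu$ is Shalika normalized, $P_W(\mu)=1$. Corollary \ref{theorem stacky Shalika dual pair} then gives $L_{\check W}(\varphi_\mu)=1$ as well, so the ratio formulation is the same as the identity $P_X=\Delta^{-5/4}L_{\check X}$ for this normalization.

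The step I expect to need the most care is the bookkeeping in Step 2. One must make sure that inverting $\varepsilon$ and passing between the Hecke character $\varepsilon$ and its Galois parameter, via $\varpi_v\mapsto\mathrm{Fr}_v$, yields exactly the exponent $-3/2$ at $\partial$, with no stray central character. I would check this by evaluating both sides on $\partial^{1/2}$ and using $\varepsilon^2=\omega_\Pi\chi^2$.
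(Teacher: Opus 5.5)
Your Steps 1--3 follow the paper's proof. The paper combines Proposition \ref{proposition spectral Xcheck period} with \eqref{spectral_side_Xcheck_W_check}, rewrites Theorem \ref{thm:Final:PX} as a sum over $S(\omega_\Pi\chi^2)$, and reads off $\Delta^{-3/2}/\Delta^{-1/4}=\Delta^{-5/4}$, i.e.\ discrepancy $-5$. Two things in your write-up need correcting, although neither affects the corollary as stated, since it is phrased with the ratio $L_{\check W}$.

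\textbf{The prefactor sign in Step 2.} The prefactor in Theorem \ref{thm:Final:PX} is $(\omega_\Pi^{-1}\chi^{-2})(\partial^{1/2})$, not $(\omega_\Pi\chi^{2})(\partial^{1/2})$. Since the sum there runs over $\varepsilon$ with $\varepsilon^2=\omega_\Pi^{-1}\chi^{-2}$, one gets $(\omega_\Pi^{-1}\chi^{-2})(\partial^{1/2})\,\varepsilon(\partial^{1/2})=\varepsilon(\partial^{1/2})^3$. After $\varepsilon\mapsto\varepsilon^{-1}$ this becomes $\varepsilon(\partial^{-1/2})^3$, which matches $\mathfrak{z}_{Y'}$ of the lift. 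The factor you wrote would give $\varepsilon(\partial^{1/2})^{-1}$, hence $\varepsilon(\partial^{1/2})$ after inversion, so your check as written does not go through.

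\textbf{The final paragraph is wrong.} Corollary \ref{theorem stacky Shalika dual pair} is stated for Whittaker normalized forms, not Shalika normalized ones. Moreover, $L_{\check W}(\varphi_\mu)=\Delta^{-5/4}\,|\mathrm{Hom}(\Gamma_F^{\mathrm{ur}},\mu_2)|\,L(1,\varphi_\mu,\wedge^2_0)$ depends only on the parameter, not on how $\mu$ is scaled, so it is not $1$ in general. What that corollary actually gives is that the Whittaker normalized multiple of $\mu$ is $\mu_{\mathrm{Wh}}=L_{\check W}(\varphi_\mu)\,\mu$. From this, $P_X(\mu_{\mathrm{Wh}})=\Delta^{-5/4}L_{\check X}(\varphi_\mu)$, which is the form matching Definition \ref{definition weak numerical duality}. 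For the Shalika normalized $\mu$, by contrast, the identity must stay in ratio form.
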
 
\begin{proof}
    Combining Proposition \ref{proposition spectral Xcheck period} and \eqref{spectral_side_Xcheck_W_check}, we have 
    \begin{align*}
        \frac{L_{\check{X}}(\varphi_\mu)}{L_{\check{W}}(\varphi_\mu)} &= \frac{\Delta^{-1/4} }{|S(\omega_{\Pi}\chi^2)|} \sum_{ \varepsilon \in S(\omega_{\Pi}\chi^2)}  \varepsilon(\partial^{-1/2})^3 \frac{L(1/2,\sigma,\wedge_0^{\varsigma_{0}} \otimes \varepsilon)}{L(1,\varepsilon^{2})}.
    \end{align*}
Hence, we get $$ P_X(\mu) = \Delta^{-5/4} \cdot L_{\check{X}}(\varphi_\mu)/L_{\check{W}}(\varphi_\mu).$$
\end{proof}

\subsection{\texorpdfstring{$(\check{G}, \check{X})$}{(Gcheck,Xcheck)} on the automorphic side}
\label{subsec_automorphic_side_Gcheck}

Let $\Pi$ be a cuspidal unramified tempered automorphic representation of $\check{G}(\A)$. By \eqref{ses_checkG}, we can identify $\Pi$ with $\pi \otimes \chi$, where  $\pi$ is a cuspidal unramified automorphic representation of $\GL_4(\A)$ and $\chi$ is an unramified character $[\mathbf{G}_m]\to \C^\times$ such that $\omega_\pi \chi^{-2} = 1$. Given any cusp form $f \in \pi$, Definition \ref{definition DM automorphic period} and Lemma \ref{example DM stack periods split isogeny} allow us to write 
 \begin{equation}\label{first_step_ind_unf_n}
    P_{\check{X}}(f \otimes \chi) = C_{Y'}\int_{[H']} \, (f \otimes \chi)(\iota(h)) \, |\eta_{Y'}(h)|^{1/2}\sum_{y \in \mathring{Y}'(F)} \Phi_{Y'}(y \partial^{1/2}h) \, dh,
\end{equation}
where $\Phi_{Y'} \in \mathcal{S}(Y'(\mathbb{A}))$ is the Schwartz function of $\mathfrak{o}$-points, and we have denoted by $C_{Y'}$ the overall normalizing constant
$$C_{Y'} = \Delta^{\frac{\mathrm{dim}(Y') - \mathrm{dim}(H')}{4}}|\eta_{Y'}(\partial^{1/2})|^{1/2} = \Delta^{-5/2},$$
where for the latter equality we have used Lemma \ref{Lemma Gcheck eigenmeasure}. 
\begin{lem}
    We have
    $$P_{\check{X}}(f \otimes \chi) = C_{Y'}'\chi(\partial)^{-1/2}  \bigg[\int_{\mathbb{A}^\times \cap \mathfrak{o}} |x|^{3/2} \,\chi(x) \, dx \bigg]\bigg[\int_{[P'_0]} (f \otimes \chi)(\iota(p)a_0)|\eta_{Y'}(p)|^{1/2} dp\bigg],$$
    where $C_{Y'}' =\Delta^{-19/4}$, $a_0$ is the element introduced in \eqref{Definition:a0}, and $dp$ denotes the left invariant Haar measure on $P'_0(\mathbb{A})$.
\end{lem}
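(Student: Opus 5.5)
The plan is to mimic the proof of Lemma \ref{lemma_1_on_G_side}, now for the $H'$-space $Y'$. The main difference is that the central $\Gm$ of $\overline{H}'$ acts on $\mathring{Y}'$ only through the $y$-coordinate. As a result, the torus direction transverse to $P'_0$ can be split off as a separate Tate-type integral in $x$, carrying the character $\chi$.

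\textbf{Unfolding.} Lemma \ref{Description:Variety:Lemma} gives $\mathring{Y}'\simeq P'_0\backslash H'$. Since $P'_0$ is an extension of a split group by a unipotent group, the kernel of $\mathrm{H}^1(F,P'_0)\to\mathrm{H}^1(F,H')$ vanishes. Hence $\mathring{Y}'(F)=P'_0(F)\backslash H'(F)$, and I unfold the sum in \eqref{first_step_ind_unf_n} against $[H']$. This produces an outer integral over $P'_0(\A)\backslash H'(\A)$ of $\Phi_{Y'}(v_0\partial^{1/2}h)|\eta_{Y'}(h)|^{1/2}$, times the inner integral $\int_{[P'_0]}(f\otimes\chi)(\iota(p)\iota(h))|\eta_{Y'}(p)|^{1/2}dp$. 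The integrand is invariant under $H'(\mathfrak o)$, so the Iwasawa decomposition reduces $h$ to $P'(\A)$. I also replace the measure from the eigenform of Lemma \ref{Lemma Gcheck eigenmeasure} by $|\eta_{Y'}|\delta_{P'}^{-1}d_rh$.

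\textbf{Separating the $x$-integral.} I parametrize $P'_0\backslash P'$ by a one-dimensional torus $x\mapsto t_x\in H'$, chosen so that $v_0 t_x = x v_0$. Unlike in the $G$ case, I take $t_x$ in the center of $\overline{H}'$, lifted to $H'$. A concrete choice is $t_x=(\mathrm{Id},x;x)$, which acts on $\xi$ by $y\nu(g)^{-1}=x$ by \eqref{eq_new_action_H'}. This element is central in $\check G$, so translating $f\otimes\chi$ by it pulls out the central character $(\omega_\pi\times\chi)(\mathrm{Id},x)=\chi(x)$, and it commutes past $p$. The factor $|\eta_{Y'}(t_x)|^{1/2}\delta_{P'}^{-1}(t_x)$ should become $|x|^{3/2}$, because $\eta_{Y'}(t_x)=x^3$ and $\delta_{P'}(t_x)=1$. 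The integrality condition $\Phi_{Y'}(xv_0)=1$ becomes $x\in\A^\times\cap\mathfrak o$, as in Lemma \ref{lemma_1_on_G_side}. The double integral therefore factors as the product of the two brackets.

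\textbf{Absorbing $\partial^{1/2}$.} The translation by $\partial^{1/2}$ acting on $v_0$ must be rewritten as $\iota(\cdot)$ of an element of $H'(\A)$. I write it as $a_0$ times a central element, which contributes $\chi(\partial)^{-1/2}$, in the same way the translation was handled in \eqref{equation expliciting Shalika normalization}. The leftover powers of $\Delta$ then come from three sources: $|\eta_{Y'}(\partial^{1/2})|$, the modulus characters of $a_0$, and the change of measure on $P'_0\backslash P'$. Collecting these with $C_{Y'}=\Delta^{-5/2}$ should give $C'_{Y'}=\Delta^{-19/4}$.

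\textbf{Main obstacle.} I expect the hardest step to be this bookkeeping. It requires checking that $a_0$ (rather than $h_{\partial^{1/2}}$) is compatible with $v_0\partial^{1/2}$ up to $P'_0$, tracking the $\mu_2$-component $y$ of the lift of $a_0$, and confirming the exponent $-19/4$. I would verify these by writing $\partial^{1/2}$ as explicit diagonal elements of $H'$ and comparing with $\delta_{P'_0}$.
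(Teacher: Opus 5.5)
\textbf{Route.} Your outline follows the paper's proof step for step:
\begin{itemize}
\item unfold via $\mathring{Y}'(F)\simeq P_0'(F)\backslash H'(F)$;
\item reduce to $P_0'(\mathbb{A})\backslash P'(\mathbb{A})$ by Iwasawa;
\item parametrize that quotient by the central torus $x\mapsto(\mathrm{Id},x;x)$, which is exactly the paper's section, to split off $\int|x|^{3/2}\chi(x)\,dx$;
\item realize the $\partial^{1/2}$-shift by $h_{\partial^{1/2}}=(a_\partial,\partial^{1/2};\partial^{3/2})\in P'(\mathbb{A})$, where $\iota(h_{\partial^{1/2}})=a_0^{-1}(\mathrm{Id},\partial^{1/2})$ gives $\chi(\partial)^{-1/2}$.
\end{itemize}

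\textbf{The gap.} The power of $\Delta$ is the only non-formal content of the lemma, and you never compute it. You list three sources and say they ``should give'' $\Delta^{-19/4}$. The paper obtains $C'_{Y'}=\Delta^{-9/4}C_{Y'}$ by substituting $h_{\partial^{1/2}}h\to h$ in the outer integral over $P_0'(\mathbb{A})\backslash H'(\mathbb{A})$. It counts only $|\eta_{Y'}(h_{\partial^{1/2}})|^{1/2}=|\partial|^{9/4}=\Delta^{-9/4}$, so it treats that substitution as measure preserving.

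\textbf{What your recipe actually gives.} If you carry out your own bookkeeping, which explicitly includes modulus characters, you will not land on $\Delta^{-19/4}$. You can avoid any change of variables. Since $P'=P_0'\times\{(\mathrm{Id},x;x)\}$ with the second factor central, Iwasawa gives
\[
P_{\check X}(f\otimes\chi)=C_{Y'}\int_{\mathbb{A}^\times}\Phi_{Y'}(\partial^{1/2}x\,v_0)\,|x|^{3/2}\chi(x)\,dx\cdot\int_{[P_0']}(f\otimes\chi)(\iota(p))\,|\eta_{Y'}(p)|^{1/2}\,dp .
\]
\begin{itemize}
\item The substitution $x\mapsto\partial^{-1/2}x$ turns the first factor into $\Delta^{3/4}\chi(\partial)^{-1/2}\int_{\mathbb{A}^\times\cap\mathfrak{o}}|x|^{3/2}\chi(x)\,dx$.
\item Next, $a_0=\iota(p_0)^{-1}$ with $p_0=(a_\partial,1;\partial)\in P_0'(\mathbb{A})$. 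Right translation by $p_0$ on $[P_0']$, using $d(qp_0)=\delta_{P_0'}(p_0)^{-1}dq$ for the left Haar measure, shows that inserting $a_0$ multiplies the inner integral by $\delta_{P_0'}(p_0)^{-1}|\eta_{Y'}(p_0)|^{1/2}=|\partial|^{-3/2}=\Delta^{3/2}$.
\end{itemize}
Altogether the constant is $\Delta^{-5/2}\cdot\Delta^{3/4}\cdot\Delta^{-3/2}=\Delta^{-13/4}$.

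Equivalently, $h\mapsto h_{\partial^{1/2}}h$ on $P_0'\backslash H'\simeq\mathring{Y}'$ is scaling by $\partial^{1/2}$ on the cone. This rescales the eigenmeasure, which has $\mathbf{G}_{\mathrm{gr}}$-weight $3$, by $|\partial|^{3/2}$. That Jacobian of $\Delta^{3/2}$ is exactly what the paper's count omits.

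The step you flagged as the main obstacle is therefore a real one. Done carefully, it supports $C'_{Y'}=\Delta^{-13/4}$ rather than the stated $\Delta^{-19/4}$. You cannot prove the lemma as stated without finding a convention under which this Jacobian is absorbed. Otherwise the discrepancy would propagate to $C''_{Y'}$ and to the final discrepancy count.
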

\begin{proof}
    By Lemma \ref{Description:Variety:Lemma} and the fact that $\mathrm{K}^1_{P_0', H'} =0$, one has
    $$\mathring{Y}'(F) \simeq P_0'(F) \backslash H'(F),$$
    hence 
    $$ P_{\check{X}}(f \otimes \chi) = C_{Y'}\int_{[H']} \, (f \otimes \chi)(\iota(h)) \, |\eta_{Y'}(h)|^{1/2}\sum_{h' \in P_0'(F) \backslash H'(F)} \Phi_{Y'}(v_0 h'  \partial^{1/2}h) \, dh,$$
  where $v_0 = (\langle f_1, f_2\rangle, f_1 \wedge f_2) \in \mathring{Y}'(F)$. Observe that $\partial^{1/2} \in \Ggr(\mathbb{A})$ acts on $v_0$ as the element 
  $$h_{\partial^{1/2}} = \left(\left[\begin{smallmatrix} \partial & & & \\ & \partial& & \\ & & 1 & \\ & & & 1\end{smallmatrix}\right], \partial^{1/2},\partial^{3/2}\right) \in P'(\mathbb{A}),$$  which is designed so that $\iota(h_{\partial^{1/2}}) = a_0^{-1} \cdot ({\rm Id},\partial^{1/2})$, with $a_0$ the element introduced in \eqref{Definition:a0}. Hence, we can write  
    \begin{align*}
        P_{\check{X}}(f \otimes \chi) &=  C_{Y'}\int_{[H']} \, (f \otimes \chi)(\iota(h)) \, |\eta_{Y'}(h)|^{1/2}\sum_{h' \in P_0'(F) \backslash H'(F)} \Phi_{Y'}(v_0 h_{\partial^{1/2}} h' h) \, dh \\ 
        &= C_{Y'} \int_{P'_0(\mathbb{A}) \backslash H'(\mathbb{A})} \Phi_{Y'}(v_0 h_{\partial^{1/2}}  h)|\eta_{Y'}(h)|^{-1/2} \int_{[P'_0]} (f \otimes \chi)(\iota(ph))|\eta_{Y'}(p)|^{1/2} \, dp \, d_\omega h, 
    \end{align*}
where $dp$ denotes the left invariant Haar measure on $P_0'(\mathbb{A})$ and $d_\omega h$ denotes the measure associated with the eigenvolume form $\omega$ of  Lemma \ref{Lemma Gcheck eigenmeasure}. Since $h_{\partial^{1/2}}$ normalizes $P_0'$ and  $$|\eta_{Y'}(h_{\partial^{1/2}})|^{-1/2}=|\partial|^{-9/4} = \Delta^{9/4},$$  we can make the change of variables $h_{\partial^{1/2}}h \to h$  to get
    \begin{align*}
        &C_{Y'}' \int_{P_0'(\mathbb{A})\backslash H'(\mathbb{A})}\Phi_{Y'}(v_0h)|\eta_{Y'}(h)|^{-1/2} \int_{[P_0']}(f \otimes \chi)(\iota(p h^{-1}_{\partial^{1/2}} h))|\eta_{Y'}(p)|^{1/2} \, dp \, d_\omega h\\
        &= C_{Y'}'\chi(\partial)^{-1/2}\int_{P_0'(\mathbb{A})\backslash H'(\mathbb{A})}\Phi_{Y'}(v_0h)|\eta_{Y'}(h)|^{-1/2} \int_{[P_0']}(f \otimes \chi)(\iota(p) a_0 \iota(h))|\eta_{Y'}(p)|^{1/2} \, dp \, d_\omega h,  \end{align*}
    where $C_{Y'}' = \Delta^{-9/4} \cdot C_{Y'} = \Delta^{-19/4}$ as in the statement of the lemma.

Now, recall that $P_0'$ sits inside the Siegel parabolic $P'$ of $H'$; therefore, using the Iwasawa decomposition $H'(\mathbb{A}) = P'(\mathbb{A})H'(\mathfrak{o})$, the fact that $f$ is unramified, and that $H'(\mathfrak{o})$ preserves $\mathbf{Y}'(\mathfrak{o})$ we reduce the domain of integration on the outer integral to obtain
   $$C_{Y'}'\chi(\partial)^{-1/2}\int_{P_0'(\mathbb{A})\backslash P'(\mathbb{A})} \Phi_{Y'}(v_0h)|\eta_{Y'}(h)|^{-1/2}\int_{[P_0']}(f \otimes \chi)((\iota(p) a_0 \iota(h)))|\eta_{Y'}(p)|^{1/2} \, dp \,   |\eta_{Y'}(h)|dh,$$
where $dh$ denotes the left invariant Haar measure on $P'(\mathbb{A})$.  In order to parametrize the outer domain of integration, consider the morphism 
    \begin{align*} \label{equation central torus}
        P'&\longrightarrow \mathbf{G}_m,\\
        (p,x,y)&\mapsto \frac{\nu(p)y}{\mu(p)}\nonumber
    \end{align*}
    It is surjective and has kernel $P'_0$, with section 
    \begin{align*}
        \mathbf{G}_m&\to P'_0\setminus P',\, x\mapsto (1,x,x).
    \end{align*}
Thus, we recognize then that the outer integral is a 1-dimensional multiplicative adelic integral which can be completely separated
$$C_{Y'}'\chi(\partial)^{-1/2}\int_{\mathbb{A}^\times} \Phi_{Y'}(v_0 x)|x|^{3/2}\chi(x) \, dx\int_{[P_0']}(f \otimes \chi)(\iota(p) a_0) |\eta_{Y'}(p)|^{1/2} \, dp.$$
The result follows from the fact that $\Phi_{Y'}(v_0 x) = 0$ unless $x \in \A^\times \cap \mathfrak{o}$.
\end{proof}

The first integral resulting from the preceding lemma, over the 1-dimensional multiplicative region $\mathbb{A}^\times \cap \mathfrak{o}$, is an integral of Tate type which can be evaluated as an abelian $L$-function
$$\int_{\mathbb{A}^\times \cap \mathfrak{o}}|x|^{3/2} \, \chi(x) \, dx = L(3/2, \chi).$$
To continue, we focus on unfolding the inner $[P_0']$-integral further, and we set
$$I_{P_0'}(f \otimes \chi) = \int_{[P'_0]} (f \otimes \chi)(\iota(p)a_0)|\eta_{Y'}(p)|^{1/2} dp.$$
We will unfold $I_{P_0'}$ using the Shalika period. 
\begin{lem}
    We have
    $$I_{P_0'}(f \otimes \chi) = \Delta^{3/2}\int_{\mathbb{A}^\times} \Omega_{f \times \chi}(\iota(a_y)a_0)\delta_{P_0'}^{-1/2}(a_y) \, dy,$$
    where $$a_y := \left(\begin{bmatrix} \mathrm{Id}_{2} & 0\\ 0 & y\cdot\mathrm{Id}_{2}\end{bmatrix}, y^{-2},y^{-1}\right)$$ and $\Omega_{f \times \chi}$ is the global Shalika function
    $$\Omega_{f \times \chi}(m_1,m_2) := \int_{[\mathrm{PGL}_2]}f_{U,\psi_1}\left ( \left[\begin{smallmatrix} g & 0\\ 0 & \mathrm{det}(g) J_2 {}^{t}g^{-1}J_2\end{smallmatrix}\right] m_1 a_\partial^{-1} \right)  \chi( \mathrm{det}(g)^{-1} m_2 )\, dg.$$
\end{lem}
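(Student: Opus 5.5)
The plan is to mirror the proof of Lemma \ref{lemma_2_on_G_side} on the $\check{G}$-side. First write $P_0' = N M_0'$, with $N$ the unipotent radical of the Siegel parabolic (embedded in $U \simeq \mathrm{Mat}_2$ as the symmetric-type matrices $X$ with $J_2{}^tX J_2 = X$ up to the fixed sign convention). Decompose the left Haar measure as $dp = \delta_{P_0'}(m)^{-1}\, dn\, dm$, and note that $|\eta_{Y'}(p)| = \delta_{P_0'}(p)$ by Lemma \ref{Lemma Gcheck eigenmeasure}. This gives
\[
I_{P_0'}(f\otimes\chi) = \int_{[M_0']} \delta_{P_0'}(m)^{-1/2} \int_{[N]} (f\otimes\chi)(n\,\iota(m) a_0)\, dn\, dm .
\]
Next switch to the self-dual measure on $[N]$, which produces the factor $\Delta^{\dim N/2} = \Delta^{3/2}$ by \eqref{equation switching to self dual measure}. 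Then Fourier expand the $[N]$-integral along the complementary line in $U/N$: it equals $\sum_{\alpha\in F} f_{U,\psi_\alpha}$, where $\psi_\alpha(n(X)) = \psi(\mathrm{Tr}(A_\alpha X))$ as before.

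Second, compute the action of $M_0'(F)$ on these characters. The computation parallels the $G$-side, with $m_0 = (m_0(m,\lambda), x; y)$ and the constraint $x^2 = \mu^2\nu^{-4}$. Conjugation should scale $\alpha$ by a unit, yielding two orbits: $\alpha = 0$ and $\alpha \neq 0$. By cuspidality of $f$, the $\psi_0$-term vanishes. The stabilizer of $\psi_1$ is the subgroup of $M_0'$ whose $\mathrm{GSp}_4$-component is $\left[\begin{smallmatrix} g & \\ & \det(g) J_2{}^tg^{-1}J_2\end{smallmatrix}\right]$, together with the compatible $x,y$ coordinates. Modulo the quotient by the central $(z\mathrm{Id}, z^{-2})$ that defines $\check{G}$ (and the $\mu_2$ split off via \eqref{equation direct product}), this is a copy of $\mathrm{PGL}_2$ times a residual torus. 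Collapsing the sum over $\mathrm{Stab}(F)\backslash M_0'(F)$ against the integral over $[M_0']$ gives an integral over $\mathrm{Stab}(\mathbb{A})\backslash M_0'(\mathbb{A})$ of the inner $[\mathrm{PGL}_2]$-integral. In that inner integral the $\chi$-factor evaluated on the embedded $g$ contributes $\chi(\det(g)^{-1}m_2)$, which matches the definition of $\Omega_{f\times\chi}$ once $a_0 = (a_\partial^{-1},1)$ is absorbed into the argument $m_1 a_\partial^{-1}$.

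Finally, parametrize $\mathrm{Stab}\backslash M_0' \simeq \mathbf{G}_m$ by $y \mapsto a_y$. One checks that $a_y \in P_0'$, i.e.\ $x^2 = \mu^2\nu^{-4}$ and $y$-coordinate $=\mu\nu^{-1}$ with $\mu = 1$, $\nu = y$: indeed $(y^{-2})^2 = y^{-4}$ and the third coordinate is $y^{-1}$. The Haar measure on the quotient is then $dy$, and the modulus factor is $\delta_{P_0'}^{-1/2}(a_y)$. The main obstacle I expect is the orbit and stabilizer bookkeeping in $M_0'$ through the isogeny and central quotient. Concretely, I need to verify that the stabilizer, after quotienting, is exactly the $\mathrm{PGL}_2$ over which $\Omega_{f\times\chi}$ integrates, with volume-one normalization and no leftover finite-index factor, and that the $\chi$-twist comes out as $\chi(\det(g)^{-1}m_2)$. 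I would check this first by writing $M_0'$ explicitly in coordinates $(m,\lambda,x,y)$ modulo $(z\mathrm{Id}, z^{-2}, \cdot)$.
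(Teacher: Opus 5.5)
Your proposal is correct and follows the paper's proof step by step: the decomposition $dp=\delta_{P_0'}^{-1}(m)\,du\,dm$ together with $|\eta_{Y'}|=\delta_{P_0'}$ on $P_0'$; the factor $\Delta^{3/2}$ from passing to the self-dual measure on the $3$-dimensional $[U_0']$; the Fourier expansion over the characters $\psi_\alpha$ of the line $U/U_0'$, with the $\alpha=0$ term killed by cuspidality; the transitivity of $M_0'(F)$ on $\alpha\in F^\times$ with stabilizer $R$; and the section $y\mapsto a_y$ of $R\backslash M_0'\simeq\mathbf{G}_m$. One small correction to your bookkeeping: after the central quotient, the stabilizer $R=\{(\mathrm{diag}(g,\det(g)J_2{}^tg^{-1}J_2),\pm\det(g)^{-1},1)\}$ is $\mathrm{PGL}_2\times\mu_2$ with no residual torus. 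The $\mu_2$ is the sign in the $x$-coordinate, which the unramified $\chi$ does not see, and the one-dimensional torus is the quotient $R\backslash M_0'$, exactly as your final step already uses.
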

\begin{proof}
    Consider the decomposition of $P'_0$ into its unipotent radical $U_0'$ and its maximal reductive subgroup $M_0'$. We can write
    $$I_{P_0'}(f \otimes \chi) = \int_{[U_0'][M'_0]}(f \otimes \chi)(\iota(um)a_0)|\eta_{Y'}(m)|^{1/2}\delta_{P_0'}^{-1}(m) \, d(u,m),$$
    where the factor $\delta_{P_0'}^{-1}(m)$ appeared because $d p$ is the left invariant Haar measure (see \cite[IV (9)]{Cartier}).
    For simplicity, denote temporarily $(f \otimes \chi)$ by $\mathcal{F}$. We may Fourier expand along the codimension 1 abelian subgroup $[U'_0] \subset [\mathrm{Mat}_2]$ and appeal to cuspidality to write
    $$\int_{[U'_0]}\mathcal{F}(\iota(um) a_0) \, du = \Delta^{u'_0/2}\int_{[U_0']}\mathcal{F}(\iota(um)a_0) \, d^\psi u = \, \Delta^{u'_0/2}\sum_{\alpha \in F^\times} \, \mathcal{F}_{U,\psi_\alpha}(\iota(m) a_0)$$
   where  $u_0' = \mathrm{dim}(U_0')=3$, we have switched to the self-dual measure $d^\psi u$ in order to Fourier expand, and we used the notation
    $$\mathcal{F}_{U,\psi_\alpha}(g) := \int_{[U]} \, \mathcal{F}(ug) \, \psi_\alpha(u) \, d^\psi u.$$
    We also recall that $\psi_\alpha:[U/U_0']\to \C^\times$ denotes the character
    \[n(X):= \left[\begin{smallmatrix}I_2&X\\ &I_2\end{smallmatrix}\right]\mapsto \psi_{\alpha}(n(X))= \psi(\mathrm{Tr}(A_{\alpha}X)), \text{ with } A_{\alpha} = \left[\begin{smallmatrix}\alpha& \\ &-\alpha\end{smallmatrix}\right].\]
    Now, concretely, $M'_0$ is given by 
    $$M'_0 = \bigg\{\left(\left[\begin{smallmatrix} g & 0\\ 0 & \lambda J_2 {}^{t}g^{-1}J_2\end{smallmatrix}\right],x, \mathrm{det}(g)\lambda^{-1}\right) \in H'\,:\, x^2=\mathrm{det}(g)^2 \lambda^{-4}\bigg\}$$
    whose $F$-points act transitively on the set of $\psi_\alpha$'s, with the stabilizer of the $\alpha = 1$ term being the $F$-points of the subgroup $R \subset M'_0$ given by $$\bigg\{\left(\left[\begin{smallmatrix} g & 0\\ 0 & \mathrm{det}(g) J_2 {}^{t}g^{-1}J_2\end{smallmatrix}\right],\pm \mathrm{det}(g)^{-1}, 1\right) \in H'\bigg\}.$$
    Returning back to the computation of $I_{P_0'}$, since $f$ and $\chi$ are unramified we may then unfold to obtain
    \begin{align*}
        I_{P_0'}(f \otimes \chi) &= \Delta^{3/2}\int_{R(\mathbb{A})\backslash M'_0(\mathbb{A})}\int_{[R]}(f_{U,\psi_1} \otimes \chi)(\iota(rm)a_0)|\eta_{Y'}(m)|^{1/2}\delta_{P_0'}^{-1}(m) \, dr \, dm\\
        &=  \Delta^{3/2}\int_{R(\mathbb{A})\backslash M'_0(\mathbb{A})}\int_{[\mathrm{PGL}_2]}\!\!\! \!\! f_{U,\psi_1}\left ( \left[\begin{smallmatrix} g & 0\\ 0 & \mathrm{det}(g) J_2 {}^{t}g^{-1}J_2\end{smallmatrix}\right] m_1 a_\partial^{-1} \right)  \chi( \mathrm{det}(g)^{-1} m_2 )\delta_{P_0'}^{-1/2}(m) \, dg \, dm\\ 
        &= \Delta^{3/2}\int_{R(\mathbb{A})\backslash M'_0(\mathbb{A})} \Omega_{f \times \chi}(\iota(m)a_0) \delta_{P_0'}^{-1/2}(m) \, dm,
    \end{align*}
    where we denoted by $(m_1,m_2)$ the coordinates of $\iota(m)$ and we used the fact that  the characters $|\eta_{Y'}|$ and $\delta_{P_0'}$ are equal when restricted to $P_0'$.
    In order to simplify the outer domain of integration, note that there is a map
$$R \backslash M'_0 \longrightarrow \Gm$$
$$\left(\begin{bmatrix} g & 0\\ 0 & \lambda g^*\end{bmatrix},x,\mathrm{det}(g)\lambda^{-1}\right)\longmapsto \mathrm{det}(g)\lambda^{-1},$$
where $g^* = J_2 {}^t g^{-1} J_2$, exhibiting $R$ as a normal subgroup. Using the section 
$$a: y \longmapsto a_y = \left(\begin{bmatrix} \mathrm{Id}_{2} & 0\\ 0 & y\cdot\mathrm{Id}_{2}\end{bmatrix}, y^{-2},  y^{-1}\right),$$
we finally obtain the desired expression in the lemma
$$I_{P_0'}(f \otimes \chi) = \Delta^{3/2}\int_{\mathbb{A}^\times} \Omega_{f \times \chi}(\iota(a_y)a_0)\delta_{P_0'}^{-1/2}(a_y) \, dy.$$

\end{proof}

Combining the preceding calculations, we obtain an Eulerian product expression for the automorphic period $P_{\check{X}}(f \otimes \chi)$.
\begin{thm} \label{theorem Gcheck global integral}
    Let $f \otimes \chi$ be a factorizable unramified Shalika normalized tempered cusp form on $\check{G}$. Then
    $$P_{\check{X}}(f \otimes \chi) = C_{Y'}'' \cdot \chi(\partial^{-1/2} )\cdot \Omega_f^\circ \cdot L( 3/2, \chi) \cdot \prod_v \, Z_v(f_v \times \chi_v)$$
    where 
    \begin{itemize}
        \item $C_{Y'}'' = \Delta^{-13/4}$,
        \item $\Omega_f^\circ = \Delta^{7/4}\chi(\partial)^{-1}$ is the global constant appearing in \eqref{equation Omegaf0},
        \item $L( 3/2, \chi)$ is the Tate $L$-function of the character $\chi$,
        \item and $Z_v(f_v \times \chi_v)$ is the local expression
    $$Z_v(f_v \times \chi_v) = \sum_{k \geq 0} \,  \Omega_{f,v}^{\mathrm{ur}} \left (\left[\begin{smallmatrix} \varpi_v^{k} \cdot\mathrm{Id}_2 & 0\\ 0 & \mathrm{Id}_2 \end{smallmatrix} \right] \right)\delta_{P_0'}^{1/2}(a_{\varpi_v^k}).$$
    \end{itemize}
\end{thm}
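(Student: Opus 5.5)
Combine the two preceding lemmas and then factor the resulting global Shalika function into local Shalika functions using the Shalika normalization of \S\ref{subsubsecn=2}.

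First, insert the second lemma (the unfolding of $I_{P_0'}$) into the first one. This gives
\[
P_{\check{X}}(f\otimes\chi)=C_{Y'}'\,\Delta^{3/2}\,\chi(\partial)^{-1/2}\,L(3/2,\chi)\int_{\mathbb{A}^\times}\Omega_{f\times\chi}(\iota(a_y)a_0)\,\delta_{P_0'}^{-1/2}(a_y)\,dy .
\]
Here we have already used $\int_{\mathbb{A}^\times\cap\mathfrak{o}}|x|^{3/2}\chi(x)\,dx=L(3/2,\chi)$. The constant is $C_{Y'}'\Delta^{3/2}=\Delta^{-19/4+6/4}=\Delta^{-13/4}=C''_{Y'}$, which matches the statement.

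Next, identify $\Omega_{f\times\chi}(\iota(a_y)a_0)$ with the integral appearing in \eqref{equation expliciting Shalika normalization}. We have
\[
\iota(a_y)=\bigl(\mathrm{diag}(1,1,y,y),\,y^{-2}\bigr)\sim\bigl(\mathrm{diag}(y^{-1},y^{-1},1,1),\,1\bigr)
\]
modulo the central quotient $(z\mathrm{Id},z^{-2})$ with $z=y$. Because the two entries of $\iota(a_y)$ are toral and central-twist related, the expression reduces to $\Omega_f^\circ$ times local Shalika functions evaluated at $\varpi_v^{(k,k)}$ (after replacing $y$ by $y^{-1}$ if necessary). There is also a central contribution $\chi$ evaluated on a power of $y$, which should cancel against the $y$-component once $\omega_\pi=\chi^2$ is used. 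Concretely, translate $f$ by $\mathrm{diag}(y^{-1}\mathrm{Id}_2,\mathrm{Id}_2)$ via \eqref{equation expliciting Shalika normalization}. With $a_1=\mathrm{diag}(y^{-1}\mathrm{Id}_2,\mathrm{Id}_2)$ and $a_2$ the accompanying $\mathbf{G}_m$-component, that formula gives
\[
\Delta^{-7/4}\,\Omega_f^\circ\,\chi^{-1}(a_2\partial^{-1})\prod_v\Omega^{\mathrm{ur}}_{f,v}(a_{1,v}).
\]
The prefactor $\Delta^{7/4}$ coming from $\Omega_f^\circ$ combines with the $\Delta^{-7/4}$ that converted from $P_M$ to the raw integral. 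The leftover $\chi(\partial)^{\pm1}$ factors are absorbed into $\chi(\partial^{-1/2})$. We then keep only $\Omega_f^\circ$ as displayed, using $\Omega_f^\circ=\Delta^{7/4}\chi(\partial)^{-1}$ from \eqref{equation Omegaf0}.

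Finally, the $y$-integral factors over places because everything is unramified. Write $y_v=\varpi_v^{k}u$. The integral over $\mathfrak{o}_v^\times$ is trivial and leaves a sum over $k\in\mathbb{Z}$ of $\Omega^{\mathrm{ur}}_{f,v}(\varpi_v^{(k,k)})\delta_{P_0'}^{1/2}(a_{\varpi_v^k})$. The sign of the exponent of $\delta$ is fixed by the substitution $y\mapsto y^{-1}$. By Proposition \ref{Sakellaridis_prop} the terms with $k<0$ vanish, since the unramified Shalika function is supported on $n\ge0$. This yields $Z_v(f_v\times\chi_v)$.

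The main obstacle is the careful bookkeeping in the middle step. One has to track precisely how $a_0$, $\iota(a_y)$ and the central quotient interact, so that the powers of $\chi$, $\partial$ and $\Delta$ come out exactly as $\chi(\partial^{-1/2})\,\Omega_f^\circ$ with no stray characters of $y$. I would first check the exponent of $\chi$ on $y$: using $\omega_\pi=\chi^2$, the central twist should contribute $\chi(y)^{2}\chi(y^{-2})=1$.
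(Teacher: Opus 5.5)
Your outline is the paper's: combine the two lemmas (giving $C'_{Y'}\Delta^{3/2}=\Delta^{-13/4}$ and the Tate factor $L(3/2,\chi)$), factor the global Shalika function through \eqref{equation expliciting Shalika normalization}, then evaluate the local sums. The outer steps are fine: unit volume of $\mathfrak{o}_v^\times$, the substitution $y\mapsto y^{-1}$, and vanishing of $\Omega^{\mathrm{ur}}_{f,v}(\varpi_v^{(n,n)})$ for $n<0$. The gap is the middle step. You flag it as the main obstacle but do not carry it out, and as written it does not go through, for two reasons.

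First, $\Omega_{f\times\chi}$ is not the standard Shalika integral. It is built from $f_{U,\psi_1}$, with $\psi_1(n(X))=\psi(\mathrm{Tr}(\mathrm{diag}(1,-1)X))$, and is integrated over the twisted Levi $g\mapsto\mathrm{diag}(g,\det(g)J_2{}^tg^{-1}J_2)$. By contrast, \eqref{equation expliciting Shalika normalization} concerns $\Psi$ and $\mathrm{diag}(g,g)$. Toral or central manipulations of $\iota(a_y)$ cannot bridge this.

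The missing ingredient, which the paper uses exactly as in Proposition \ref{Prop_Shalika_twisted}, is conjugation by a rational sign matrix in $\mathrm{GL}_4(\mathfrak{o})$ that commutes with the diagonal torus. It carries the twisted Levi to $\mathrm{diag}(g,g)$ and $\psi_1$ to the trace character. Left $\mathrm{GL}_4(F)$-invariance and right $\mathrm{GL}_4(\mathfrak{o})$-invariance of $f$ then give the identification.

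A sign remark: the paper takes $w_0=\mathrm{diag}(1,1,-1,1)$, and a direct check shows this sends $\psi_1$ to $\Psi^{-1}$. The matrix $\mathrm{diag}(1,1,1,-1)$ gives $\Psi$ on the nose. The two differ by the rational element $\mathrm{diag}(-\mathrm{Id}_2,\mathrm{Id}_2)$ modulo the center, so the discrepancy is harmless.

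Second, the constants. After this conjugation, use the central adjustment $\iota(a_y)\equiv(\mathrm{diag}(y^{-1}\mathrm{Id}_2,\mathrm{Id}_2),1)$ in $\check{G}$; the central element is $(z\mathrm{Id},z^{-2})$ with $z=y^{-1}$, not $z=y$. Then $\Omega_{f\times\chi}(\iota(a_y)a_0)$ is literally the raw integral $\int_{[U][\mathrm{PGL}_2]}f(u\,\mathrm{diag}(g,g)\,a_\partial^{-1}a_1)\chi^{-1}(\det g)\Psi(u)\,d^\psi u\,dg$, with $a_1=\mathrm{diag}(y^{-1}\mathrm{Id}_2,\mathrm{Id}_2)$ and a single shift $a_\partial^{-1}$. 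Comparing the last two lines of \eqref{equation expliciting Shalika normalization} shows this equals $\Omega_f^\circ\prod_v\Omega^{\mathrm{ur}}_{f,v}(a_{1,v})$ exactly. No power of $\Delta$ and no factor $\chi(\partial)$ is produced.

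Passing through the normalized value $P_M(\Pi(a)(f\times\chi))$ and back is legitimate, but then everything cancels. Your claim that leftover $\chi(\partial)^{\pm1}$ factors are ``absorbed into $\chi(\partial^{-1/2})$'' is not a valid step. That factor is already produced in full by the first lemma (it appears in your first display), so any genuine leftover would change the answer. With these two points supplied, your argument coincides with the paper's.
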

\begin{proof}

First, we calculate the global constant prefactor. Multiplying 
$$C_{Y'}' \cdot \Delta^{3/2} = \Delta^{-13/4},$$ we obtain the description of $C_{Y'}''$ in the statement. Note that the adelic integral $I_{P_0'}(f \otimes \chi)$ splits as an Eulerian product of local integrals as $\Omega_{f \times \chi}$ factors into a product of local unramified Shalika functions. Indeed, proceeding as in Proposition \ref{Prop_Shalika_twisted}, we get 
\begin{align*}
    \Omega_{f \times \chi}(\iota(a_y)a_0) &= \int_{[\mathrm{PGL}_2]}(f_{U,\psi_1} \otimes \chi)\left(\left[\begin{smallmatrix} g & 0\\ 0 & \mathrm{det}(g) g^* \end{smallmatrix} \right]\left[\begin{smallmatrix} \mathrm{Id}_2 & 0\\ 0 & y \cdot \mathrm{Id}_2 \end{smallmatrix} \right] a_\partial^{-1},\mathrm{det}(g)^{-1}y^{-2}\right ) \, dg \\ 
    &=  \int_{[U][\mathrm{PGL}_2]}f\left(u\left[\begin{smallmatrix} y^{-1}g & 0\\ 0 & g \end{smallmatrix} \right] a_\partial^{-1} \right)   \chi^{-1}\left(\mathrm{det}(g)\right ) \Psi(u) \, d^\psi u dg 
\end{align*}
where in the second equality we used $\omega_\pi = \chi^2$ and that conjugating by $w_0 = {\rm diag}(1,1,-1,1) \in \GL_4(F)$ identifies $$w_0\left[\begin{smallmatrix} g& \\ &\mathrm{det}(g) g^* \end{smallmatrix}\right]w_0^{-1} =  \left[\begin{smallmatrix} g& \\ &g \end{smallmatrix}\right],\,\,\psi_1(w_0 u w_0^{-1}) = \Psi(u).$$
 Thus, by \eqref{equation expliciting Shalika normalization}, we can write 

$$ \Omega_{f \times \chi}(\iota(a_y)a_0) = \Omega_f^\circ \cdot  \prod_v \Omega_{f,v}^{\rm ur} \left (\left[\begin{smallmatrix} y^{-1} \cdot\mathrm{Id}_2 & 0\\ 0 & \mathrm{Id}_2 \end{smallmatrix} \right] \right),$$
with $\Omega_f^\circ = \Delta^{\frac{7}{4}} \chi^{-1}(\partial)$ the constant of \eqref{equation Omegaf0}. This let us write $\Delta^{-3/2} I_{P_0'}(f \otimes \chi)$ as 
    \begin{align*}
        &\Omega_f^\circ \cdot \prod_v \int_{F_v^\times}  \Omega_{f,v}^{\rm ur} \left (\left[\begin{smallmatrix} y^{-1} \cdot\mathrm{Id}_2 & 0\\ 0 & \mathrm{Id}_2 \end{smallmatrix} \right] \right)\delta_{P_0'}^{-1/2}(a_y) \, dy \\
        &=\Omega_f^\circ \cdot  \sum_{k \in \Z} \, \Omega_{f,v}^{\mathrm{ur}} \left (\left[\begin{smallmatrix} \varpi_v^{-k} \cdot\mathrm{Id}_2 & 0\\ 0 & \mathrm{Id}_2 \end{smallmatrix} \right] \right)\delta_{P_0'}^{-1/2}(a_{\varpi_v^k}) \\ 
         &=\Omega_f^\circ \cdot  \sum_{k \geq 0} \, \Omega_{f,v}^{\mathrm{ur}} \left (\left[\begin{smallmatrix} \varpi_v^{k} \cdot\mathrm{Id}_2 & 0\\ 0 & \mathrm{Id}_2 \end{smallmatrix} \right] \right)\delta_{P_0'}^{1/2}(a_{\varpi_v^k}), 
    \end{align*}
 where in the last step we observed that $\Omega_{f,v}^{\mathrm{ur}} \left (\left[\begin{smallmatrix} \varpi_v^{-k} \cdot\mathrm{Id}_2 & 0\\ 0 & \mathrm{Id}_2 \end{smallmatrix} \right] \right) = 0$ for $k \geq 0$, and made a substitution $k \mapsto -k$ to simplify notation slightly.
\end{proof}

\subsubsection{The local computation} It follows from Theorem \ref{theorem Gcheck global integral} that we can assume the unramified representation $\pi_v$ of $\GL_4(F_v)$ to have Satake parameter $\mathrm{Sat}_{\pi_v}$ in $\GSp_4(\kk)$ with similitude character $\chi_v$. Denote $$s_{(a,b;c)} = \mathrm{Tr}(V_{(a,b;c)}|\mathrm{Sat}_{\pi_v}).$$
The Casselman--Shalika formula of Proposition \ref{CS:Formula:Shalika:simplified} applied to $\Omega_{f,v}^{\mathrm{ur}} $ allows us to write
\begin{equation}
    Z_v(f_v \times \chi_v) = \sum_{k \geq 0} q_v^{-2k} \delta_{P_0'}^{k/2}(a_{\varpi_v}) \bigg(s_{(k,k;0)} - s_{(k-1,k-1;0)}\chi_v(\varpi_v)q_v^{-1}\bigg).
\end{equation}
Using the fact that  $s_{(-1,-1;0)} = 0$, we may compute the preceding expression as 
\begin{align*}
     \sum_{k\geq 0}q_v^{-2k}\delta_{P_0'}^{k/2}(a_{\varpi_v})  s_{(k,k; 0)} - \chi_v(\varpi_v)q_v^{-3}\delta_{P_0'}^{1/2}(a_{\varpi_v}) \bigg(\!\sum_{k\geq 0}q_v^{-2(k-1)}\delta_{P_0'}^{(k-1)/2}(a_{\varpi_v}) s_{(k-1,k-1; 0)}\!\bigg).
\end{align*}
Thus, we conclude that 
$$Z_v(f_v \times \chi_v)
 = (1 - \chi_v(\varpi_v)\delta_{P_0'}^{1/2}(a_{\varpi_v})q_v^{-3})\sum_{k\geq 0}q_v^{-2k}\delta_{P_0'}^{k/2}(a_{\varpi_v})   s_{(k,k; 0)}.$$
Towards the evaluation of our local zeta integral $Z_v(f_v \times \chi_v)$, we first establish the following computation, analogous to Lemma \ref{lemma_on_Lstd_G_side}.

\begin{lem}\label{lemma_on_std_n} For ${\rm Re}(s) \gg 0$, we have 
     $$\frac{L(s,\pi_v,\wedge_0^2 \otimes \chi_v)}{L(2s,\chi_v^{2})} = \sum_{k \geq 0}q_v^{-ks}s_{(k,k;0)} ,$$  
    where recall that $\wedge_0^2$ is the $5$-dimensional representation of $\GSp_4(\kk)$ of highest weight $(1,1;-1)$ and $s_{(a,b;c)} = \mathrm{Tr}(V_{(a,b;c)}|\mathrm{Sat}_{\pi_v})$.
\end{lem}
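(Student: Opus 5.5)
The plan is to mirror the proof of Lemma \ref{lemma_on_Lstd_G_side}, now for the untwisted $\GL_4$ situation in which $\mathrm{Sat}_{\pi_v}\in\GSp_4(\kk)$ has similitude $\chi_v(\varpi_v)$. First I will write the local $L$-factor as a generating function of symmetric powers:
$$L(s,\pi_v,\wedge_0^2\otimes\chi_v)=\sum_{k\ge0}q_v^{-ks}\,\mathrm{Tr}\big(\mathrm{Sym}^k(\wedge_0^2)\,\big|\,\mathrm{Sat}_{\pi_v}\big)\,\chi_v(\varpi_v)^k .$$
Next I will decompose $\mathrm{Sym}^k(\wedge_0^2)$ into irreducibles. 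Since $\wedge_0^2$ is the standard representation of $\mathrm{SO}_5$, pulled back along $\GSp_4\to\mathrm{SO}_5$, the classical harmonic decomposition gives
$$\mathrm{Sym}^k(\wedge_0^2)\simeq\bigoplus_{i=0}^{\lfloor k/2\rfloor}V_{(k-2i,k-2i;-k+2i)}.$$
Here the complement of the harmonic part is multiplication by the invariant quadratic form. This is exactly the identity quoted from \cite[Lemma 6.2]{CauchiGutiCS} in the proof of Lemma \ref{lemma_on_Lstd_G_side}, so I will simply invoke it.

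Then I will absorb the twist using \eqref{playing_with_traces}:
$$s_{(j,j;-j)}\,\chi_v(\varpi_v)^{k}=\chi_v(\varpi_v)^{k-j}\,s_{(j,j;0)}.$$
With $j=k-2i$ this factor equals $\chi_v(\varpi_v)^{2i}$, and the $L$-function becomes
$$\sum_{k\ge0}\ \sum_{i=0}^{\lfloor k/2\rfloor} q_v^{-ks}\,\chi_v^{2i}(\varpi_v)\,s_{(k-2i,k-2i;0)} .$$
Reindexing by $k=2i+j$ (the Cauchy product formula) factors this as
$$\Big(\sum_{i\ge0}q_v^{-2is}\chi_v^{2i}(\varpi_v)\Big)\Big(\sum_{j\ge0}q_v^{-js}s_{(j,j;0)}\Big).$$
The first factor is $L(2s,\chi_v^2)$, and dividing it out gives the claim. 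Convergence for $\mathrm{Re}(s)\gg0$ is harmless, since all the series are dominated by geometric series once $q_v^{-\mathrm{Re}(s)}$ is small relative to the size of the Satake parameters.

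The only step needing care is the bookkeeping of the similitude twist, i.e. checking that $\wedge_0^2\otimes\chi_v$ has trivial central character behaviour so that its Satake trace lands in weights $(j,j;0)$ times powers of $\chi_v^2$. Concretely, this means confirming that the highest weight $(1,1;-1)$ twisted by $\chi_v$ matches $(1,1;0)$. I would verify this directly on a diagonal element $\mathrm{diag}(x_1,x_2,\nu x_2^{-1},\nu x_1^{-1})$ using \eqref{highest_weight_GSp4}, with $\nu=\chi_v(\varpi_v)$.
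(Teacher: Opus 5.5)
Your proposal is correct and is essentially the paper's argument. The paper likewise invokes \cite[Lemma 6.2]{CauchiGutiCS} to write $L(s,\pi_v,\wedge_0^2\otimes\chi_v)=\sum_{k\ge 0}\sum_{i=0}^{\lfloor k/2\rfloor}q_v^{-ks}s_{(k-2i,k-2i;2i)}$, which is your symmetric-power decomposition once $\chi_v(\varpi_v)^k$ is absorbed via \eqref{playing_with_traces} using that $\chi_v(\varpi_v)$ is the similitude of $\mathrm{Sat}_{\pi_v}$, and then it splits off $L(2s,\chi_v^2)$ by the Cauchy product exactly as you do.
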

\begin{proof}
Thanks to \cite[Lemma 6.2]{CauchiGutiCS}, we can write $$L(s,\pi_v,\wedge_0^2 \otimes \chi_v) = \sum_{k=0}^\infty \sum_{i=0}^{\lfloor k/2\rfloor}q_v^{-ks} s_{(k-2i,k-2i;2i)} .$$
    Moreover, using the  Cauchy product formula, the right hand side equals $$\left( \sum_{k=0}^\infty q_v^{-2ks}\chi_v({\varpi_v})^{2k} \right) \left( \sum_{k = 0}^\infty q_v^{-ks} s_{(k,k;0)} \right).$$
The result then follows.
\end{proof}

Applying the preceding lemma to the last expression of $Z_v(f_v \times \chi_v)$, we obtain 
\begin{equation}\label{we are surfing baby}
    Z_v(f_v \times \chi_v) = \frac{L(2+\langle a, \rho_{P_0'}\rangle, \pi_v, \wedge_0^2 \otimes \chi_v)}{L(4+2\langle a, \rho_{P_0'}\rangle, \chi^2_v) L(3+ \langle a, \rho_{P_0'}\rangle, \chi_v)}.
\end{equation}
where $a: \Gm \to M_0'$ is the cocharacter defining $y \mapsto a_y$ and $\rho_{P_0'}$ denotes the half sum of positive roots that belong to $P_0'$, so that $\delta_{P_0'}^{1/2}(a_{\varpi_v}) = q_v^{-\langle a, \rho_{P_0'}\rangle}$. 
\begin{thm}\label{thm PXcheck final}
     Let $f \otimes \chi$ be a factorizable unramified Shalika normalized tempered cusp form on $\check{G}$. Then
     $$P_{\check{X}}(f \otimes \chi) =  \chi(\partial^{-3/2} ) \cdot \Delta^{-3/2} \cdot  \frac{L(1/2,\varphi_f, \wedge_0^2 \otimes \chi)}{L(1, \chi^2)}.$$
\end{thm}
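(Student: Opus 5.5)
The plan is to assemble Theorem \ref{theorem Gcheck global integral} with the local formula \eqref{we are surfing baby}, after computing the exponent $\langle a,\rho_{P_0'}\rangle$ explicitly.

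\textbf{Step 1: the value of $\delta_{P_0'}^{1/2}(a_{\varpi_v})$.} Since $|\eta_{Y'}|$ and $\delta_{P_0'}$ agree on $P_0'$ and $\eta_{Y'}(g,x;y)=y^3$, we get $\delta_{P_0'}(a_y)=|y^{-1}|^{3}$. Hence $\delta_{P_0'}^{1/2}(a_{\varpi_v})=q_v^{3/2}$, i.e.\ $\langle a,\rho_{P_0'}\rangle=-3/2$. This sign is delicate, and I would double-check it against the adjoint action of $a_y$ on the unipotent radical $U_0'$.

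\textbf{Step 2: the local factors.} With $\langle a,\rho_{P_0'}\rangle=-3/2$, formula \eqref{we are surfing baby} becomes
\[Z_v(f_v\times\chi_v)=\frac{L(1/2,\pi_v,\wedge_0^2\otimes\chi_v)}{L(1,\chi_v^2)\,L(3/2,\chi_v)}.\]
Taking the product over all $v$, the global factor $L(3/2,\chi)$ in Theorem \ref{theorem Gcheck global integral} cancels the denominators $\prod_v L(3/2,\chi_v)$. Convergence is interpreted via analytic continuation in the central parameter, as in Remark \ref{remark convergence in numerical duality}. The Euler product then becomes $L(1/2,\varphi_f,\wedge_0^2\otimes\chi)/L(1,\chi^2)$.

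\textbf{Step 3: the global constants.} Theorem \ref{theorem Gcheck global integral} gives the prefactor
\[C_{Y'}''\,\chi(\partial^{-1/2})\,\Omega_f^\circ=\Delta^{-13/4}\cdot\chi(\partial^{-1/2})\cdot\Delta^{7/4}\chi(\partial)^{-1}=\Delta^{-3/2}\chi(\partial^{-3/2}).\]
This matches the claimed constant.

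\textbf{Main obstacle.} The real difficulty is bookkeeping the sign and normalization of $\delta_{P_0'}$ in Step 1, together with the $a_\partial$ translations in the Shalika normalization. An off-by-sign there would shift the $L$-values to $s=7/2$ instead of $s=1/2$. The consistency checks are twofold:
\begin{itemize}
\item the $L(3/2,\chi)$ factor must cancel exactly;
\item the final answer must match $L_X/L_{\check M}$ from \eqref{Shalika normalized spectral period X} up to $\Delta^{-5/4}$. Indeed $\Delta^{-1/4}\cdot\Delta^{-5/4}=\Delta^{-3/2}$, with $\chi(\partial)^{-3/2}=\mathfrak{z}_X$.
\end{itemize}
Both checks should confirm the value of the exponent.
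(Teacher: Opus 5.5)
Your proposal is correct and follows the paper's proof: combine Theorem \ref{theorem Gcheck global integral} with \eqref{we are surfing baby} at $\langle a,\rho_{P_0'}\rangle=-3/2$, cancel the global $L(3/2,\chi)$ against the local denominators, and compute the constant $\Delta^{-13/4}\chi(\partial^{-1/2})\cdot\Delta^{7/4}\chi(\partial)^{-1}=\Delta^{-3/2}\chi(\partial^{-3/2})$. Your derivation of the exponent from $\eta_{Y'}(a_y)=y^{-3}$ is an extra check that the paper omits, since it simply asserts the value; it agrees with the adjoint action of $a_y$ on the $3$-dimensional $U_0'$, which is scaling by $y^{-1}$.
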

\begin{proof}
Explicitly, we have $$ C_{Y'}'' \cdot \chi(\partial^{-1/2} )\cdot \Omega_f^\circ = \chi(\partial^{-3/2} )\Delta^{-3/2} .$$  Thus combining this with  \eqref{we are surfing baby}, the Tate $L$-function appearing in Theorem \ref{theorem Gcheck global integral}, and the computation $\langle a, \rho_{P_0'}\rangle = -3/2$, we obtain the final expression for $P_{\check{X}}(f \otimes \chi)$.
\end{proof}
 Comparing with the spectral $X$-period, we see that for a Shalika normalized cusp form $f \times \chi$ on $\check{G}$, we have the following period identity.
 \begin{cor}\label{final cor Gcheck side}
    Let $f \otimes \chi$ be a factorizable Shalika normalized unramified tempered cusp form on $\check{G}$. Then we have the period identity $$P_{\check{X}}(f \otimes \chi) = \Delta^{-5/4} \cdot  L_X(\varphi_{f\otimes \chi})/L_{\check{M}}(\varphi_{f\otimes \chi}),$$
    with discrepancy $-5$.
 \end{cor}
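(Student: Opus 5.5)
The identity follows by directly comparing the automorphic evaluation in Theorem \ref{thm PXcheck final} with the spectral computations of \S\ref{Section:The:Space:G:X}. The only subtlety is identifying the parameters on both sides.

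Take $\varphi = \varphi_{f\otimes\chi}$, viewed as a $G = \mathrm{GSpin}_6$-valued parameter. Since $f\otimes\chi$ is Shalika normalized, Theorem \ref{theorem Jacquet-Shalika} and Definition \ref{def shalikanormalized} show that $\varphi_f$ factors through $\mathrm{GSp}_4$ with similitude character $\chi$. Under the embedding \eqref{GSp4embedsintoGSpin33}, $g\mapsto (g,\nu(g))$, this says exactly that $\varphi$ has image in a conjugate of $H\subset G$, with $\nu\circ\varphi = \chi$. So we are in the distinguished case of both Proposition \ref{proposition spectral X period} and $L_{\check{M}}$. I then use the Shalika-normalized quotient \eqref{Shalika normalized spectral period X}:
$$\frac{L_X(\varphi)}{L_{\check{M}}(\varphi)} = (\nu\circ\varphi)(\partial)^{-3/2}\,\Delta^{-1/4}\,\frac{L(1/2,\varphi,\wedge_0^2\otimes\nu)}{L(1,\varphi,\nu^2)}.$$
Substituting $\nu\circ\varphi=\chi$ gives $\chi(\partial^{-3/2})\Delta^{-1/4}\, L(1/2,\varphi_f,\wedge^2_0\otimes\chi)/L(1,\chi^2)$. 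This is consistent with the formula $\mathfrak{z}_X(\varphi)=\chi(\partial^{-3/2})$ computed just after that display. Note that the quotient removes the common factor $L(1,\varphi,\wedge^2_0)$ together with the $\Delta^{-5/4}$ of $L_{\check M}$.

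Multiplying by $\Delta^{-5/4}$ gives $\chi(\partial^{-3/2})\Delta^{-3/2}L(1/2,\varphi_f,\wedge_0^2\otimes\chi)/L(1,\chi^2)$, which is exactly the right-hand side of Theorem \ref{thm PXcheck final}. Hence $P_{\check X} = \Delta^{-5/4}L_X/L_{\check M}$. Reading this against Definition \ref{definition weak numerical duality} (with $\Delta^{a/4}$, $a=-5$) gives discrepancy $-5$.

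The main point needing care is that the hypothesis of Proposition \ref{proposition spectral X period}, that the cone point is the only fixed point, holds for tempered cuspidal $\varphi$. If it fails, one should argue by analytic continuation in the central character parameter as in Remark \ref{remark convergence in numerical duality}. The other thing to check is that the representation $\wedge_0^2\otimes\nu$ on the $X$ side matches $\wedge^2_0\otimes\chi$ on the automorphic side, i.e.\ that $V=\wedge^2_0\otimes\nu$ evaluated at $\varphi$ equals $\wedge_0^2(\varphi_f)\otimes\chi$. This follows directly from $\nu\circ\varphi=\chi$.
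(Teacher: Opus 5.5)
Your proposal is correct and is the same argument as the paper's. You combine Proposition~\ref{proposition spectral X period} with the Shalika-normalized quotient \eqref{Shalika normalized spectral period X}, substitute $\nu\circ\varphi_f=\chi$, and match the result, after multiplying by $\Delta^{-5/4}$, against Theorem~\ref{thm PXcheck final}; your remarks on why $\varphi_f$ lands in $\mathrm{GSp}_4$ with similitude $\chi$, and on the fixed-point hypothesis, are reasonable checks that the paper leaves implicit.
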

 \begin{proof}
 By Proposition \ref{proposition spectral X period} and \eqref{Shalika normalized spectral period X}, we have
 \begin{align*}
      \frac{L_X(\varphi_{f\otimes \chi})}{L_{\check{M}}(\varphi_{f\otimes \chi})} &= (\nu \circ \varphi_f)(\partial)^{-3/2}  \Delta^{-1/4}   \frac{L(1/2, \varphi_f, \wedge_0^2 \otimes \nu)}{L(1, \varphi_f, \nu^{2})} \\ &= \chi(\partial)^{-3/2}\Delta^{-1/4} \frac{L(1/2, \varphi_f, \wedge_0^2 \otimes \chi)}{L(1, \chi^{2})}.
 \end{align*}
Therefore, Theorem \ref{thm PXcheck final} gives
$$P_{\check{X}}(f \otimes \chi) = \Delta^{-5/4} \cdot  \frac{L_X(\varphi_{f\otimes \chi})}{L_{\check{M}}(\varphi_{f\otimes \chi})},$$
as desired.
 \end{proof}
\begin{rem}
  If $f \otimes \chi$ were not Shalika normalized (but still with non-trivial $P_M$-period), the formula of Corollary \ref{final cor Gcheck side} would read as $$P_{\check{X}}(f \otimes \chi) / P_M(f \otimes \chi) = \Delta^{-5/4} \cdot  L_X(\varphi_{f\otimes \chi})/L_{\check{M}}(\varphi_{f\otimes \chi}).$$
\end{rem}

\subsection{Remarks on discrepancies} \label{subsection discrepancy}

Since the first examples of singular relative Langlands duality were discovered, we have understood that discrepancies are typical and unavoidable in the majority of period formulae. Although we currently do not have a satisfying conceptual understanding, by carefully tracing through our (Shalika) normalization scheme and global unfolding, one can start to make some sense of the resulting discrepancies. Our discussion here is definitely not the final word on this topic, but rather a starting point for future investigations. 

Consider the situation of Conjecture \ref{conjecture ratio of periods}, i.e., we attempt to establish a period formula of the form 
\begin{equation}\label{equation discrepancy discussion}
    \frac{P_{M_1}}{P_{M_2}} \overset{?}{=} \frac{L_{\check{X}_1}}{L_{\check{X}_2}},
\end{equation}
where $(\overset{?}{=})$ means we are looking for an equality allowing for discrepancy corrections, under the following assumptions: 
\begin{itemize}
    \item On the automorphic side, we assume that $P_{M_1}$ unfolds to an adelic integral of $P_{M_2}$, to be made precise below. We remind the reader that the periods $P_{M_1}, P_{M_2}$ are defined \textit{with a $\partial^{1/2}$-shift} as in Definition \ref{definition automorphic period} and \eqref{equation theta series}.
    \item On the spectral side, we assume that $\check{X}_2 = H \backslash \check{G}$ for some reductive subgroup $H \subset \check{G}$, and $\check{X}_1$ is the induction from $H$ to $\check{G}$ of some (possibly singular) $H$-variety $Y$. 
    \item Finally, we assume that $P_{M_2} = L_{\check{X}_2}$ is a smooth numerical duality (arising from a hyperspherical dual pair, say) without discrepancies. 
\end{itemize}
One may wish to take the case of $M_2 = \mathrm{Whittaker}$ and $\check{X}_2 = \mathrm{pt}$, which is already interesting.

With the above setting in mind, let us consider first the global constant appearing in the definition of the spectral period: for a graded $\check{G}$-variety $\check{X}$, we may define the
$$\text{spectral discrepancy } \sigma(\check{X}) := \varepsilon_{\check{X}} - \mathrm{dim}(\check{X})$$
as the power of $\Delta^{1/4}$ which appears in Definition \ref{definition spectral period}. In some sense, the spectral discrepancy measures the nonlinearity of $\check{X}$: it vanishes when $\check{X}$ is a vector space with $\Ggr$ acting by scaling. Furthermore, spectral discrepancy behaves nicely with respect to induction: for $H \subset \check{G}$ a reductive subgroup and $Y$ a graded $H$-variety with spectral discrepancy $\sigma(Y)$, we have
$$\sigma(\mathrm{Ind}_H^{\check{G}}(Y)) = \sigma(Y) - \mathrm{dim}(H \backslash \check{G}).$$
In particular, if we consider the ratio of spectral periods on the right hand side of \eqref{equation discrepancy discussion} for $\check{X}_1 = \mathrm{Ind}_H^{\check{G}}(Y)$ and $\check{X}_2 = H \backslash \check{G}$, then the spectral discrepancy subtracts to give a \textit{relative spectral discrepancy}
\begin{equation}
    \sigma_{\check{X}_1/\check{X}_2} := \sigma(\check{X}_1) - \sigma(\check{X}_2) = \sigma(Y)
\end{equation}
as the power of $\Delta^{1/4}$ appearing on the right hand side of \eqref{equation discrepancy discussion}, and we regard it as the spectral discrepancy in the case when we normalize $L_{\check{X}_1}$ by $L_{\check{X}_2}$.

The analogue of the previous paragraph on the automorphic side is more subtle. To start, we may define, for $M = (X,\Psi)$ a graded Hamiltonian $G$-variety, its
$$\text{automorphic discrepancy } \alpha(M) := \mathrm{dim}(X) - \mathrm{dim}(G) - \varepsilon_X,$$
as the power of $\Delta^{1/4}$ in Definition \ref{definition automorphic period}. Now let us make precise the assumption that $P_{M_1}$ unfolds to an adelic integral of $P_{M_2}$: for a cusp form $f$ on $G$ with trivial central character, we assume that there is a combination of unfolding and Fourier--Whittaker expansions that allow us to write 
\begin{equation} \label{equation PM1 unfolds to PM2}
    P_{M_1}(f) = \Delta^{\frac{\alpha(M_1)+ d + 2u }{4}} \cdot \int_{\Xi(\mathbb{A})} \, |\eta_{M_1}(a)|^s \, P_{M_2}^\psi(r_a \cdot f) \, da,
\end{equation}
where $\Xi(\mathbb{A})$ is some adelic reductive subgroup of $G(\mathbb{A})$, $s \in \Q$, $r_a$ denotes right translation by $a \in \Xi(\mathbb{A})$, and $P_{M_2}^\psi$ is defined by the same formula as $P_{M_2}$ but using the self-dual measure on the unipotent regions of integration, and dropping all the global constants; in other words, $$P_{M_2}^\psi = \Delta^{\frac{-\alpha(M_2) + 2 \, \mathrm{dim}(U)}{4}}P_{M_2}$$ as automorphic distributions, where we write $M_2 = (U L \backslash G, \Psi_U) $, with $U$ (resp. $L$) a possibly trivial unipotent (resp. reductive) subgroup of $G$ and $\Psi_U$ an affine line bundle encoded by a generic character on $[U]$. The numbers $d$ and $u$ appearing in \eqref{equation PM1 unfolds to PM2}, which are the most subtle constants associated with the unfolding process, are described as follows: \begin{itemize}
    \item $u $ is the dimension of unipotent regions of integrations in $P_{M_1}$ on which a Fourier--Whittaker expansion is performed. Switching from the probability measure to the self-dual measure introduces the extra factor of $\Delta^{2u}$ above.
    \item $d$ takes into account the transition from the $\partial^{1/2}$-shift in $\theta_{M_1}$ to an idèle $a_{M_2,0} \in \Xi(\A)$ which effectuates the $\partial^{1/2}$-shift in $\theta_{M_2}$. More precisely, if we let $h_{\partial^{1/2}}$ be any element in $\Xi(\A)$ which acts on the preferred base point of $M_1$ as $\partial^{1/2}$ and such that $h_{\partial^{1/2}}^{-1}$ equals $a_{M_2,0}$ modulo the center of  $G(\A)$, then  $$ \Delta^{d/4} = |\eta_{M_1}( h_{\partial^{1/2}}^{-1}) |^s.$$
\end{itemize}
Since $M_2$ is hyperspherical, it is reasonable to write $\Omega_{f,v}^{\mathrm{ur}}$ as the unramified local spherical function \cite{SakellaridiSsphericalFunctions} associated to $M_2$ for each place $v$, normalized to have value 1 at the identity.
By multiplicity one of the local models in which $\Omega_{f,v}^{\mathrm{ur}}$ lives, we have
\begin{equation} \label{equation global spherical function and local spherical functions}
    P_{M_2}^\psi(r_a \cdot f) =  \Omega_f^\circ \cdot \prod_v \, \Omega_{f,v}^{\mathrm{ur}}(a_v)
\end{equation}
for some constant $\Omega_f^\circ$ depending on the normalization of $f$ (hence on $a_{M_2,0}$). Substituting \eqref{equation global spherical function and local spherical functions} into the integral \eqref{equation PM1 unfolds to PM2} and using the Casselman--Shalika type formulae of Sakellaridis in \textit{op. cit.}, one can in principle relate $P_{M_1}(f)$ directly to an Eulerian product of graded traces of the Satake parameters of $f$, which one hopes to compare to the spectral period $L_Y$. 

Focusing on the global constants that appear, we see that for $f$ to be $M_2$-normalized, the right hand side of \eqref{equation global spherical function and local spherical functions} needs to read $\Delta^{\frac{-\alpha(M_2)+2 \mathrm{dim}(U)}{4}}$ when we take $a = \mathrm{id}$: indeed, $M_2$-normalization means that $P_{M_2}(f) = 1$. Since we have chosen $\Omega_{f,v}^{\mathrm{ur}}(1) = 1$, this means that 
\begin{equation*}
    \Omega_f^\circ = \Delta^{\frac{-\alpha(M_2)-2\mathrm{dim}(U)}{4}}.
\end{equation*}
Substituting \eqref{equation global spherical function and local spherical functions} into \eqref{equation PM1 unfolds to PM2} with the assumption that $f$ is $M_2$-normalized, we obtain
\begin{equation*}
    P_{M_1}(f) = \Delta^{\frac{\alpha(M_1)-\alpha(M_2) + d +  2 (u - \mathrm{dim}(U))}{4}} \cdot  \prod_v \, \int_{\Xi(F_v)}|\eta_{M_1}(a_v)|^s \, \Omega_{f,v}^{\mathrm{ur}}(a_v) \, d a_v.
\end{equation*}
In other words, the correct notion of \textit{relative automorphic discrepancy} of the ratio of automorphic periods $M_1$ by $M_2$ is not simply $\alpha(M_1) - \alpha(M_2)$, but rather it is corrected by $d+ 2(u-\mathrm{dim}(U))$
\begin{equation}
    \alpha_{M_1/M_2} := \alpha(M_1)-\alpha(M_2) + d + 2 (u - \mathrm{dim}(U)).
\end{equation} 

Finally, it is clear that for equation \eqref{equation discrepancy discussion} to be free of discrepancy, it is equivalent to ask for the following condition to hold. 
\begin{defn} \label{definition discrepancy equation} Let $M_1, M_2$ be a pair of graded $G$-actions, and $\check{X}_1, \check{X}_2$ be a pair of graded $\check{G}$-actions, as in \eqref{equation discrepancy discussion}. Then we say that the \textit{discrepancy equation is satisfied} if
    \begin{equation}\label{equation discrepancy equation}
   \alpha_{M_1/M_2} = \sigma_{\check{X}_1/\check{X}_2}.
\end{equation}
\end{defn}
That is, we demand that the relative automorphic and spectral discrepancies match. 

\begin{example} \label{example Whittaker normalization 2}
    Analyzing \eqref{equation discrepancy equation} for Example \ref{example Whittaker normalization}, the (smooth) Hecke period, we see that the discrepancy equation is satisfied if
    $$\alpha(T \backslash \mathrm{PGL}_2) -\alpha(\mathrm{Whitt})+ d + 2(u - \mathrm{dim}(U)) = 0.$$
    This is indeed satisfied, with the following calculations:
    $$\alpha(T\backslash \mathrm{PGL}_2) = -\mathrm{dim}(T) = -1$$
    $$\alpha(\mathrm{Whitt}) = -\mathrm{dim}(U) - \langle 2\rho, 2\check{\rho}\rangle = -3$$
    where $2\rho$ (resp. $2\check{\rho}$) is the positive root (resp. coroot), whose associated unipotent subgroup is $U$. In this case, $u = 0$ since there is no unipotent integral in the toric period whose probability measure we need to correct to the self-dual measure. Finally, $d = 0$ as $s=0$.
\end{example}

\begin{example}
    We analyze another smooth case, that of the Godement--Jacquet period for $G = \mathrm{GL}_n^{\times 2}$ acting on $M_1 = T^*\mathrm{Mat}_n$, normalized by the $L^2$-period $M_2 = T^*\mathrm{GL}_n$ (see \cite[\S B.3.1]{CV}). The automorphic discrepancies of $M_1$ and $M_2$ are
    $$\alpha(M_1) = n^2 - 2n^2 - n^2 = -2n^2, \, \alpha(M_2) = n^2 - 2n^2 = -n^2.$$
    In this case, $h_{\partial^{1/2}}^{-1}= a_{M_2,0} = (\mathrm{Id}_n,\partial^{-1/2} \mathrm{Id}_n)$, $s=1/2$, and $$ |\eta_{M_1}(a_{M_2,0}) |^{1/2} = \Delta^{\frac{n^2}{4}} \Rightarrow d = n^2.$$ 
    In this case there are no unipotent regions of integration, so the relative automorphic discrepancy is simply the difference
    $$\alpha_{M_1/M_2} = -2n^2 - (-n^2) + n^2=0.$$
    On the spectral side, we have $\check{X}_1 = \mathrm{Ind}_{\mathrm{GL}_n^\mathrm{diag}}^{\check{G}}\mathbf{A}^n$ and $\check{X}_2 = \mathrm{GL}_n^\mathrm{diag} \backslash \check{G}$, so the relative spectral discrepancy is
    $$\sigma_{\check{X}_1/\check{X}_2} = \sigma_{\mathbf{A}^n} = 0,$$
    showing that the discrepancy equation is satisfied.
\end{example}

\begin{example}
    In the cases of principal interest in \cite{CV}, the relative automorphic discrepancy (measured with respect to the Whittaker period) is given by $2\gimel$ in Lemma 7.2 of \textit{op. cit}. In this case, as in the preceding example, we have $u = \mathrm{dim}(U)-1$, reflecting the fact that the automorphic period of interest unfolds ``in one step" to the Whittaker period.
\end{example} 

\begin{example}
    Finally we consider the most important case of our present discussion, that is, the computation of $P_{\check{X}}$ (where $\check{X}$ is the singular variety defined in \S \ref{section Gcheck Xcheck}) normalized by the Shalika period as in Definition \ref{def shalikanormalized}. In other words, we set $M_2 = M$ (for $n = 2$) , and while $M_1$ strictly speaking does not exist as a Hamiltonian action, it should be the cotangent bundle over the singular variety $\check{X}$. Using the ingredients
    $$\alpha(M_1) = \mathrm{dim}(\check{X}) - \mathrm{dim}(\check{G}) - 3 = -10,$$
    $$\alpha(M_2) =  -\mathrm{dim}(U \mathrm{PGL}_2) - \varepsilon_\Sh = -15,$$
    $$s=-1/2,\,\,\,|\eta_{M_1}(h_{\partial^{1/2}}^{-1}) |^{-1/2} = \Delta^{-\frac{9}{4}},$$
    $$u = 3 = \mathrm{dim}(U) -1,$$
    we have that the relative automorphic discrepancy
    $$\alpha_{M_1/M_2} = -10 - (-15) +(-9) + 2(3-4) = -6,$$
    as was seen in the automorphic formula of Theorem \ref{thm PXcheck final}. On the spectral side, we are considering the spectral period quotient $L_X/L_{\check{M}}$, where $X$ is the singular variety defined in \S \ref{Section:The:Space:G:X}. Since $X$ is a $Y$-fiber bundle over $\check{M}$, the relative spectral discrepancy calculates easily as
    $$\sigma(Y) = \varepsilon_Y - \mathrm{dim}(Y) = 3-4=-1.$$
    The difference between the two preceding equations gives rise to a discrepancy of $-5$.  
\end{example}

We remark that even in the smooth setting, equation \eqref{equation discrepancy equation} can already be useful in providing an ansatz for the type of spectral period one represents with $P_{M_1}$ and assuming an unfolding to $M_2$ being the Whittaker period: indeed, in this situation we would expect $P_{M_1}$ to distinguish a subgroup $H \subset \check{G}$ such that $\alpha_{M_1/M_2} = \mathrm{dim}(H) - \mathrm{dim}(\check{G})$.

\bibliographystyle{alpha}
\bibliography{MainCCGT}
\end{document}